\begin{document}
\maketitle
\begin{abstract}
We show that the perverse t-structure induces a t-structure on the category $\mc{D}^A(S,\Z_\ell)$ of Artin $\ell$-adic complexes when $S$ is an excellent scheme of dimension less than $2$ and provide a counter-example in dimension $3$. The heart $\mathrm{Perv}^A(S,\Z_\ell)$ of this t-structure can be described explicitly in terms of representations in the case of $1$-dimensional schemes.

When $S$ is of finite type over a finite field, we also construct a perverse homotopy t-structure over $\mc{D}^A(S,\Q_\ell)$ and show that it is the best possible approximation of the perverse t-structure. We describe the simple objects of its heart $\mathrm{Perv}^A(S,\Q_\ell)^\#$ and show that the weightless truncation functor $\omega^0$ is t-exact. We also show that the weightless intersection complex $EC_S=\omega^0 IC_S$ is a simple Artin homotopy perverse sheaf. If $S$ is a surface, it is also a perverse sheaf but it need not be simple in the category of perverse sheaves.
\end{abstract}
\tableofcontents
\section*{Introduction}
In this paper, we aim to define abelian categories of Artin perverse sheaves. Those categories should be categories of perverse sheaves which come from finite schemes over the base scheme.
Letting $S$ be a noetherian finite dimensional scheme, we study to this end the stable $\infty$-category of \emph{Artin $\ell$-adic complexes} $\mc{D}^A(S,\Z_\ell)$ which is defined as the thick stable $\infty$-subcategory of the stable $\infty$-category $\mc{D}^b_c(S,\Z_\ell)$ of $\ell$-adic complexes generated by the complexes of the form $f_*\Z_{\ell,X}$ for $f\colon X\rar S$ finite, where $\Z_{\ell,X}$ denotes the constant sheaf with value $\Z_\ell$ over $X$. To define categories of perverse sheaves, we will define perverse t-structures on those categories.

The motivation for this problem comes from the conjectural motivic t-structure. We have at our disposal several stable $\infty$-categories of constructible étale motives $\mc{DM}_{\et,c}(-)$ endowed with Grothendieck's six functors formalism and with $\ell$-adic realization functors \cite{thesevoe,orange,ayo07,ayo14,tcmm,em}. Let $S$ be a noetherian finite-dimensional scheme. One of the major open problems regarding the stable $\infty$-category $\mc{DM}_{\et,c}(S)$ is the existence of a motivic t-structure, whose heart would be the abelian category of mixed motivic sheaves that satisfies Beilinson's conjectures \cite{Jannsen}.

More precisely, working by analogy with the stable $\infty$-category $\mc{D}^b_c(S,\Z_\ell)$, there are two possible versions of the motivic t-structure: the \emph{perverse motivic t-structure} and the \emph{ordinary motivic t-structure}. If Beilinson's conservativity conjecture holds, \textit{i.e.} if the $\ell$-adic realization functor $$\rho_\ell\colon \DM_{\et}(S,\Q)\rar \mc{D}^b_c(S,\Q_\ell)$$ is conservative, then the perverse (resp. ordinary) motivic t-structure on $\DM_{\et,c}(S)$ is characterized by the joint t-exactness of the $\ell$-adic realization functors for every prime number $\ell$ when the targets are endowed with the perverse (resp. ordinary) t-structures.

As Beilinson proved in \cite{notebeilinson} that the category $\mc{D}^b_c(S,\overline{\Q}_\ell)$ is the bounded derived category of the abelian category of perverse sheaves over $S$, the category $\mc{DM}_{\et,c}(S)$ is by analogy conjectured to be the derived category of the conjectural abelian category of \emph{perverse motives} which would be the heart of the conjectural motivic t-structure. In \cite{bondarko15}, Bondarko has shown that the perverse motivic t-structure on the stable $\infty$-category $\mc{DM}_{\et,c}(S)$ can be recovered from the ordinary motivic t-structures on the stable $\infty$-categories $\mc{DM}_{\et,c}(K)$ (assuming they exist) when $K$ runs through the set of residue fields of $S$.

Proving that the motivic t-structure exists seems completely out of reach at the moment. However, if $n=0,1$, the ordinary motivic t-structure has been constructed in \cite{orgo,ayo11,abv,bvk,plh,plh2,vaish} on the subcategory $\mc{DM}^n_{\et,c}(S,\Q)$ of \emph{constructible $n$-motives}, which is the thick stable $\infty$-subcategory generated by the cohomological motives of proper $S$-schemes of dimension less than $n$. We state their main result:

\begin{theorem*}(Voevodsky, Orgogozo, Ayoub, Barbieri-Viale, Kahn, Pepin Lehalleur, Vaish) Let $S$ be a noetherian excellent finite dimensional scheme admitting resolution of singularities by alterations. Let $\ell$ be a prime number invertible on $S$. Let $n=0,1$.

Then, the stable $\infty$-category $\mc{DM}^n_{\et,c}(S,\Q)$ can be endowed with a non-degenerate t-structure such that the $\ell$-adic realization functor:
$$\mc{DM}^n_{\et,c}(S,\Q)\rar \mc{D}^b_c(S,\Q_\ell)$$
is t-exact when the stable $\infty$-category $\mc{D}^b_c(S,\Q_\ell)$ is endowed with its ordinary t-structure.
\end{theorem*}

In the case of the dimensional subcategories $\mc{DM}^n_{\et,c}(S,\Q)$, Bondarko's approach, which defines the perverse motivic t-structure from the ordinary motivic t-structure, does not apply. It indeed requires to use Grothendieck's six functors formalism and some of the six functors do not behave well with respect to the dimensional subcategories. Therefore, the existence of the perverse motivic t-structure on the stable $\infty$-category $\mc{DM}^n_{\et,c}(S,\Q)$ does not follow formally from the existence of the ordinary motivic t-structure on the stable $\infty$-categories $\mc{DM}^n_{\et,c}(K,\Q)$ when $K$ runs through the set of residue fields of $S$.

Another natural problem is to determine whether those results can be extended to the case of étale motives with integral coefficients: Voevodsky has shown in \cite[4.3.8]{orange} that it is not possible to construct a reasonable motivic t-structure on the stable $\infty$-category of \emph{Nisnevich motives} $\mc{DM}_{\mathrm{Nis},c}(S,\Z)$; 
however, nothing is known about the t-structure on the stable $\infty$-category of étale motives.

Hence, the above theorem leaves two open questions:
\begin{enumerate}\item Is there a t-structure on the stable $\infty$-category $\mc{DM}^n_{\et,c}(S,\Z)$ such that the $\ell$-adic realization functor:
$$\mc{DM}^n_{\et,c}(S,\Z)\rar \mc{D}^b_c(S,\Z_\ell)$$
is t-exact when $\mc{D}^b_c(S,\Z_\ell)$ is endowed with its \emph{ordinary} t-structure ?
\item Let $R=\Z$ or $\Q$. Can the stable $\infty$-category $\mc{DM}^n_{\et,c}(S,R)$ be endowed with a t-structure such that the $\ell$-adic realization functor:
$$\mc{DM}^n_{\et,c}(S,R)\rar \mc{D}^b_c(S,R_\ell)$$
is t-exact when $\mc{D}^b_c(S,R_\ell)$ is endowed with its \emph{perverse} t-structure ?
\end{enumerate}

One of the goals of this paper is to determine what answer to this question could reasonably be expected in the case when $n=0$, looking at the analogous situation in the setting of $\ell$-adic complexes. Namely, the stable $\infty$-category $\mc{D}^A(S,\Z_\ell)$ is the $\ell$-adic analog of the stable $\infty$-category of $0$-motives (sometimes called Artin motives). The analog of the above problem is then to determine whether the ordinary and the perverse t-structures on the stable $\infty$-category of constructible $\ell$-adic complexes induce a t-structure on the stable $\infty$-category of Artin $\ell$-adic complex. The advantage of this setting is that the ordinary and the perverse t-structure exist while they are still conjectural in the motivic world.

To investigate this problem and to state some of our results, we need to introduce the stable $\infty$-subcategory $\mc{D}^{smA}(S,\Z_\ell)$ of $\mc{D}^A(S,\Z_\ell)$ of \emph{smooth Artin $\ell$-adic complexes} which is the stable $\infty$-subcategories of $\mc{D}^b_c(S,\Z_\ell)$ generated by the $f_*\Z_{\ell,X}$, for $f\colon X\rar S$ finite and étale. This category plays in this paper the role that the category of lisse $\ell$-adic complexes plays in the theory of perverse sheaves. It is closely linked to a subclass of representations of the pro-étale fundamental group of $S$.

To explain this link, we use two ingredients. On the first hand, we need Bhatt and Scholze's construction of the category of constructible $\ell$-adic complexes: they defined it in \cite{bhatt-scholze} as the category of constructible objects of the derived category $\mc{D}(\Sh(S_{\mathrm{pro\acute{e}t}},\Z_\ell))$ of sheaves of $\Z_{\ell,S}$-modules on the pro-étale site of $S$. In the spirit of \cite[7]{bhatt-scholze}, if $\xi$ is a geometric point of $S$ and if $S$ is connected, the abelian category of continuous representations of the pro-étale fundamental group $\pi_1^{\mathrm{pro\acute{e}t}}(S,\xi)$ is equivalent to the abelian category of \emph{lisse $\ell$-adic sheaves}.

On the second hand, letting $\pi$ be a topological group, we introduce the abelian category of \emph{representations of Artin origin} of $\pi$ as the category of iterated extensions of Artin representations of $\pi$ in the category of continuous representations. We then say that a lisse $\ell$-adic sheaf is \emph{smooth Artin} if it corresponds to a representation of Artin origin of $\pi_1^{\mathrm{pro\acute{e}t}}(S,\xi)$. We can extend this definition to non-connected schemes by asking the sheaf to be smooth Artin when restricted to every connected component. We can now state the link between smooth Artin $\ell$-adic sheaves and representations more precisely:

\begin{proposition*}(\Cref{t-structure ordinaire smooth}) Let $S$ be a noetherian finite dimensional scheme and let $\ell$ be a prime number which is invertible on $S$. Then, the stable $\infty$-category $\mc{D}^{smA}(S,\Z_\ell)$ is the category of those bounded $\ell$-adic complexes whose ordinary cohomology sheaves are smooth Artin. In particular, the ordinary t-structure induces a t-structure on $\mc{D}^{smA}(S,\Z_\ell)$ whose heart is the category of smooth Artin $\ell$-adic sheaves.
\end{proposition*}

To answer our question regarding the ordinary t-structure, we introduce the notion of Artin $\ell$-adic sheaves by mimicking the definition
of constructible sheaves: an $\ell$-adic sheaf is \emph{Artin} if there is a stratification of $S$ such that its pullback to any stratum is smooth Artin.

\begin{proposition*}(\Cref{t-structure ordinaire}) Let $S$ be a noetherian finite dimensional scheme and let $\ell$ be a prime number which is invertible on $S$. Then, the category $\mc{D}^{A}(S,\Z_\ell)$ is the category of those $\ell$-adic complexes whose ordinary cohomology sheaves are Artin. In particular, the ordinary t-structure induces a t-structure on $\mc{D}^{A}(S,\Z_\ell)$ whose heart is the category of Artin $\ell$-adic sheaves.
\end{proposition*}
Our main result concerns the perverse t-structure:
\begin{theorem*}(\Cref{main theorem} and \Cref{main theorem 2}) Let $S$ be a noetherian finite dimensional excellent scheme and let $\ell$ be a prime number which is invertible on $S$. Assume that $S$ is of dimension at most $2$, or that $S$ is of dimension $3$ and that the residue fields of the closed points of $S$ are separably closed or real closed. Then, the perverse t-structure induces a t-structure on $\mc{D}^A(S,\Z_\ell)$.
\end{theorem*}
We let $\mathrm{Perv}^A(S,\Z_\ell)$ be the heart of the induced perverse t-structure on $\mc{D}^A(S,\Z_\ell)$ when it is defined.

The above result seems rather optimal: if $k$ is a finite field and $\ell$ is a prime number invertible on $k$, we prove (see \Cref{exemple dim 3}) that the perverse t-structure of $\mc{D}^b_c(\mb{A}^3_k,\Z_\ell)$ does not induce a t-structure on $\mc{D}^A(\mb{A}^3_k,\Z_\ell)$.

To circumvent this negative result, we introduce another notion of Artin perverse sheaves inspired by the approach of \cite{plh,vaish}, \cite[2.2.4]{ayo07} and \cite{bondarko-deglise}. More precisely, we define a \emph{perverse homotopy t-structure} on the category $\mc{D}^A_{\In}(S,\Z_\ell)$ of \emph{Ind-Artin $\ell$-adic complexes} formally by specifying the set of t-negative objects. It has the following properties (see \Cref{perv vs hperv}).\begin{itemize}
    \item If the perverse homotopy t-structure on the stable $\infty$-category $\mc{D}^A_{\In}(S,\Z_\ell)$ induces a t-structure on the subcategory $\mc{D}^A(S,\Z_\ell)$, then the induced t-structure is final among those such that the inclusion functor $$\mc{D}^A(S,\Z_\ell)\rar \mc{D}^b_c(S,\Z_\ell)$$ is right t-exact when the right hand side is endowed with its perverse t-structure. 
    \item  When the perverse t-structure induces a t-structure on $\mc{D}^A(S,\Z_\ell)$, then so does the perverse homotopy t-structure and both induced t-structures coincide.
\end{itemize}

The properties of this t-structure are closely related to the properties of the right adjoint $\omega^0$ of the inclusion functor $$\iota\colon\mc{D}^A_{\In}(S,\Z_\ell)\rar \mc{D}^{\coh}_{\In}(S,\Z_\ell)$$ of Ind-Artin $\ell$-adic complexes into Ind-cohomological $\ell$-adic complexes. The motivic analog of this functor was first introduced in \cite{az} to study the reductive Borel-Serre compactification of a symmetric variety.

When $S$ is of finite type over $\mb{F}_p$ and working with coefficients in $\Q_\ell$, the functor $\omega^0$ is more tractable. We have at our disposal the stable $\infty$-category of mixed $\ell$-adic sheaves $\mc{D}^b_m(S,\Q_\ell)$ of \cite[5]{bbd} and Deligne's theory of weights (see \cite{WeilII}). Using the truncation functors defined by Morel in \cite{thesemorel}, Nair and Vaish defined in \cite{nv} a \emph{weightless truncation functor} $$\omega^0 \colon \mc{D}^b_m(S,\Q_\ell)\rar \mc{D}^b_m(S,\Q_\ell)$$
(denoted by $w_{\leqslant \id}$ in \cite{nv}) which by \cite{vaish2} is the $\ell$-adic analog of Ayoub and Zucker's functor.

We show the following results:

\begin{proposition*}(\Cref{w<id vs omega0,perverse homotopy t-structure induces a t-structure} and \Cref{corollaire de w<id vs omega0,perv vs pervsharp}) Let $S$ be a scheme of finite type over $\mb{F}_p$. Let $\ell \neq p$ be a prime number.
\begin{enumerate} \item The two functors denoted above by $\omega^0$ induce equivalent functors $$\mc{D}^{\coh}(S,\Q_\ell)\rar \mc{D}^A(S,\Q_\ell).$$
In particular, both functors send cohomological $\ell$-adic complexes to Artin $\ell$-adic complexes.

\item The perverse homotopy t-structure induces a t-structure on $\mc{D}^A(S,\Q_\ell)$. We denote its heart by $\mathrm{Perv}^A(S,\Q_\ell)^\#$ and call it the \emph{abelian category of Artin homotopy perverse sheaves}. 
 
\item The abelian categories $\mathrm{Perv}^A(S,\Q_\ell)$ and $\mathrm{Perv}^A(S,\Q_\ell)^{\#}$ coincide when the perverse t-structure restricts to $\mc{D}^A(S,\Q_\ell)$.

\end{enumerate}
\end{proposition*}

To study the abelian category $\mathrm{Perv}^A(S,\Q_\ell)^{\#}$, we would like to show that the functor $\omega^0$ is t-exact. 
However, this t-exactness statement does not make sense as it is: it is unknown whether the perverse truncation functors preserve cohomological complexes. To get around this issue, we define the stable $\infty$-category $\mc{D}^{w\leqslant 0}(S,\Q_\ell)$ of \emph{weightless $\ell$-adic complexes} as the essential image of the weightless truncation functor. The main insight leading to this definition is that Artin $\ell$-adic complexes are the cohomological complexes which are also weightless. 
This stable $\infty$-category $\mc{D}^{w\leqslant 0}(S,\Q_\ell)$ is then endowed with a t-structure such that the inclusion of $\mc{D}^A(S,\Q_\ell)$ equipped with its perverse homotopy t-structure is t-exact (see \Cref{gluing weightless}(3)). We can now provide a meaningful t-exactness statement:

\begin{theorem*}(\Cref{omega^0 t-exact}) Let $S$ be a scheme of finite type over $\mb{F}_p$ and let $\ell\neq p$ be a prime number.
Let $$\iota\colon\mc{D}^{w\leqslant 0}(S,\Q_\ell)\rar \mc{D}^b_m(S,\Q_\ell)$$ be the inclusion. When the left hand side is endowed with the weightless perverse t-structure and the right hand side is endowed with the perverse t-structure, the adjunction $(\iota,\omega^0)$ is a t-adjunction and the functor $\omega^0$ is t-exact.
\end{theorem*}

A consequence of this result is that the abelian category $\mathrm{Perv}^A(S,\Q_\ell)^\#$ is similar to the abelian category of perverse sheaves and contains the weightless intersection complex of \cite{nv}: 
\begin{proposition*}(\Cref{simple,ECX} and \Cref{exemple ECX}) Let $S$ be a scheme of finite type over $\mb{F}_p$ and let $\ell\neq p$ be a prime number. 
\begin{enumerate} \item All the objects of $\mathrm{Perv}^A(S,\Q_\ell)^\#$ are of finite length. Its simple objects can be described as the $\omega^0j_{!*}(L[\dim(V)])$ where  $j\colon V\hookrightarrow S$ runs through the set of inclusions of smooth subschemes of $S$ and $L$ runs through the set of simple smooth Artin $\ell$-adic sheaves over $V$.
\item Let $IC_S$ be the intersection complex of $S$. The weightless intersection complex $EC_S=\omega^0 IC_S$ of \cite{nv} is a simple object of the abelian category $\mathrm{Perv}^A(S,\Q_\ell)^\#$.
\item If $S$ is a surface, then, the complex $EC_S$ is also a perverse sheaf, but it need not be simple in the category of perverse sheaves.
\end{enumerate}
\end{proposition*}

Finally, we describe the category $\mathrm{Perv}^A(C,\Z_\ell)$ when  $C$ is an excellent scheme of dimension $1$. Let $\Gamma$ be the set of generic points of $C$. Let $\nu\colon\widetilde{C}\rar C$ be the normalization of $C$. In this setting, we introduce a residue map $\partial_y$ for any closed point $y$ of $\widetilde{C}$ in \Cref{lemma not regular}. The category $\mathrm{Perv}^A(C,\Z_\ell)$ is equivalent to the category $\mathrm{P}^A(C,\Z_\ell)$ (see \Cref{P^A(S)}) which is defined as follows: 
\begin{itemize}\item An object of $\mathrm{P}^A(C,\Z_\ell)$ is a quadruple $(U,(M_\eta)_{\eta\in \Gamma},(M_x)_{x\in C \setminus U},(f_x)_{x\in C\setminus U})$ where 
\begin{enumerate}
    
    \item $U$ is a nil-regular open subset of $C$ such that the immersion $U\rar C$ is affine,
    \item for all $\eta\in \Gamma$, $M_\eta$ is a representation of Artin origin of $\pi_1^{\et}(U_\eta,\overline{\eta})$ with coefficients in $\Z_\ell$,
    \item for all $x\in C\setminus U$, $M_x$ is a complex in $\mc{D}^b_{\Z_\ell-\mathrm{perf}}(\Sh((BG_{k(x)})_{\mathrm{pro\acute{e}t}},\Z_\ell))$ (see \Cref{cas des corps}) placed in degrees $[0,1]$,
    \item for all $x\in C\setminus U$, $f_x$ is an element of $$\Hom_{{\mathrm{D}}  (\Rep(G_{k(x)},\Z_\ell))}\left(M_x,\bigoplus\limits_{\nu(y)=x}\mathrm{Ind}_{G_{k(y)}}^{G_{k(x)}}\left( \partial_y \left[\phi_y^*(M_{\eta(y)})\right]\right)\right)$$

such that $\Hl^0(f_x)$ is injective.
\end{enumerate}

\item An element of $\Hom_{\mathrm{P}^A(C,\Z_\ell)}\left((U,(M_\eta),(M_x),(f_x)),(V,(N_\eta),(N_x),(g_x))\right)$ is a couple of the form $((\Phi_\eta)_{\eta \in \Gamma},(\Phi_x)_{x \in (C\setminus U)\cap (C\setminus V)})$ where $\Phi_\eta\colon M_\eta\rar N_\eta$ is a map of representations of $G_{K_\eta}$, where $\Phi_x\colon M_x \rar N_x$ is a map in $\mc{D}(\Sh((BG_{k(x)})_{\mathrm{pro\acute{e}t}},\Z_\ell))$ and where the diagram

$$\begin{tikzcd} M_x \ar[r,"f_x"]\ar[d,"\Phi_x"]& \bigoplus\limits_{\nu(y)=x}\mathrm{Ind}_{G_{k(y)}}^{G_{k(x)}}\left(  \partial_y\left[\phi_y^*(M_{\eta(y)})\right]\right) \ar[d,"\bigoplus\limits_{\nu(y)=x}\mathrm{Ind}_{G_{k(y)}}^{G_{k(x)}}\left( \partial_y\left(\phi_y^*(\Phi_{\eta(y)})\right)\right)"]\\
N_x \ar[r,"g_x"]& \bigoplus\limits_{\nu(y)=x}\mathrm{Ind}_{G_{k(y)}}^{G_{k(x)}}\left( \partial_y\left[\phi_y^*(N_{\eta(y)})\right]\right)
\end{tikzcd}$$
is commutative.
\end{itemize} 

In the next paper, we study the motivic case and prove that we have similar results. One of the main interests of the results of this paper is that the motivic constructions will realize into our constructions.

\textbf{Acknowledgements}

This paper is part of my PhD thesis, done under the supervision of Frédéric Déglise. I would like to express my deepest gratitude to him for his constant support, for his patience and for his numerous suggestions and improvements. 

My sincere thanks also go to Joseph Ayoub, Jakob Scholbach and the anonymous referee for their very detailed reports that allowed me to greatly improve the quality of this paper.
I would also like to thank warmly Marcin Lara, Sophie Morel, Riccardo Pengo, Simon Pepin Lehalleur and Jörg Wildeshaus for their help, their kind remarks and their questions which have also enabled me to improve this work.

Finally, I would like to thank Olivier Benoist, Robin Carlier, Mattia Cavicchi, Adrien Dubouloz, Wiesława Nizioł, Fabrice Orgogozo, Timo Richarz, Wolfgang Soergel, Markus Spitzweck, Olivier Taïbi, Swann Tubach and Olivier Wittenberg for their interest in my work and for some helpful questions and advices. 

\newpage
\section{Preliminaries}
All schemes are assumed to be \textbf{noetherian} and of \textbf{finite dimension}; furthermore all smooth (and étale) morphisms and all quasi-finite morphisms are also implicitly assumed to be separated and of finite type.

In this text we will freely use the language of $(\infty,1)$-categories of \cite{htt,ha}. The term \emph{category} will by default mean $(\infty,1)$-category, and the term \emph{$2$-category} will mean $(\infty,2)$-category. When we refer to derived categories, we refer to the $\infty$-categorical version. 

There are several definitions of the derived category of constructible $\ell$-adic sheaves. The first definition was given in \cite{bbd} over schemes over a field. This definition was then extended to all noetherian finite dimensional schemes in \cite{torsten} \cite[7.2]{em}, \cite{bhatt-scholze} and \cite[XIII.4]{travauxgabber}. In this paper, we will use the model of \cite{bhatt-scholze}. We will use the theorems of \cite[4]{bbd} over excellent schemes; in particular, to use the affine Lefschetz theorem \cite[4.1.1]{bbd} and its consequences, one has to replace \cite[XIV.3.1]{sga4} in the proof of \cite[4.1.1]{bbd} with \cite[XV.1.1.2]{travauxgabber}.

We adopt the cohomological convention for t-structures (\textit{i.e} the convention of \cite[1.3.1]{bbd} and the opposite of \cite[1.2.1.1]{ha}): a t-structure on a stable category $\mc{D}$ is a pair $(\mc{D}^{\leqslant 0},\mc{D}^{\geqslant 0})$ of strictly full subcategories of $\mc{D}$ having the following properties: 
\begin{itemize}
	\item For any object $M$ of $\mc{D}^{\leqslant 0}$ and any object $N$ of $\mc{D}^{\geqslant 0}$, $\pi_0 \Map(M,N[-1])=0$.
	\item We have inclusions $\mc{D}^{\leqslant 0} \subseteq \mc{D}^{\leqslant 0}[-1]$ and $\mc{D}^{\geqslant 0}[-1] \subseteq \mc{D}^{\geqslant 0}$.
	\item For any object $M$ of $\mc{D}$, there exists an exact triangle $M'\rar M \rar M''$ where $M'$ is an object of $\mc{D}^{\leqslant 0}$ and $M''$ is an object of $\mc{D}^{\geqslant 0}[-1]$.
\end{itemize}

A stable category endowed with a t-structure is called a t-category. If $\mc{D}$ is a t-category, we denote by $\mc{D}^\heart=\mc{D}^{\geqslant 0} \cap \mc{D}^{\leqslant 0}$ the heart of the t-structure which is an abelian category and by $\mc{D}^b$ the full subcategory of bounded objects which is a stable t-category.

Let $i\colon Z\rar X$ be a closed immersion and $j\colon U\rar X$ be the complementary open immersion. We call localization triangles the exact triangles of functors on $\ell$-adic sheaves over $X$: 
\begin{equation}\label{localization}j_!j^*\rar \id \rar i_*i^*.
\end{equation}
\begin{equation}\label{colocalization}i_!i^!\rar \id \rar j_*j^*.
\end{equation}

Let $S$ be a scheme. A \emph{stratification} of $S$ is a partition $\mc{S}$ of $S$ into non-empty equidimensional locally closed subschemes called \emph{strata} such that the topological closure of any stratum is a union strata.

If $S$ is a scheme and $\xi$ is a geometric point of $S$, we denote by $\pi_1^{\et}(S,\xi)$ the étale fundamental group of $S$ with base point $\xi$ defined in \cite{sga1} and $\pi_1^{\mathrm{pro\acute{e}t}}(S,\xi)$ the pro-étale fundamental group of $S$ with base point $\xi$ defined in \cite[7]{bhatt-scholze}.

If $k$ is a field, we denote by $G_k$ its absolute Galois group.

If $p$ is a prime number, we denote by $\mb{F}_p$ the field with $p$ elements, by $\mb{Z}_p$ the ring of $p$-adic integers, by $\Q_p$ the field of $p$-adic numbers and by $\overline{\Q}_p$  its algebraic closure.

If $\phi\colon H\rar G$ is a map of groups, we denote by $\phi^*$ the forgetful functor which maps representations of $G$ to representations of $H$. 

If $H$ is a finite index subgroup of a group $G$, we denote by $\In^G_H$ the induced representation  functor which maps representations of $H$ to representations of $G$. 

\subsection{Notions of abelian and stable subcategories generated by a set of objects}
	
	Recall the following definitions (see \cite[02MN]{stacks}).
	
	\begin{definition} Let $\mathrm{A}$ be an abelian category.
		\begin{enumerate}\item A \emph{Serre subcategory} of $\mathrm{A}$ is a nonempty full subcategory $\mathrm{B}$ of $\mathrm{A}$ such that given an exact sequence $$M' \rar M \rar M''$$ such that $M'$ and $M''$ are objects of $\mathrm{B}$, then $M$ is an object of $\mathrm{B}$.
			\item A \emph{weak Serre subcategory} of $\mathrm{A}$ is a nonempty full subcategory $\mathrm{B}$ of $\mathrm{A}$ such that given an exact sequence $$M_1 \rar M_2 \rar M_3 \rar M_4 \rar M_5$$ such that for all $i \in \{1,2,4,5\}$, $M_i$ is an object of $\mathrm{B}$, then $M_3$ is an object of $\mathrm{B}$.
			\item Let $\mc{E}$ be a set of objects of $\mathrm{A}$, the \emph{Serre (resp. weak Serre) subcategory of $\mathrm{A}$ generated by $\mc{E}$} is the smallest Serre (resp. weak Serre) subcategory of $\mathrm{A}$ whose set of objects contains $\mc{E}$. 
		\end{enumerate}
	\end{definition}

Note that a Serre subcategory of an abelian category $\mathrm{A}$ is also a weak Serre subcategory of $\mathrm{A}$ and that if $\mathrm{B}$ is a weak Serre subcategory of $\mathrm{A}$, then $\mathrm{B}$ is abelian and the inclusion functor $\mathrm{B}\rar \mathrm{A}$ is exact and fully faithful.

Recall that a strictly full subcategory is a full subcategory whose set of objects is closed under isomorphisms.

\begin{proposition}\label{A*} Let $\mathrm{B}$ be a strictly full abelian subcategory of an abelian category $\mathrm{A}$. Assume that $\mathrm{B}$ is closed under subobject and quotient in $\mathrm{A}$.
	
Let $\mathrm{B}^*$ be the full subcategory of $\mathrm{A}$ spanned by the objects obtained as successive extensions of the objects $\mathrm{B}$. Then \begin{enumerate}
	\item The category $\mathrm{B}^*$ is the smallest weak Serre subcategory of $\mathrm{A}$ containing $\mathrm{B}$.
	\item The category $\mathrm{B}^*$ is a Serre subcategory of $\mathrm{A}$.
\end{enumerate}
	
\end{proposition}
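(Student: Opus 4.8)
The plan is to work throughout with the explicit description of $\mathrm{B}^*$: an object $M$ of $\mathrm{A}$ lies in $\mathrm{B}^*$ exactly when it carries a finite filtration $0 = M_0 \subseteq M_1 \subseteq \cdots \subseteq M_n = M$ with each successive quotient $M_i/M_{i-1}$ in $\mathrm{B}$. First I would record the formal consequences of this description: $0 \in \mathrm{B}^*$ (empty filtration), $\mathrm{B} \subseteq \mathrm{B}^*$, the subcategory $\mathrm{B}^*$ is strictly full (filtrations transport along isomorphisms), and $\mathrm{B}^*$ is stable under extensions in $\mathrm{A}$ (given $0 \rar M' \rar M \rar M'' \rar 0$ with $M', M'' \in \mathrm{B}^*$, concatenate a filtration of $M'$ with the preimage in $M$ of a filtration of $M''$, whose graded pieces are unchanged).

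The crux — and the only point where the hypotheses that $\mathrm{B}$ is closed under subobjects and quotients in $\mathrm{A}$ are used — is to show that $\mathrm{B}^*$ is itself closed under subobjects and quotients in $\mathrm{A}$. Given $M \in \mathrm{B}^*$ with filtration $(M_i)$ and a subobject $N \subseteq M$, I would equip $N$ with the induced filtration $N_i := N \cap M_i$ and observe, via the Noether isomorphism $N_i/N_{i-1} \cong (N_i + M_{i-1})/M_{i-1}$, that each $N_i/N_{i-1}$ is a subobject of $M_i/M_{i-1} \in \mathrm{B}$, hence itself lies in $\mathrm{B}$; so $N \in \mathrm{B}^*$. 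Dually, for the quotient $M/N$ I would filter by the images $Q_i$ of the $M_i$ and use the isomorphism $Q_i/Q_{i-1} \cong (M_i + N)/(M_{i-1}+N)$, which exhibits each $Q_i/Q_{i-1}$ as a quotient of $M_i/M_{i-1} \in \mathrm{B}$, hence in $\mathrm{B}$; so $M/N \in \mathrm{B}^*$.

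Once closure under subobjects, quotients and extensions is in place, assertion (2) is purely formal: for an exact sequence $M' \rar M \rar M''$ with $M', M'' \in \mathrm{B}^*$, the object $\ker(M \rar M'') = \mathrm{im}(M' \rar M)$ is a quotient of $M'$ and $\mathrm{im}(M \rar M'') = M/\ker(M \rar M'')$ is a subobject of $M''$, so both lie in $\mathrm{B}^*$, whence $M$ — an extension of the latter by the former — lies in $\mathrm{B}^*$. Thus $\mathrm{B}^*$ is a Serre subcategory of $\mathrm{A}$, and a fortiori a weak Serre subcategory containing $\mathrm{B}$. For assertion (1) it then remains to prove minimality: given any weak Serre subcategory $\mathrm{C}$ of $\mathrm{A}$ with $\mathrm{B} \subseteq \mathrm{C}$, I would feed the defining five-term exactness condition a sequence of the shape $X \rar X \rar 0 \rar X \rar X$ (outer maps the identity, $X \in \mathrm{C}$) to deduce $0 \in \mathrm{C}$, and a short exact sequence $0 \rar M' \rar M \rar M'' \rar 0$ to deduce that $\mathrm{C}$ is closed under extensions; together these force $\mathrm{C}$ to contain every successive extension of objects of $\mathrm{B}$, i.e. $\mathrm{B}^* \subseteq \mathrm{C}$. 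The only step with genuine content is the second paragraph, and even there the work reduces to bookkeeping with the induced filtrations, so I do not anticipate a real obstacle.
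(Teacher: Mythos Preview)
Your proof is correct and follows essentially the same strategy as the paper: reduce everything to showing that $\mathrm{B}^*$ is closed under subobjects and quotients, then deduce the Serre property and minimality formally. The only organizational difference is that the paper phrases the closure argument as an induction on auxiliary subcategories $\mathrm{B}_n$ (defined by $\mathrm{B}_0=\mathrm{B}$ and $\mathrm{B}_{n+1}=$ extensions of objects of $\mathrm{B}_n$), analysing a single extension step via the exact sequences $0\rar \ker(\pi|_N)\rar N\rar \pi(N)\rar 0$ and $0\rar M'/\ker(\pi|_N)\rar M/N\rar M''/\pi(N)\rar 0$, whereas you work directly with a full filtration and invoke the Noether isomorphisms on the graded pieces; the content is the same.
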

\begin{proof}
Any weak Serre subcategory containing $\mathrm{B}$ is closed under extensions and therefore contains $\mathrm{B}^*$. Hence it suffices to show that $\mathrm{B}^*$ is a Serre subcategory of $\mathrm{A}$.

The subcategory $\mathrm{B}^*$ is strictly full, contains the zero object and any extension of objects of $\mathrm{B}^*$ is an object of $\mathrm{B}^*$. Therefore using \cite[02MP]{stacks}, it suffices to show that $\mathrm{B}^*$ is closed under subobject and quotient.
	
We first define subcategories of $\mathrm{A}$ by induction. Let $\mathrm{B}_0=\mathrm{B}$ and if $n\geqslant 0$ is an integer, let $\mathrm{B}_{n+1}$ be the full subcategory of $\mathrm{A}$ whose objects are the objects $M$ of $\mathrm{A}$ such that there exists an exact sequence $$0\rar M' \rar M \rar M'' \rar 0$$ where $M'$ and $M''$ are objects of $\mathrm{B}_n$. 
	
By definition, $\mathrm{B}^*$ is the full subcategory of $\mathrm{A}$ whose objects are objects of $\mathrm{B}_n$ for some non-negative integer $n$. Therefore it suffices to show that for all $n\geqslant 0$, the subcategory $\mathrm{B}_n$ is closed under subobject and quotient.

The case where $n=0$ is the hypothesis on $\mathrm{B}$. Assume that any subobject and any quotient of any object of $\mathrm{B}_n$ lies in $\mathrm{B}_n$ and let $M$ be an object of $\mathrm{B}_{n+1}$. Let $N$ be a subobject of $M$, we show that $N$ and $M/N$ are objects of $\mathrm{B}_{n+1}$. 

We have an exact sequence $$0\rar M' \rar M \overset{\pi}{\rar} M'' \rar 0$$ where $M'$ and $M''$ are objects of $\mathrm{B}_n$. This yields exact sequences:

$$0\rar \ker(\pi|_N) \rar N \overset{\pi|_N}{\rar} \pi(N) \rar 0,$$
$$0 \rar M'/\ker(\pi|_N) \rar M/N \rar M''/\pi(N) \rar 0.$$

Since $\pi(N)$ is a subobject of $M''$, it is an object of $\mathrm{B}_n$ and since $\ker(\pi|_N)$ is a subobject of $M'$, it is also an object of $\mathrm{B}_n$. Thus $N$ is an object of $\mathrm{B}_{n+1}$. Similarly, $M/N$ is an object of $\mathrm{B}_{n+1}$.
\end{proof}
	
\begin{definition}\label{loc cat} Let $\mc{C}$ be a stable category 
\begin{enumerate}\item A \emph{thick} subcategory of $\mc{C}$ is a full subcategory $\mc{D}$ of $\mc{C}$ which is closed under finite limits, finite colimits and retracts.
\item A \emph{localizing} subcategory of $\mc{C}$ is a full subcategory $\mc{D}$ of $\mc{C}$ which is closed under finite limits and arbitrary colimits.
\item Let $\mc{E}$ be a set of objects. We call \emph{thick} (resp. \emph{localizing}) \emph{subcategory generated by} $\mc{E}$ the smallest thick (resp. localizing) subcategory of $\mc{C}$ whose set of objects contains $\mc{E}$.
	\end{enumerate}
\end{definition}

\subsection{Induced t-structures}
	Recall the following definition of \cite[1.3.19]{bbd}:
\begin{definition}
Let $\mc{D}$ be a t-category and $\mc{D}'$ be a full stable subcategory of $\mc{D}$. If  $(\mc{D}^{\leqslant 0} \cap \mc{D}', \mc{D}^{\geqslant 0} \cap \mc{D}')$ defines a t-structure on $\mc{D}'$, we say that $\mc{D'}$ is a \emph{sub-t-category} of $\mc{D}$, that the t-structure of $\mc{D}$ \emph{induces a t-structure} on $\mc{D}'$ and call the latter the \emph{induced t-structure}.
\end{definition}
First notice the following properties on sub-t-categories:
\begin{proposition}\label{premieres prop du coeur}
	Let $\mc{D}$ be a t-category and $\mc{D}_0$ be a strictly full sub-t-category of $\mc{D}^b$. Then,
	\begin{enumerate}
		\item The subcategory $\mc{D}_0$ is thick.
		\item The category $\mc{D}^\heart_0$ is a weak Serre subcategory of $\mc{D}^\heart$.
	\end{enumerate}
\end{proposition}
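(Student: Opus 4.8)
The plan is to deduce both statements from a single observation: the cohomology objects of an object of $\mc{D}_0$ again lie in $\mc{D}_0$. First I would record this. If $M$ is an object of $\mc{D}_0$, its truncation triangle $\tau^{\leqslant n}M\rar M\rar \tau^{\geqslant n+1}M$ computed for the induced t-structure on $\mc{D}_0$ has outer terms in $\mc{D}^{\leqslant n}\cap \mc{D}_0$ and $\mc{D}^{\geqslant n+1}\cap \mc{D}_0$ respectively; by uniqueness of truncation triangles in $\mc{D}$ it agrees with the ambient one, so $\tau^{\leqslant n}M$ and $\tau^{\geqslant n}M$ lie in $\mc{D}_0$ for all $n$. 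Since $\mc{D}_0$ is a stable subcategory it is closed under shifts, so $H^n(M)\in \mc{D}_0\cap\mc{D}^\heart=\mc{D}^\heart_0$ for every $n$. This is the only place the hypothesis that $\mc{D}_0$ is a \emph{sub-t-category} (rather than just a stable subcategory of $\mc{D}^b$) gets used, and it is also what forces the convenient logical order to be "(2) then (1)".

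Next I would prove (2). Closure of $\mc{D}^\heart_0$ under extensions is immediate: an extension $0\rar M'\rar M\rar M''\rar 0$ in $\mc{D}^\heart$ with $M',M''\in\mc{D}^\heart_0$ underlies an exact triangle $M'\rar M\rar M''$ in $\mc{D}$, so $M\in\mc{D}_0$ by stability, and $M\in\mc{D}^\heart$. For closure under kernels and cokernels, take $f\colon M'\rar M''$ in $\mc{D}^\heart_0$ and form $C=\mathrm{cofib}(f)$, which lies in $\mc{D}_0$; the long exact cohomology sequence of $M'\rar M''\rar C$ shows $C$ is concentrated in degrees $-1,0$ with $H^{-1}(C)=\ker f$ and $H^0(C)=\mathrm{coker}\,f$, so the observation above puts both $\ker f$ and $\mathrm{coker}\,f$ in $\mc{D}^\heart_0$. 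Closure under kernels, cokernels and extensions is an elementary reformulation of the defining five-term condition for a weak Serre subcategory, so (2) follows.

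For (1), $\mc{D}_0$ is closed under finite limits and colimits by hypothesis, so only closure under retracts remains. I would first isolate a dévissage lemma: if $M\in\mc{D}^b$ has all its cohomology objects in $\mc{D}_0$, then $M\in\mc{D}_0$. This goes by induction on the number of nonzero $H^n(M)$: if there is at most one, $M$ is a shift of a cohomology object; otherwise, letting $n$ be minimal with $H^n(M)\neq 0$, the triangle $H^n(M)[-n]\rar M\rar \tau^{\geqslant n+1}M$ has both outer terms in $\mc{D}_0$ (the left one by hypothesis, the right one by induction), so $M\in\mc{D}_0$. Now if $M$ is a retract of some $N\in\mc{D}_0$, then $M$ is bounded and each $H^n(M)$ is a retract of $H^n(N)\in\mc{D}^\heart_0$; since a weak Serre subcategory is closed under direct summands (a summand of $X$ is the kernel of an idempotent endomorphism of $X$), part (2) gives $H^n(M)\in\mc{D}^\heart_0\subseteq\mc{D}_0$, and the dévissage lemma yields $M\in\mc{D}_0$.

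I do not anticipate a real obstacle: the argument is a straightforward dévissage once the coincidence of the induced and ambient truncations is in hand, and the only mild subtlety is keeping the order of the two parts reversed from the statement so that the summand-closure of $\mc{D}^\heart_0$ needed in (1) is already available.
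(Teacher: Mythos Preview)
Your argument is correct and essentially the same as the paper's: both use that truncations in $\mc{D}_0$ coincide with the ambient ones, deduce closure under $H^n$, and handle retracts by the cone-of-idempotent/dévissage trick. The only difference is organizational: you prove (2) first and invoke it in (1), while the paper proves (1) directly (the cone-of-projector step there is exactly your kernel-of-idempotent argument applied in the heart) and then shows (2) via \cite[0754]{stacks} using that the inclusion $\mc{D}_0^\heart\hookrightarrow\mc{D}^\heart$ is exact.
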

\begin{proof}
We first prove (1).	Since $\mc{D}_0$ is a strictly full stable subcategory of $\mc{D}$, it is closed under finite limits and colimits.
	
Let $M$ be an object of $\mc{D}_0$. Assume that $M=M'\bigoplus M''$ in $\mc{D}$. We prove that $M'$ is an object of $\mc{D}_0$.

Since $M$ is bounded, so is $M'$. Hence, by dévissage, it suffices to show that $\Hl^k(M')$ lies in $\mc{D}_0$ for any integer $k$. Therefore, we can assume that $M$ lies in $\mc{D}^\heart$.
	
	Let $p\colon M\rar M$ be the projector corresponding to $M'$. Let $C$ be the cone of $p$. Then, $M'=\ker(p)=\Hl^0(C)$ lies in $\mc{D}_0^\heart$ which proves (1).
	
We now prove (2). Since the inclusion functor $\mc{D}_0^\heart\rar \mc{D}^\heart$ is fully faithful and exact, by \cite[0754]{stacks}, it suffices to show that $\mc{D}_0^\heart$ is closed under extensions. But if $$0\rar M' \rar M \rar M''\rar 0$$ is an exact sequence in $\mc{D}^\heart$ with $M'$ and $M''$ objects of $\mc{D}^\heart_0$, the triangle $$M' \rar M \rar M''$$ is exact in $\mc{D}$. Since $\mc{D}_0$ is a stable subcategory of $\mc{D}$, this proves that $M$ is an object of $\mc{D}_0$ and thus of $\mc{D}_0^\heart$. This finishes the proof.
\end{proof}

The following lemma will allow us to restrict t-structures to thick subcategories.

\begin{lemma}\label{keur} Let $\mc{D}$ be a t-category. Let $\mc{D}'$ be a thick subcategory of $\mc{D}^b$. Assume that $\mc{D}' \cap \mc{D}^\heart$ contains a set of generators of $\mc{D}'$ as a thick subcategory of $\mc{D}$. Then, the following conditions are equivalent.
\begin{enumerate}
	\item The t-structure of $\mc{D}$ induces a t-structure on $\mc{D}'$.
	\item If $f\colon M\rar N$ is a map in $\mc{D}' \cap \mc{D}^\heart$, $\ker(f)$ is an object of $\mc{D}'$.
	\item If $f\colon M\rar N$ is a map in $\mc{D}' \cap \mc{D}^\heart$, $\coker(f)$ is an object of $\mc{D}'$.
\end{enumerate}	
\end{lemma}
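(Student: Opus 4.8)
The plan is to prove the equivalence of (1), (2), (3) by producing, from the hypotheses, the truncation functors for the would-be induced t-structure, using the extra assumption that $\mc{D}'$ is thickly generated by objects lying in the heart $\mc{D}^\heart$.

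First I would observe that (2) and (3) are equivalent by an entirely formal duality: in the heart $\mc{D}^\heart$, the cokernel of $f\colon M\to N$ is computed from the kernel of a map of the same type (e.g.\ via $\coker(f)=\ker(N\to \Hl^0(\mathrm{cofib}(f)\to \cdots))$, or more cleanly, passing to the opposite category $\mc{D}^{\mathrm{op}}$ with the shifted opposite t-structure interchanges kernels and cokernels while preserving thickness and the generation hypothesis). So it suffices to prove (1)$\Leftrightarrow$(2).

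For (1)$\Rightarrow$(2): if the t-structure restricts, then $\mc{D}'\cap\mc{D}^\heart = (\mc{D}')^\heart$ is the heart of the induced t-structure, hence an abelian category in which kernels exist; by \Cref{premieres prop du coeur}(2) it is a weak Serre subcategory of $\mc{D}^\heart$, so in particular closed under kernels taken in $\mc{D}^\heart$, giving (2).

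The substantive direction is (2)$\Rightarrow$(1). The strategy is to show that every object $M$ of $\mc{D}'$ admits its $\mc{D}$-truncations $\tau^{\leqslant n}M$ and $\tau^{\geqslant n}M$ inside $\mc{D}'$; equivalently, since $M$ is bounded and $\mc{D}'$ is stable and thick, it is enough to show each cohomology object $\Hl^k(M)\in\mc{D}^\heart$ actually lies in $\mc{D}'$. I would argue by induction on the "thick-generation length": since $\mc{D}'$ is generated as a thick subcategory by a set $\mc{E}\subseteq \mc{D}'\cap\mc{D}^\heart$, every object of $\mc{D}'$ is built from objects of $\mc{E}$ by finitely many shifts, cofibers, and retracts. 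An object of $\mc{E}$ is in the heart, so its cohomology objects (itself, in degree $0$) are trivially in $\mc{D}'$. Shifts are harmless. For a retract, one uses the argument already appearing in the proof of \Cref{premieres prop du coeur}(1): a summand of an object with cohomology in $\mc{D}'$ has cohomology computed as a kernel of a projector in $\mc{D}^\heart$ between objects of $\mc{D}'\cap\mc{D}^\heart$, which lies in $\mc{D}'$ by (2). The key inductive step is the cofiber: given $g\colon A\to B$ with $A,B\in\mc{D}'$ having all cohomology objects in $\mc{D}'$, the long exact sequence of cohomology for the triangle $A\to B\to C$ exhibits each $\Hl^k(C)$ as an extension $0\to \coker(\Hl^{k-1}(g))\to \Hl^k(C)\to \ker(\Hl^k(g))\to 0$ in $\mc{D}^\heart$; by (2) (and its equivalent (3)) both outer terms lie in $\mc{D}'\cap\mc{D}^\heart$, and since $\mc{D}'$ is stable it is closed under extensions in $\mc{D}$ — the extension triangle lives in $\mc{D}'$ — so $\Hl^k(C)\in\mc{D}'$. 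This closes the induction; then $\tau^{\leqslant n}M$ and $\tau^{\geqslant n}M$, being finite iterated extensions of shifted cohomology objects, lie in $\mc{D}'$, and the triangle $\tau^{\leqslant 0}M\to M\to \tau^{\geqslant 1}M$ exhibits the induced t-structure axiom; the orthogonality and shift-stability axioms are inherited from $\mc{D}$.

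**The main obstacle** I anticipate is purely bookkeeping: making the "thick-generation length" induction precise, i.e.\ checking that the class of objects of $\mc{D}'$ all of whose $\mc{D}$-cohomology objects lie in $\mc{D}'$ is itself a thick subcategory containing $\mc{E}$ — the retract and cofiber cases are where (2)/(3) genuinely enter, and one must be a little careful that "closed under retracts" for this auxiliary class again reduces, via \Cref{premieres prop du coeur}(1)'s projector trick, to closure of $\mc{D}'\cap\mc{D}^\heart$ under kernels. Everything else is formal manipulation of the long exact cohomology sequence and the definition of a t-structure.
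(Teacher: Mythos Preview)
Your proposal is correct and follows essentially the same route as the paper. Both arguments reduce to showing that the class $\mc{C}\subseteq\mc{D}'$ of objects all of whose $\mc{D}$-cohomology objects lie in $\mc{D}'$ is a thick subcategory containing the generators, hence equals $\mc{D}'$; the extension step in both cases uses the long exact cohomology sequence together with (2) and (3).

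Two minor remarks. First, the paper's argument for (2)$\Leftrightarrow$(3) is more direct than your duality sketch: the cone $C$ of $f$ lies in $\mc{D}'$ and has cohomology $\ker(f)$ and $\coker(f)$ in adjacent degrees, so the truncation triangle $\ker(f)[1]\to C\to\coker(f)$ shows one is in $\mc{D}'$ iff the other is. Second, your treatment of retracts is more elaborate than needed: since $\Hl^n$ preserves direct sums, a summand $M'$ of $M\in\mc{C}$ has $\Hl^n(M')$ a summand of $\Hl^n(M)\in\mc{D}'$, which lies in $\mc{D}'$ by thickness alone --- no appeal to (2) is necessary. (Your indexing in the cofiber step is also off by one: the extension is $0\to\coker(\Hl^k(g))\to\Hl^k(C)\to\ker(\Hl^{k+1}(g))\to 0$.) These are cosmetic; the substance is the same.
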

\begin{proof}
	Condition (1) implies conditions (2) and (3). If $f\colon M\rar N$ is a map in $\mc{D}' \cap \mc{D}^\heart$ and $C$ is the cone of $f$, we have an exact triangle:
	$$\ker(f)\rar C \rar \coker(f)[-1].$$
	Thus, conditions (2) and (3) are equivalent. 
	
	Assume now that conditions (2) and (3) hold. First notice that $\mc{D'} \cap \mc{D}^\heart$ is closed under extensions since $\mc{D}'$ is. Therefore, the subcategory $\mc{D'} \cap \mc{D}^\heart$ of $\mc{D}^\heart$ is weak Serre using \cite[0754]{stacks}.
	
	We consider the full subcategory $\mc{C}$ of $\mc{D}'$ made of those objects $M$ such that for all integer $n$, the object $\Hl^n(M)$ lies in $\mc{D}'$. The subcategory $\mc{C}$ is closed under suspensions and direct factors. We now show that it is closed under extensions. 
 
 Let $$M'\rar M \rar M''$$ be an exact triangle in $\mc{D'}$ such that $M'$ and $M''$ are objects of $\mc{C}$. Then, if $n$ is an integer, we get an exact sequence in $\mc{D}^\heart$:
	$$\Hl^{n-1}(M'') \rar \Hl^n(M') \rar \Hl^n(M)\rar \Hl^n(M'') \rar \Hl^{n+1}(M').$$
	
	Since  $\mc{D'} \cap \mc{D}^\heart$ is a weak Serre subcategory of $\mc{D}^\heart $ and since $M'$ and $M''$ are objects of $\mc{C}$, the object $\Hl^n(M)$ lies in $\mc{D}'$.
	
	Therefore, the subcategory $\mc{C}$ is thick and contains a set of generators of $\mc{D'}$, whence, we have $\mc{C}=\mc{D}'$. This shows that the subcategory $\mc{D}'$ of $\mc{D}^b$ is stable under the truncation functor $\tau_{\geqslant 0}$, thus by \cite[1.3.19]{bbd}, the t-structure of $\mc{D}$ induces a t-structure on $\mc{D}'$.
\end{proof}

In fact, \Cref{keur} implies that weak Serre subcategories of the heart of a t-category on the one hand and strictly full bounded sub-t-categories of the same t-category on the other hand are in one-to-one correspondence. More precisely, we define the following particular sub-t-categories:

\begin{definition}\label{D^b_A} Let $\mc{D}$ be a t-category. Let $\mathrm{A}$ be a weak Serre subcategory of $\mc{D}^{\heart}$. We will denote by $\mc{D}^b_\mathrm{A}$ the full subcategory of $\mc{D}$ spanned by those bounded objects $C$ of $\mc{D}$ such that for all integer $n$, the object $\Hl^n(C)$ is in $\mathrm{A}$.

\end{definition}

\begin{remark}
Be careful that in general, the category $\mc{D}^b_\mathrm{A}$ need not coincide with the category $\mc{D}^b(\mathrm{A})$. However, by \cite[7.60]{cisinski-bunke}, there is a canonical t-exact functor: 
$$\mc{D}^b(\mathrm{A})\rar \mc{D}^b_\mathrm{A}.$$
\end{remark}

We then have the following proposition:
\begin{proposition}\label{D^b_A description} Let $\mc{D}$ be a t-category. Then, 
	\begin{enumerate}
		\item Let $\mathrm{A}$ be a weak Serre subcategory of $\mc{D}^\heart$. Then, the category $\mc{D}^b_{\mathrm{A}}$ is a strictly full sub-t-category of $\mc{D}^b$. Moreover, we have $(\mc{D}^b_\mathrm{A})^\heart=\mathrm{A}$ as subcategories of $\mc{D}^\heart$.
		\item Let $\mc{D}_0$ be a strictly full sub-t-category of $\mc{D}^b$. Then, the category $\mc{D}_0^\heart$ is a weak Serre subcategory of $\mc{D}^\heart$ and $\mc{D}^b_{\mc{D}_0^\heart}=\mc{D}_0$ as subcategories of $\mc{D}$.
	\end{enumerate}
\end{proposition}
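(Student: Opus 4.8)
The plan is to prove the two statements essentially by combining \Cref{keur}, \Cref{premieres prop du coeur}, and the characterization of induced t-structures from \cite[1.3.19]{bbd}. The key observation throughout is that $\mc{D}^b_\mathrm{A}$ is, by construction, closed under the truncation functors $\tau_{\geqslant 0}$ and $\tau_{\leqslant 0}$ of $\mc{D}$, since these functors only affect cohomology objects and $\mathrm{A}$ is a full subcategory of $\mc{D}^\heart$.

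For part (1), I would first check that $\mc{D}^b_\mathrm{A}$ is a strictly full stable subcategory of $\mc{D}^b$: strict fullness is clear since membership only depends on the isomorphism classes of the cohomology objects; stability under shifts is immediate; and stability under cofibers follows from the long exact cohomology sequence together with the fact that $\mathrm{A}$, being a weak Serre subcategory of $\mc{D}^\heart$, is closed under the relevant kernels, cokernels and extensions (so that if $M'\rar M\rar M''$ is exact with $M',M''$ in $\mc{D}^b_\mathrm{A}$, each $\Hl^n(M)$ sits in a five-term exact sequence with outer terms in $\mathrm{A}$, hence lies in $\mathrm{A}$). Since $\mc{D}^b_\mathrm{A}$ is stable under $\tau_{\geqslant 0}$, \cite[1.3.19]{bbd} shows the t-structure of $\mc{D}$ (equivalently of $\mc{D}^b$) induces a t-structure on $\mc{D}^b_\mathrm{A}$, so it is a sub-t-category. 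Finally, an object $C$ of $\mc{D}^b_\mathrm{A}$ lies in the heart of the induced t-structure iff $C\in\mc{D}^{\leqslant 0}\cap\mc{D}^{\geqslant 0}$, i.e. iff $C$ is concentrated in degree $0$, i.e. iff $C\simeq \Hl^0(C)$ with $\Hl^0(C)\in\mathrm{A}$; this gives $(\mc{D}^b_\mathrm{A})^\heart=\mathrm{A}$ as subcategories of $\mc{D}^\heart$.

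For part (2), let $\mc{D}_0$ be a strictly full sub-t-category of $\mc{D}^b$. That $\mc{D}_0^\heart$ is a weak Serre subcategory of $\mc{D}^\heart$ is exactly \Cref{premieres prop du coeur}(2). It remains to show $\mc{D}^b_{\mc{D}_0^\heart}=\mc{D}_0$. The inclusion $\mc{D}_0\subseteq\mc{D}^b_{\mc{D}_0^\heart}$ holds because $\mc{D}_0$, being a sub-t-category, is stable under the truncation functors of $\mc{D}$ and hence under taking cohomology objects, so every $\Hl^n(M)$ for $M\in\mc{D}_0$ lies in $\mc{D}_0\cap\mc{D}^\heart=\mc{D}_0^\heart$. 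For the reverse inclusion, take $C\in\mc{D}^b_{\mc{D}_0^\heart}$; then each $\Hl^n(C)$ lies in $\mc{D}_0^\heart\subseteq\mc{D}_0$, and since $C$ is bounded, a finite dévissage along the exact triangles $\tau_{\leqslant n-1}C\rar\tau_{\leqslant n}C\rar\Hl^n(C)[-n]$ — using that $\mc{D}_0$ is a stable subcategory and hence closed under cofibers and shifts — shows $C\in\mc{D}_0$.

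I do not expect a serious obstacle here: the statement is really a bookkeeping consequence of the earlier lemmas. The one point that requires a little care is making sure the dévissage in part (2) stays inside $\mc{D}_0$, which it does precisely because $\mc{D}_0$ is required to be a \emph{stable} subcategory (closed under finite limits and colimits, in particular under cofibers), not merely a weak Serre subcategory of some heart; and dually, in part (1), that closure of $\mc{D}^b_\mathrm{A}$ under cofibers genuinely uses the full weak-Serre hypothesis on $\mathrm{A}$ via the five-term exact sequence, not just closure under extensions.
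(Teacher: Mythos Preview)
Your proof is correct and follows essentially the same logic as the paper's, with one small difference worth noting. For part (1), the paper invokes \Cref{keur} (together with the fact that $\mathrm{A}$ generates $\mc{D}^b_\mathrm{A}$ as a thick subcategory), whereas you bypass \Cref{keur} entirely: you observe that $\mc{D}^b_\mathrm{A}$ is by construction closed under the truncation $\tau_{\geqslant 0}$, which is exactly the hypothesis of \cite[1.3.19]{bbd}, so you apply that directly. Your route is slightly more elementary and transparent, since closure under truncation is immediate from the definition of $\mc{D}^b_\mathrm{A}$ and does not require first checking thickness, generators, or the kernel condition of \Cref{keur}; the paper's route, on the other hand, illustrates how \Cref{keur} packages the general argument once and for all. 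For part (2) the two arguments are identical: both cite \Cref{premieres prop du coeur} for the weak-Serre statement and then do the same d\'evissage for the equality $\mc{D}^b_{\mc{D}_0^\heart}=\mc{D}_0$.
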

\begin{proof}
	The first assertion follows from \Cref{keur} and from the fact that $\mathrm{A}$ generates $\mc{D}^b_{\mathrm{A}}$ as a thick subcategory of $\mc{D}^b$.
	
The first part of the second assertion follows from \Cref{premieres prop du coeur}. 

Finally, the cohomology objects of any object of $\mc{D}_0$ lie in $\mc{D}_0^{\heart}$. Thus, any object of $\mc{D}_0$ lies in $\mc{D}^b_{\mc{D}_0^\heart}$. Moreover, if $M$ is an object of $\mc{D}^b_{\mc{D}_0^\heart}$, to show that it lies in $\mc{D}_0$, it suffices by dévissage to show that for all $n$, the object $\Hl^n(M)$ is an object of $\mc{D}_0$. This is true by definition.
\end{proof}

\subsection{Finite permutation resolutions}
In this paragraph, we recall a result of Balmer and Gallauer (see \cite{balmer-gallauer}). If $G$ is a finite group, $R$ is a ring and $H$ is a subgroup of $G$, the $R$-module $R[G/H]$ is endowed with a left $R[G]$-module structure letting $G$ act on $G/H$ by left multiplication.

\begin{proposition}\label{resolution de permutation} Let $G$ be a finite group. Let $R$ be a regular ring and let $M$ be a representation of $G$ with coefficients in $R$. Then, the complex $M \oplus M[1]$ of $R[G]$-modules is quasi-isomorphic as a complex of $R[G]$-modules to a complex $$\cdots \rar 0 \rar P_n \rar P_{n-1} \rar \cdots \rar P_0 \rar 0 \rar \cdots$$ where each $P_i$ is placed in degree $-i$ and is isomorphic to $R[G/H_1]\oplus \cdots \oplus R[G/H_m]$ for some subgroups $H_1,\ldots,H_m$ of $G$.
\end{proposition}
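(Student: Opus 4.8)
The statement is recalled from the work of Balmer and Gallauer on finite permutation resolutions, so the plan is to deduce it from their main construction, which carries essentially all of the content. I would set things up as follows. Since $M$ is finitely generated over the Noetherian ring $R[G]$, there is a surjection $R[G]^{\oplus n}\rar M$; its source is a free module, hence a permutation module ($R[G]=R[G/1]$), and iterating this on the successive kernels produces a resolution $\cdots\rar R[G]^{\oplus n_1}\rar R[G]^{\oplus n_0}\rar M\rar 0$ by permutation modules. This resolution is in general infinite, and necessarily so: $R[G]$ need not be of finite global dimension (for instance $R=\Z_\ell$, $G=\Z/\ell$), so a \emph{bounded} permutation resolution cannot be extracted by truncating a projective one, and must involve the modules $R[G/H]$ for varying subgroups $H$ in an essential way.

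The core input is then Balmer and Gallauer's theorem that for $R$ regular — commutative, Noetherian, of finite Krull dimension, equivalently of finite global dimension — and $M$ a finitely generated $R[G]$-module, there is a genuinely bounded complex $P^\bullet$ of permutation modules together with a quasi-isomorphism $P^\bullet\overset{\sim}{\rar}M\oplus M[1]$. The shape of their argument is an induction on the order of $G$, using restriction and induction along proper subgroups together with the fixed-point (Brauer) functors $N\mapsto N^H$ to dévisser along the lattice of subgroups; the hypothesis that $R$ has finite global dimension is used to guarantee that the $R$-module syzygies occurring in the construction become projective after finitely many steps, hence direct summands of permutation modules after inducing up, which is exactly what forces the output complex to be bounded. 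One produces in this way a one-sided approximation of $M$ from below and one from above and splices them, and the extra summand $M[1]$ is precisely the correction term that allows the splice to close up into a single bounded complex; this parity is already visible in the elementary remark that an $R[G]$-module which is projective over $R$ but not stably free over $R[G]$ is resolved not by free modules but only by direct summands of them.

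The step I expect to be the genuine obstacle is precisely this finite permutation resolution: the assertion that $M\oplus M[1]$ lies in the essential image of $\mathrm{K}^b(\mathrm{perm}(G;R))\rar \mathrm{D}^b(R[G])$, rather than merely in its thick closure (the latter being formal from the surjection $R[G]^{\oplus n}\rar M$). The remaining ingredients — the existence of the free covers and the identification of $R[G]^{\oplus n}$ with a permutation module — are routine. Finally, I would note that for the coefficient rings relevant to this paper ($\Z_\ell$, $\Q_\ell$, and finite fields, all regular of Krull dimension at most $1$) the construction simplifies substantially, since then every finitely generated $R$-module already admits a free resolution of length at most $1$.
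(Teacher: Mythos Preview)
Your approach is essentially the paper's: both reduce the statement entirely to Balmer--Gallauer. Two small differences are worth noting.

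First, the paper does not cite a single Balmer--Gallauer theorem producing $P^\bullet\overset{\sim}{\rar}M\oplus M[1]$ directly; it assembles this from two of their results. Their main theorem \cite[4.6]{balmer-gallauer} says that the \emph{idempotent completion} of the category $\mc{P}(R,G)$ of bounded permutation complexes is all of $\mc{D}^b(R[G])$, so $M$ lies in that idempotent completion. Then \cite[2.13]{balmer-gallauer} --- an Eilenberg-swindle-type lemma --- identifies the idempotent completion of $\mc{P}(R,G)$ with the subcategory of those $N$ for which $N\oplus N[1]$ already lies in $\mc{P}(R,G)$. This is the actual source of the extra summand $M[1]$: it is the cost of passing from ``retract of a permutation complex'' to ``genuine permutation complex'', not a correction term arising inside an inductive construction as your sketch suggests.

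Second, and more concretely, you have omitted the only remaining step: the statement requires the $P_i$ to sit in degrees $-n,\ldots,0$, but the quasi-isomorphism $P\rar M\oplus M[1]$ produced above gives a priori only a bounded $P$ with terms in arbitrary degrees. The paper closes this by invoking \cite[2.4]{balmer-gallauer} to trim the positive-degree terms, using that $M\oplus M[1]$ is concentrated in nonpositive degrees. You should add this final truncation.
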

\begin{proof} We say that a bounded complex $P$ of $R[G]$-modules is a \emph{permutation complex} if the $P_i$ are isomorphic to $R[G/H_1]\oplus \cdots \oplus R[G/H_m]$ for some subgroups $H_1,\ldots,H_m$ of $G$.

Let $\mc{P}(R,G)$ be the subcategory of $\mc{D}^b(R[G])$ made of those complexes $N$ which admit a quasi-isomorphism $P\rar N$, where $P$ is permutation complex. Then, \cite[4.6]{balmer-gallauer} states that the idempotent completion of $\mc{P}(R,G)$ is the category $\mc{D}^b(R[G])$ itself. 

Moreover, using \cite[2.13]{balmer-gallauer}, the idempotent completion of $\mc{P}(R,G)$ is the subcategory made of those complexes $N$ such that $N\oplus N[1]$ lies in $\mc{P}(R,G)$.  Thus, the complex $M \oplus M[1]$ admits a quasi-isomorphism $P\rar M \oplus M[1]$, where $P$ is permutation complex.

Finally, noting that $M \oplus M[1]$ lies in nonpositive degree, we can modify $P$ so that the $P_i$ vanish for $i>0$ by \cite[2.4]{balmer-gallauer} which yields the result.
\end{proof}

\subsection{Artin representations and representations of Artin origin}

Let $\ell$ be a prime number. The topology on $\Z_\ell$ induces a topology on every $\Z_\ell$-module of finite type. 

In this paragraph and in the following paragraphs, we only consider the case of $\Z_\ell$-coefficients for simplicity; however, all the definitions can be formulated with coefficients in $\Q_\ell$ (or even a finite extension of $\Q_\ell$, its ring of integers,  $\overline{\Q}_\ell$ or the subring of elements of $\overline{\Q}_\ell$ which are integral over $\Z_\ell$) and all the propositions still hold.

Recall the following definitions:
\begin{definition} Let $\pi$ be a topological group and let $\ell$ be a prime number. 
	\begin{enumerate}
		\item A \emph{continuous representation} of $\pi$ with coefficients in $\Z_\ell$ is a $\Z_\ell$-module of finite type endowed with a continuous action of $\pi$. We will denote by $\mathrm{Rep}(\pi,\Z_\ell)$ the abelian category of continuous representations of $\pi$ with coefficients in $\Z_\ell$.
		\item An \emph{Artin representation} of $\pi$  is a continuous representation of $\pi$ which factors through a finite quotient of $\pi$. We will denote by $\mathrm{Rep}^A(\pi,\Z_\ell)$ the abelian category of Artin representation of $\pi$ with coefficients in $\Z_\ell$.
	\end{enumerate}
\end{definition}

The category $\Rep^A(\pi,\Z_\ell)$ is a strictly full abelian subcategory of $\Rep(\pi,\Z_\ell)$; in general, it is not a weak Serre subcategory. Indeed, consider an additive character $\chi\colon \pi\rar \Z_\ell$ that does not factor through a finite quotient (for example if $\pi=\hat{\Z}$, write $\hat{\Z}=\prod \limits_{p \text{ prime}} \Z_p$ and take the projection to $\Z_\ell$). Then, we have a morphism $\pi \rar GL_2(\Z_\ell)$ given by $$\begin{pmatrix} 1 & \chi \\ 0 & 1 \end{pmatrix}$$
which defines a continuous representation $M$ of $\pi$ on $\Z_\ell^2$. The exact sequence:
$$0\rar \Z_\ell \rar M \rar \Z_\ell \rar 0$$ is an exact sequence of representations if $\pi$ acts trivially on both copies of $\Z_\ell$. Hence, this gives an extension of Artin representations which is not Artin.

This leads us to the following definition:

\begin{definition} Let $\pi$ be a topological group and let $\ell$ be a prime number. A continuous representation is \emph{of Artin origin} if it is obtained as a successive extension of Artin representations in $\Rep(\pi,\Z_\ell)$. We denote by $\Rep^A(\pi,\Z_\ell)^*$ the category of representations of Artin origin.
\end{definition}

\Cref{A*} implies the following proposition.

\begin{proposition}\label{Artin origin} Let $\pi$ be a topological group and let $\ell$ be a prime number. Then,
	\begin{enumerate}
		\item The smallest weak Serre subcategory of  $\Rep(\pi,\Z_\ell)$ containing $\Rep^A(\pi,\Z_\ell)$ is the category $\Rep^A(\pi,\Z_\ell)^*$.
		\item The category $\Rep^A(\pi,\Z_\ell)^*$ is a Serre subcategory of $\Rep(\pi,\Z_\ell)$.
		\end{enumerate}
\end{proposition}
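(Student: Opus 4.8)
The plan is to deduce both statements directly from \Cref{A*}, applied with $\mathrm{A}=\Rep(\pi,\Z_\ell)$ and $\mathrm{B}=\Rep^A(\pi,\Z_\ell)$, so that the category $\mathrm{B}^*$ produced there is by construction exactly $\Rep^A(\pi,\Z_\ell)^*$. The only work is to check the hypotheses of \Cref{A*}: that $\Rep^A(\pi,\Z_\ell)$ is a strictly full abelian subcategory of $\Rep(\pi,\Z_\ell)$ which is closed under subobjects and quotients.

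Strict fullness is immediate, since factoring through a finite quotient of $\pi$ is invariant under isomorphism of continuous representations. For closure under subobjects and quotients: if $M$ is Artin, the $\pi$-action factors through a finite quotient $\pi\twoheadrightarrow Q$; given a subrepresentation $N\subseteq M$ or a quotient representation $M\twoheadrightarrow P$ (formed in $\Rep(\pi,\Z_\ell)$), the kernel of $\pi\to Q$ acts trivially on $M$, hence on $N$ and on $P$, so those actions also factor through $Q$. Since $\Z_\ell$ is noetherian, $N$ and $P$ are again of finite type over $\Z_\ell$, and the subspace (resp. quotient) topology agrees with the canonical topology on a finite-type $\Z_\ell$-module, so $N$ and $P$ are genuine objects of $\Rep^A(\pi,\Z_\ell)$.

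It remains to see that $\Rep^A(\pi,\Z_\ell)$ is an abelian subcategory with exact inclusion into $\Rep(\pi,\Z_\ell)$: it contains the zero object and is stable under finite direct sums, and for a morphism $f\colon M\to N$ of Artin representations whose actions factor through finite quotients $\pi\to Q_1$ and $\pi\to Q_2$, both $\ker(f)$ and $\coker(f)$ computed in $\Rep(\pi,\Z_\ell)$ carry actions factoring through the image of $\pi$ in the finite group $Q_1\times Q_2$, so they are Artin; thus kernels and cokernels of maps between Artin representations stay in $\Rep^A(\pi,\Z_\ell)$. With all hypotheses of \Cref{A*} verified, part (1) of that proposition yields that $\Rep^A(\pi,\Z_\ell)^*$ is the smallest weak Serre subcategory of $\Rep(\pi,\Z_\ell)$ containing $\Rep^A(\pi,\Z_\ell)$, and part (2) yields that it is a Serre subcategory. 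There is no substantial obstacle; the only mildly delicate point is the bookkeeping with topologies, namely checking that a subquotient of a finite-type $\Z_\ell$-module with continuous $\pi$-action is again a continuous representation, which is taken care of by the noetherianity of $\Z_\ell$ together with the observation that the action on the subquotient still factors through the same finite quotient.
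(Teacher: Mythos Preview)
Your proposal is correct and follows exactly the paper's approach: the paper's proof is simply the one line ``\Cref{A*} implies the following proposition,'' and you have just filled in the verification of the hypotheses of \Cref{A*} (strict fullness, closure under subobjects and quotients, abelian), which the paper asserts in the paragraph preceding the definition of $\Rep^A(\pi,\Z_\ell)^*$ without further justification. There is nothing to add.
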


The following proposition shows that the Artin origin condition can be checked with coefficients in $\Q_\ell$.

\begin{proposition}\label{passage coeffs Ql rep} Let $\pi$ be a topological group. Let $M$ be a continuous representation with coefficients in $\Z_\ell$. Assume that $M\otimes_{\Z_\ell} \Q_\ell$ is a representation of Artin origin with coefficients in $\Q_\ell$. Then, $M$ itself is of Artin origin.
\end{proposition}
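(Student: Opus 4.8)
The plan is to pull back along the canonical map $\phi\colon M\to V:=M\otimes_{\Z_\ell}\Q_\ell$ a filtration witnessing that $V$ is of Artin origin, and to check that the induced filtration of $M$ has Artin graded pieces. This reduces the whole statement to two elementary observations about continuity of actions.

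First I would record those two observations. The kernel of $\phi$ is the torsion submodule $M_{\mathrm{tors}}$, which is a \emph{finite} $\Z_\ell$-module; since the action of $\pi$ on the finite discrete set $M_{\mathrm{tors}}$ is continuous, every stabilizer is an open subgroup, hence their finite intersection is an open normal subgroup acting trivially, so $M_{\mathrm{tors}}$ is an Artin representation. Second, if $W$ is an Artin $\Q_\ell$-representation of $\pi$, say factoring through a continuous finite quotient $\pi\twoheadrightarrow G$, then any $\pi$-stable $\Z_\ell$-submodule $L\subseteq W$ that is finitely generated over $\Z_\ell$ is again an Artin representation: the kernel of $\pi\twoheadrightarrow G$ acts trivially on $W$, hence on $L$, so the action on $L$ factors through $G$. (Note that no integrality phenomenon intervenes here, in contrast with the notion of being of Artin origin.)

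Now, by hypothesis there is a finite filtration $0=V_0\subseteq V_1\subseteq\cdots\subseteq V_n=V$ by sub-$\Q_\ell$-representations with each $V_i/V_{i-1}$ Artin. I would set $M_i:=\phi^{-1}(V_i)$. These are sub-$\Z_\ell$-representations of $M$ with $M_0=M_{\mathrm{tors}}$ and $M_n=M$, each finitely generated over $\Z_\ell$ since $M$ is and $\Z_\ell$ is noetherian; moreover $\phi$ induces an injection of continuous $\pi$-representations $M_i/M_{i-1}\hookrightarrow V_i/V_{i-1}$, because $M_i\cap\phi^{-1}(V_{i-1})=M_{i-1}$. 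By the second observation above, $M_i/M_{i-1}$ is Artin for every $i\geqslant 1$, and $M_0$ is Artin by the first one. Hence $0\subseteq M_0\subseteq M_1\subseteq\cdots\subseteq M_n=M$ exhibits $M$ as a successive extension of Artin representations in $\Rep(\pi,\Z_\ell)$ — equivalently, one may invoke that $\Rep^A(\pi,\Z_\ell)^*$ is closed under extensions by \Cref{Artin origin} — so $M$ is of Artin origin.

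There is no serious obstacle in this argument: the only points needing (minor) care are the treatment of the torsion submodule and the verification that a finitely generated $\Z_\ell$-lattice inside an Artin $\Q_\ell$-representation is itself Artin, both of which are immediate consequences of the continuity of the $\pi$-action. The key structural input is simply that ``of Artin origin'' is closed under extensions, so it suffices to produce one filtration of $M$ with Artin subquotients.
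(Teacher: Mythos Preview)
Your proof is correct and the core ideas coincide with the paper's: both handle the torsion part separately (finite, hence Artin), and both rely on the elementary fact that a finitely generated $\Z_\ell$-submodule of an Artin $\Q_\ell$-representation is itself Artin, because the same open normal subgroup of $\pi$ acts trivially on it.

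The organization, however, is genuinely different. The paper proceeds by induction on the extension depth $n$: at the inductive step it takes a short exact sequence $0\to N'\to M\otimes\Q_\ell\to N''\to 0$ with $N',N''\in\mathrm A_{n-1}$, defines $M'=i^{-1}(M)$ and $M''=p(M)$, and then must contend with the fact that $0\to M'\to M\to M''\to 0$ is \emph{not} exact in the middle, forcing the introduction of the auxiliary kernel $K=\ker(M\to M'')$ and a finite correction term $F=K/M'$. Your approach sidesteps all of this bookkeeping by pulling back the \emph{entire} filtration $(V_i)$ at once: because $M_i=\phi^{-1}(V_i)$ is defined as a preimage, the induced map $M_i/M_{i-1}\hookrightarrow V_i/V_{i-1}$ is automatically injective, and the filtration $0\subseteq M_0\subseteq\cdots\subseteq M_n=M$ has Artin graded pieces on the nose. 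This is cleaner and avoids the rank-counting argument and the extra extension by $F$ that appear in the paper. The price is nil: you use nothing beyond what the paper already invokes.
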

\begin{proof}
Let $\mathrm{A}_0=\Rep^A(\pi,\Q_\ell)$ and if $n$ is a non-negative integer, let $\mathrm{A}_{n+1}$ be the full subcategory of $\Rep(\pi,\Q_\ell)$ whose objects are the extensions of the objects of $\mathrm{A}_n$ in $\Rep(\pi,\Q_\ell)$. 
There is a non-negative integer $n$ such that $M\otimes_{\Z_\ell} \Q_\ell$ is an object of $\mathrm{A}_n$. 

Let $n$ be a non-negative integer. We prove by induction on $n$ that if $M$ is a continuous representation with coefficients in $\Z_\ell$ such that $M\otimes_{\Z_\ell} \Q_\ell$ lies in $\mathrm{A}_n$, then $M$ is of Artin origin. 

Notice first that we can assume $M$ to be torsion free. 
Indeed, let $M_{\mathrm{tors}}$ be the sub-$\Z_\ell$-module of torsion elements of $M$. 
Then, the representation on $M$ induces a sub-representation on $M_{\mathrm{tors}}$. 
Moreover, we have an exact sequence of representations:
$$0\rar M_{\mathrm{tors}} \rar M \rar M/M_{\mathrm{tors}}\rar 0.$$
	
Now, the abelian group $M_{\mathrm{tors}}$ is finite. 
Therefore, the representation on $M_{\mathrm{tors}}$ is an Artin representation. 
Moreover, we have $$M/M_{\mathrm{tors}}\otimes_{\Z_\ell} \Q_\ell =M \otimes_{\Z_\ell} \Q_\ell.$$ 
Thus, we can assume that $M$ is torsion free.

Now, if, $n=0$, the representation $M\otimes_{\Z_\ell} \Q_\ell$ is an Artin representation. Since $M$ is torsion free, the group homomorphism 
$$\mathrm{Aut}_{\Z_\ell}(M) \rar \mathrm{GL}_{\Q_\ell}(M\otimes_{\Z_\ell} \Q_\ell)$$
is injective. 
Therefore, the morphism $\pi\rar \mathrm{Aut}_{\Z_\ell}(M)$ which gives rise to the $\Z_\ell[\pi]$-module structure of $M$ factors through a finite quotient and thus $M$ is an Artin representation. 

Assume now that $n>0$. We get an exact sequence:
$$0\rar N' \overset{i}{\rar} M\otimes_{\Z_\ell} \Q_\ell \overset{p}{\rar} N'' \rar 0$$
such that $N'$ and $N''$ are objects of $\mathrm{A}_{n-1}$. 

Let $M'$ (resp. $M''$) be the inverse image (resp. the image) of $M$ along the map $i$ (resp. through the map $p$).

There is a $\Q_\ell$-basis of $N'$ whose image lies in $M$. 
Therefore, $$N'=M' \otimes_{\Z_\ell} \Q_\ell.$$ 

Furthermore, there is a $\Q_\ell$-basis of $N''$ which lies in the image of $M$. 
Thus, $$N''=M''\otimes_{\Z_\ell} \Q_\ell.$$ 

Hence, by induction, $M'$ and $M''$ are of Artin origin.

The map $i$ and $p$ induce maps $i_0\colon M'\rar M$ and $p_0\colon  M\rar M''$ such that $p_0 \circ i_0=0$. Letting $K$ be the kernel of $p_0$, the map $i_0$ factors through $K$. 

Moreover, the rank of $K$ as a $\Z_\ell$-module is $\dim_{\Q_\ell}(M)-\dim_{\Q_{\ell}}(M'')$ which is also the rank of $M'$. This yields two exact sequences:
$$0\rar K \rar M \rar M'' \rar 0$$
$$0 \rar M' \rar K \rar F \rar 0$$
where $F$ is of rank $0$.

Since $F$ is finite as an abelian group, it is an Artin representation. Thus, $K$ is of Artin origin and finally $M$ is of Artin origin.
\end{proof}

Artin representations are contained in a more explicit Serre subcategory of $\Rep(\pi,\Z_\ell)$ which we will call \emph{strongly of weight $0$}. To prove that a representation is not of Artin origin, it is often convenient to prove that it is not strongly of weight $0$.

\begin{definition} Let $\pi$ be a topological group and $\ell$ be a prime number. Let $M$ be a continuous representation of $\pi$ with coefficients in $\Z_\ell$. We say that $M$ is \emph{strongly of weight $0$} if for all $g\in \pi$, the eigenvalues of the matrix defined by the action of $g$ on $M\otimes_{\Z_\ell} \overline{\Q_\ell}$ are roots of unity. We will denote by $\Rep^{w=0}(\pi,\Z_\ell)$ the abelian category of representations that are strongly of weight $0$.
\end{definition}

\begin{remark}\label{weight 0} In the theory of weights developed by Deligne in \cite{WeilII}, letting $k$ be a finite field and letting $\pi=G_k=\hat{\Z}$ be its absolute Galois group, an $\ell$-adic representation of $G_k$ is of weight $0$ if the action of the topological generator $F$ of $G_k$ are algebraic numbers such that all their conjugates are of absolute value $1$ as complex numbers (\textit{i.e.} are roots of unity by a theorem of Kronecker). Hence, representations that are strongly of weight $0$ of $G_k$ are contained in representations of weight $0$ of $G_k$. 
%
\end{remark}

\begin{proposition}\label{strongly of weight 0} Let $\pi$ be a topological group and $\ell$ be a prime number. The abelian category $\Rep^{w=0}(\pi,\Z_\ell)$ is a Serre subcategory of $\Rep(\pi,\Z_\ell)$ and contains the category representations of Artin origin. 
\end{proposition}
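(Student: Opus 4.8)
The plan is to verify the two assertions separately: first that $\Rep^{w=0}(\pi,\Z_\ell)$ is a Serre subcategory of $\Rep(\pi,\Z_\ell)$, and then that it contains $\Rep^A(\pi,\Z_\ell)^*$. For the Serre subcategory claim, I would invoke the criterion of \cite[02MP]{stacks}: it suffices to check that $\Rep^{w=0}(\pi,\Z_\ell)$ is a strictly full subcategory containing $0$ that is closed under subobjects, quotients, and extensions. Strict fullness and stability under isomorphism are clear since the condition on eigenvalues only depends on the isomorphism class of the representation. Given an exact sequence $0 \rar M' \rar M \rar M'' \rar 0$ of continuous representations, fix $g \in \pi$; after tensoring with $\overline{\Q_\ell}$ the sequence stays exact, and the action of $g$ on $M \otimes \overline{\Q_\ell}$ is block-triangular with diagonal blocks the actions on $M' \otimes \overline{\Q_\ell}$ and $M'' \otimes \overline{\Q_\ell}$, so the multiset of eigenvalues of $g$ on $M$ is the union of those on $M'$ and $M''$. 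This immediately gives closure under subobjects, quotients, and extensions all at once: the eigenvalues of $g$ on $M$ are roots of unity if and only if this holds for both $M'$ and $M''$ (for sub/quotient one uses that a sub-multiset of roots of unity consists of roots of unity).

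For the inclusion $\Rep^A(\pi,\Z_\ell)^* \subseteq \Rep^{w=0}(\pi,\Z_\ell)$, I would use that by \Cref{Artin origin}(1) the category $\Rep^A(\pi,\Z_\ell)^*$ is the smallest weak Serre subcategory of $\Rep(\pi,\Z_\ell)$ containing $\Rep^A(\pi,\Z_\ell)$; since $\Rep^{w=0}(\pi,\Z_\ell)$ is a Serre — hence a fortiori weak Serre — subcategory by the first part, it is enough to check that every Artin representation is strongly of weight $0$. But if $M$ is an Artin representation, the action of $\pi$ factors through a finite quotient, so for every $g \in \pi$ the automorphism induced by $g$ on $M$ has finite order; hence the same is true of its action on $M \otimes \overline{\Q_\ell}$, and a finite-order matrix over a field of characteristic $0$ is diagonalizable with all eigenvalues roots of unity. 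This completes the inclusion.

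I do not expect a serious obstacle here; the one point requiring a little care is the behaviour of eigenvalue multisets under the passage from $\Z_\ell$-coefficients to $\overline{\Q_\ell}$-coefficients and along a short exact sequence, where one should note that the tensor product $-\otimes_{\Z_\ell}\overline{\Q_\ell}$ is exact (being a localization followed by a flat base change) so that the block-triangular description of the $g$-action is legitimate. Everything else is formal, so the proof will be short.
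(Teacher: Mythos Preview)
Your proposal is correct and follows essentially the same route as the paper: the block-triangular eigenvalue argument for the Serre subcategory claim, and the finite-order observation for Artin representations. The only cosmetic difference is that you explicitly cite \cite[02MP]{stacks} and \Cref{Artin origin} for the reductions, whereas the paper leaves these implicit.
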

\begin{proof}First, any sub-representation and any quotient of a representation that is strongly of weight $0$ is strongly of weight $0$. Now, if $$0\rar M' \rar M \rar M'' \rar 0$$ is an exact sequence of continuous representations where $M'$ and $M''$ are strongly of weight $0$, the sequence $$0\rar M'\otimes_{\Z_\ell} \overline{\Q_\ell} \rar M \otimes_{\Z_\ell} \overline{\Q_\ell} \rar M'' \otimes_{\Z_\ell} \overline{\Q_\ell}\rar 0$$
	is still exact and if $g$ is an element of $\pi$, its action on $M$ is the action of a matrix of the form
	$A=\begin{bmatrix} A'&* \\ 0 & A''\end{bmatrix}$ where $A'$ (resp. $A''$) is the matrix of the action of $g$ on $M'$ (resp. $M''$). The set of eigenvalues of $A$ is the union of the set of eigenvalues of $A'$ and $A''$ and those are roots of unity. Hence, the category $\Rep^{w=0}(\pi,\Z_\ell)$ is a Serre subcategory of $\Rep(\pi,\Z_\ell)$.
	
	Now, it suffices to show that Artin representations are strongly of weight $0$. Let $M$ be an Artin representation of $\pi$. Then, the representation $M\otimes_{\Z_\ell} \overline{\Q}_\ell$ is an Artin representation of $\pi$ with coefficients in $\overline{\Q}_\ell$. Since it factors through a finite quotient $G$ of $\pi$, if $g\in \pi$, the element $g^{|G|}$ of $\pi$ acts trivially on $M\otimes_{\Z_\ell} \overline{\Q}_\ell$. 
\end{proof}

\subsection{Categories of \texorpdfstring{$\ell$}{\textell}-adic sheaves and complexes}
In this paragraph, we recall the definitions of \cite{bhatt-scholze,hrs} of various $\ell$-adic categories. If $S$ is a scheme, denote by $S_{\mathrm{pro\acute{e}t}}$ the pro-étale site of $S$ introduced in \cite[4]{bhatt-scholze}. 


Any totally disconnected topological space $M$ induces a sheaf on $S_{\mathrm{pro\acute{e}t}}$: $$U \mapsto \mathcal{C}^0(U,M)=\mathcal{C}^0(\pi_0(U),M).$$ we denote this sheaf by $M_S$ and call it the constant sheaf of value $M$. If $M=\Lambda$ is a totally disconnected topological ring, then the sheaf $\Lambda_S$ is a sheaf of rings on $S_{\mathrm{pro\acute{e}t}}$.

\begin{definition} Let $S$ be a scheme (noetherian by conventions) and $\ell$ be a prime number invertible on $S$. We define:
	\begin{enumerate}
		\item The abelian category of \emph{$\ell$-adic sheaves} $\Sh(S,\Z_\ell)$ as the abelian category of sheaves of $\Z_{\ell,S}$-modules on $S_{\mathrm{pro\acute{e}t}}$.
		\item The abelian category of \emph{lisse $\ell$-adic sheaves} $\mathrm{Loc}_S(\Z_\ell)$ as the category of sheaves of $\Z_{\ell,S}$-modules $L$ such that there exists a pro-étale cover $\{S_i\rar S\}$ such that for all $i$, there exists a $\Z_\ell$-module of finite type $M$ such that $L|_{S_i}$ is isomorphic to $M_{S_i}$.
		\item The abelian category of \emph{constructible $\ell$-adic sheaves} $\Sh_c(S,\Z_\ell)$ as the category of sheaves of $\Z_{\ell,S}$-modules $L$ such that there exists a stratification of $S$ such that for any stratum $T$, the sheaf $L|_T$ is lisse.
		\item The \emph{derived category of $\ell$-adic sheaves} $\mc{D}(S,\Z_\ell)$ as the derived category of $\Sh(S,\Z_\ell)$.
		\item The \emph{stable category of lisse $\ell$-adic sheaves} $\mc{D}_{\mathrm{lisse}}(S,\Z_\ell)$ as category of dualizable objects of $\mc{D}(S,\Z_\ell)$.
		\item The \emph{derived category of constructible $\ell$-adic sheaves} $\mc{D}^b_c(S,\Z_\ell)$ as the category of complexes of sheaves of $\Z_{\ell,S}$-modules $C$ such that there exists a stratification of $S$ such that for any stratum $T$, the sheaf $C|_T$ is lisse.
	\end{enumerate}
\end{definition}
\begin{remark} Beware that in \cite{bhatt-scholze}, they denote by $\mathrm{Loc}_S(\Z_\ell)$ the category of locally free sheaves of $\Z_{\ell,S}$-modules of finite rank. We want the category of lisse sheaves to be abelian, hence we chose to include all sheaves that are locally of finite type. This is also the convention of \cite{sga4.5}.	
\end{remark}

These categories are related to each other by the following proposition.
\begin{proposition}(\cite[3.28]{hrs}) Let $S$ be a scheme and $\ell$ be a prime number invertible on $S$. The canonical t-structure of $\mc{D}(S,\Z_\ell)$ induces a t-structure on $\mc{D}^b_c(S,\Z_\ell)$ (resp. $\mc{D}_{\mathrm{lisse}}(S,\Z_\ell)$) whose heart is $\Sh_c(S,\Z_\ell)$ (resp. $\mathrm{Loc_S(\Z_\ell)}$).
\end{proposition}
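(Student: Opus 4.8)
The strategy is to show that the two subcategories are stable under the truncation functors $\tau_{\leqslant n}$ and $\tau_{\geqslant n}$ of the canonical t-structure of $\mc{D}(S,\Z_\ell)$, to deduce the existence of the induced t-structures from \cite[1.3.19]{bbd}, and then to identify the hearts by unwinding the definitions. (Equivalently one could show that $\Sh_c(S,\Z_\ell)$ and $\mathrm{Loc}_S(\Z_\ell)$ are weak Serre subcategories of $\Sh(S,\Z_\ell)$ and quote \Cref{D^b_A description}, but this comes down to the same work.) Two t-exactness inputs are used throughout. First, for any morphism of schemes $f\colon T\rar S$ the pullback $f^*\colon\mc{D}(S,\Z_\ell)\rar\mc{D}(T,\Z_\ell)$ is t-exact, being the derived functor of the exact inverse image functor on $\ell$-adic sheaves. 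Second, the constant sheaf functor $c_T\colon\mc{D}(\Z_\ell)\rar\mc{D}(T,\Z_\ell)$, $M\mapsto M_T$, is symmetric monoidal and t-exact: t-exactness holds because a surjection of finitely generated $\Z_\ell$-modules admits a continuous set-theoretic section (choose and lift generators of the target, then split the resulting surjection from a free module using the structure theory over the discrete valuation ring $\Z_\ell$). Consequently both functors commute with $\tau_{\leqslant n}$ and $\tau_{\geqslant n}$.

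Consider first a lisse complex $C$ on a scheme $T$, i.e.\ a dualizable object of $\mc{D}(T,\Z_\ell)$. By \cite{bhatt-scholze,hrs}, after a suitable pro-étale cover $T'\rar T$ one has $C|_{T'}\simeq c_{T'}(M)$ for a perfect complex $M$ of $\Z_\ell$-modules. Since $c_{T'}$ is t-exact, $(\tau_{\leqslant n}C)|_{T'}\simeq c_{T'}(\tau_{\leqslant n}M)$. Now $\Z_\ell$ is a regular ring of Krull dimension $\leqslant 1$, so a complex of $\Z_\ell$-modules is perfect if and only if it is bounded with finitely generated cohomology — a condition visibly stable under truncation; hence $\tau_{\leqslant n}M$ is perfect, $c_{T'}(\tau_{\leqslant n}M)$ is dualizable, and, dualizability being pro-étale local, $\tau_{\leqslant n}C$ is lisse; the argument for $\tau_{\geqslant n}C$ is the same. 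The case of $\mc{D}^b_c(S,\Z_\ell)$ then follows stratum by stratum: if $C$ is bounded and $\mc{S}$ is a stratification with $C|_X$ lisse for every $X\in\mc{S}$, then $(\tau_{\leqslant n}C)|_X\simeq\tau_{\leqslant n}(C|_X)$ is lisse and $\tau_{\leqslant n}C$ is still bounded, so $\tau_{\leqslant n}C\in\mc{D}^b_c(S,\Z_\ell)$, and likewise for $\tau_{\geqslant n}C$.

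By \cite[1.3.19]{bbd} the canonical t-structure therefore induces t-structures on $\mc{D}^b_c(S,\Z_\ell)$ and on $\mc{D}_{\mathrm{lisse}}(S,\Z_\ell)$. An object of either heart is a sheaf lying in the corresponding subcategory. A sheaf lies in $\mc{D}^b_c(S,\Z_\ell)$ exactly when it is lisse on the strata of some stratification, which is the definition of $\Sh_c(S,\Z_\ell)$. A dualizable object concentrated in degree $0$ is, pro-étale locally, of the form $M_{T'}$ with $M$ a perfect $\Z_\ell$-complex which, by t-exactness of $c_{T'}$, is itself concentrated in degree $0$, i.e.\ is a finitely generated $\Z_\ell$-module; conversely every sheaf that is pro-étale locally of this shape is dualizable since $c_{T'}$ preserves dualizable objects, so the heart in the lisse case is precisely $\mathrm{Loc}_S(\Z_\ell)$. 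The one nonformal ingredient is the structural result imported from \cite{bhatt-scholze,hrs}, that lisse $\ell$-adic complexes are pro-étale locally constant with value a perfect $\Z_\ell$-complex; granting it, the statement reduces formally to the regularity of $\Z_\ell$ and the t-exactness of pullback. In the ``weak Serre'' reformulation the analogous obstacle is the stability of $\Sh_c(S,\Z_\ell)$ under kernels and cokernels in $\Sh(S,\Z_\ell)$, which requires a Noetherian induction exploiting the generic local constancy of constructible sheaves; the truncation approach sidesteps this and yields the abelianness of the heart for free.
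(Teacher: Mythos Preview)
The paper does not give its own proof of this proposition: it is quoted as a black box from \cite[3.28]{hrs}. There is therefore no ``paper's proof'' to compare your argument against.

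Your argument is essentially correct and is in fact a reasonable reconstruction of how such a statement is proved. The key inputs you identify are the right ones: t-exactness of pullback, the fact that over the regular one-dimensional ring $\Z_\ell$ perfectness coincides with ``bounded with finitely generated cohomology'' (hence is preserved by truncation), and the structural result that dualizable objects in $\mc{D}(S,\Z_\ell)$ are pro-\'etale locally constant on a perfect complex. Two small points deserve a word. First, when you argue that a dualizable object concentrated in degree $0$ has value $M$ concentrated in degree $0$, you are implicitly using that $c_{T'}$ is conservative on the heart for $T'\neq\varnothing$; this is immediate (pull back to a geometric point), but it is a separate fact from t-exactness. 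Second, your justification of the t-exactness of $c_{T}$ via continuous set-theoretic sections is correct but can be phrased more cleanly: finitely generated $\Z_\ell$-modules are profinite abelian groups, and any continuous surjection of profinite groups admits a continuous section.

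Your final paragraph slightly overstates the contrast with the ``weak Serre'' route: once one knows the heart is $\Sh_c$ (resp.\ $\mathrm{Loc}$), closure under kernels and cokernels in $\Sh(S,\Z_\ell)$ is automatic from \Cref{premieres prop du coeur}; the two approaches are equivalent, not that one sidesteps an obstacle the other faces.
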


The above proposition shows in particular that $\Sh_c(S,\Z_\ell)$ and $\mathrm{Loc_S(\Z_\ell)}$ are weak Serre subcategories of $\Sh(S,\Z_\ell)$. Furthermore, we have the following characterization of lisse sheaves:

\begin{proposition}\label{description des faisceaux l-adiques lisses} Let $S$ be a connected scheme and $\ell$ be a prime number invertible on $S$. Let $\xi$ be a geometric point of $S$. The fiber functor associated to $\xi$ induces an equivalence of abelian monoidal categories $$\xi^*\colon \mathrm{Loc}_S(\Z_\ell)\rar \Rep(\pi_1^{\mathrm{pro\acute{e}t}}(S,\xi),\Z_\ell)$$
where $\pi_1^{\mathrm{pro\acute{e}t}}(S,\xi)$ is the pro-étale fundamental group of $S$ with base point $\xi$ of \cite[7.4.2]{bhatt-scholze}
\end{proposition}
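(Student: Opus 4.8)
The plan is to reduce the statement to its finite-coefficient version, which is part of Bhatt and Scholze's study of the pro-étale fundamental group, and then to pass to the $\ell$-adic limit. First, the functor $\xi^*$ is exact, being the pullback along a point of the pro-étale topos of $S$, and it is monoidal, since pullback commutes with tensor products and the tensor product of two $\Z_\ell$-modules of finite type is again of finite type; moreover the monodromy action of $\pi:=\pi_1^{\mathrm{pro\acute{e}t}}(S,\xi)$ on the stalk of a lisse sheaf is continuous, so $\xi^*$ does take values in $\Rep(\pi,\Z_\ell)$. It is therefore enough to show that $\xi^*$ is fully faithful and essentially surjective; the monoidality of the resulting equivalence is then automatic. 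The input I would quote is the torsion case: for each $n\geqslant 1$, reduction modulo $\ell^n$ identifies the lisse sheaves of $(\Z/\ell^n)_S$-modules with the pro-étale-locally constant sheaves of finite $\Z/\ell^n$-modules, and by \cite[7.4]{bhatt-scholze} the fiber functor induces an equivalence of abelian categories between these and the category $\Rep(\pi,\Z/\ell^n)$ of representations of $\pi$ on finite $\Z/\ell^n$-modules with open kernel.

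Next I would run the limit argument. In the formalism of \cite[6.8]{bhatt-scholze} the constant sheaf $\Z_{\ell,S}$ is $\varprojlim_n (\Z/\ell^n)_S$, and any $L\in\mathrm{Loc}_S(\Z_\ell)$, being pro-étale-locally of the form $M_{S'}$ for $M$ of finite type, satisfies $L=\varprojlim_n L/\ell^n L$ with $(L/\ell^n L)_n$ a compatible system of lisse $\Z/\ell^n$-sheaves (here one uses that $\pi_0(S')$ is a profinite set, so that $\varprojlim_n (M/\ell^n M)_{S'}=M_{S'}$). On the other side, any $M\in\Rep(\pi,\Z_\ell)$ is $\ell$-adically complete, hence $M=\varprojlim_n M/\ell^n M$ with $M/\ell^n M\in\Rep(\pi,\Z/\ell^n)$, and $\xi^*$ is compatible with both presentations, $\xi^*(L/\ell^n L)=(\xi^*L)/\ell^n(\xi^*L)$. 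Full faithfulness follows: for $L'\in\mathrm{Loc}_S(\Z_\ell)$ a morphism into the $\ell^n$-torsion sheaf $L'/\ell^n L'$ factors through $L/\ell^n L$, so $\Hom_{\mathrm{Loc}_S(\Z_\ell)}(L,L')=\varprojlim_n\Hom(L/\ell^n L,L'/\ell^n L')$ and likewise $\Hom_{\Rep(\pi,\Z_\ell)}(M,M')=\varprojlim_n\Hom(M/\ell^n M,M'/\ell^n M')$, and $\xi^*$ is an isomorphism at each finite level by the torsion case. For essential surjectivity, given $M\in\Rep(\pi,\Z_\ell)$ I would use the torsion equivalence to build lisse $\Z/\ell^n$-sheaves $L_n$ with $\xi^*L_n\cong M/\ell^n M$, lift the structure isomorphisms of the system $(M/\ell^n M)_n$ via full faithfulness at finite level, set $L:=\varprojlim_n L_n$, and check that $L$ is lisse — on a pro-étale cover trivialising all the $L_n$ simultaneously (the one corresponding to the profinite quotient of $\pi$ through which all the $\xi^*L_n$ factor) it becomes $\varprojlim_n (M/\ell^n M)_{S'}=M_{S'}$ — and that $\xi^*L=\varprojlim_n\xi^*L_n\cong M$ as a representation of $\pi$.

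I expect the only genuine work to lie in the limit manipulations: one must check that the present, slightly larger, category $\mathrm{Loc}_S(\Z_\ell)$ (defined with the ``locally of finite type'' rather than the ``locally free'' convention) is still equivalent to the category of compatible systems $(L_n)_n$ of lisse $\Z/\ell^n$-sheaves in the formalism of \cite[6.8]{bhatt-scholze}, and that the inverse limit of a constant pro-étale sheaf with totally disconnected value is again constant. The latter point is precisely where the profiniteness of $\pi_0$ of pro-étale covers — together with the fact that continuous maps into an inverse limit of topological spaces form the inverse limit of the continuous maps — is used; everything else is formal bookkeeping on top of the torsion case and the $\ell$-adic completeness of $\Z_\ell$-modules of finite type.
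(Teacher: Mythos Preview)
Your proposal is correct and follows essentially the same route as the paper: reduce to the torsion case $\Rep(\pi,\Z/\ell^n)\simeq\mathrm{Loc}_S(\Z/\ell^n\Z)$ via Bhatt--Scholze's infinite Galois theory, then pass to the limit over $n$. The paper's proof is terser --- it simply says ``the proposition follows by passing to the limit in the 2-category of categories'' --- whereas you spell out the full faithfulness and essential surjectivity at the limit level and flag the point about the ``locally of finite type'' versus ``locally free'' convention; this extra care is appropriate given the paper's own remark that its convention differs from \cite{bhatt-scholze}.
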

\begin{remark} The proposition above with coefficients in $\Z_\ell$ still holds with the étale fundamental group by \cite[Chapitre 2, 2.4]{sga4.5}; we need the pro-étale fundamental group only in the case when the ring of coefficients is an algebraic extension of $\Q_\ell$. In this setting, the proposition does not follow from the proof below. However, it is a consequence of \cite[7.4.7,7.4.10]{bhatt-scholze}. 
\end{remark}
\begin{proof} Let $\mathrm{Loc}_S$ be the infinite Galois category (see \cite[7.2.1]{bhatt-scholze}) of locally constant sheaves of sets on $S_{\mathrm{pro\acute{e}t}}$. By definition of the pro-étale fundamental group, the fiber functor $$\xi^*\colon \mathrm{Loc}_S \rar \pi_1^{\mathrm{pro\acute{e}t}}(S,\xi)-\mathrm{Set}$$ is an equivalence.

The constant sheaf $(\Z/\ell^n \Z)_{S}$  is sent to the set $\Z/\ell^n\Z$ endowed with the trivial action of $\pi_1^{\mathrm{pro\acute{e}t}}(S,\xi)$. Thus, $\xi^*$ induces an equivalence $$\mathrm{Loc}_S(\Z/\ell^n\Z)\rar \Rep(\pi_1^{\mathrm{pro\acute{e}t}}(S,\xi),\Z/\ell^n\Z).$$

The proposition follows by passing to the limit in the 2-category of categories.
\end{proof}

Recall that Bhatt and Scholze defined using the theory of infinite Galois categories of \cite[7.3]{bhatt-scholze} a map $$\pi_1^{\mathrm{pro\acute{e}t}}(S,\xi)\rar \pi_1^{\et}(S,\xi)$$ that exhibits $\pi_1^{\et}(S,\xi)$ as the profinite completion of $\pi_1^{\mathrm{pro\acute{e}t}}(S,\xi)$ by \cite[7.4.3]{bhatt-scholze}, and which is a group isomorphism when $S$ is geometrically unibranch. We now claim that the formation of $\pi_1^{\mathrm{pro\acute{e}t}}(S,\xi)$ can be upgraded into a functor with respect to $(S,\xi)$ which is compatible with the map above.

\begin{lemma}\label{6 functors lisse 1} Let $f\colon T \rar S$ be a map of schemes. Let $\xi'\colon \Spec(\Omega)\rar T$ be a geometric point of $T$ and let $\xi=f\circ \xi'$. Then, 
	\begin{enumerate} 
		\item The map $f$ induces a continuous map of groups $$\pi_1^{\mathrm{pro\acute{e}t}}(f)\colon \pi_1^{\mathrm{pro\acute{e}t}}(T,\xi')\rar \pi_1^{\mathrm{pro\acute{e}t}}(S,\xi)$$ such that $\pi_1^{\mathrm{pro\acute{e}t}}$ is a functor and the map $\pi_1^{\mathrm{pro\acute{e}t}}\rar \pi_1^{\et}$ is a natural transformation. 
	    \item Assume that $f$ is finite and étale, then, the map $\pi_1^{\mathrm{pro\acute{e}t}}(f)$ is injective and its image is a finite index subgroup.
        \item Let $\ell$ be a prime number which is invertible on $S$. Then, the square
		$$\begin{tikzcd}\mathrm{Loc}_S(\Z_\ell)\ar[r,"f^*"]\ar[d,"\xi^*"]&  \mathrm{Loc}_T(\Z_\ell)\ar[d]\ar[d,"(\xi')^*"] \\
		\Rep(\pi_1^{\mathrm{pro\acute{e}t}}(S,\xi),\Z_\ell) \ar[r,"\pi_1^{\mathrm{pro\acute{e}t}}(f)^*"]&
		\Rep(\pi_1^{\mathrm{pro\acute{e}t}}(T,\xi'),\Z_\ell)
		\end{tikzcd}$$
		is commutative.
\end{enumerate}
\end{lemma}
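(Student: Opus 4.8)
The plan is to derive the three statements from the functoriality of the infinite Galois category formalism of \cite[7]{bhatt-scholze}, combined with \Cref{description des faisceaux l-adiques lisses}. Since $\pi_1^{\mathrm{pro\acute{e}t}}$ is only defined for pointed connected schemes, I first replace $S$ and $T$ by the connected components of $\xi$ and $\xi'$; the morphism $f$ restricts to a morphism between these components, still finite étale if $f$ was, so nothing is lost.

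For (1), the key point is that inverse image of sheaves of sets carries the infinite Galois category $\mathrm{Loc}_S$ of locally constant sheaves of sets on $S_{\mathrm{pro\acute{e}t}}$ into $\mathrm{Loc}_T$: if $\mathcal{F}$ becomes constant over a pro-étale cover $\{S_i\to S\}$, then $f^*\mathcal{F}$ becomes constant over $\{T\times_S S_i\to T\}$. Moreover $f^*$ is compatible with the fiber functors, $(\xi')^*\circ f^*=(f\circ\xi')^*=\xi^*$. Whiskering an automorphism of $(\xi')^*$ by $f^*$ therefore produces an automorphism of $\xi^*$, and this defines the continuous homomorphism $\pi_1^{\mathrm{pro\acute{e}t}}(f)$, whose functoriality in $f$ comes from the pseudo-functoriality of inverse image. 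Since $f^*$ also preserves the full subcategories of locally constant sheaves of \emph{finite} sets, and $\pi_1^{\mathrm{pro\acute{e}t}}\to\pi_1^{\et}$ is by construction induced by the inclusion of those subcategories, the asserted naturality follows.

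For (3), the square commutes essentially by construction. The equivalence $\xi^*$ of \Cref{description des faisceaux l-adiques lisses} is obtained from the equivalence $\mathrm{Loc}_S\simeq \pi_1^{\mathrm{pro\acute{e}t}}(S,\xi)\text{-}\mathrm{Set}$ by passing to $\Z/\ell^n$-module objects and then taking the limit over $n$; by (1), under these equivalences $f^*$ becomes restriction of the action along $\pi_1^{\mathrm{pro\acute{e}t}}(f)$ at each finite level, hence also in the limit. Since $f^*\Z_{\ell,S}=\Z_{\ell,T}$, the functor $f^*$ sends $\mathrm{Loc}_S(\Z_\ell)$ into $\mathrm{Loc}_T(\Z_\ell)$, and the canonical identification $(\xi')^*f^*L=(f\circ\xi')^*L=\xi^*L$ of underlying $\Z_\ell$-modules is, by the previous sentence, equivariant for the two group actions; this is precisely the commutativity of the square.

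For (2), let $f\colon T\to S$ be finite étale with $T$ connected. The sheaf of sections $\mathcal{F}_T$, $U\mapsto \mathrm{Hom}_S(U,T)$, is a locally constant sheaf of finite sets, and is exactly the sheaf on $S_{\mathrm{pro\acute{e}t}}$ represented by the object $T$; hence $T_{\mathrm{pro\acute{e}t}}=(S_{\mathrm{pro\acute{e}t}})_{/T}$ and $\Sh(T_{\mathrm{pro\acute{e}t}})\simeq \Sh(S_{\mathrm{pro\acute{e}t}})_{/\mathcal{F}_T}$, an equivalence that restricts to an equivalence $\mathrm{Loc}_T\simeq(\mathrm{Loc}_S)_{/\mathcal{F}_T}$ of infinite Galois categories compatible with fiber functors, under which $\xi'$ corresponds to a point $e_0$ of the finite set $E:=\xi^*\mathcal{F}_T$ (the geometric fiber of $f$ over $\xi$). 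Applying $\xi^*$ and using that the $\pi_1^{\mathrm{pro\acute{e}t}}(S,\xi)$-action on $E$ is transitive (as $T$ is connected) gives $(\mathrm{Loc}_S)_{/\mathcal{F}_T}\simeq (\pi_1^{\mathrm{pro\acute{e}t}}(S,\xi)\text{-}\mathrm{Set})_{/E}\simeq H\text{-}\mathrm{Set}$ with $H=\mathrm{Stab}(e_0)$ an open subgroup of index $\deg(f)$; tracking the fiber functor through these identifications shows $\pi_1^{\mathrm{pro\acute{e}t}}(T,\xi')\cong H$ and that $\pi_1^{\mathrm{pro\acute{e}t}}(f)$ becomes the inclusion $H\hookrightarrow \pi_1^{\mathrm{pro\acute{e}t}}(S,\xi)$, whence both injectivity and finite index. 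I expect the main obstacle to be exactly this last part: one must check carefully that the slice description of $\mathrm{Loc}_T$ is an equivalence of infinite Galois categories compatible with the constructions of \cite[7.4]{bhatt-scholze}, so that the identification of the fundamental group of a connected covering with a point stabilizer is legitimate. (Injectivity alone is softer: for $f$ finite étale the unit $\mathcal{G}\to f^*f_*\mathcal{G}$ is a split monomorphism, the diagonal of $T/S$ being a clopen immersion, so every object of $\mathrm{Loc}_T$ embeds into one of the form $f^*(-)$, which already forces $\pi_1^{\mathrm{pro\acute{e}t}}(f)$ to be injective.)
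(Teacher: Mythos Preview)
Your proof is correct. Parts (1) and (3) match the paper's argument almost verbatim: both derive the functoriality from $f^*\colon \mathrm{Loc}_S\to\mathrm{Loc}_T$ together with the compatibility $(\xi')^*\circ f^*=\xi^*$ and the structure theorem \cite[7.2.5]{bhatt-scholze} for infinite Galois categories, and both deduce (3) from the commutativity at the level of $\pi$-sets.

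For (2), your route differs from the paper's. You identify $\mathrm{Loc}_T$ with the slice $(\mathrm{Loc}_S)_{/\mathcal{F}_T}$ and hence with $H\text{-Set}$ for $H=\mathrm{Stab}(e_0)$, obtaining injectivity and finite index in one stroke; as you yourself flag, this requires checking that the slice equivalence is compatible with the infinite Galois category formalism of \cite[7.4]{bhatt-scholze}. The paper instead separates the two conclusions: injectivity is obtained from the fact that the unit $L\to f_*f^*L$ is a monomorphism together with \cite[2.37]{lara}, which gives a criterion for $\pi_1^{\mathrm{pro\acute{e}t}}(f)$ to be a topological embedding; finite index is then proved by showing directly that the image of $\pi_1^{\mathrm{pro\acute{e}t}}(f)$ equals $\mathrm{Stab}(t)$, first checking the inclusion (via the diagonal section of $T\times_S T\to T$) and then producing a section $\mathrm{Stab}(t)\to\pi_1^{\mathrm{pro\acute{e}t}}(T,\xi')$ by restricting automorphisms of the fiber functor. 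Your slice argument is cleaner and more conceptual once the compatibility is granted; the paper's argument is more hands-on but avoids having to verify that slicing is well-behaved in the Noohi-group setting. Your parenthetical remark about injectivity via the split monomorphism of the unit is essentially the paper's first step.
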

\begin{proof}
	If $X$ is a scheme, let $\mathrm{Loc}_X$ (resp. $\mathrm{Loc}_{X}^{\et}$) be the infinite Galois category (see \cite[7.2.1]{bhatt-scholze}) of locally constant sheaves of sets on $X_{\mathrm{pro\acute{e}t}}$ (resp. $X_{\et}$). 
	
	Now, if $f\colon T\rar S$ is a map of schemes, let ${\mathrm{Loc}}_{f}$ (resp.${\mathrm{Loc}}^{\et}_f$) be the map $f^*\colon \mathrm{Loc}_S \rar \mathrm{Loc}_T$. This defines contravariant functors $\mathrm{Loc}$ and $\mathrm{Loc}^{\et}$ from the category of schemes to the category of infinite Galois category. Moreover, the map of Bhatt and Scholze between the pro-étale and the étale fundamental group is induced by the natural map $\mathrm{Loc}^{\et}\rar \mathrm{Loc}$. Thus, the first assertion follows from the structure theorem of infinite Galois categories \cite[7.2.5]{bhatt-scholze}.
	
	Now, by definition of the map $\pi_1^{\mathrm{pro\acute{e}t}}(f)$, the square:
	$$\begin{tikzcd}\mathrm{Loc}_S\ar[r,"f^*"]\ar[d,"\xi^*"]&  \mathrm{Loc}_T\ar[d]\ar[d,"(\xi')^*"] \\
		\pi_1^{\mathrm{pro\acute{e}t}}(S,\xi)-\text{Set} \ar[r,"\pi_1^{\mathrm{pro\acute{e}t}}(f)^*"]&
		\pi_1^{\mathrm{pro\acute{e}t}}(T,\xi')-\text{Set}
	\end{tikzcd}$$
is commutative which implies the third assertion.


We now prove the second assertion. We have a functor $f_*\colon \mathrm{Loc}_T\rar \mathrm{Loc}_S$ which is both left and right adjoint to $f^*$ and sends a locally constant sheaf $L$ to the sheaf $X \mapsto L(X\times_S T)$. Moreover, if $L$ is a locally constant sheaf on $S$, the map $L\rar f_* f^* L$ is injective. Therefore, using \cite[2.37]{lara}, the map $\pi_1^{\mathrm{pro\acute{e}t}}(f)$ is a topological embedding.

Now to prove that the quotient is finite, we can assume that $T$ is connected. Via the Yoneda embedding, the $S$-scheme $T$ corresponds to a locally constant proétale sheaf of sets $M_T$ which itself corresponds to a $\pi^{\mathrm{pro\acute{e}t}}(S,\xi)$-set $E:=\xi^*(M_T)$ which is the fiber over the point $\xi$. As $T$ is connected, the $\pi^{\mathrm{pro\acute{e}t}}(S,\xi)$-set $E$ is transitive. Furthermore, the geometric point $\xi'$ corresponds to a point $t$ of $E$. As the action of $\pi^{\mathrm{pro\acute{e}t}}(S,\xi)$ on $E$ is transitive, we have $$E \simeq \pi^{\mathrm{pro\acute{e}t}}(S,\xi)/\mathrm{Stab}(t)$$ and therefore $U:=\mathrm{Stab}(t)$ is a finite index subgroup of $\pi^{\mathrm{pro\acute{e}t}}(S,\xi)$. We now claim that the image of $\pi_1^{\mathrm{pro\acute{e}t}}(T,\xi')$ through $\pi_1^{\mathrm{pro\acute{e}t}}(f)$ is $U$ which will finish the proof.

First, the image of  in $\pi_1^{\mathrm{pro\acute{e}t}}(S,\xi)$, when acting on $E$, stabilizes $t$. Indeed, the $\pi_1^{\mathrm{pro\acute{e}t}}(T,\xi')$-set $E$ corresponds through $(\xi')^*$ to the locally constant sheaf $f^*(M_T)$ which corresponds to the proétale cover $T\times_S T$. The point $t$ then corresponds to a point of $T\times_S T$ which belongs to the image of the diagonal maps $T\rar T \times_S T$ of $T$. As $T$ corresponds to the trivial one point $\pi_1^{\mathrm{pro\acute{e}t}}(T,\xi')$-set, the action of $\pi_1^{\mathrm{pro\acute{e}t}}(T,\xi')$ on $t$ is trivial.

Finally, the map $\pi_1^{\mathrm{pro\acute{e}t}}(T,\xi')\rar \mathrm{Stab}(t)$ has a section. Indeed, the elements of $\mathrm{Stab}(t)$ can be seen as automorphisms of the fiber functor $\xi^*$ on the category of proétale covers of $S$ which respect the fiber over $\xi'$. The action of those elements can be restricted to automorphisms of the fiber functor $\xi'$ which yields the desired section. 
\end{proof}
\begin{remark}
    The proof of (2) of the above result was communicated to the author by M. Lara. His proof also yields that $$\xi^*(M_T) \simeq \pi^{\mathrm{pro\acute{e}t}}(S,\xi)/\mathrm{Stab}(t)$$ for any proétale cover $T$ of $S$.
\end{remark}

\begin{corollary}\label{6 functors lisse 2} Let $f\colon T \rar S$ be a finite étale map and let $\ell$ be a prime number which is invertible on $S$. Let $\xi'$ be a geometric point of $T$ and let $\xi=f\circ \xi'$. Then, the functor $f_*\colon \Sh(T,\Z_\ell)\rar \Sh(S,\Z_\ell)$ induces a map $\mathrm{Loc}_T(\Z_\ell)\rar \mathrm{Loc}_S(\Z_\ell)$ and the square:
		$$\begin{tikzcd}\mathrm{Loc}_T(\Z_\ell)\ar[rr,"f_*"]\ar[d,"(\xi')^*"]&& \mathrm{Loc}_S(\Z_\ell)\ar[d]\ar[d,"\xi^*"] \\
			\Rep(\pi_1^{\mathrm{pro\acute{e}t}}(T,\xi'),\Z_\ell) \ar[rr,"\mathrm{Ind}_{\pi'}^{\pi}"]&&
			\Rep(\pi_1^{\mathrm{pro\acute{e}t}}(S,\xi),\Z_\ell)
		\end{tikzcd}$$
	where $\pi'=\pi_1^{\mathrm{pro\acute{e}t}}(T,\xi')$ and $\pi=\pi_1^{\mathrm{pro\acute{e}t}}(S,\xi)$ is commutative.
\end{corollary}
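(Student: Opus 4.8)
The plan is to reduce the statement to a purely group-theoretic compatibility between the pushforward $f_*$ along a finite étale cover and the induced representation functor, using the previously established equivalences. First I would recall from \Cref{description des faisceaux l-adiques lisses} that for a connected scheme the fiber functor $\xi^*$ identifies $\mathrm{Loc}_S(\Z_\ell)$ with $\Rep(\pi_1^{\mathrm{pro\acute{e}t}}(S,\xi),\Z_\ell)$ (and similarly for $T$), and from \Cref{6 functors lisse 1}(2) that $\pi_1^{\mathrm{pro\acute{e}t}}(f)$ realizes $\pi'$ as a finite index subgroup of $\pi$, so that $\mathrm{Ind}_{\pi'}^{\pi}$ is defined. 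The first point to check is that $f_*$ indeed preserves lisse sheaves: since $f$ is finite étale, $f_* = f_!$ commutes with all base change and is exact, and pro-étale locally on $S$ the cover $T$ splits as a finite disjoint union of copies of $S$, so $f_* L$ is pro-étale locally a finite direct sum of (restrictions of) $L$, hence lisse. One may reduce to $S$ (hence $T$) connected by working on connected components.

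Next I would establish the commutativity of the square. The cleanest route is to exhibit an isomorphism of functors $\xi^* \circ f_* \simeq \mathrm{Ind}_{\pi'}^{\pi} \circ (\xi')^*$ by a direct computation of stalks. Given a lisse sheaf $L$ on $T$ with associated $\pi'$-representation $V = (\xi')^*L$, the stalk $\xi^*(f_*L)$ can be computed via the description of $\pi_1^{\mathrm{pro\acute{e}t}}$-sets: by the remark following \Cref{6 functors lisse 1}, the pro-étale cover $T \to S$ corresponds to the transitive $\pi$-set $\pi/\pi'$ (after identifying $\pi' = \mathrm{Stab}(t)$ with its image), and the formula $f_* L(X) = L(X \times_S T)$ shows that taking the fiber at $\xi$ amounts to summing the fibers of $L$ over the points of $\pi/\pi'$ lying above $\xi$. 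Concretely, $\xi^*(f_*L) \cong \bigoplus_{g\pi' \in \pi/\pi'} V$ as a $\Z_\ell$-module, with $\pi$ permuting the summands through its action on $\pi/\pi'$ and acting on each summand through the $\pi'$-action twisted by the relevant coset representative — which is exactly the standard description of $\mathrm{Ind}_{\pi'}^{\pi}V$. Alternatively, and perhaps more robustly, one can deduce this from \Cref{6 functors lisse 1}(3) plus the projection formula / base change: adjunction identifies $f_*$ (for $f$ finite étale, where $f_* = f_!$) as left adjoint to $f^*$, and on the representation side $\mathrm{Ind}_{\pi'}^{\pi}$ is left adjoint to the restriction $\pi_1^{\mathrm{pro\acute{e}t}}(f)^*$; since \Cref{6 functors lisse 1}(3) already gives compatibility of $f^*$ with restriction of representations, the mate of that square gives compatibility of the left adjoints, which is precisely the desired diagram.

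The main obstacle I anticipate is making the adjunction/mate argument rigorous at the level of $\infty$-categories or abelian categories while keeping track of the pro-étale topology: one must know that $f_*$ really is \emph{left} adjoint to $f^*$ on lisse sheaves (this uses that $f$ is finite étale, so $f_* = f_!$ and there is a trace/counit, rather than merely $f_*$ being a right adjoint), and that the equivalences $\xi^*$, $(\xi')^*$ are compatible with these adjunctions. The subtlety is purely formal once one has \Cref{6 functors lisse 1} in hand; the finiteness of the index, needed for $\mathrm{Ind}_{\pi'}^{\pi}$ to be simultaneously a left and right adjoint of restriction, is guaranteed by \Cref{6 functors lisse 1}(2). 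I would therefore present the proof in two steps: (i) $f_*$ preserves lisseness, reducing to the split case pro-étale locally; (ii) the square commutes, either by the explicit stalk computation using the identification of $T$ with the $\pi$-set $\pi/\pi'$, or by taking the mate of the square in \Cref{6 functors lisse 1}(3). I would favor writing out (ii) via the explicit stalk description, since it is self-contained and makes the $\mathrm{Ind}$ appear transparently, relegating the adjunction argument to a remark.
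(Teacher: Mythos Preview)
Your proposal is correct. The paper's proof, however, is much shorter: it simply observes that $f_*$ is right adjoint to $f^*$ and that $\mathrm{Ind}_{\pi'}^{\pi}$ is right adjoint to $\pi_1^{\mathrm{pro\acute{e}t}}(f)^*$, and concludes directly from \Cref{6 functors lisse 1}(3) by uniqueness of right adjoints (transported through the equivalences $\xi^*$ and $(\xi')^*$). This is essentially your alternative (ii), except that you phrase it via \emph{left} adjoints (using $f_*=f_!$ and induction-as-left-adjoint); both are valid in this finite étale/finite index situation, but the right-adjoint version requires no appeal to $f_!$ or to the coincidence of induction and coinduction. Your preferred route via the explicit stalk computation and the identification $T\leftrightarrow \pi/\pi'$ is more hands-on and makes the appearance of $\mathrm{Ind}$ transparent, and your step (i) (checking that $f_*$ preserves lisse sheaves) is a point the paper leaves implicit in the adjunction argument; so your write-up would be longer but somewhat more self-contained, while the paper's two-line argument trades that explicitness for brevity.
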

\begin{proof}
	The functor $f_*$ is right adjoint to the functor $f^*$. On the other hand, the functor $\mathrm{Ind}_{\pi'}^{\pi}$ is right adjoint to the functor $\pi_1^{\mathrm{pro\acute{e}t}}(f)^*$ which yields the result.
 \end{proof}
	
Finally, recall that the fibered category $\mc{D}^b_c(-,\Z_\ell)$ over the category of noetherian excellent schemes is endowed with the 6 functors formalism (see  \cite[6.7]{bhatt-scholze}). 

\begin{remark}\label{cas des corps}
    If $k$ is a field, the topos $\widetilde{k}_{\mathrm{pro\acute{e}t}}$ is equivalent to the topos $\widetilde{(BG_k)}_{\mathrm{pro\acute{e}t}}$ where $(BG_k)_{\mathrm{pro\acute{e}t}}$ is the site of profinite continuous $G_k$-sets with covers given by continous surjections (see \cite[4.1.10]{bhatt-scholze}). 
    Therefore, the abelian category $\Sh(k_{\mathrm{pro\acute{e}t}},\Z_\ell)$ identifies with the abelian category $\Sh((BG_k)_{\mathrm{pro\acute{e}t}},\Z_\ell)$ of modules over the profinite $G_k$-set $\Z_\ell$ endowed with trivial action (in the case of coefficients in $\Q_\ell$, the topological ring $\Q_\ell$ with its trivial action still defines a sheaf of rings on $(BG_k)_{\mathrm{pro\acute{e}t}}$ using \cite[4.3.2]{bhatt-scholze}).

    Finally, the stable category $\mc{D}^b_c(k,\Z_\ell)$ identifies with the stable subcategory of \emph{$\Z_\ell$-perfect complexes} $\mc{D}^b_{\Z_\ell-\mathrm{perf}}(\Sh((BG_k)_{\mathrm{pro\acute{e}t}},\Z_\ell))$ of $\mc{D}^b(\Sh((BG_k)_{\mathrm{pro\acute{e}t}},\Z_\ell))$ which is the full subcategory made of those complexes whose restriction to $\mc{D}^b(\Z_\ell)$ is perfect (\textit{i.e.} whose cohomology sheaves are finite when seen as $\Z_\ell$-modules).

    The heart of $\mc{D}^b_c(k,\Z_\ell)$  then identifies with $\Rep(G_k,\Z_\ell)$.
\end{remark}

\subsection{Reminders on cdh-descent and Mayer-Vietoris for closed immersions}
\begin{definition}\label{cdh-square} We say following \cite[2]{voe10} and \cite[2.1.11]{tcmm} that a cartesian square of schemes: $$\begin{tikzcd}E \ar[r]\ar[d] & F \ar[d,"i"]\\
		Y \ar[r,"f"] & X
	\end{tikzcd}$$
	is \emph{cdh-distinguished} if $f$ is proper and $i$ is a closed immersion such that, letting $U$ be its open complement in $X$, the map $f^{-1}(U)\rar U$ induced by $f$ is an isomorphism.
\end{definition}

The definition of \cite[7.2]{em} of $\ell$-adic complexes (which is equivalent to the definition of \cite{bhatt-scholze} by \cite[7.2.21]{em}) yields the following:
\begin{proposition}(cdh-descent) \label{cdh-descent} Let $\ell$ be a prime number. A cdh-distinguished square of $\Z[1/\ell]$-schemes: $$\begin{tikzcd}E \ar[r,"p"]\ar[d,"i_E"] & F \ar[d,"i"]\\
		Y \ar[r,"f"] & X
	\end{tikzcd}$$
	induces an exact triangle:
	
	$$\Z_{\ell,X}\rar f_*\Z_{\ell,Y}\oplus i_* \Z_{\ell,F}\rar i_* p_* \Z_{\ell,E}.$$
\end{proposition}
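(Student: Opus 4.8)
The plan is to deduce cdh-descent from the known localization triangles for $\ell$-adic complexes together with proper base change, without invoking any descent property that is not already encoded in the six functor formalism. Given a cdh-distinguished square as in the statement, write $j\colon U\hookrightarrow X$ for the open complement of $i\colon F\hookrightarrow X$, and let $j_Y\colon f^{-1}(U)\hookrightarrow Y$ be the pullback of $j$ along $f$, with $f_U\colon f^{-1}(U)\rar U$ the induced map, which is an isomorphism by hypothesis. First I would apply $f_*$ to the colocalization triangle \eqref{colocalization} on $Y$, namely $i_{E,!}i_E^!\Z_{\ell,Y}\rar \Z_{\ell,Y}\rar j_{Y,*}j_Y^*\Z_{\ell,Y}$, obtaining an exact triangle $f_*i_{E,!}i_E^!\Z_{\ell,Y}\rar f_*\Z_{\ell,Y}\rar f_*j_{Y,*}\Z_{\ell,f^{-1}(U)}$. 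Since $f$ is proper and $i_E$ is a closed immersion (base change of $i$), we have $f_*i_{E,!}=i_!p_*$ by proper base change and $f_*j_{Y,*}=j_*f_{U,*}$; because $f_U$ is an isomorphism, $f_*j_{Y,*}\Z_{\ell,f^{-1}(U)}\simeq j_*\Z_{\ell,U}$. Moreover $i_E^!\Z_{\ell,Y}$ agrees with $p^*i^!\Z_{\ell,X}$ again by the base change isomorphism for the lower-star/upper-shriek pair along the cdh-square; hence the first term becomes $i_*p_*p^*i^!\Z_{\ell,X}$.

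Next I would compare this with the colocalization triangle on $X$ itself, $i_!i^!\Z_{\ell,X}\rar \Z_{\ell,X}\rar j_*\Z_{\ell,U}$, i.e. $i_*i^!\Z_{\ell,X}\rar \Z_{\ell,X}\rar j_*\Z_{\ell,U}$. The unit $\id\rar p_*p^*$ on $F$ (or rather its $i_*(\cdot)$-pushforward applied to $i^!\Z_{\ell,X}$) gives a map from the $X$-triangle to the pushed-forward $Y$-triangle, in which the right-hand terms $j_*\Z_{\ell,U}$ match by construction. Taking the octahedron (or simply the map of triangles), the cofiber of $\Z_{\ell,X}\rar f_*\Z_{\ell,Y}$ is identified with the cofiber of $i_*i^!\Z_{\ell,X}\rar i_*p_*p^*i^!\Z_{\ell,X}$, and I claim the latter cofiber is $i_*p_*\Z_{\ell,E}\oplus i_*\Z_{\ell,F}[\text{shift}]$ only after one more bookkeeping step; more precisely the cleanest route is to instead assemble the three localization triangles on $X$, on $F$ (for $i_*i^*$ versus $i_!i^!$ is not needed — on $F$ there is nothing to localize), and observe directly that the total complex of the evident commutative square
\[
\begin{tikzcd}
\Z_{\ell,X}\ar[r]\ar[d] & f_*\Z_{\ell,Y}\ar[d]\\
i_*\Z_{\ell,F}\ar[r] & i_*p_*\Z_{\ell,E}
\end{tikzcd}
\]
is acyclic, which is equivalent to the asserted triangle $\Z_{\ell,X}\rar f_*\Z_{\ell,Y}\oplus i_*\Z_{\ell,F}\rar i_*p_*\Z_{\ell,E}$. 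Acyclicity of the total complex is checked after applying $j^*$ and $i^*$ separately: pulling back by $j$, properness and the isomorphism $f_U$ collapse the square to $\Z_{\ell,U}\overset{\id}{\rar}\Z_{\ell,U}$ with the two bottom terms vanishing, hence exact; pulling back by $i^*$ and using proper base change $i^*f_*\simeq p_*i_E^*$ on the top row, the square becomes $i^*\Z_{\ell,X}\rar p_*\Z_{\ell,E}$ appearing identically in both rows, so the total complex is a cone on an identity map, hence acyclic. By the localization (recollement) gluing, acyclicity after $i^*$ and $j^*$ forces acyclicity on $X$.

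The main obstacle I anticipate is purely one of bookkeeping with the base change isomorphisms: one must be careful that the square defining the cdh-distinguished datum is cartesian so that all four instances of proper base change ($i^*f_*\simeq p_*i_E^*$, $f_*i_{E,!}\simeq i_!p_*$, and the compatibilities $j^*f_*\simeq f_{U,*}j_Y^*$, $f_U$ an isomorphism) are available and mutually compatible, and that the comparison maps between the localization triangles on $X$ and on $Y$ are the canonical ones induced by adjunction units/counits, so that the diagram above genuinely commutes in the $\infty$-categorical sense. Everything else is formal: once the square is shown to have acyclic total complex after the two pullbacks $i^*$ and $j^*$, the conclusion follows from the fact that $(i^*,j^*)$ is jointly conservative on $\mc{D}^b_c(X,\Z_\ell)$, which is part of the recollement attached to the closed–open decomposition $(i,j)$ and hence available in the present setting by \cite[6.7]{bhatt-scholze}.
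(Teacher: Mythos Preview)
Your argument is correct, and it is genuinely different from what the paper does. The paper simply cites \cite[7.2]{em}: Cisinski--D\'eglise's construction of $\ell$-adic complexes comes equipped with cdh-descent, and since their category is equivalent to Bhatt--Scholze's by \cite[7.2.21]{em}, the triangle follows. In other words, the paper imports the result wholesale from the literature rather than arguing from first principles.

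Your route is more elementary and self-contained: you reduce the exactness of the triangle to the acyclicity of the total complex of the evident commutative square, and check this after $i^*$ and $j^*$ using only proper base change, the fact that $f_U$ is an isomorphism, and the joint conservativity of $(i^*,j^*)$. This is exactly the standard hands-on proof of cdh-descent from the six-functor formalism, and it has the advantage of not requiring the reader to chase through \cite{em}. The paper's approach, on the other hand, is shorter to state and situates the result within a general descent framework that is useful elsewhere.

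One expository remark: your first paragraph starts down a path via the colocalization triangle and the identification $i_E^!\Z_{\ell,Y}\simeq p^*i^!\Z_{\ell,X}$, then abandons it midway for the cleaner square-acyclicity argument. The second argument is complete on its own; you could drop the first paragraph entirely, or at least the base-change claim for $i_E^!$, which you never actually use (and which, incidentally, requires $f$ proper rather than just the square being cartesian).
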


The following formulas are consequences of \cite[5.1.7]{ddo}. They are closely linked to the Rapoport-Zink complex \cite[2.5]{rz} and are generalizations of the Mayer-Vietoris exact triangle for closed immersions \cite[2.2.31]{ayo07}.

\begin{proposition}\label{Mayer-Vietoris} Let $i\colon E\rar Y$ be a closed immersion and let $\ell$ be a prime number invertible on $Y$. Assume that $E=\bigcup\limits_{i \in I} E_i$ with $I$ a finite set and $(E_i)_{i \in I}$ a family of closed subschemes of $E$. If $J\subseteq I$, write $E_J=\bigcap\limits_{i \in J} E_i$ and write $i_J\colon E_J\rar Y$ the canonical closed immersion. Then, the canonical maps $(i_J)_* i_J^! \Z_{\ell,Y}\rar i_*i^!\Z_{\ell,Y}$ exhibit  $i_*i^!\Z_{\ell,Y}$ as the colimit
	
$$\colim\limits_{p \in (\Delta^{\mathrm{inj}})^{op}} \left(\bigoplus\limits_{\substack{J \subseteq I \\ |J|=p+1}} (i_J)_* i_J^! \Z_{\ell,Y}\right).$$ where $\Delta^{\mathrm{inj}}$ is the category of finite ordered sets with morphisms the injective maps.
\end{proposition}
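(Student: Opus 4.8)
\textbf{Proof proposal for Proposition \ref{Mayer-Vietoris}.}

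The plan is to reduce the statement to a statement about the functors $i^!$ localized at the closed subschemes $E_J$, and then to invoke the general ``inclusion-exclusion'' formula for a finite union of closed subsets, which is precisely the content of \cite[5.1.7]{ddo}. First I would record the elementary fact that for closed immersions $a\colon E_J \hookrightarrow E$ and $i\colon E \hookrightarrow Y$ with composite $i_J = i \circ a$, one has $i_J^! = a^! i^!$, hence $(i_J)_* i_J^! = i_* a_* a^! i^!$; since $i_*$ is fully faithful and commutes with the relevant (finite) colimits, it suffices to prove the corresponding statement inside $\mc{D}^b_c(E,\Z_\ell)$, namely that the canonical maps $(a_J)_* a_J^! \Z_{\ell,E} \rar \Z_{\ell,E}$ (where now $a_J\colon E_J \hookrightarrow E$) exhibit $\Z_{\ell,E}$ as the colimit over $(\Delta^{\mathrm{inj}})^{op}$ of $\bigoplus_{|J|=p+1} (a_J)_* a_J^! \Z_{\ell,E}$. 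In other words, applying $i^!$ throughout, I have replaced $Y$ by $E$ and $\Z_{\ell,Y}$ by $\Z_{\ell,E} = i^!\Z_{\ell,Y}$ up to a shift/twist that does not affect the colimit computation—more precisely, $i^!\Z_{\ell,Y}$ is a fixed object and $i_J^!\Z_{\ell,Y} = a_J^! i^! \Z_{\ell,Y}$, so the whole diagram is obtained by applying the exact functors $i_*$ and $a_J^!$ to the ``universal'' diagram on $E$ with the dualizing-type object $i^!\Z_{\ell,Y}$ in place of the constant sheaf.

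Next I would identify the simplicial (or rather $(\Delta^{\mathrm{inj}})^{op}$-indexed) object in question with the \v{C}ech-type nerve associated to the closed cover $E = \bigcup_{i\in I} E_i$. The key input is \cite[5.1.7]{ddo}, which expresses $\mathrm{id}_{\mc{D}(E)}$—or rather the section functor supported on $E$ viewed inside a larger scheme—as a finite homotopy colimit of the functors $(a_J)_* a_J^!$ indexed by the nonempty subsets $J \subseteq I$, with the transition maps given by the co-unit maps $a_J^! \to a_{J'}^!$ for $J' \subseteq J$; the alternating structure is encoded exactly by the category $\Delta^{\mathrm{inj}}$ of finite nonempty ordered sets and injections. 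Concretely, the colimit over $p \in (\Delta^{\mathrm{inj}})^{op}$ of $\bigoplus_{|J| = p+1}(a_J)_* a_J^! \Z_{\ell,E}$ is computed by the associated chain complex
$$\cdots \rar \bigoplus_{|J|=2}(a_J)_*a_J^!\Z_{\ell,E} \rar \bigoplus_{|J|=1}(a_J)_*a_J^!\Z_{\ell,E} \rar 0$$
and one must check that this complex is quasi-isomorphic to $\Z_{\ell,E}$, which is a purely formal consequence of the localization triangles \eqref{colocalization} iterated over $I$, i.e. of the fact that $i^!_{E}$ for $E$ covered by the $E_i$ decomposes by inclusion-exclusion; this is what \cite[5.1.7]{ddo} provides in the six-functor setting.

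The main obstacle I anticipate is purely bookkeeping: making the identification of the indexing category $(\Delta^{\mathrm{inj}})^{op}$ and the face maps precise enough that the cited inclusion-exclusion formula of \cite{ddo} applies verbatim, and checking that passing from the ``constant sheaf on $E$'' formulation to the ``$i^!\Z_{\ell,Y}$'' formulation via the fully faithful $i_*$ is legitimate—that is, that $i_*$ commutes with the finite colimit indexed by $(\Delta^{\mathrm{inj}})^{op}$ (it does, being exact and the colimit being a finite one, realized as a bounded complex). I would also note, as a sanity check, the two extreme cases: when $I$ is a singleton the formula degenerates to $i_*i^!\Z_{\ell,Y} = i_*i^!\Z_{\ell,Y}$, and when $|I| = 2$ it recovers precisely the Mayer-Vietoris triangle for closed immersions of \cite[2.2.31]{ayo07}, namely $i_*i^!\Z_{\ell,Y} \to (i_1)_*i_1^!\Z_{\ell,Y} \oplus (i_2)_*i_2^!\Z_{\ell,Y} \to (i_{12})_*i_{12}^!\Z_{\ell,Y}$, which reassures that the signs and the direction of the arrows are correct. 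No deep geometric input is needed beyond \cite[5.1.7]{ddo} and the standard compatibilities $i_J^! = a_J^! i^!$ and $(i\circ a)_* = i_* a_*$.
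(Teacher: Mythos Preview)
Your proposal is correct and follows the paper's approach: the paper itself gives no proof for this proposition beyond the one-line attribution ``The following formulas are consequences of \cite[5.1.7]{ddo},'' and your argument is precisely an unpacking of why that citation applies, via the compatibilities $i_J^! = a_J^! i^!$ and $(i_J)_* = i_* (a_J)_*$ together with exactness of $i_*$. One small quibble: in your $|I|=2$ sanity check the exact triangle coming from the colimit description is $(i_{12})_*i_{12}^!\Z_{\ell,Y} \rar (i_1)_*i_1^!\Z_{\ell,Y} \oplus (i_2)_*i_2^!\Z_{\ell,Y} \rar i_*i^!\Z_{\ell,Y}$, i.e.\ $i_*i^!$ is the cofiber, not the first term as you wrote.
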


\begin{proposition}\label{Mayer-Vietoris2} Let $i\colon E\rar Y$ be a closed immersion and let $\ell$ be a prime number invertible on $Y$. Assume that $E=\bigcup\limits_{i \in I} E_i$ with $I$ a finite set and $(E_i)_{i \in I}$ a family of closed subschemes of $E$. If $J\subseteq I$, write $E_J=\bigcap\limits_{i \in J} E_i$ and write $i_J\colon E_J\rar Y$ the canonical closed immersion. Then, the canonical maps $i_*i^*\Z_{\ell,Y}\rar (i_J)_* i_J^* \Z_{\ell,Y}$ exhibit  $i_*i^*\Z_{\ell,Y}$ as the limit
	
$$\lim\limits_{p \in (\Delta^{\mathrm{inj}})^{op}} \left(\bigoplus\limits_{\substack{J \subseteq I \\ |J|=p+1}} (i_J)_* i_J^* \Z_{\ell,Y}\right).$$ where $\Delta^{\mathrm{inj}}$ is the category of finite ordered sets with morphisms the injective maps.
\end{proposition}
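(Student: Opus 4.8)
The plan is to strip off the closed immersion $i$, and then to identify the remaining statement as the cosimplicial descent datum attached to the closed cover $\{E_i\}_{i\in I}$ of $E$, which in turn reduces by induction on $|I|$ to the two-term case provided by \Cref{cdh-descent}.

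First I would reduce to the case $Y=E$, $i=\id_E$. Writing $a_J\colon E_J\rar E$ for the closed immersion such that $i_J=i\circ a_J$, one has $i_J^*\Z_{\ell,Y}=a_J^*i^*\Z_{\ell,Y}=\Z_{\ell,E_J}$, hence $(i_J)_*i_J^*\Z_{\ell,Y}=i_*\big((a_J)_*\Z_{\ell,E_J}\big)$, and likewise $i_*i^*\Z_{\ell,Y}=i_*\Z_{\ell,E}$. Since $I$ is finite the diagram is essentially bounded — the summand vanishes as soon as $p+1>|I|$ — so the limit is a finite limit and exists in $\mc{D}^b_c(E,\Z_\ell)$; as $i_*$ is a right adjoint it commutes with this limit and carries the canonical restriction maps to the canonical restriction maps. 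It therefore suffices to prove, in $\mc{D}^b_c(E,\Z_\ell)$ and compatibly with the unit maps $\Z_{\ell,E}\rar (a_J)_*\Z_{\ell,E_J}$, that
$$\Z_{\ell,E}\ \simeq\ \lim_{p \in (\Delta^{\mathrm{inj}})^{op}} \left(\bigoplus_{\substack{J\subseteq I\\ |J|=p+1}}(a_J)_*\Z_{\ell,E_J}\right).$$

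Next I would prove this identity by induction on $|I|$. The case $|I|=1$ is trivial. For $|I|=2$ the square with vertices $E_1\cap E_2$, $E_1$, $E_2$, $E$ is an abstract blow-up square: all four of its maps are closed immersions, $E_1\rar E$ is proper, and since $E=E_1\cup E_2$ the open subscheme $E\setminus E_2$ is contained in $E_1$ and is carried isomorphically onto by $E_1\rar E$. Hence \Cref{cdh-descent} furnishes an exact triangle $\Z_{\ell,E}\rar (a_1)_*\Z_{\ell,E_1}\oplus(a_2)_*\Z_{\ell,E_2}\rar (a_{12})_*\Z_{\ell,E_{12}}$, which is precisely the displayed identity for a two-element cover. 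For the inductive step, put $F=\bigcup_{i<n}E_i$, so that $E=F\cup E_n$ with $F$, $E_n$ and $F\cap E_n=\bigcup_{i<n}(E_i\cap E_n)$ all closed in $E$; applying the two-element case to this decomposition, the inductive hypothesis to the covers $\{E_i\}_{i<n}$ of $F$ and $\{E_i\cap E_n\}_{i<n}$ of $F\cap E_n$, and gluing the resulting totalizations along the length-one triangle yields the totalization for $\{E_i\}_{i\leqslant n}$. This is exactly the combinatorial input of \cite[5.1.7]{ddo}, which one may also cite directly.

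I expect the main obstacle to be the bookkeeping in the inductive step: one must check the scheme-theoretic identity $\big(\bigcup_{i<n}E_i\big)\cap E_n=\bigcup_{i<n}(E_i\cap E_n)$ and, more delicately, match the coface (restriction) maps of the glued diagram with those of the nerve of $\{E_i\}_{i\leqslant n}$, so that the three totalizations assemble to the right object — this is where the precise semicosimplicial indexing is used. A secondary point to be careful about is that the square above must be cdh-distinguished as a square of \emph{schemes}, i.e.\ that $E=E_1\cup E_2$ is read scheme-theoretically, so that $E_1\setminus(E_1\cap E_2)\rar E\setminus E_2$ is an isomorphism and not merely a homeomorphism. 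Alternatively one can bypass \Cref{cdh-descent} entirely by checking, on geometric points of $E_{\et}$, the two-element short exact sequence $0\rar (\Z/\ell^n)_E\rar (a_1)_*(\Z/\ell^n)_{E_1}\oplus(a_2)_*(\Z/\ell^n)_{E_2}\rar (a_{12})_*(\Z/\ell^n)_{E_{12}}\rar 0$ and passing to the limit over $n$.
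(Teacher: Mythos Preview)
Your approach is correct and in fact more detailed than what the paper does: the paper gives no argument at all for \Cref{Mayer-Vietoris2} beyond the sentence ``The following formulas are consequences of \cite[5.1.7]{ddo}'', so your proposal is essentially an unpacking of that citation. Your reduction to $Y=E$ via the exactness of $i_*$ and the finiteness of the diagram is clean, and the two-term case via \Cref{cdh-descent} (or, as you note, a direct stalkwise check mod $\ell^n$) is exactly the closed Mayer-Vietoris triangle.

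The one place where your write-up is honest but incomplete is the inductive step: ``gluing the resulting totalizations along the length-one triangle yields the totalization for $\{E_i\}_{i\leqslant n}$'' is the right idea, but matching the semicosimplicial structure maps coherently is genuinely the content of \cite[5.1.7]{ddo}, and is not something that falls out automatically from writing down the three pieces. Since you already flag this and offer the citation as an alternative, there is no gap --- but if you want a self-contained proof you will need to spell out that combinatorial identification, or else reformulate the induction as a filtration argument on the totalization (truncating the \v{C}ech nerve at level $\leqslant k$ and showing each cofiber is controlled by the inductive hypothesis).
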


\subsection{Perverse sheaves} In this paragraph, we recall the definition of the perverse t-structure over base scheme endowed with a dimension function following \cite{notesgabber}. First recall the definition of dimension functions (see for instance \cite[1.1.1]{bondarko-deglise}):
\begin{definition}
    Let $S$ be a scheme. A \emph{dimension function} on $S$ is a function $\delta\colon S\rar \Z$ such that for any immediate specialization $y$ of a point $x$ (\textit{i.e} a specialization such that $\codim_{\overline{\{ x\}}} (y)=1$), we have $\delta(x)=\delta(y)+1$.
\end{definition}

Two dimension functions on a scheme differ by a Zariski-locally constant function. 
Moreover, if $S$ is universally catenary and integral, the formula $$\delta(x)=\dim(X)-\codim_X(x)$$ gives a dimension function on $S$.
Finally, any $S$-scheme of finite type $f\colon X\rar S$ inherits a canonical dimension function by setting 
$$\delta(x)=\delta(f(x))+\mathrm{tr.deg}\hspace{1mm} k(x)/k(f(x)).$$

The auto-dual perversity $p_{1/2}\colon Z\mapsto -\delta(Z)$ induces two t-structures $[p_{1/2}]$ and $[p_{1/2}^+]$ on $\mc{D}^b_c(S,\Z_\ell)$. In the thesis, we only consider the t-structure $[p_{1/2}]$. We now recall its definition over a general base scheme. 

\begin{definition} (Compare with \cite[2]{notesgabber}) 
Let $S$ be a scheme endowed with a dimension function and $\ell$ be a prime number invertible on $S$. 
For every point $x\in S$, let $i_x\colon \{ x\}\rar S$ be the inclusion.
We define
\begin{itemize}
\item the full subcategory ${}^{\mathrm{p}} \mc{D}^{\leqslant 0}(S,\Z_\ell)$ as the subcategory of those complexes $K$ such that for any point $x\in S$, we have $i_x^*K\leqslant p_{1/2}(\overline{\{ x\}})=-\delta(x)$ with respect to the ordinary t-structure of $\mc{D}(k(x),\Z_\ell)$.
\item the full subcategory ${}^{\mathrm{p}} \mc{D}^{\geqslant 0}(S,\Z_\ell)$ as the subcategory of those complexes $K$ such that for any point $x\in S$, we have $i_x^!K\geqslant p_{1/2}(\overline{\{ x\}})=-\delta(x)$ with respect to the ordinary t-structure of $\mc{D}(k(x),\Z_\ell)$.
\end{itemize}
\end{definition}

In the case when the ring of coefficients is $\Z/\ell\Z$, \cite[6]{notesgabber} implies that this defines a t-structure on $\mc{D}(S,\Z/\ell\Z)$ and \cite[8.2]{notesgabber} implies that $\mc{D}^b_c(S,\Z/\ell\Z)$ is a sub-t-category of $\mc{D}(S,\Z/\ell\Z)$. The proofs of \cite{notesgabber} actually apply with coefficients in $\Z_\ell$ which defines a t-structure on $\mc{D}^b_c(S,\Z_\ell)$. 

Alternatively, when the scheme $S$ is assumed to be excellent, this is a t-structure for the same reasons as in \cite[2.2.9-2.2.17]{bbd} (see \cite[2.2]{morel2} for a more precise explanation; the proof uses the absolute purity property \cite[XVI.3.1]{travauxgabber}). The following result is an important step of the latter proof and will be used in the rest of the paper. 

\begin{proposition}\label{t-structure perverse lisse} Let $S$ be a regular connected scheme endowed with a dimension function $\delta$ and $\ell$ be a prime number invertible on $S$. The perverse t-structure of $\mc{D}^b_c(S,\Z_\ell)$ induces a t-structure on $\mc{D}_{\mathrm{lisse}}(S,\Z_\ell)$ which coincides with the ordinary t-structure shifted by $\delta(S)$. In particular, the heart of this t-structure is $\mathrm{Loc}_S(\Z_\ell)[\delta(S)]$.
\end{proposition}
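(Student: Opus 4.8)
The plan is to show that, on a regular connected scheme $S$ with dimension function $\delta$, the perverse t-structure restricted to dualizable objects is the ordinary t-structure shifted by $\delta(S)$. Since both the ordinary shifted t-structure and the perverse t-structure are determined by their connective (or coconnective) parts, it suffices to prove the equality $\mc{D}_{\mathrm{lisse}}(S,\Z_\ell)\cap {}^{\mathrm p}\mc{D}^{\leqslant 0}(S,\Z_\ell) = \mc{D}_{\mathrm{lisse}}(S,\Z_\ell)\cap \mc{D}^{\leqslant \delta(S)}(S,\Z_\ell)$ and the analogous equality for the coconnective parts, where the last $\mc{D}^{\leqslant \delta(S)}$ refers to the ordinary t-structure. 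First I would recall that for a dualizable (lisse) complex $L$ and a point $x\in S$, the stalk $i_x^* L$ computes, under $\xi_x^*$, the restriction of the underlying representation; in particular if $L$ lies in ordinary degrees $\leqslant n$ then so does $i_x^* L$ for every $x$. Conversely, checking ordinary amplitude can be done on a single geometric point of each connected component since $L$ is lisse.

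Next I would handle the connective inclusion. If $L$ is lisse and lies in ordinary degrees $\leqslant n$, then for every $x\in S$ the stalk $i_x^* L$ is in ordinary degrees $\leqslant n$ over $k(x)$. We have $\delta(x) = \delta(S) - \mathrm{codim}_S(x)$ because $S$ is regular (hence integral after restricting to the component, and catenary), so $\delta(x)\leqslant \delta(S)$ with equality exactly at the generic point. Therefore $i_x^* L$ is in degrees $\leqslant n \leqslant p_{1/2}(\overline{\{x\}}) = -\delta(x)$ precisely when $n\leqslant -\delta(S)$, i.e.\ after the shift by $\delta(S)$ this is the condition that $L[\delta(S)]$ — equivalently $L$ placed appropriately — is perverse-connective; and conversely if $L$ is perverse-connective then testing at the generic point $\eta$ where $\delta(\eta)=\delta(S)$ forces $i_\eta^* L$, hence $L$ itself, to be in ordinary degrees $\leqslant -\delta(S)$. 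This gives the connective equality.

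For the coconnective inclusion I would use absolute purity. The key point is that for $i_x\colon\{x\}\hookrightarrow S$ with $S$ regular, the exceptional pullback $i_x^! L$ of a lisse complex $L$ is computed from $i_x^* L$ by a Tate twist and a shift by twice the codimension: $i_x^! L \simeq i_x^* L(-c)[-2c]$ with $c = \mathrm{codim}_S(x)$, at least after reducing to closed points via a dévissage on the dimension of $\overline{\{x\}}$ (factor $i_x$ through the inclusion of $\overline{\{x\}}$, which is again regular after restricting to its regular locus, and through its generic point). Concretely one uses the purity isomorphism $i^! \Z_{\ell,S}\simeq \Z_{\ell,Z}(-c)[-2c]$ from \cite[XVI.3.1]{travauxgabber} together with the projection formula $i_x^! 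L \simeq i_x^* L\otimes i_x^!\Z_{\ell,S}$ valid because $L$ is dualizable. Then $i_x^! L$ is in ordinary degrees $\geqslant m+2c$ whenever $i_x^* L$ — equivalently $L$ — is in degrees $\geqslant m$; and $m + 2c \geqslant -\delta(x) = -\delta(S)+c$ holds for all $x$ iff $m\geqslant -\delta(S)$ (the binding case again being $c=0$). This yields the coconnective equality, and combining the two gives the identification of hearts $\mathrm{Loc}_S(\Z_\ell)[\delta(S)]$.

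The main obstacle is the coconnective part: one must carry out the dévissage reducing the computation of $i_x^!$ on lisse complexes to the absolute purity statement along closed immersions of regular schemes, being careful that the successive strata/closures are regular (or at least that one may pass to their regular loci) so that purity applies, and tracking the shifts so that the numerics $m+2c$ versus $-\delta(x)=-\delta(S)+c$ line up uniformly in $x$. Once purity is invoked correctly, everything else is the bookkeeping with $\delta(x) = \delta(S) - \mathrm{codim}_S(x)$ and the fact that amplitude of a lisse complex is detected at generic points.
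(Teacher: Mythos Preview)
Your proposal is correct and follows essentially the same approach as the paper: verify the pointwise conditions defining the perverse t-structure, handling $i_x^*$ trivially for lisse complexes and computing $i_x^! L \simeq i_x^* L(-c)[-2c]$ via absolute purity. The paper organizes this more tersely by first reducing (via d\'evissage on the ordinary t-structure) to checking that objects of $\mathrm{Loc}_S(\Z_\ell)$ lie in perverse degree $\delta(S)$; your concern about justifying purity at a non-closed point $x$ is legitimate (the paper glosses over it), and it is resolved more cleanly than the d\'evissage through $\overline{\{x\}}$ you sketch by instead factoring $i_x$ as $\{x\}\hookrightarrow \Spec(\mathcal{O}_{S,x})\to S$, where the second map is a pro-open immersion (so $(-)^!=(-)^*$) and the first is a closed immersion of regular schemes to which absolute purity applies directly.
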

\begin{proof} Since the category $\mc{D}_{\mathrm{lisse}}(S,\Z_\ell)$ is bounded with respect to the shifted ordinary t-structure, by dévissage it suffices to show that the objects of $\mathrm{Loc}_S(\Z_\ell)$ are in degree $\delta(S)$ with respect to the perverse t-structure. 

Let $L$ be a locally constant sheaf of $\Z_\ell$-modules on $S$ and let $x$ be a point of $S$. The sheaf $i_x^*L$ is in degree $0\leqslant \delta(S)-\delta(x)$. Furthermore, since the scheme $S$ is regular and connected, by the absolute purity property \cite[XVI.3.1]{travauxgabber}, we have $$i_x^!L=i_x^*L(-c)[-2c]$$ where $c=\codim_S(x)=\delta(S)-\delta(x)$. Therefore, the sheaf $i_x^!L$ is in degree $2c\geqslant \delta(S)-\delta(x)$. Hence, the sheaf $L$ is in degree $\delta(S)$ with respect to the perverse t-structure which finishes the proof. 
\end{proof}

If $n$ is an integer, we will denote by $\Hlp^n$ the $n$-th cohomology functor with respect to the perverse t-structure. 
\begin{definition} Let $S$ be a scheme endowed with a dimension function and $\ell$ be a prime number that is invertible on $S$. 
The \emph{category $\mathrm{Perv}(S,\Z_\ell)$ of perverse sheaves} is the heart of the perverse t-structure on $\mc{D}^b_c(S,\Z_\ell)$.
\end{definition}

Finally, note that \Cref{t-structure perverse lisse,premieres prop du coeur} imply that $\mathrm{Loc}_S(\Z_\ell)[\delta(S)]$ is a weak Serre subcategory of $\mathrm{Perv}(S,\Z_\ell)$ when the scheme $S$ is regular and connected.

\section{Artin \texorpdfstring{$\ell$}{\textell}-adic complexes}
In this section, we define abelian categories of Artin $\ell$-adic sheaves and stable categories of Artin $\ell$-adic complexes. We show that the ordinary t-structure on the derived category of $\ell$-adic sheaves induces a t-structure on the category of Artin $\ell$-adic complexes such that the heart of this t-structure is the category of Artin $\ell$-adic sheaves.
\subsection{Artin \texorpdfstring{$\ell$}{\textell}-adic sheaves and complexes}

\begin{definition} Let $S$ be a scheme and $\ell$ a prime number that is invertible on $S$. 
	\begin{enumerate}\item We define the abelian category $\Sh^{smA}(S,\Z_\ell)$  of \emph{smooth Artin $\ell$-adic sheaves} over $S$ as the Serre subcategory of $\mathrm{Loc}_S(\Z_\ell)$ made of those objects $M$ such that for any geometric point $\xi$ of $S$, the representation $\xi^*M$ of $\pi_1^{\mathrm{pro\acute{e}t}}(S,\xi)$ is of Artin origin.
	\item We define the abelian category $\Sh^A(S,\Z_\ell)$  of \emph{Artin $\ell$-adic sheaves} over $S$ as the category of constructible $\ell$-adic sheaves such that there exists a stratification of $S$ such that for any stratum $T$, the $\ell$-adic sheaf $L|_T$ is smooth Artin.
	\end{enumerate}
\end{definition}
\begin{remark} Let $S$ be a connected scheme and $\ell$ a prime number invertible on $S$. Let $\xi$ be a geometric point of $S$. Then, the functor $\xi^*$ induces an equivalence from $\Sh^{smA}(S,\Z_\ell)$ to $\Rep^A(\pi_1^{\mathrm{pro\acute{e}t}}(S,\xi),\Z_\ell)^*$ by \Cref{description des faisceaux l-adiques lisses}.
\end{remark}

\begin{definition}\label{def l-adiques} Let $S$ be a scheme and $\ell$ a prime number which is invertible on $S$.
	\begin{enumerate}\item The stable category $\mc{D}^A(S,\Z_\ell)$ of \emph{Artin $\ell$-adic complexes} is the thick stable subcategory of the stable category $\mc{D}^b_c(S,\Z_\ell)$ generated by the complexes of the form $f_*\Z_{\ell,X}$ with $f\colon X\rar S$ finite.
		\item The stable category $\mc{D}^{smA}(S,\Z_\ell)$ of \emph{smooth Artin $\ell$-adic complexes} over $S$ with coefficients in $\Z_\ell$ is the thick stable subcategory of the stable category $\mc{D}^b_c(S,\Z_\ell)$ generated by the complexes of the form $f_*\Z_{\ell,X}$ with $f\colon X\rar S$ étale and finite.
		\item The stable category $\mc{D}^A_{\In}(S,\Z_\ell)$ (resp. $\mc{D}^{smA}_{\In}(S,\Z_\ell)$) is the localizing subcategory of the stable category $\mc{D}(S,\Z_\ell)$ with the generators as in (1) (resp. (2)).
	\end{enumerate}
\end{definition}

\begin{proposition}\label{presentability l-adique} Let $S$ be a scheme and $\ell$ a prime number which is invertible on $S$. Then, the stable category $\mc{D}^A_{\In}(S,\Z_\ell)$ is presentable.    
\end{proposition}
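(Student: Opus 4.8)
The plan is to reduce the statement to standard facts about presentable stable $\infty$-categories, using only that the ambient category $\mc{D}(S,\Z_\ell)$ is presentable and that $\mc{D}^A_{\In}(S,\Z_\ell)$ is, by construction, the localizing subcategory it generates on a small set of objects. First I would recall that $\mc{D}(S,\Z_\ell)$ is presentable: by the definitions of the previous subsection it is the derived $\infty$-category of the abelian category $\Sh(S,\Z_\ell)$ of sheaves of $\Z_{\ell,S}$-modules on the pro-étale site of $S$, and, after replacing the pro-étale site by an essentially small (replete, bounded) model as in \cite{bhatt-scholze}, that abelian category is a Grothendieck abelian category; hence its derived $\infty$-category is presentable by \cite[1.3.5.21]{ha}.

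Next I would check that the generating family is essentially small: a finite $S$-scheme is the spectrum of a finite $\mc{O}_S$-algebra, so finite $S$-schemes form a set up to isomorphism, and therefore $\mc{E}=\{f_*\Z_{\ell,X}\mid f\colon X\to S\text{ finite}\}$ is a small set of objects of $\mc{D}(S,\Z_\ell)$. By \Cref{def l-adiques}, the category $\mc{D}^A_{\In}(S,\Z_\ell)$ is precisely the localizing subcategory of $\mc{D}(S,\Z_\ell)$ generated by $\mc{E}$. Since $\mc{D}(S,\Z_\ell)$ is stable, a full subcategory closed under finite limits and arbitrary colimits is the same thing as a stable subcategory closed under all small colimits: closure under arbitrary colimits forces closure under suspension and cofibers, while closure under finite limits forces a zero object and closure under fibers. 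Thus $\mc{D}^A_{\In}(S,\Z_\ell)$ is the smallest stable subcategory of $\mc{D}(S,\Z_\ell)$ containing $\mc{E}$ and closed under small colimits, and I would then invoke the general fact that such a subcategory of a presentable stable $\infty$-category is again presentable, with colimit-preserving inclusion (see e.g.\ \cite[1.4.4.2]{ha}), to conclude.

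The only genuinely non-formal ingredient is the presentability of $\mc{D}(S,\Z_\ell)$ itself, which rests on the fact that Bhatt–Scholze's pro-étale topos admits a presentation by an essentially small site so that $\Sh(S,\Z_\ell)$ is honestly a Grothendieck abelian category; once this is granted, the rest is unwinding \Cref{def l-adiques} and applying the standard machinery. As a byproduct of the argument, the inclusion $\mc{D}^A_{\In}(S,\Z_\ell)\hookrightarrow\mc{D}(S,\Z_\ell)$ preserves small colimits and hence admits a right adjoint, which is the input needed for the functor $\omega^0$ studied later in the paper.
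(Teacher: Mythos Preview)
Your proposal is correct and follows essentially the same approach as the paper: the paper simply invokes \cite[1.4.4.11(2)]{ha}, which is exactly the statement that in a presentable stable $\infty$-category the localizing subcategory generated by a small set of objects is again presentable, while you spell out the verification of its hypotheses (presentability of $\mc{D}(S,\Z_\ell)$ and smallness of the generating family). The only discrepancy is that the precise reference for the final step is \cite[1.4.4.11(2)]{ha} rather than \cite[1.4.4.2]{ha}.
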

\begin{proof} This follows from \cite[1.4.4.11(2)]{ha}.
\end{proof}

\subsection{Description of smooth Artin \texorpdfstring{$\ell$}{\textell}-adic complexes}

\begin{proposition}\label{t-structure ordinaire smooth} Let $S$ be a scheme and $\ell$ a prime number which is invertible on $S$. The ordinary t-structure of $\mc{D}(S,\Z_\ell)$ induces a t-structure on $\mc{D}^{smA}(S,\Z_\ell)$. The heart of this t-structure is equivalent to $\Sh^{smA}(S,\Z_\ell)$. In particular, the subcategory $\mc{D}^{smA}(S,\Z_\ell)$ is the subcategory made of those bounded complexes $C$ of $\ell$-adic sheaves such that for all integer $n$, the sheaf $\Hl^n(C)$ is smooth Artin.
\end{proposition}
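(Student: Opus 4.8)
The plan is to realise $\mc{D}^{smA}(S,\Z_\ell)$ as the subcategory $\mc{D}^b_{\mathrm{A}}$ (in the notation of \Cref{D^b_A}) attached to the weak Serre subcategory $\mathrm{A}=\Sh^{smA}(S,\Z_\ell)$ of $\mc{D}(S,\Z_\ell)^\heart=\Sh(S,\Z_\ell)$, and then to quote \Cref{D^b_A description}. Note first that $\Sh^{smA}(S,\Z_\ell)$ is a weak Serre subcategory of $\Sh(S,\Z_\ell)$: it is a Serre subcategory of $\mathrm{Loc}_S(\Z_\ell)$ by definition (using \Cref{Artin origin}), the latter is a weak Serre subcategory of $\Sh(S,\Z_\ell)$, and a Serre subcategory of a weak Serre subcategory is again weak Serre, since an exact and fully faithful embedding reflects exact sequences. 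We may moreover assume $S$ connected: being noetherian, $S$ is a finite disjoint union of its connected components $S_j$, each immersion $S_j\rar S$ is open and closed hence finite étale, every finite étale $S$-scheme is the disjoint union over $j$ of finite étale $S_j$-schemes, and all the categories at play — including $\mc{D}^{smA}$, $\mc{D}^b_{\mathrm{A}}$ and $\Sh^{smA}$ — decompose accordingly as products over $j$. Fix then a geometric point $\xi$ of the connected scheme $S$ and set $\pi=\pi_1^{\mathrm{pro\acute{e}t}}(S,\xi)$, so that $\xi^*$ is an equivalence $\mathrm{Loc}_S(\Z_\ell)\rar\Rep(\pi,\Z_\ell)$ by \Cref{description des faisceaux l-adiques lisses}.

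For the inclusion $\mc{D}^{smA}(S,\Z_\ell)\subseteq\mc{D}^b_{\mathrm{A}}$ it suffices, since $\mc{D}^b_{\mathrm{A}}$ is thick by \Cref{premieres prop du coeur}(1) and \Cref{D^b_A description}(1), to check that the generators $f_*\Z_{\ell,X}$ with $f\colon X\rar S$ finite étale lie in $\Sh^{smA}(S,\Z_\ell)$. For such an $f$ the functor $f_*$ is exact and preserves lisse sheaves, so $f_*\Z_{\ell,X}$ is a lisse sheaf placed in degree $0$; decomposing $X$ into its connected components (each mapping finite étale, and surjectively, onto the connected scheme $S$) and applying \Cref{6 functors lisse 2}, its stalk at $\xi$ is a finite direct sum of representations $\mathrm{Ind}_{\pi'}^{\pi}\Z_\ell\cong\Z_\ell[\pi/\pi']$ with $\pi'$ of finite index in $\pi$. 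The $\pi$-action on such a module lands in the finite group of permutations of $\pi/\pi'$, hence factors through a finite quotient of $\pi$, so it is an Artin representation.

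The reverse inclusion $\mc{D}^b_{\mathrm{A}}\subseteq\mc{D}^{smA}(S,\Z_\ell)$ is the substantial point. As $\mc{D}^{smA}(S,\Z_\ell)$ is thick it is stable under shifts, cones and retracts; a dévissage along the ordinary cohomology sheaves reduces us to showing that every smooth Artin sheaf lies in $\mc{D}^{smA}(S,\Z_\ell)$, and a further dévissage — a smooth Artin sheaf corresponds to an iterated extension of Artin representations, and $\mc{D}^{smA}$ is closed under extensions — reduces us to a sheaf $M$ with $\xi^*M$ a single Artin representation. Such an $\xi^*M$ factors through a finite quotient $G$ of $\pi$; since $\pi_1^{\et}(S,\xi)$ is the profinite completion of $\pi$, this $G$ is the Galois group of a connected finite étale Galois cover $\widetilde{S}\rar S$, and every subgroup $H\leqslant G$ yields a finite étale intermediate cover $q_H\colon S_H=\widetilde{S}/H\rar S$ with $\pi/\pi_1^{\mathrm{pro\acute{e}t}}(S_H,\xi)\cong G/H$, whence $\xi^*\!\left(q_{H*}\Z_{\ell,S_H}\right)\cong\Z_\ell[G/H]$ by \Cref{6 functors lisse 2}. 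Now apply the Balmer--Gallauer permutation resolution (\Cref{resolution de permutation}) to the $\Z_\ell[G]$-module $\xi^*M$: there is a quasi-isomorphism of bounded complexes of $\Z_\ell[G]$-modules $P_\bullet\rar\xi^*M\oplus\xi^*M[1]$ in which each $P_i$ is a finite direct sum of modules $\Z_\ell[G/H]$. Inflating along the projection $\pi\rar G$ and applying the exact functor inverse to $\xi^*$ turns this into an isomorphism in $\mc{D}(S,\Z_\ell)$ between $M\oplus M[1]$ and a bounded complex whose terms are finite direct sums of the generators $q_{H*}\Z_{\ell,S_H}$; such a complex lies in $\mc{D}^{smA}(S,\Z_\ell)$ by dévissage along its stupid filtration. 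Hence $M\oplus M[1]\in\mc{D}^{smA}(S,\Z_\ell)$, and since $M$ is a retract of $M\oplus M[1]$ we conclude $M\in\mc{D}^{smA}(S,\Z_\ell)$.

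Combining the two inclusions gives $\mc{D}^{smA}(S,\Z_\ell)=\mc{D}^b_{\mathrm{A}}$, and \Cref{D^b_A description}(1) then says exactly that the ordinary t-structure induces a t-structure on $\mc{D}^{smA}(S,\Z_\ell)$ with heart $\mathrm{A}=\Sh^{smA}(S,\Z_\ell)$; the asserted description of the objects of $\mc{D}^{smA}(S,\Z_\ell)$ is just the definition of $\mc{D}^b_{\mathrm{A}}$. The only genuinely non-formal ingredient is the appeal to \Cref{resolution de permutation}: an arbitrary $\Z_\ell[G]$-module is \emph{not} an iterated extension of the permutation modules $\Z_\ell[G/H]$ (even rationally one only has virtual identities of Artin--Brauer type), and it is precisely the passage to $M\oplus M[1]$ together with closure under retracts — available because $\mc{D}^{smA}$ is defined as a \emph{thick} subcategory — that makes the argument work.
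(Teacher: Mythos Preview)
Your proof is correct and follows essentially the same route as the paper: reduce to connected $S$, check that the generators $f_*\Z_{\ell,X}$ lie in $\Sh^{smA}(S,\Z_\ell)$ via \Cref{6 functors lisse 2}, and for the converse reduce to a single Artin representation and invoke the Balmer--Gallauer permutation resolution (\Cref{resolution de permutation}) to get $M\oplus M[1]$ in $\mc{D}^{smA}$, then use closure under retracts. The only cosmetic difference is that the paper phrases the reduction as computing the intersection $\mc{D}^{smA}(S,\Z_\ell)\cap\Sh(S,\Z_\ell)$ (via \Cref{keur}), whereas you argue the two inclusions $\mc{D}^{smA}\subseteq\mc{D}^b_{\mathrm{A}}$ and $\mc{D}^b_{\mathrm{A}}\subseteq\mc{D}^{smA}$ directly.
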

\begin{proof} We can assume that $S$ is connected.
	By \Cref{keur} and \Cref{D^b_A description}, it suffices to show that $$\mc{D}^{smA}(S,\Z_\ell) \cap \Sh(S,\Z_\ell)=\Sh^{smA}(S,\Z_\ell).$$
	
	Let $f\colon X\rar S$ be étale and finite. Recall that $f_*$ is t-exact with respect to the ordinary t-structure of $\mc{D}(S,\Z_\ell)$. Therefore, using \Cref{6 functors  lisse 2}, the object $f_*\Z_{\ell,X}$  lies in $\Sh^{smA}(S,\Z_\ell)$.
	
	Thus, by \Cref{D^b_A description,premieres prop du coeur,Artin origin}, the category $\mc{D}^{smA}(S,\Z_\ell)$ is included in $\mc{D}^b(S,\Z_\ell)_{\Sh^{smA}(S,\Z_\ell)}$ (see \Cref{D^b_A}). Thus, we have $$\mc{D}^{smA}(S,\Z_\ell) \cap \Sh(S,\Z_\ell)\subseteq \Sh^{smA}(S,\Z_\ell).$$
	
	Take now $M$ a smooth Artin $\ell$-adic sheaf. It corresponds to a representation of Artin origin of $\pi_1^{\mathrm{pro\acute{e}t}}(S,\xi)$ that we still denote by $M$. 
	
	Assume first that $M$ is an Artin representation. Let $G$ be a finite quotient of $\pi_1^{\mathrm{pro\acute{e}t}}(S,\xi)$ through which $M$ factors. Then, by \Cref{resolution de permutation} the complex $M \oplus M[1]$ of $\Z_\ell[G]$-modules is quasi-isomorphic as a complex of $\Z_\ell[G]$-modules to a complex $$P=\cdots \rar 0 \rar P_n \rar P_{n-1} \rar \cdots \rar P_0 \rar 0 \rar \cdots$$ where each $P_i$ is placed in degree $-i$ and is isomorphic to $\Z_\ell[G/H_1]\oplus \cdots \oplus \Z_\ell[G/H_m]$ for some subgroups $H_1,\ldots,H_m$ of $G$.
	
	The exact functor $$\Rep(G,\Z_\ell)\rar \Rep(\pi_1^{\mathrm{pro\acute{e}t}}(S,\xi),\Z_\ell) \rar \Sh(S,\Z_\ell)$$ induces an exact functor $$\mc{D}^b(\Rep(G,\Z_\ell))\rar \mc{D}(S,\Z_\ell).$$
	
	If $H$ is a subgroup of $G$, the finite set $G/H$ is endowed with an action of $\pi_1^{\mathrm{pro\acute{e}t}}(S,\xi)$ and therefore corresponds to a finite locally constant sheaf on $S_{\et}$, and therefore to a finite étale $S$-scheme $f\colon X\rar S$. The image of $\Z_\ell[G/H]$ through the functor above is then $f_*\Z_{\ell,X}$.
	
	Thus, the object $M\oplus M[1]$, and therefore the object $M$ itself, is in the thick category generated by the $f_*\Z_{\ell,X}$ with $f\colon X\rar S$ finite and étale, which is $\mc{D}^{smA}(S,\Z_\ell)$.
	
	Now, since $\mc{D}^{smA}(S,\Z_\ell)$ is closed under extensions, any representation of Artin origin of $\pi_1^{\mathrm{pro\acute{e}t}}(S,\xi)$
	lies in $\mc{D}^{smA}(S,\Z_\ell)$ which finishes the proof.
\end{proof}

\begin{corollary}\label{t-structure ordinaire smooth fields} If $k$ is a field and $\ell$ is a prime number that is distinct from the characteristic of $k$, then, the category $\mc{D}^A(k,\Z_\ell)$ is equivalent to the category of the complexes $C$ in $\mc{D}^b(\Sh((BG_k)_{\mathrm{pro\acute{e}t}},\Z_\ell))$ (see \Cref{cas des corps}) such that the sheaf $\Hl^n(C)$ identifies with a representation of Artin origin for all $n$.
\end{corollary}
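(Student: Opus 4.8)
The plan is to reduce everything to \Cref{t-structure ordinaire smooth} via the dictionary of \Cref{cas des corps}. The crucial point is that over a field the categories $\mc{D}^A(k,\Z_\ell)$ and $\mc{D}^{smA}(k,\Z_\ell)$ coincide. The inclusion $\mc{D}^{smA}(k,\Z_\ell)\subseteq\mc{D}^A(k,\Z_\ell)$ is clear, and for the reverse inclusion it is enough to show that each generator $f_*\Z_{\ell,X}$, with $f\colon X\rar\Spec k$ finite, already lies in $\mc{D}^{smA}(k,\Z_\ell)$. Since $f$ is finite it is in particular proper with finite fibres, so $f_*$ is t-exact for the ordinary t-structure and $f_*\Z_{\ell,X}$ is a single $\ell$-adic sheaf in degree $0$. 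Under the identification of \Cref{cas des corps} this sheaf is the permutation representation $\Z_\ell[\pi_0(X\times_k k^{sep})]$ of $G_k$ on the finite $G_k$-set $\pi_0(X\times_k k^{sep})$; such a representation factors through a finite quotient of $G_k$, hence is an Artin representation, in particular of Artin origin, so the corresponding $\ell$-adic sheaf is smooth Artin and \Cref{t-structure ordinaire smooth} puts $f_*\Z_{\ell,X}$ in $\mc{D}^{smA}(k,\Z_\ell)$.

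Granting this, I would finish as follows. By \Cref{t-structure ordinaire smooth}, $\mc{D}^A(k,\Z_\ell)=\mc{D}^{smA}(k,\Z_\ell)$ is exactly the full subcategory of $\mc{D}^b_c(k,\Z_\ell)$ of bounded complexes all of whose ordinary cohomology sheaves are smooth Artin. I then transport this along the equivalence $\mc{D}^b_c(k,\Z_\ell)\simeq\mc{D}^b_{\Z_\ell-\mathrm{perf}}(\Sh((BG_k)_{\mathrm{pro\acute{e}t}},\Z_\ell))$ of \Cref{cas des corps}: its heart $\Sh_c(k,\Z_\ell)=\mathrm{Loc}_{\Spec k}(\Z_\ell)$ is identified with $\Rep(G_k,\Z_\ell)$, and by \Cref{description des faisceaux l-adiques lisses} a smooth Artin sheaf corresponds to a representation of Artin origin of $\pi_1^{\mathrm{pro\acute{e}t}}(\Spec k,\xi)$, which is $G_k$ since $\Spec k$ is normal, hence geometrically unibranch \cite[7.4.3]{bhatt-scholze}. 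Finally, a bounded complex in $\mc{D}^b(\Sh((BG_k)_{\mathrm{pro\acute{e}t}},\Z_\ell))$ whose cohomology sheaves are representations of Artin origin automatically has $\Z_\ell$-finitely generated cohomology, hence is $\Z_\ell$-perfect; so the full subcategory obtained this way is precisely the one appearing in the statement.

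I do not expect a serious obstacle. The two small points that need care are the exactness of $f_*$ for finite $f$ and the identification of $f_*\Z_{\ell,X}$ with the permutation sheaf attached to $\pi_0(X\times_k k^{sep})$ — both standard consequences of the six-functor formalism together with \Cref{cas des corps} — and the bookkeeping needed to chain the equivalences of \Cref{t-structure ordinaire smooth}, \Cref{description des faisceaux l-adiques lisses} and \Cref{cas des corps}. The mild pathologies of imperfect base fields (non-reduced finite $k$-algebras, purely inseparable residue extensions) never actually intervene, since passing to $\pi_0(X\times_k k^{sep})$ only remembers the separable reduced part of $X$.
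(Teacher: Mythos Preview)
Your proposal is correct and matches the paper's intended argument. The paper states this result as a corollary with no proof, treating it as immediate from \Cref{t-structure ordinaire smooth} together with the dictionary of \Cref{cas des corps}; you have supplied precisely the expected details, in particular the observation that $\mc{D}^A(k,\Z_\ell)=\mc{D}^{smA}(k,\Z_\ell)$ over a field (via topological invariance for the non-reduced and purely inseparable parts) and the remark that Artin-origin cohomology forces $\Z_\ell$-perfectness.
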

\subsection{The trace of the six functors on Artin \texorpdfstring{$\ell$}{\textell}-adic complexes}
Artin $\ell$-adic complexes are stable under some of the six functors of constructible $\ell$-adic complexes. The results of this paragraph could also have been formulated with the categories $\mc{D}^A_{\In}(-,\Z_\ell)$, replacing thick subcategories with localizing subcategories when needed. We will first show the following lemma which was proved by Ayoub and Zucker in the motivic setting and in the case of schemes of finite type over a field of characteristic $0$ and was generalized to arbitrary schemes by Pepin Lehalleur.

\begin{proposition}\label{generators} Let $S$ be a scheme and $\ell$ a prime number which is invertible on $S$. The category $\mc{D}^A(S,\Z_\ell)$ is the thick subcategory of $\mc{D}^b_c(S,\Z_\ell)$ generated by any of the following families of objects:
	
	\begin{enumerate}	\item $f_* \Z_{\ell,X}$ for $f\colon X\rar S$ finite.
		\item $f_! \Z_{\ell,X}$ for $f\colon X\rar S$ étale.
		\item $f_! \Z_{\ell,X}$ for $f\colon X\rar S$ quasi-finite.
	\end{enumerate}
\end{proposition}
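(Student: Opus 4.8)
The plan is to prove the equality of thick subcategories by showing mutual inclusions, exploiting that all three families generate the same category and comparing each with the defining family $f_*\Z_{\ell,X}$ for $f$ finite. I would first observe that if $f\colon X\to S$ is finite, then $f$ is in particular proper, so $f_!=f_*$; hence the family (1) is a subfamily of neither (2) nor (3) directly, but the key is to interpolate through intermediate cases. The natural strategy is: (a) reduce (2) and (3) to (1), and (b) reduce (1) to (2) or (3). For step (b), given $f\colon X\to S$ finite, I would use a compactification/dévissage argument or, more efficiently, use that finite morphisms are quasi-finite, so once we know (3) generates a thick subcategory containing all $f_!\Z_{\ell,X}$ for $f$ quasi-finite, it contains $f_*\Z_{\ell,X}=f_!\Z_{\ell,X}$ for $f$ finite, giving $\mc{D}^A(S,\Z_\ell)\subseteq \langle (3)\rangle$.

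For the reverse inclusions, the main work is to show that $f_!\Z_{\ell,X}$ lies in $\mc{D}^A(S,\Z_\ell)$ whenever $f$ is quasi-finite (which subsumes the étale case). By Zariski's main theorem, a quasi-finite separated morphism $f\colon X\to S$ factors as $X\xrightarrow{j} \bar X \xrightarrow{\bar f} S$ with $j$ an open immersion and $\bar f$ finite. Then $f_!\Z_{\ell,X}=\bar f_* j_!\Z_{\ell,X}$, and since $\bar f_*=\bar f_!$ preserves $\mc{D}^A$ once we know it (this requires knowing $\mc{D}^A$ is stable under pushforward along finite maps — which is essentially built into the definition via projection-type arguments, or follows from $\bar f_*$ being t-exact and composition of finite maps being finite), it suffices to show $j_!\Z_{\ell,X}\in\mc{D}^A(\bar X,\Z_\ell)$ for $j$ an open immersion with closed complement $i\colon Z\to\bar X$. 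Here I would use the localization triangle $j_!j^*\Z_{\ell,\bar X}\to \Z_{\ell,\bar X}\to i_*i^*\Z_{\ell,\bar X}$, i.e. $j_!\Z_{\ell,X}\to\Z_{\ell,\bar X}\to i_*\Z_{\ell,Z}$: both $\Z_{\ell,\bar X}=\mathrm{id}_*\Z_{\ell,\bar X}$ and $i_*\Z_{\ell,Z}$ (as $i$ is a closed immersion, hence finite) are Artin, so $j_!\Z_{\ell,X}$ is Artin by the two-out-of-three property in the thick subcategory. This handles $(2),(3)\subseteq \mc{D}^A(S,\Z_\ell)$.

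The one subtle point — and what I expect to be the main obstacle — is the stability of $\mc{D}^A(S,\Z_\ell)$ under $g_*$ for $g$ finite, needed to push the Zariski-main-theorem factorization back down. This is not completely formal: one needs that for $g\colon Y\to S$ finite and $C\in\mc{D}^A(Y,\Z_\ell)$, one has $g_*C\in\mc{D}^A(S,\Z_\ell)$. Since $\mc{D}^A(Y,\Z_\ell)$ is generated by $h_*\Z_{\ell,W}$ for $h\colon W\to Y$ finite, and $g_*h_*\Z_{\ell,W}=(g\circ h)_*\Z_{\ell,W}$ with $g\circ h$ finite, and $g_*$ is exact (as $g$ is finite, hence affine, so $g_*$ is t-exact and commutes with the relevant limits/colimits), the thick subcategory generated by the $h_*\Z_{\ell,W}$ is sent into $\mc{D}^A(S,\Z_\ell)$; so this actually goes through cleanly. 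The only remaining care is the case of quasi-finite but non-separated morphisms, but by our standing conventions all quasi-finite morphisms are separated and of finite type, so Zariski's main theorem applies directly. Assembling: $\mc{D}^A(S,\Z_\ell)=\langle(1)\rangle\subseteq\langle(3)\rangle$ since finite implies quasi-finite, $\langle(2)\rangle\subseteq\langle(3)\rangle$ since étale of finite type implies quasi-finite, and $\langle(3)\rangle\subseteq\mc{D}^A(S,\Z_\ell)$ by the factorization-plus-localization argument, while $\mc{D}^A(S,\Z_\ell)\subseteq\langle(2)\rangle$ requires writing $f_*\Z_{\ell,X}$ for $f$ finite in terms of étale generators — which is where one again invokes cdh-descent (\Cref{cdh-descent}) or a Noetherian induction on $\dim S$ reducing the finite cover to a composite of a finite étale cover over a dense open and lower-dimensional strata, closing the circle of inclusions.
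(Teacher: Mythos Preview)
Your overall architecture matches the paper's: the inclusion $\langle(3)\rangle\subseteq\mc{D}^A(S,\Z_\ell)$ via Zariski's Main Theorem plus the localization triangle is exactly what the paper does, and your argument that $\bar f_*$ preserves Artin complexes (generators go to generators under composition of finite maps) is clean and correct. The inclusions $\langle(1)\rangle\subseteq\langle(3)\rangle$ and $\langle(2)\rangle\subseteq\langle(3)\rangle$ are formal, as you say.

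However, there is a genuine gap in your treatment of $\mc{D}^A(S,\Z_\ell)\subseteq\langle(2)\rangle$. Your sketch via noetherian induction is on the right track: for $g\colon Y\to S$ finite, one finds a dense open $j\colon U\to S$ over which $g$ is finite \'etale up to a universal homeomorphism (so $j^*g_*\Z_{\ell,Y}$ is in $\mc{C}(U)$), and by induction on the closed complement $i\colon Z\to S$ one has $i^*g_*\Z_{\ell,Y}\in\mc{C}(Z)$. But to assemble these via the localization triangle you need $i_*\mc{C}(Z)\subseteq\mc{C}(S)$, and this is \emph{not} formal: closed immersions are not \'etale, so it is not clear that $i_*f_!\Z_{\ell,X}$ is \'etale-generated over $S$ for $f\colon X\to Z$ \'etale. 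The paper (following Pepin Lehalleur) supplies the missing lemma: reduce to $X$ standard \'etale over affine $Z$, lift the defining equations from $A/I$ to $A$ to get $\tilde f\colon X'\to S$ \'etale on an open $V\supseteq Z$, and then the localization triangle $j_!j^*\tilde f_!\Z_{\ell,X'}\to\tilde f_!\Z_{\ell,X'}\to i_*f_!\Z_{\ell,X}$ exhibits $i_*f_!\Z_{\ell,X}$ as a cone of \'etale-generated objects. Your mention of cdh-descent is a red herring here---the relevant input is this lifting of \'etale covers, not a resolution square.
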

\begin{proof} We follow the proof of \cite[1.28]{plh}.
	
Let $j\colon U\rar S$ be an open immersion. Let $i\colon F\rar S$ be the reduced complementary closed immersion. By localization \eqref{localization}, we have an exact triangle $$j_!\Z_{\ell,U}\rar \Z_{\ell,S}\rar i_*\Z_{\ell,F}.$$
	
Thus, $j_!\Z_{\ell,U}$ is an Artin $\ell$-adic complex. 
	
Now, let $f\colon X\rar S$ be quasi-finite. Using Zariski's Main Theorem \cite[18.12.13]{ega4} and the case of open immersions, we get that $f_!\Z_{\ell,X}$ is an Artin $\ell$-adic complex.
	
Let $\mc{C}(S)$ be the thick subcategory of $\mc{D}^b_c(S,\Z_\ell)$ generated by the $f_!\Z_{\ell,X}$ with $f\colon X\rar S$ étale. To prove the proposition, it suffices to show that if $g\colon Y\rar S$ is finite, $g_*\Z_{\ell,Y}$ is in $\mc{C}(S)$. 

Notice that the fibered category $\mc{C}(-)$ is closed under the functor $p^*$ for any morphism $p$ and the functor $q_!$ for any quasi-finite morphism $q$.

Furthermore, when $g$ is a universal homeomorphism, we have $g_*\Z_{\ell,Y}=\Z_{\ell,S}$ by topological invariance of the small étale site (see \cite[04DY]{stacks}).
	
We now prove this claim by noetherian induction on $S$. We can assume that $Y$ is reduced using the topological invariance of the small étale site once again, we have $$g_*\Z_{\ell,Y}=g'_*\Z_{\ell,Y_{\mathrm{red}}}$$ where $Y_{\mathrm{red}}$ is the reduction of $Y$ and $g'\colon Y_{\mathrm{red}}\rar S$ is the canonical morphism. 

Moreover, there is an open immersion $j\colon U\rar S$ such that the pullback map $g_U\colon Y\times_S U\rar U$ is the composition of a finite étale map and a purely inseparable morphism. Thus, $j^*g_*\Z_{\ell,Y}=(g_U)_*\Z_{\ell,Y\times_S U}$ lies in $\mc{C}(U)$. 

Furthermore, if $i\colon Z\rar S$ is the reduced complementary immersion, the complex $i^*g_*\Z_{\ell,Y}$ is in $\mc{C}(Z)$ by noetherian induction and proper base change. Finally, by localization, it suffices to prove the lemma below.
\end{proof}

\begin{lemma} Keep the same notations, then, the functor $i_*$ sends $\mc{C}(Z)$ to $\mc{C}(S)$.
\end{lemma}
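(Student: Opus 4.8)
The plan is to reduce to generators and then to a single localization triangle attached to an étale $S$-scheme. Since $i_*$ is exact and $\mc{C}(S)$ is thick, the full subcategory of those $M\in\mc{C}(Z)$ with $i_*M\in\mc{C}(S)$ is a thick subcategory of $\mc{C}(Z)$, so it is enough to treat the generators $f_!\Z_{\ell,X}$ with $f\colon X\rar Z$ étale. Moreover, since $Z_{\mathrm{red}}\rar Z$ is a nilpotent thickening and therefore induces an equivalence of small étale topoi (\cite[04DY]{stacks}), one gets an equivalence $\mc{C}(Z_{\mathrm{red}})\xrightarrow{\sim}\mc{C}(Z)$ under which $i_*$ factors through pushforward along $Z_{\mathrm{red}}\rar S$; so I would first reduce to the case where $Z$ is reduced.

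The main step I would then carry out is to extend the given étale $Z$-scheme $X$ to an étale $S$-scheme: produce an étale morphism $\widetilde f\colon\widetilde X\rar S$ together with an identification $\widetilde X\times_S Z\cong X$ over $Z$ compatible with $f$. Once this is available the rest is formal. Writing $j\colon U\hookrightarrow S$ for the open complement of $Z$, étale base change gives $i^*(\widetilde f_!\Z_{\ell,\widetilde X})\cong f_!\Z_{\ell,X}$, so the localization triangle \eqref{localization} for $\widetilde f_!\Z_{\ell,\widetilde X}$ becomes
\[
j_!j^*(\widetilde f_!\Z_{\ell,\widetilde X})\rar\widetilde f_!\Z_{\ell,\widetilde X}\rar i_*(f_!\Z_{\ell,X}).
\]
Here $\widetilde f_!\Z_{\ell,\widetilde X}\in\mc{C}(S)$ because $\widetilde f$ is étale, and $j_!j^*(\widetilde f_!\Z_{\ell,\widetilde X})\cong (j\circ\widetilde f_U)_!\Z_{\ell,\widetilde X_U}$ with $j\circ\widetilde f_U$ étale, hence also in $\mc{C}(S)$; therefore $i_*(f_!\Z_{\ell,X})$ is the cofiber of a morphism between two objects of $\mc{C}(S)$ and thus lies in $\mc{C}(S)$.

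I expect the extension of $X$ to an étale $S$-scheme to be the delicate point of the argument. Zariski-locally on $S$ such a lift comes from the standard local structure of étale morphisms (lift the monic polynomial and the localizing element defining a standard-étale chart), but patching the local lifts along the closed immersion $i$ needs care — one can, for instance, reduce to a Henselian situation and invoke an Elkik-type lifting theorem for étale algebras along Henselian pairs, or argue by descent. If one prefers to avoid this, the alternative is to mimic the dévissage of the proof of \Cref{generators}: use Zariski's Main Theorem to factor $i\circ f$ as an open immersion followed by a finite morphism, push the associated localization triangle forward, and close the argument by the noetherian induction already in progress — so that $\mc{C}(S')=\mc{D}^A(S')$ may be used for every closed subscheme $S'\subsetneq S$, in particular for $Z$. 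This is precisely why the lemma is formulated and invoked inside that induction.
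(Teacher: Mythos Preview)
Your outline matches the paper's proof: reduce to the generators $f_!\Z_{\ell,X}$ with $f\colon X\rar Z$ étale, lift $f$ to an étale $S$-scheme, and read off the result from the localization triangle. The paper carries this out exactly as you sketch in your ``local'' option, but it resolves the patching issue you flag in a way you do not quite articulate: one does \emph{not} produce a global lift of $X$. Instead, one uses Mayer--Vietoris on $X$ (and on $S$) to reduce to the case where $S=\Spec(A)$, $Z=\Spec(A/I)$ and $X=\Spec(B)$ with $B$ a standard étale $A/I$-algebra. Then one naively lifts the defining polynomials from $A/I[x_1,\dots,x_n]$ to $A[x_1,\dots,x_n]$; the resulting $S$-scheme $X'$ is only étale over the open locus $V\subseteq S$ where the Jacobian is invertible, but $V\supseteq Z$. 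Since $i$ factors through $V$ and $(V\hookrightarrow S)_!$ preserves $\mc{C}$, one may replace $S$ by $V$, and now $\widetilde f$ is genuinely étale and your localization-triangle argument finishes. So no Elkik-type input or patching is needed.

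Your alternative via Zariski's Main Theorem is circular as stated. Factoring $i\circ f$ as $X\hookrightarrow \overline{X}\xrightarrow{\overline g} S$ with $\overline g$ finite and using localization on $\overline{X}$ expresses $i_*f_!\Z_{\ell,X}$ as a cofiber of finite pushforwards $\overline g_*(-)$ to $S$. Knowing $\mc{C}(Z)=\mc{D}^A(Z)$ by induction does not help here: what you would need is that these finite pushforwards to $S$ lie in $\mc{C}(S)$, and that is precisely the claim the ambient noetherian induction is in the process of establishing for $S$ itself. So this route does not close without further input.
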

\begin{proof} We follow the proof of \cite[1.18(ii), 1.14(iii)]{plh}. It suffices to show that if $f\colon X\rar Z$ is étale, $i_*f_!\Z_{\ell,X}$ lies in $\mc{C}(S)$. As in the proof of \cite[1.14]{plh} we use Mayer-Vietoris exact triangles to reduce to the case where $S=\Spec(A)$, $Z=\Spec(A/I)$ and $X=\Spec(B)$ where $A$ is a ring, $I$ is an ideal of $A$ and $B$ is a standard étale $A/I$-algebra, \textit{i.e.} $B$ is of the form $$B=(A/I)[x_1,\ldots,x_n]/(P_1,\ldots,P_n)$$ and the determinant of the matrix $(\frac{\partial P_i}{\partial x_j})_{1\leqslant i,j\leqslant n}$ is invertible in $A/I$. 
	
We can then lift the equations $P_i$ to $\widetilde{P}_i\in A[x_1,\ldots,x_n]$. 
The open subset $V$ of $S$ over which the map $$\widetilde{f}\colon X'=\Spec(A[x_1,\ldots,x_n]/(\widetilde{P}_1,\ldots,\widetilde{P}_n)) \rar \Spec(A)=S$$ is étale contains $Z$ and $\widetilde{f}$ extends $f$. Since the functor $(V\rar S)_!$ sends $\mc{C}(V)$ to $\mc{C}(S)$, we can replace $S$ with $V$ and therefore assume that $\widetilde{f}$ is étale.

We then have a localization triangle:
$$j_! j^*\widetilde{f}_!\Z_{\ell,X'} \rar \widetilde{f}_!\Z_{\ell,X'} \rar i_* f_!\Z_{\ell,X}.$$

Therefore, $i_* f_!\Z_{\ell,X}$ lies in $\mc{C}(S)$ as wanted.
\end{proof}

\begin{corollary}\label{6 foncteurs} Let $\ell$ be a prime number. The six functors from $\mc{D}(-,\Z_\ell)$ induce by restriction the following functors on $\mc{D}^A(-,\Z_\ell)$:
	\begin{enumerate}\item $f^*$ for any morphism $f$,
		\item $f_!$ for any quasi-finite morphism $f$,
		\item A monoidal structure $\otimes$.
	\end{enumerate}
\end{corollary}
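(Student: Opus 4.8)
The plan is to deduce each of the three statements from Proposition~\ref{generators}, which identifies $\mc{D}^A(-,\Z_\ell)$ with the thick subcategory generated by the $f_!\Z_{\ell,X}$ for $f$ quasi-finite. Since the six functors on $\mc{D}^b_c(-,\Z_\ell)$ are exact and commute with retracts, it suffices in each case to check that the relevant functor sends the chosen family of generators of $\mc{D}^A(S,\Z_\ell)$ into $\mc{D}^A(T,\Z_\ell)$ for the target scheme $T$.

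For (1), let $f\colon T\rar S$ be an arbitrary morphism and let $g\colon X\rar S$ be quasi-finite. By smooth (or rather arbitrary) base change, $f^*g_!\Z_{\ell,X}\simeq g'_!\Z_{\ell,X'}$ where $g'\colon X'=X\times_S T\rar T$ is the base change of $g$, which is again quasi-finite. Hence $f^*$ sends the generators of Proposition~\ref{generators}(3) for $S$ to generators for $T$, and therefore preserves Artin $\ell$-adic complexes. For (2), let $f\colon T\rar S$ be quasi-finite and let $g\colon X\rar T$ be quasi-finite; then $f\circ g\colon X\rar S$ is quasi-finite, and $f_!g_!\Z_{\ell,X}\simeq (f\circ g)_!\Z_{\ell,X}$ lies in $\mc{D}^A(S,\Z_\ell)$ by Proposition~\ref{generators}(3). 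So $f_!$ maps the generators for $T$ to objects of $\mc{D}^A(S,\Z_\ell)$. For (3), I would use the generators $f_*\Z_{\ell,X}$ of Proposition~\ref{generators}(1): given finite morphisms $f\colon X\rar S$ and $g\colon Y\rar S$, the projection formula gives $f_*\Z_{\ell,X}\otimes g_*\Z_{\ell,Y}\simeq f_*(f^*g_*\Z_{\ell,Y})$, and $f^*g_*\Z_{\ell,Y}\simeq g'_*\Z_{\ell,X\times_S Y}$ by proper base change, where $g'\colon X\times_S Y\rar X$ is finite; composing, this is $h_*\Z_{\ell,X\times_S Y}$ with $h=f\circ g'\colon X\times_S Y\rar S$ finite, hence an Artin $\ell$-adic complex. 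Since $\otimes$ is exact in each variable and $\mc{D}^A(S,\Z_\ell)$ is thick, bilinearity of $\otimes$ together with this computation on generators shows $\otimes$ restricts to $\mc{D}^A(S,\Z_\ell)$; the unit $\Z_{\ell,S}=(\mathrm{id})_*\Z_{\ell,S}$ is Artin, so this is a symmetric monoidal structure.

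The only genuinely non-formal inputs are the base change isomorphisms (arbitrary base change for $f^*$ versus $g_!$, proper base change for $f^*$ versus $g_*$, and the projection formula), all of which are part of the six functor formalism for $\mc{D}^b_c(-,\Z_\ell)$ over excellent schemes recalled after Corollary~\ref{6 functors lisse 2}. I do not expect any real obstacle here: the content of the corollary is entirely in Proposition~\ref{generators}, and what remains is the bookkeeping that quasi-finite and finite morphisms are stable under composition and base change, plus the fact that thick subcategories are preserved by exact functors and retracts.
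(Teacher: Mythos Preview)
Your proof is correct and is precisely the argument the paper has in mind: the corollary is stated without proof immediately after Proposition~\ref{generators}, and your verification on generators via base change, composition of quasi-finite maps, and the projection formula is the intended (and only reasonable) way to fill in the details. One minor terminological point: for (1) you do not need ``smooth'' or ``arbitrary'' base change but simply the base change isomorphism $f^*g_!\simeq g'_!(f')^*$, which holds for any $f$ and any separated $g$ of finite type as part of the six functor formalism.
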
 

\begin{remark}Notice that on the categories $\mc{D}^A_{\In}(-,\Z_\ell)$, we have formal right adjoints to the three functors that we have described.
\end{remark}

In the case of smooth Artin $\ell$-adic complexes, we also have functors $\otimes$, $f^*$ for any morphism $f$ and $g_*=g_!$ for any finite étale morphism $g$. Since $f^*$ and $g_*$ are t-exact, they induce functors on the hearts. Furthermore, the monoidal structure on $\mc{D}^{smA}(S,\Z_\ell)$ induces a monoidal structure on its heart: map two objects $M$ and $N$ to $\Hl^0(M\otimes N)$. Using \Cref{6 functors lisse 1,6 functors lisse 2,t-structure ordinaire smooth}, we identify those functors with classical constructions on categories of representations:
\begin{proposition}\label{6 foncteurs rpz} Let $f\colon T\rar S$ be a map between connected schemes and let $\ell$ be a prime number which is invertible on $S$. Let $\xi'$ be a geometric point of $T$ and let $\xi=f\circ \xi'$. Then,
	\begin{enumerate}
		\item The functor $f^*\colon \mc{D}^{smA}(S,\Z_\ell)\rar \mc{D}^{smA}(T,\Z_\ell)$ induces the forgetful functor $$\pi_1^{\mathrm{pro\acute{e}t}}(f)^*\colon \Rep^A(\pi_1^{\mathrm{pro\acute{e}t}}(S,\xi),\Z_\ell)^*\rar\Rep^A(\pi_1^{\mathrm{pro\acute{e}t}}(T,\xi'),\Z_\ell)^*$$ by restriction.
		\item Furthermore, if $f$ is finite and étale, the map induced by $f_*$ by restriction is the induced representation functor $\mathrm{Ind}_{\pi'}^{\pi}$ where $\pi=\pi_1^{\mathrm{pro\acute{e}t}}(S,\xi)$ and $\pi'=\pi_1^{\mathrm{pro\acute{e}t}}(T,\xi')$.
		\item The monoidal structure of $\mc{D}^{smA}(S,\Z_\ell)$ induces by restriction a monoidal structure on $\Rep^A(\pi_1^{\mathrm{pro\acute{e}t}}(S),\Z_\ell)^*$ which is the usual monoidal structure.
	\end{enumerate}
\end{proposition}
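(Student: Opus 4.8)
The plan is to reduce all three assertions to the hearts of the ordinary t-structures and then invoke the compatibilities with fiber functors that are already established. By \Cref{6 foncteurs} the functors $f^*$ (for arbitrary $f$), $f_*$ (for $f$ finite and étale) and $\otimes$ preserve the subcategories $\mc{D}^{smA}(-,\Z_\ell)$; by \Cref{t-structure ordinaire smooth} the ordinary t-structure restricts to $\mc{D}^{smA}(-,\Z_\ell)$ with heart $\Sh^{smA}(-,\Z_\ell)$, and by \Cref{description des faisceaux l-adiques lisses} the fiber functor $\xi^*$ identifies $\Sh^{smA}(S,\Z_\ell)$ with $\Rep^A(\pi_1^{\mathrm{pro\acute{e}t}}(S,\xi),\Z_\ell)^*$. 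Since $f^*$ is exact on $\ell$-adic sheaves and, for $f$ finite étale, $f_*$ is t-exact for the ordinary t-structure (as recalled in the proof of \Cref{t-structure ordinaire smooth}), both restrict to exact functors on these hearts; the restriction of the monoidal product to the heart is by definition $(M,N)\mapsto\Hl^0(M\otimes N)$. It therefore suffices to identify the induced functors at the level of smooth Artin sheaves, equivalently of representations of Artin origin.

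For (1) and (2) I would restrict the commutative squares already at our disposal. \Cref{6 functors lisse 1}(3) provides a commutative square identifying $f^*\colon\mathrm{Loc}_S(\Z_\ell)\rar\mathrm{Loc}_T(\Z_\ell)$ with the forgetful functor $\pi_1^{\mathrm{pro\acute{e}t}}(f)^*$; restricting its horizontal arrows to the full subcategories $\Sh^{smA}\subseteq\mathrm{Loc}$ and $\Rep^A(-,\Z_\ell)^*\subseteq\Rep(-,\Z_\ell)$ (legitimate because $f^*$ preserves $\mc{D}^{smA}$ and $\xi^*$ carries one subcategory onto the other) yields (1). Similarly, \Cref{6 functors lisse 2} gives, for $f$ finite étale, a commutative square identifying $f_*$ with $\mathrm{Ind}_{\pi'}^{\pi}$, and restricting it gives (2). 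The one point deserving a word of care is that $\mathrm{Ind}_{\pi'}^{\pi}$ preserves the Artin-origin condition: $\pi'$ has finite index in $\pi$ by \Cref{6 functors lisse 1}(2), so induction is exact and preserves iterated extensions of Artin representations, and if $M$ is an Artin representation of $\pi'$ with $K=\ker(\pi'\rar\mathrm{GL}(M))$ of finite index, then the action of $\pi$ on $\mathrm{Ind}_{\pi'}^{\pi}(M)$ has kernel $\bigcap_{g\in\pi/\pi'}gKg^{-1}$, still of finite index, so $\mathrm{Ind}_{\pi'}^{\pi}(M)$ is again an Artin representation. This makes the restriction of the square well-defined; on the sheaf side this preservation is in any case already guaranteed by \Cref{6 foncteurs}.

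For (3) I would use that the fiber functor $\xi^*$ is symmetric monoidal (\Cref{description des faisceaux l-adiques lisses}). For $M,N\in\Sh^{smA}(S,\Z_\ell)$ the sheaf $\Hl^0(M\otimes N)$ is the ordinary (non-derived) tensor product of $M$ and $N$ by right-exactness of the tensor product, and $\xi^*$ of it is $\xi^*M\otimes_{\Z_\ell}\xi^*N$ by monoidality; hence the monoidal product induced on the heart corresponds under $\xi^*$ to the ordinary tensor product of representations, which restricts to the usual monoidal structure on $\Rep^A(\pi_1^{\mathrm{pro\acute{e}t}}(S,\xi),\Z_\ell)^*$. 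That this product remains inside $\Sh^{smA}(S,\Z_\ell)$ is once more \Cref{6 foncteurs}, and all the identifications obtained this way are natural since they come from the commutative squares above.

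I do not expect a genuine obstacle here: the argument is essentially bookkeeping once \Cref{6 functors lisse 1}, \Cref{6 functors lisse 2} and \Cref{t-structure ordinaire smooth} are available. The only steps that need (elementary) attention are the verification that $\mathrm{Ind}_{\pi'}^{\pi}$ and the tensor product preserve representations of Artin origin, so that the commutative squares from the lisse setting may legitimately be restricted to the Artin-origin subcategories, and the observation that the monoidal product on the heart is the underived tensor, so that monoidality of the fiber functor applies directly.
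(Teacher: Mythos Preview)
Your proposal is correct and follows essentially the same route as the paper: the paper's ``proof'' is the paragraph immediately preceding the proposition, which observes that $f^*$ and $g_*$ are t-exact, hence induce functors on hearts, and that the identifications then follow from \Cref{6 functors lisse 1}, \Cref{6 functors lisse 2} and \Cref{t-structure ordinaire smooth}. You have simply spelled out in more detail what the paper leaves implicit (in particular the check that $\mathrm{Ind}_{\pi'}^{\pi}$ and $\otimes$ preserve representations of Artin origin). One small citation slip: the stability of $\mc{D}^{smA}$ under $f^*$, $f_*$ (finite \'etale) and $\otimes$ is not quite \Cref{6 foncteurs} (which concerns $\mc{D}^A$), but is the content of the sentence just before the proposition; the argument is the same, so this does not affect correctness.
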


\subsection{Artin \texorpdfstring{$\ell$}{\textell}-adic complexes as stratified complexes of representations}
In this paragraph, we show that for any Artin $\ell$-adic complex, there is a stratification of the base scheme such that the restriction of the complex to any stratum is smooth Artin. We then deduce that the ordinary t-structure of $\mc{D}(S,\Z_\ell)$ induces a t-structure on $\mc{D}^A(S,\Z_\ell)$ whose heart is $\Sh^A(S,\Z_\ell)$.

\begin{proposition}\label{stratification} Let $S$ be a scheme, $\ell$ be a prime number that is invertible on $S$ and $M$ be an $\ell$-adic complex. Then, the complex $M$ is an Artin $\ell$-adic complex if and only if there is a stratification of $S$ such that for any stratum $T$, the complex $M|_T$ is a smooth Artin $\ell$-adic complex.
\end{proposition}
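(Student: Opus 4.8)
The plan is to prove the two implications separately, with the non-trivial direction being the ``only if'' statement. For the ``if'' direction, suppose there is a stratification $\mc{S}$ of $S$ such that $M|_T$ is smooth Artin for every stratum $T$. One proceeds by induction on the number of strata, or more efficiently on the dimension of $S$: pick a dense open stratum $j\colon U \hookrightarrow S$ (a union of the top-dimensional strata) with reduced closed complement $i\colon Z \hookrightarrow S$. By the localization triangle \eqref{localization}, $M = j_!j^*M \to M \to i_*i^*M$, it suffices to show both $j_!j^*M$ and $i_*i^*M$ are Artin. Now $j^*M = M|_U$ is smooth Artin by hypothesis, hence Artin, and $j_!$ preserves Artin $\ell$-adic complexes by \Cref{6 foncteurs}(2) since $j$ is quasi-finite (in fact an open immersion). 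For $i_*i^*M$: the complex $i^*M = M|_Z$ is, by restricting the stratification $\mc{S}$ to $Z$, a complex which is smooth Artin on each stratum of the induced stratification of $Z$, so by the inductive hypothesis $i^*M$ is an Artin $\ell$-adic complex over $Z$; one then needs that $i_*$ sends $\mc{D}^A(Z,\Z_\ell)$ to $\mc{D}^A(S,\Z_\ell)$. This last point is exactly the content of the lemma proved inside \Cref{generators} (that $i_*$ sends $\mc{C}(Z)$ to $\mc{C}(S)$, combined with the identification of $\mc{C}(-)$ with $\mc{D}^A(-,\Z_\ell)$ from that proposition), so it may be invoked directly.

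For the ``only if'' direction, one must produce a single stratification adapted to a given Artin $\ell$-adic complex $M$. First I would reduce to generators: recall $\mc{D}^A(S,\Z_\ell)$ is the thick subcategory generated by the $f_*\Z_{\ell,X}$ with $f\colon X\to S$ finite. The class of complexes admitting an adapted stratification (i.e. a stratification making each restriction smooth Artin) is stable under shifts, finite (co)limits and retracts — this requires a small argument: given two complexes with adapted stratifications $\mc{S}_1,\mc{S}_2$, one passes to a common refinement $\mc{S}$, and one checks that the smooth Artin condition is stable under the operations in question stratum by stratum, using that $\Sh^{smA}(-,\Z_\ell)$ is a Serre subcategory of $\mathrm{Loc}$ (hence $\mc{D}^{smA}$ is closed under finite (co)limits and retracts within each stratum by \Cref{t-structure ordinaire smooth} and \Cref{premieres prop du coeur}). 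The existence of a common refinement of two stratifications into a stratification is a standard fact about noetherian schemes and can be taken for granted. So it remains to treat a single generator $f_*\Z_{\ell,X}$ with $f$ finite.

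For $M = f_*\Z_{\ell,X}$ with $f\colon X\to S$ finite, the strategy is generic-constructibility plus noetherian induction on $S$. By generic flatness and by the structure of finite morphisms, there is a dense open $j\colon U\hookrightarrow S$ such that $f_U\colon X\times_S U \to U$ factors as a finite étale morphism followed by a universal homeomorphism (this is the same input used in the proof of \Cref{generators}); then by topological invariance of the small étale site and \Cref{6 functors lisse 2}, $(f_U)_*\Z_{\ell,X\times_S U}$ is a smooth Artin sheaf on $U$, in particular $M|_U$ is smooth Artin. Let $i\colon Z\hookrightarrow S$ be the reduced complement; by proper base change $i^*M = (f_Z)_*\Z_{\ell, X\times_S Z}$ with $f_Z$ finite, so by noetherian induction $i^*M$ admits an adapted stratification $\mc{S}_Z$ of $Z$. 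Then $\{U\}\cup \mc{S}_Z$ is a stratification of $S$ (after checking the closure condition, which holds since $Z$ is closed and $\mc{S}_Z$ is a stratification of $Z$), and $M$ restricted to each stratum is smooth Artin. The main obstacle I anticipate is purely bookkeeping: verifying that the ``adapted stratification'' class really is thick — in particular that refining stratifications interacts correctly with $f^*$, $f_!$, tensor products and cones on the level of strata — and making sure the closure/equidimensionality axioms of a stratification (\emph{not} just a partition into locally closed subsets) are preserved when passing to common refinements. None of this is deep, but it is where the care is needed; the geometric input (generic étale-up-to-homeomorphism, proper base change, Zariski's main theorem) is already available from \Cref{generators} and its lemma.
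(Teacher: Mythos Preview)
Your proof is correct, and the ``if'' direction (stratification $\Rightarrow$ Artin) is exactly the argument the paper has in mind when it says ``proceed by induction on the number of strata and use \Cref{6 foncteurs}''; you have simply spelled out the localization triangle and the fact that $i_*$ preserves Artin complexes (which, incidentally, is immediate from the definition via finite generators---no need to invoke the auxiliary lemma of \Cref{generators}).

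For the ``only if'' direction, however, you take a genuinely different route. The paper isolates an intermediate result (\Cref{ouvert dense lisse}): every Artin complex is smooth Artin on some dense open. That lemma is proved by showing that the class of $M$ with this property is thick and contains the \emph{\'etale} generators $f_!\Z_{\ell,X}$; the full stratification then follows by a single noetherian induction applied to an arbitrary Artin $M$. You instead show directly that the class of complexes admitting a full adapted stratification is thick and contains the \emph{finite} generators $f_*\Z_{\ell,X}$, doing the noetherian induction only for those generators. Both work. The paper's route is slightly cleaner on the bookkeeping side (intersecting two dense opens is easier than producing a common refinement of two stratifications and checking the closure and equidimensionality axioms survive), and it yields \Cref{ouvert dense lisse} as a standalone statement, which is reused later (e.g.\ in the proofs of \Cref{main theorem} and \Cref{outil 3}). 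Your route is more self-contained and avoids the change of generators, at the cost of the stratification bookkeeping you yourself flag as the main obstacle.
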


\begin{proof} To prove the "only if" part of the proposition, proceed by induction on the number of strata and use \Cref{6 foncteurs}. We now prove the converse  starting with a lemma:
	\begin{lemma}\label{ouvert dense lisse} Let $S$ be a scheme, $\ell$ be a prime number that is invertible on $S$ and $M$ be an Artin $\ell$-adic complex. Then, there is a dense open subscheme $U$ of $S$ such that $M|_U$ is smooth Artin.
	\end{lemma} 
	\begin{proof} Let $\mc{C}$ be the full subcategory of $\mc{D}^A(S,\Z_\ell)$ made of those objects $M$ such that there is a dense open subscheme $U$ of $S$ such that $M|_U$ is smooth Artin. Then, the subcategory $\mc{C}$ is closed under finite (co)limits and retracts and is therefore thick.
		
		Furthermore, if $f\colon W\rar S$ is étale, there is a dense open immersion $j\colon U\rar S$ such that the pull-back morphism $g\colon V\rar U$ is finite étale. 
		
		Therefore, the complex $$(f_!\Z_{\ell,W})|_U=j^*f_!\Z_{\ell,W}=g_!\Z_{\ell,V}$$ is smooth Artin. Thus, the complex $f_!\Z_{\ell,W}$ lies in $\mc{C}$. Hence, using the description of generators of \Cref{generators}, the category $\mc{C}$ is equivalent to $\mc{D}^A(S,\Z_\ell)$.
	\end{proof}
	
	To finish the proof, we proceed by noetherian induction on $S$. Let $M$ be an Artin $\ell$-adic sheaf over $S$. 
	
	The lemma gives a dense open subscheme $U$ of $S$ such that $M|_U$ is smooth Artin. Using the induction hypothesis, there is a stratification $\mc{S}_F$ of $F=S\setminus U$ such that the restriction of $M|_F$ to any stratum of $\mc{S}_F$ is smooth Artin.
\end{proof}

\begin{corollary}\label{t-structure ordinaire}
	Let $S$ be a scheme, $\ell$ be a prime number that is invertible on $S$.  The canonical t-structure of $\mc{D}(S,\Z_\ell)$ induces a t-structure on $\mc{D}^{A}(S,\Z_\ell)$. The heart of this t-structure is equivalent to $\Sh^{A}(S,\Z_\ell)$. In particular, the subcategory $\mc{D}^{A}(S,\Z_\ell)$ is the subcategory of $\mc{D}^b_c(S,\Z_\ell)$ made of those complexes $C$ of $\ell$-adic sheaves such that for all integer $n$, the sheaf $\Hl^n(C)$ is Artin.
\end{corollary}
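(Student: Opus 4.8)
The plan is to identify $\mc{D}^A(S,\Z_\ell)$ with the category $\mc{D}^b_{\Sh^A(S,\Z_\ell)}$ of \Cref{D^b_A}, formed inside $\mc{D}^b_c(S,\Z_\ell)$ endowed with its ordinary t-structure (whose heart is $\Sh_c(S,\Z_\ell)$ by \cite[3.28]{hrs}). Granting this identification, \Cref{D^b_A description}(1) immediately gives that $\mc{D}^A(S,\Z_\ell)$ is a strictly full sub-t-category of $\mc{D}^b_c(S,\Z_\ell)$ with heart $\Sh^A(S,\Z_\ell)$; since $\mc{D}^b_c(S,\Z_\ell)$ is itself a sub-t-category of $\mc{D}(S,\Z_\ell)$, the ordinary t-structure of $\mc{D}(S,\Z_\ell)$ induces the desired t-structure, and the ``in particular'' clause is then exactly the definition of $\mc{D}^b_{\Sh^A(S,\Z_\ell)}$.

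First I would check that $\Sh^A(S,\Z_\ell)$ is a weak Serre subcategory of $\Sh(S,\Z_\ell)$, so that $\mc{D}^b_{\Sh^A(S,\Z_\ell)}$ is well defined: given a five-term exact sequence of $\ell$-adic sheaves whose four outer terms are Artin, one refines the stratifications witnessing this, together with one trivializing the (necessarily constructible) middle term, into a common stratification; on each stratum the sequence becomes exact with smooth Artin outer terms, so the middle term is smooth Artin because $\Sh^{smA}(-,\Z_\ell)$ is a Serre subcategory of $\mathrm{Loc}$. Then I would prove the inclusion $\mc{D}^A(S,\Z_\ell)\subseteq \mc{D}^b_{\Sh^A(S,\Z_\ell)}$: for $M\in\mc{D}^A(S,\Z_\ell)$, \Cref{stratification} provides a stratification with each $M|_T$ smooth Artin, whence $\Hl^n(M)|_T=\Hl^n(M|_T)$ is smooth Artin for all $n$ by \Cref{t-structure ordinaire smooth}, and since $\Hl^n(M)$ is constructible it is therefore Artin.

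The reverse inclusion $\mc{D}^b_{\Sh^A(S,\Z_\ell)}\subseteq\mc{D}^A(S,\Z_\ell)$ is where the real work lies, and I would handle it by dévissage, reducing to an Artin sheaf $M$ placed in degree $0$, and then by noetherian induction on $S$. Fixing a stratification with $M|_T$ smooth Artin on each stratum $T$, I would let $W$ be the union of the top-dimensional strata; this $W$ is open (its complement is a union of lower-dimensional strata, hence closed, since the closure of such a stratum is again a union of lower-dimensional strata), nonempty, and $M|_W$ is smooth Artin, so $M|_W\in\mc{D}^{smA}(W,\Z_\ell)\subseteq\mc{D}^A(W,\Z_\ell)$. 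Writing $j\colon W\hookrightarrow S$ and $i\colon Z\hookrightarrow S$ for the reduced complementary closed immersion, the localization triangle $j_!j^*M\rar M\rar i_*i^*M$ (see \eqref{localization}), together with \Cref{6 foncteurs} (which gives that $j_!$ and $i_*=i_!$ preserve Artin complexes, both $j$ and $i$ being quasi-finite) and the induction hypothesis applied to the Artin sheaf $i^*M=M|_Z$ on $Z$, yields $M\in\mc{D}^A(S,\Z_\ell)$.

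I expect the main obstacle to be precisely this last noetherian induction — organizing the localization triangle and checking that $j_!j^*M$ and $i_*i^*M$ are Artin complexes — but it becomes short once \Cref{stratification} and \Cref{6 foncteurs} are in hand; the remaining verifications (the weak Serre property, the constructibility of the cohomology sheaves) are routine refinements of stratifications.
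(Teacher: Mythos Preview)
Your proof is correct, but it takes a more circuitous route than the paper's. The paper argues directly that the truncation $\tau_{\geqslant 0}$ preserves $\mc{D}^A(S,\Z_\ell)$: given $M\in\mc{D}^A(S,\Z_\ell)$, \Cref{stratification} produces a stratification with each $M|_T$ smooth Artin; since pullback to a stratum is t-exact, $(\tau_{\geqslant 0}M)|_T=\tau_{\geqslant 0}(M|_T)$, which is smooth Artin by \Cref{t-structure ordinaire smooth}; then the \emph{other} direction of \Cref{stratification} immediately gives that $\tau_{\geqslant 0}M$ is Artin. The heart identification follows by the same mechanism.

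By contrast, you only invoke one direction of \Cref{stratification} (in your Step~2) and then, in Step~3, you reprove the other direction via a noetherian induction with the localization triangle --- this is exactly the argument already packaged into \Cref{stratification}, so you could replace your entire Step~3 by a one-line citation. Likewise, your weak Serre check for $\Sh^A(S,\Z_\ell)$ is extra work that the paper's route avoids: once truncation is shown to preserve $\mc{D}^A$, the heart is automatically a weak Serre subcategory by \Cref{premieres prop du coeur}. What your approach buys is a clean identification $\mc{D}^A(S,\Z_\ell)=\mc{D}^b_{\Sh^A(S,\Z_\ell)}$ up front, from which everything else follows formally via \Cref{D^b_A description}; the paper reaches the same identification but as a consequence rather than as the organizing principle.
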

\begin{proof} Let $M$ be an Artin $\ell$-adic complex. Let $\tau_{\geqslant 0}$ be the truncation functor with respect to the ordinary t-structure on $\mc{D}(S,\Z_\ell)$. To show that the canonical t-structure of $\mc{D}(S,\Z_\ell)$ induces a t-structure on $\mc{D}^{A}(S,\Z_\ell)$, it suffices to show that $\tau_{\geqslant 0}M$ is Artin. 
	
Let $\mc{S}$ be a stratification of $S$ such that for all $T\in \mc{S}$, the complex $M|_T$ is smooth Artin. Since the pullback to $T$ functor is t-exact, we have $(\tau_{\geqslant 0}M)|_T=\tau_{\geqslant 0}(M|_T)$. The latter is smooth Artin by \Cref{t-structure ordinaire smooth}. Therefore, the complex $\tau_{\geqslant 0} M$ is Artin. 
	
Furthermore, using \Cref{t-structure ordinaire smooth,stratification}, we get that any Artin $\ell$-adic sheaf is an Artin $\ell$-adic complex, in other words, we have $$\Sh^{A}(S,\Z_\ell)\subseteq \mc{D}^{A}(S,\Z_\ell).$$ 

Finally, if $M$ is an object of $\mc{D}^{A}(S,\Z_\ell)^\heart$, there exists a stratification $\mc{S}$ of $S$ such that for all $T\in \mc{S}$, the complex $M|_T$ is smooth Artin and thus, it is a smooth Artin $\ell$-adic sheaf.
\end{proof}

\begin{corollary}\label{passage coeffs Ql}	Let $S$ be a scheme, $\ell$ be a prime number that is invertible on $S$. Let $M$ be a constructible $\ell$-adic complex with coefficients in $\Z_\ell$. Then, the $\ell$-adic complex $M$ is Artin if and only if $M\otimes_{\Z_\ell}\Q_\ell$ is Artin as an $\ell$-adic complex with coefficients in $\Q_\ell$.
\end{corollary}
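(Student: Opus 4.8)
The plan is to strip away, one at a time, the two layers that separate an $\ell$-adic complex from a representation — its ordinary cohomology sheaves, and a stratification — so as to reduce the statement to \Cref{passage coeffs Ql rep}, which is precisely the analogous assertion for continuous representations.

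The implication ``$M$ Artin $\Rightarrow M\otimes_{\Z_\ell}\Q_\ell$ Artin'' is formal: the base change functor $-\otimes_{\Z_\ell}\Q_\ell\colon\mc{D}^b_c(S,\Z_\ell)\rar\mc{D}^b_c(S,\Q_\ell)$ is exact, carries $\Z_{\ell,X}$ to $\Q_{\ell,X}$ and commutes with $f_*$ for $f$ finite (by flatness of $\Q_\ell$ over $\Z_\ell$, or by proper base change), hence sends each generator $f_*\Z_{\ell,X}$ of $\mc{D}^A(S,\Z_\ell)$ to the generator $f_*\Q_{\ell,X}$ of $\mc{D}^A(S,\Q_\ell)$; being exact and preserving retracts, it therefore carries $\mc{D}^A(S,\Z_\ell)$ into $\mc{D}^A(S,\Q_\ell)$.

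For the converse I would first invoke \Cref{t-structure ordinaire} to reduce to the case of $\ell$-adic \emph{sheaves}: since $\Q_\ell$ is flat over $\Z_\ell$ we have $\Hl^n(M\otimes_{\Z_\ell}\Q_\ell)=\Hl^n(M)\otimes_{\Z_\ell}\Q_\ell$ for all $n$, so it is enough to show that a constructible $\ell$-adic sheaf $N$ is Artin whenever $N\otimes_{\Z_\ell}\Q_\ell$ is. Let $\mc{S}$ be a common refinement of a stratification on which $N$ is stratum-wise lisse (which exists because $N$ is constructible) and of a stratification witnessing that $N\otimes_{\Z_\ell}\Q_\ell$ is Artin. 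On each connected stratum $T\in\mc{S}$ the sheaf $N|_T$ is lisse and $(N|_T)\otimes_{\Z_\ell}\Q_\ell=(N\otimes_{\Z_\ell}\Q_\ell)|_T$ is smooth Artin. Through the equivalence $\xi^*\colon\mathrm{Loc}_T(\Z_\ell)\rar\Rep(\pi_1^{\mathrm{pro\acute{e}t}}(T,\xi),\Z_\ell)$ of \Cref{description des faisceaux l-adiques lisses} — which commutes with $-\otimes_{\Z_\ell}\Q_\ell$, being a fiber functor — this means that the continuous representation $\xi^*(N|_T)$ is of Artin origin after tensoring with $\Q_\ell$; hence it is itself of Artin origin by \Cref{passage coeffs Ql rep}, so $N|_T$ is smooth Artin and $N$ is Artin.

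All the substance is concentrated in \Cref{passage coeffs Ql rep}; the only points requiring attention here are the compatibility of $\Hl^n$ and of the fiber functors with $-\otimes_{\Z_\ell}\Q_\ell$ (both furnished by flatness of $\Q_\ell$ over $\Z_\ell$) and the standard fact that two stratifications of a noetherian scheme admit a common refinement which is again a stratification.
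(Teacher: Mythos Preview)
Your proof is correct and follows essentially the same route as the paper: reduce via \Cref{t-structure ordinaire} and a stratification to lisse sheaves on connected strata, translate through the fiber functor of \Cref{description des faisceaux l-adiques lisses}, and invoke \Cref{passage coeffs Ql rep}. The only cosmetic difference is the order of operations (you pass to cohomology sheaves first and then stratify, whereas the paper stratifies $M$ first and then takes cohomology on each stratum), and you spell out the easy forward implication, which the paper leaves implicit.
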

\begin{proof} Since $M$ is constructible, there is a stratification $\mc{S}$ of $S$ such that for all $T\in \mc{S}$, the complex $M|_T$ is lisse.  Thus, for any integer $n$, the sheaf $\Hl^n(M|_T)$ is lisse. 
	
Refining the stratification if needed, we can assume the strata to be connected. Let $\xi_T$ be a geometric point of $T$. Then, for any integer $n$, the sheaf $\Hl^n(M|_T)$ corresponds through the equivalence of \Cref{description des faisceaux l-adiques lisses} to an object $E^n(M,T)$ of $\Rep(\pi_1^{\mathrm{pro\acute{e}t}}(T,\xi_T),\Z_\ell)$.

Since the functor $-\otimes_{\Z_\ell} \Q_\ell$ is t-exact, we have $$\Hl^n(M|_T)\otimes_{\Z_\ell} \Q_\ell=\Hl^n((M \otimes_{\Z_\ell} \Q_\ell)|_T)$$ for any integer $n$.

Since $M \otimes_{\Z_\ell} \Q_\ell$ is Artin, refining the stratification further, we can assume the $\ell$-adic sheaves $\Hl^n((M \otimes_{\Z_\ell} \Q_\ell)|_T)$ to be smooth Artin for any integer $n$ \Cref{t-structure ordinaire}. 

Therefore, the representation $E^n(M,T)\otimes_{\Z_\ell} \Q_\ell$ is of Artin origin for any integer $n$. Thus, by \Cref{passage coeffs Ql rep}, the representation $E^n(M,T)$ belongs for any integer $n$ to the abelian category $\Rep^A(\pi_1^{\mathrm{pro\acute{e}t}}(T,\xi_T),\Z_\ell)^*$. Therefore, by \Cref{t-structure ordinaire}, the complex $M$ is Artin.
\end{proof}

\section{The perverse t-structure}

\textbf{From now on, all schemes are assumed to be excellent and endowed with a dimension function} $\delta$ (see for instance \cite[1.1.1]{bondarko-deglise}).
One of the main goals of this paper is to understand when the perverse t-structure can be restricted to Artin $\ell$-adic complexes and to study Artin perverse sheaves when that is the case.
\subsection{The smooth Artin case}
Using \Cref{t-structure perverse lisse,t-structure ordinaire smooth}, we have:
\begin{proposition}\label{t-structure perverse smooth} Let $S$ be a regular connected scheme and $\ell$ be a prime number which is invertible on $S$. Then, the perverse t-structure of $\mc{D}^b_c(S,\Z_\ell)$ induces a t-structure on $\mc{D}^{smA}(S,\Z_\ell)$. This t-structure coincides with the ordinary t-structure of $\mc{D}^{smA}(S,\Z_\ell)$ (see \Cref{t-structure ordinaire smooth}) shifted by $\delta(S)$. In particular, the heart of this t-structure is $\Sh^{smA}(S,\Z_\ell)[\delta(S)]$ which is equivalent to $\Rep^A(\pi_1^{\mathrm{pro\acute{e}t}}(S,\xi),\Z_\ell)^*$ if $\xi$ is a geometric point of $S$.
\end{proposition}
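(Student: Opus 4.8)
The plan is to reduce the statement to \Cref{t-structure perverse lisse} and \Cref{t-structure ordinaire smooth} by checking the hypotheses of \Cref{keur}. First I would recall that on a regular connected scheme $S$ with dimension function $\delta$, the category $\mc{D}_{\mathrm{lisse}}(S,\Z_\ell)$ is a sub-t-category of $\mc{D}^b_c(S,\Z_\ell)$ for the perverse t-structure, and that this induced t-structure is the ordinary one shifted by $\delta(S)$ (this is exactly \Cref{t-structure perverse lisse}). Since $\mc{D}^{smA}(S,\Z_\ell)\subseteq \mc{D}_{\mathrm{lisse}}(S,\Z_\ell)$ is a thick subcategory (the generators $f_*\Z_{\ell,X}$ for $f$ finite étale are dualizable), it suffices to show that the perverse t-structure of $\mc{D}_{\mathrm{lisse}}(S,\Z_\ell)$ — equivalently, the shifted ordinary t-structure — restricts to $\mc{D}^{smA}(S,\Z_\ell)$.

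That last point is precisely the content of \Cref{t-structure ordinaire smooth}: the ordinary t-structure of $\mc{D}(S,\Z_\ell)$ induces a t-structure on $\mc{D}^{smA}(S,\Z_\ell)$ whose heart is $\Sh^{smA}(S,\Z_\ell)$. Shifting everything by $\delta(S)$ is harmless for the existence of a t-structure, so the perverse t-structure of $\mc{D}^b_c(S,\Z_\ell)$ induces a t-structure on $\mc{D}^{smA}(S,\Z_\ell)$ which coincides with the ordinary t-structure shifted by $\delta(S)$. Concretely, one checks that the perverse truncation functor $\Hlp^0$ on $\mc{D}_{\mathrm{lisse}}(S,\Z_\ell)$ agrees with $\Hl^{-\delta(S)}(-)[\delta(S)]$ by \Cref{t-structure perverse lisse}, and the latter preserves $\mc{D}^{smA}(S,\Z_\ell)$ by \Cref{t-structure ordinaire smooth}; the criterion of \cite[1.3.19]{bbd} then gives the induced perverse t-structure directly, without even needing to invoke \Cref{keur}.

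For the description of the heart, I would simply combine the two previous identifications: the heart of the induced t-structure is $(\mc{D}^{smA}(S,\Z_\ell))^\heart$ for the shifted ordinary t-structure, which is $\Sh^{smA}(S,\Z_\ell)[\delta(S)]$ by \Cref{t-structure ordinaire smooth}. The final equivalence with $\Rep^A(\pi_1^{\mathrm{pro\acute{e}t}}(S,\xi),\Z_\ell)^*$ for a geometric point $\xi$ is then the remark following the definition of $\Sh^{smA}(S,\Z_\ell)$ (an application of \Cref{description des faisceaux l-adiques lisses}), the shift by $\delta(S)$ being merely cosmetic on the abelian category.

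There is essentially no obstacle here: the proposition is a formal consequence of the interplay between \Cref{t-structure perverse lisse} (which says the perverse t-structure on lisse complexes is the shifted ordinary one) and \Cref{t-structure ordinaire smooth} (which says the ordinary t-structure restricts to smooth Artin complexes). The only minor point to be careful about is that regularity and connectedness of $S$ are needed to invoke \Cref{t-structure perverse lisse} (via absolute purity), so these hypotheses must be kept in the statement; the dimension function $\delta$ supplies the shift.
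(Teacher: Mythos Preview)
Your proposal is correct and follows exactly the paper's approach: the paper's proof consists entirely of the phrase ``Using \Cref{t-structure perverse lisse,t-structure ordinaire smooth}, we have,'' and you have simply spelled out how those two results combine. Your additional remarks about invoking \cite[1.3.19]{bbd} directly rather than \Cref{keur} are fine and make the combination explicit, but there is no substantive difference.
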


In particular the case of fields translates to:

\begin{corollary}\label{t-structure perverse smooth fields} Let $k$ be a field and $\ell$ a prime number which is distinct from the characteristic of $k$. Then, the perverse t-structure of $\mc{D}^b_c(k,\Z_\ell)$ induces a t-structure on $\mc{D}^{A}(k,\Z_\ell)$. This t-structure coincides with the ordinary t-structure of \Cref{t-structure ordinaire smooth fields} shifted by $\delta(k)$. In particular, the heart of this t-structure is equivalent to $\Rep^A(G_k,\Z_\ell)^*$.
\end{corollary}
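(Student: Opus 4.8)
The plan is to deduce this immediately from \Cref{t-structure perverse smooth} applied to the regular connected scheme $S=\Spec(k)$, once $\mc{D}^{A}(k,\Z_\ell)$ has been identified with $\mc{D}^{smA}(k,\Z_\ell)$.

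First I would record the identification $\mc{D}^{A}(k,\Z_\ell)=\mc{D}^{smA}(k,\Z_\ell)$. Over $\Spec(k)$ the only stratification is the trivial one, so every constructible $\ell$-adic sheaf is lisse. Moreover, by topological invariance of the small étale site, $f_*\Z_{\ell,X}$ depends only on $X_{\mathrm{red}}$, and for a finite $k$-algebra $A$ the scheme $\Spec(A_{\mathrm{red}})$ is a finite disjoint union of spectra of finite field extensions of $k$; the inseparable extensions contribute universal homeomorphisms, hence $f_*\Z_{\ell,X}$ is the constant sheaf there, while the separable part is finite étale. Combined with the description of generators in \Cref{generators}, this shows that the generators of $\mc{D}^A(k,\Z_\ell)$ and of $\mc{D}^{smA}(k,\Z_\ell)$ generate the same thick subcategory. (This is essentially the content of \Cref{t-structure ordinaire smooth fields}.)

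Next I would apply \Cref{t-structure perverse smooth} with $S=\Spec(k)$: a field is regular and connected, so the perverse t-structure of $\mc{D}^b_c(k,\Z_\ell)$ induces a t-structure on $\mc{D}^{smA}(k,\Z_\ell)$, equal to the ordinary t-structure of \Cref{t-structure ordinaire smooth} shifted by $\delta(\Spec k)=\delta(k)$, with heart $\Sh^{smA}(k,\Z_\ell)[\delta(k)]$. Using the identification of the previous paragraph, this t-structure lives on $\mc{D}^A(k,\Z_\ell)$. Finally, the fibre functor at a geometric point of $\Spec(k)$ identifies $\mathrm{Loc}_k(\Z_\ell)$ with $\Rep(G_k,\Z_\ell)$ (one may use the étale fundamental group, which for a field is the absolute Galois group $G_k$, by the remark following \Cref{description des faisceaux l-adiques lisses}, or alternatively \Cref{cas des corps}); this equivalence restricts to an equivalence $\Sh^{smA}(k,\Z_\ell)\simeq \Rep^A(G_k,\Z_\ell)^*$, whence the heart of the induced perverse t-structure is $\Rep^A(G_k,\Z_\ell)^*$ after the shift.

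There is no real obstacle here; the statement is a specialization of \Cref{t-structure perverse smooth}. The only point requiring a little care is the identification $\mc{D}^A(k,\Z_\ell)=\mc{D}^{smA}(k,\Z_\ell)$, i.e. the fact that allowing arbitrary finite — rather than finite étale — morphisms over a field does not enlarge the category, which is exactly where topological invariance of the étale site is used on the inseparable part of a finite $k$-algebra.
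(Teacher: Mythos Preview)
Your proposal is correct and follows exactly the paper's (implicit) approach: the paper presents this corollary without proof as the specialization of \Cref{t-structure perverse smooth} to $S=\Spec(k)$, relying on the identification $\mc{D}^A(k,\Z_\ell)=\mc{D}^{smA}(k,\Z_\ell)$ already used in \Cref{t-structure ordinaire smooth fields}. Your explicit justification of that identification via topological invariance is a welcome elaboration, but not a different route.
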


In the case where the base scheme is not connected but is still regular, we have:

\begin{corollary}\label{t-structure perverse smooth non connexe} Let $S$ be a regular scheme and $\ell$ be a prime number which is invertible on $S$. 
Then, the perverse t-structure of $\mc{D}^b_c(S,\Z_\ell)$ induces a t-structure on $\mc{D}^{smA}(S,\Z_\ell)$. If $T$ is a connected component of $S$, denote by $i_T\colon T\rar S$ the clopen immersion. The heart of this t-structure is then the full subcategory made of those objects of the form 
$$\bigoplus \limits_{T \in \pi_0(S)} (i_T)_* L_T[\delta(T)]$$ where $L_T$ is an object of $\Sh^{smA}(T,\Z_\ell)$.
\end{corollary}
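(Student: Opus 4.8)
The plan is to reduce everything to the connected case handled in \Cref{t-structure perverse smooth} by decomposing $S$ into its connected components. Since $S$ is noetherian, the set $\pi_0(S)$ is finite and $S=\coprod_{T\in\pi_0(S)}T$, where each $T$ is a connected regular scheme and $i_T\colon T\rar S$ is a clopen immersion with $i_T^*=i_T^!$ and $(i_T)_*=(i_T)_!$. This yields an equivalence $\mc{D}^b_c(S,\Z_\ell)\simeq\prod_{T\in\pi_0(S)}\mc{D}^b_c(T,\Z_\ell)$ sending $K$ to $(i_T^*K)_T$, with inverse $(K_T)_T\mapsto\bigoplus_T(i_T)_*K_T$. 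First I would check that this equivalence restricts to an equivalence $\mc{D}^{smA}(S,\Z_\ell)\simeq\prod_{T\in\pi_0(S)}\mc{D}^{smA}(T,\Z_\ell)$. For the inclusion from left to right, a finite étale morphism $f\colon X\rar S$ decomposes as $\coprod_T f_T\colon\coprod_T X_T\rar\coprod_T T$ with each $f_T\colon X_T=X\times_ST\rar T$ finite étale, and by finite base change along $i_T$ the object $f_*\Z_{\ell,X}$ corresponds to $((f_T)_*\Z_{\ell,X_T})_T$; hence the generators of $\mc{D}^{smA}(S,\Z_\ell)$ lie in the (thick) subcategory $\prod_T\mc{D}^{smA}(T,\Z_\ell)$. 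For the reverse inclusion, any object of $\prod_T\mc{D}^{smA}(T,\Z_\ell)$ is a finite sum $\bigoplus_T(i_T)_*K_T$, and since $(i_T)_*$ is exact it sends $\mc{D}^{smA}(T,\Z_\ell)$ into the thick subcategory generated by the $(i_T)_*g_*\Z_{\ell,Y}=(i_T\circ g)_*\Z_{\ell,Y}$ for $g\colon Y\rar T$ finite étale; as $i_T\circ g$ is finite étale over $S$, these lie in $\mc{D}^{smA}(S,\Z_\ell)$.

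Next I would observe that, under the product decomposition $\mc{D}^b_c(S,\Z_\ell)\simeq\prod_T\mc{D}^b_c(T,\Z_\ell)$, the perverse t-structure of $\mc{D}^b_c(S,\Z_\ell)$ is the product of the perverse t-structures of the $\mc{D}^b_c(T,\Z_\ell)$ for the dimension functions $\delta|_T$. Indeed the underlying set of $S$ is the disjoint union of those of the $T$'s; for $x\in T$ the point inclusion $\{x\}\rar S$ factors as $\{x\}\rar T\overset{i_T}{\rar}S$ with $i_T^*=i_T^!$; and the defining conditions $i_x^*K\leqslant-\delta(x)$ and $i_x^!K\geqslant-\delta(x)$ are checked one point at a time. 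Hence $K=(K_T)_T$ lies in ${}^{\mathrm{p}}\mc{D}^{\leqslant 0}(S,\Z_\ell)$ (resp. ${}^{\mathrm{p}}\mc{D}^{\geqslant 0}(S,\Z_\ell)$) if and only if each $K_T$ lies in ${}^{\mathrm{p}}\mc{D}^{\leqslant 0}(T,\Z_\ell)$ (resp. ${}^{\mathrm{p}}\mc{D}^{\geqslant 0}(T,\Z_\ell)$). Combining this with the equivalence of the first paragraph, the perverse t-structure induces a t-structure on $\mc{D}^{smA}(S,\Z_\ell)$ if and only if it induces one on each $\mc{D}^{smA}(T,\Z_\ell)$, which holds by \Cref{t-structure perverse smooth}.

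Finally I would identify the heart. By the above, the induced heart is the product over $T\in\pi_0(S)$ of the hearts $\Sh^{smA}(T,\Z_\ell)[\delta(T)]$ of \Cref{t-structure perverse smooth}; transporting this back through the equivalence $(K_T)_T\mapsto\bigoplus_T(i_T)_*K_T$ gives precisely the full subcategory of objects of the form $\bigoplus_{T\in\pi_0(S)}(i_T)_*L_T[\delta(T)]$ with $L_T\in\Sh^{smA}(T,\Z_\ell)$, as asserted. The only step that requires genuine (if modest) work is the verification in the first paragraph that the thick subcategory $\mc{D}^{smA}(S,\Z_\ell)$ is compatible with the decomposition into connected components; everything else is the purely formal observation that both the notion of smooth Artin complex and the perverse t-structure are local on $\pi_0(S)$.
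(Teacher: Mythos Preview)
Your proposal is correct and follows precisely the approach the paper intends: the corollary is stated without proof in the paper, as it is an immediate consequence of the connected case \Cref{t-structure perverse smooth} via the decomposition of a noetherian scheme into finitely many connected components. Your verification that $\mc{D}^{smA}(S,\Z_\ell)$, the perverse t-structure, and the heart all decompose compatibly along $\pi_0(S)$ is exactly the routine check the paper leaves to the reader.
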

\begin{remark} Keeping the notations of \Cref{t-structure perverse smooth non connexe}, the heart of the perverse t-structure on $\mc{D}^{smA}(S,\Z_\ell)$ is equivalent (as an abelian category) to $\Sh^{smA}(S,\Z_\ell)$.
\end{remark}
\begin{definition}\label{smooth Artin perverse sheaves} Let $S$ be a regular scheme. We define the category of \emph{smooth Artin perverse sheaves} denoted by $\mathrm{Perv}^{smA}(S,\Z_\ell)$ to be the heart of the perverse t-structure on $\mc{D}^{smA}(S,\Z_\ell)$.
\end{definition}

\subsection{General properties}
We have the following restriction property:
\begin{proposition}\label{gluing 0} Let $S$ be a scheme and $\ell$ be a prime number which is invertible on $S$. Let $T$ be a locally closed subscheme of $S$. Then, if the perverse t-structure on $\mc{D}^b_c(S,\Z_\ell)$ induces a t-structure on $\mc{D}^A(S,\Z_\ell)$, then, the perverse t-structure on $\mc{D}^b_c(T,\Z_\ell)$ induces a t-structure on $\mc{D}^A(T,\Z_\ell)$ 
\end{proposition}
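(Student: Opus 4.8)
The plan is to reduce the statement to the case of an open immersion and the case of a closed immersion, since any locally closed immersion factors as a closed immersion followed by an open immersion (or vice versa), and then to handle each case using the gluing formalism for perverse t-structures together with the stability properties of Artin complexes recorded in \Cref{6 foncteurs}. By \Cref{keur} it suffices to check that the kernel (equivalently the cokernel) of a morphism between two Artin $\ell$-adic complexes lying in the perverse heart of $\mc{D}^b_c(T,\Z_\ell)$ is again an Artin $\ell$-adic complex; alternatively, one checks stability under the perverse truncation functor $\tau^{\mathrm{p}}_{\geqslant 0}$ on $\mc{D}^A(T,\Z_\ell)$. So throughout I will try to produce perverse truncations of Artin complexes on $T$ as restrictions of perverse truncations of Artin complexes on $S$, using that the six functors involved preserve the Artin condition.

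First I would treat the case where $j\colon U\rar S$ is an open immersion. Here the key input is that $j^*=j^!$ is perverse t-exact (up to the shift built into the dimension function, which is inherited by the open subscheme), and that $j_!$ is right perverse t-exact and sends $\mc{D}^A(U,\Z_\ell)$ into $\mc{D}^A(S,\Z_\ell)$ by \Cref{6 foncteurs}(2) (open immersions being quasi-finite). Given $M\in\mc{D}^A(U,\Z_\ell)$, form $j_!M\in\mc{D}^A(S,\Z_\ell)$; by hypothesis $\tau^{\mathrm{p}}_{\leqslant 0}(j_!M)$ lies in $\mc{D}^A(S,\Z_\ell)$, and applying the t-exact functor $j^*$ and using $j^*j_!=\id$ gives $j^*\tau^{\mathrm{p}}_{\leqslant 0}(j_!M)=\tau^{\mathrm{p}}_{\leqslant 0}(M)$, which is therefore Artin. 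Since $\mc{D}^A(U,\Z_\ell)$ is stable, the complementary truncation $\tau^{\mathrm{p}}_{\geqslant 1}(M)$ is Artin as well, and \Cref{keur} (or \cite[1.3.19]{bbd}) yields the induced t-structure on $\mc{D}^A(U,\Z_\ell)$.

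Next I would treat the case where $i\colon Z\rar S$ is a closed immersion; here the main obstacle lies, because $i_*$ is \emph{not} known to preserve Artin complexes in general (it is $i_!=i_*$, but $i$ is not quasi-finite onto $S$ unless $Z$ is a union of connected components), so one cannot simply transport truncations along $i_*$ as above. My preferred route is to use the pro-étale/stratification description of Artin complexes from \Cref{stratification}: an $\ell$-adic complex on $Z$ is Artin if and only if it is stratified smooth Artin, and this is a condition that can be checked after further stratifying. Concretely, given $N\in\mc{D}^A(Z,\Z_\ell)$ and its perverse truncations $\tau^{\mathrm{p}}_{\leqslant 0}N, \tau^{\mathrm{p}}_{\geqslant 1}N$ computed in $\mc{D}^b_c(Z,\Z_\ell)$, I would pick a stratification of $S$ adapted simultaneously to $Z$ and to $N$, argue stratum by stratum using the regular (smooth Artin) case \Cref{t-structure perverse smooth} where the perverse truncation is just an ordinary truncation shifted by $\delta$ — which manifestly preserves smooth Artin sheaves by \Cref{t-structure ordinaire smooth} — and then glue. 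A cleaner alternative, if available, is to embed the question into $S$ directly: choose an open $j\colon U\rar S$ with $U\cap Z$ dense in $Z$ and $N|_{U\cap Z}$ smooth Artin (\Cref{ouvert dense lisse}), handle $U\cap Z$ by the regular case, and conclude by noetherian induction on the closed complement, exactly as in the proof of \Cref{stratification}. Either way the crux is the compatibility of perverse truncation with stratifications over a regular base, which is precisely \Cref{t-structure perverse smooth}, combined with \Cref{keur} to package the truncation-stability into the statement that the perverse t-structure induces one on $\mc{D}^A(T,\Z_\ell)$.
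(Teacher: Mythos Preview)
Your treatment of the open case is correct and essentially identical to the paper's: use that $j_!$ preserves Artin complexes, that the hypothesis on $S$ makes ${}^{\mathrm{p}}\tau_{\leqslant 0}(j_!M)$ Artin, and that $j^*$ is perverse t-exact and preserves Artin complexes to conclude ${}^{\mathrm{p}}\tau_{\leqslant 0}M$ is Artin.

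For the closed case you have a genuine error. You write that $i_*$ ``is not known to preserve Artin complexes in general'' because ``$i$ is not quasi-finite onto $S$ unless $Z$ is a union of connected components''. This is false: a closed immersion is always finite (in particular quasi-finite), since it is affine with fibers that are either empty or a single point. Therefore $i_*=i_!$ does preserve Artin complexes by \Cref{6 foncteurs}(2), and moreover $i_*$ is perverse t-exact. The paper's proof of the closed case is then exactly parallel to the open case: for $M\in\mc{D}^A(Z,\Z_\ell)$ one has $i_*M\in\mc{D}^A(S,\Z_\ell)$, so by hypothesis ${}^{\mathrm{p}}\tau_{\geqslant 0}(i_*M)$ is Artin; applying the t-exact functor $i^*$ (which also preserves Artin complexes) and using $i^*i_*=\id$ gives ${}^{\mathrm{p}}\tau_{\geqslant 0}M\in\mc{D}^A(Z,\Z_\ell)$.

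Your proposed workaround via stratifications is also problematic, and in fact reveals why the misconception matters. Notice that your stratification argument never invokes the hypothesis that the perverse t-structure restricts to $\mc{D}^A(S,\Z_\ell)$: you are trying to show directly that perverse truncations on $Z$ preserve Artin complexes by reducing to the smooth Artin case on strata. If this worked it would prove unconditionally that the perverse t-structure restricts to $\mc{D}^A(Z,\Z_\ell)$ for any excellent $Z$, contradicting \Cref{exemple dim 3}. The flaw is that perverse truncation does not commute with restriction to a closed stratum (the functor $i^*$ for a closed immersion is only right t-exact), so knowing that each $N|_T$ is smooth Artin does not let you compute ${}^{\mathrm{p}}\tau_{\leqslant 0}N$ stratumwise.
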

\begin{proof}
    We can assume that $T$ is either an open subset or a closed subset of $S$. 

    Assume first that $T$ is open and denote by $j\colon T\rar S$ the canonical immersion. Let $M$ be an Artin $\ell$-adic complex over $T$. It suffices to show that ${}^p \tau_{\geqslant 0} M$ is an Artin $\ell$-adic complex. But since the functor $j^*$ is t-exact with respect to the perverse t-structure, we have $${}^p \tau_{\geqslant 0} M=j^*({}^p \tau_{\geqslant 0} j_!M).$$ 
    
    Therefore, using the stability properties of Artin $\ell$-adic complexes of \Cref{6 foncteurs}, the $\ell$-adic complex ${}^p \tau_{\geqslant 0} M$ is Artin.

    Assume now that $T$ is closed and denote by $i\colon T\rar S$ the canonical immersion. Let $M$ be an Artin $\ell$-adic complex over $T$. Since the functor $i_*$ is t-exact with respect to the perverse t-structure, we have $${}^p \tau_{\geqslant 0} M=i^*({}^p \tau_{\geqslant 0} i_*M).$$ 
    
    Therefore, using the stability properties of Artin $\ell$-adic complexes of \Cref{6 foncteurs}, the $\ell$-adic complex ${}^p \tau_{\geqslant 0} M$ is Artin.
\end{proof}

\begin{definition}\label{Artin perverse sheaves} Let $S$ be a scheme and $\ell$ be a prime number which is invertible on $S$. Assume that the perverse t-structure on $\mc{D}^b_c(S,\Z_\ell)$ induces a t-structure on $\mc{D}^A(S,\Z_\ell)$. Then, we define the category $\mathrm{Perv}^A(S,\Z_\ell)$ of \emph{Artin perverse sheaves} to be the heart of this t-structure.
\end{definition}

The following properties are analogous to the classical properties of perverse sheaves:

\begin{proposition}\label{properties of perverse sheaves} Let $\ell$ be a prime number. Assume that all schemes below are defined over $\Z[1/\ell]$ and that the perverse t-structures on their categories of constructible $\ell$-adic sheaves induce t-structures on their categories of Artin $\ell$-adic sheaves.
\begin{enumerate}
\item Let $f\colon T\rar S$ be a quasi-finite morphism of schemes. 
Then, the functor 
$$\Hlp^0f_!\colon \mathrm{Perv}^A(T,\Z_\ell)\rar \mathrm{Perv}^A(S,\Z_\ell)$$
is right exact.

Furthermore, if $f$ is affine, $\Hlp^0f_!=f_!$ and this functor is exact.
\item Let $g\colon T\rar S$ be a morphism of schemes. Assume that $\dim(g)\leqslant d$. 
Then, the functor $$\Hlp^dg^*\colon \mathrm{Perv}^A(S,\Z_\ell)\rar  \mathrm{Perv}^A(T,\Z_\ell)$$
is right exact.

Furthermore, if $f$ is étale, $\Hlp^0f^*=f^*$ and this functor is exact.
\item Consider a cartesian square of schemes
$$\begin{tikzcd}Y\ar[r,"g"] \ar[d,"q"]&  X \ar[d,"p"]\\
T\ar[r,"f"]& S
\end{tikzcd}$$
such that $p$ is quasi-finite and $\dim(f)\leqslant d$. Then we have a canonical equivalence:
$$\Hlp^d f^* (\Hlp^0 p_!) \rar \Hlp^0 q_! (\Hlp^d g^*).$$
\item Let $S$ be a scheme. Let $i\colon F \rar C$ be a closed immersion and $j\colon U\rar C$ be the open complement. Let $M$ be a perverse $\ell$-adic sheaf on $S$. Then, we have an exact sequence of perverse Artin $\ell$-adic sheaves:
$$0\rar i_* \Hlp^{-1} i^* M \rar \Hlp^0j_! j^* M \rar M \rar i_* \Hlp^0 i^* M \rar 0.$$

When $j$ is affine, we have an exact sequence of perverse Artin $\ell$-adic sheaves:
$$0\rar i_* \Hlp^{-1} i^* M \rar j_! j^* M \rar M \rar i_* \Hlp^0 i^* M \rar 0.$$
\end{enumerate}
\end{proposition}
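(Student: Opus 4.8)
The plan is to deduce each of the four assertions from the corresponding classical property of the perverse $t$-structure on $\mc{D}^b_c(-,\Z_\ell)$ — available over our excellent bases with a dimension function by the discussion preceding \Cref{t-structure perverse lisse} — combined with two facts already at hand: the functors involved preserve Artin complexes (\Cref{6 foncteurs}), and, by the standing hypothesis, the perverse $t$-structure restricts to $\mc{D}^A(-,\Z_\ell)$, so that $\Hlp^n$ of an Artin complex is again Artin. The one formal ingredient I would isolate first is the elementary remark that an exact, right $t$-exact functor $F$ between stable categories with $t$-structures induces a right exact functor $X\mapsto \Hl^0(F(X))$ on hearts: a short exact sequence $0\rar A\rar B\rar C\rar 0$ gives a triangle $FA\rar FB\rar FC\rar FA[1]$, and since $\Hl^1(FA)=0$ its long exact cohomology sequence ends with $\Hl^0(FA)\rar \Hl^0(FB)\rar \Hl^0(FC)\rar 0$.

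For (1), I would first recall that for a quasi-finite morphism $f$ the functor $f_!$ is right $t$-exact for the perverse $t$-structure, checked pointwise via $i_x^*f_!\simeq f_{x!}\,i_{f^{-1}(x)}^*$, using that $f_{x!}$ is ordinary $t$-exact and that every point $z$ of the fibre satisfies $\delta(z)=\delta(x)$. Then for $M\in\mathrm{Perv}^A(T,\Z_\ell)$ the complex $f_!M$ is Artin and lies in ${}^{\mathrm{p}}\mc{D}^{\leqslant 0}$, so $\Hlp^0f_!M$ is an Artin perverse sheaf and $\Hlp^0f_!$ is right exact by the remark above; when $f$ is affine, Zariski's Main Theorem \cite[18.12.13]{ega4} writes $f$ as an affine open immersion followed by a finite morphism, $j_!$ is $t$-exact by the affine Lefschetz theorem \cite[4.1.1]{bbd} (valid here via \cite[XV.1.1.2]{travauxgabber}, as explained in the preliminaries) and $f_*=f_!$ is $t$-exact for finite morphisms, so $f_!$ is $t$-exact and $\Hlp^0f_!=f_!$ is exact. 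Part (2) is the mirror argument: $\dim(g)\leqslant d$ implies that $g^*$ sends ${}^{\mathrm{p}}\mc{D}^{\leqslant n}$ into ${}^{\mathrm{p}}\mc{D}^{\leqslant n+d}$ (pointwise, using that pullback along a field extension is ordinary $t$-exact and that $\delta(t)=\delta(g(t))+\mathrm{tr.deg}\,k(t)/k(g(t))\leqslant \delta(g(t))+d$), equivalently $M\mapsto g^*M[d]$ is right $t$-exact, so that $\Hlp^dg^*=\Hl^0\circ(g^*[d])$ takes values in $\mathrm{Perv}^A(T,\Z_\ell)$ and is right exact, while for $f$ étale $f^*=f^!$ is $t$-exact and $\Hlp^0f^*=f^*$ is exact.

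For (3), I would combine the base change isomorphism $f^*p_!\simeq q_!g^*$, valid for any cartesian square, with the perverse truncation triangles. Setting $N=p_!M\in{}^{\mathrm{p}}\mc{D}^{\leqslant 0}$ (by (1)) and applying $f^*$ to ${}^{p}\tau_{\leqslant -1}N\rar N\rar \Hlp^0N\rar {}^{p}\tau_{\leqslant -1}N[1]$: since $f^*$ sends ${}^{\mathrm{p}}\mc{D}^{\leqslant -1}$ into ${}^{\mathrm{p}}\mc{D}^{\leqslant d-1}$, the long exact sequence of $\Hlp$ gives a canonical isomorphism $\Hlp^d(f^*p_!M)\overset{\sim}{\rar}\Hlp^df^*(\Hlp^0p_!M)$; dually, with $N'=g^*M\in{}^{\mathrm{p}}\mc{D}^{\leqslant d}$ (by (2)) and the right $t$-exact functor $q_!$ — note $q$ is quasi-finite, being a base change of $p$ — applied to ${}^{p}\tau_{\leqslant d-1}N'\rar N'\rar \Hlp^dN'[-d]\rar {}^{p}\tau_{\leqslant d-1}N'[1]$, I get a canonical isomorphism $\Hlp^d(q_!g^*M)\overset{\sim}{\rar}\Hlp^0q_!(\Hlp^dg^*M)$; composing with base change yields the asserted equivalence, all complexes in sight being Artin by \Cref{6 foncteurs} and the restriction hypothesis. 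For (4) I would apply $\Hlp$ to the localization triangle $j_!j^*M\rar M\rar i_*i^*M\rar j_!j^*M[1]$: both outer terms lie in ${}^{\mathrm{p}}\mc{D}^{\leqslant 0}$ ($j^*$ $t$-exact and $j_!$ right $t$-exact; $i_*$ $t$-exact and $i^*$ right $t$-exact), so, $M$ being perverse, the long exact sequence collapses to $0\rar i_*\Hlp^{-1}i^*M\rar \Hlp^0j_!j^*M\rar M\rar i_*\Hlp^0i^*M\rar 0$, with every term an Artin perverse sheaf since $j^*$, $i^*$, $j_!$ and $i_*=i_!$ ($i$ finite) preserve Artin complexes; when $j$ is affine, $j_!$ is $t$-exact by affine Lefschetz and the sequence becomes $0\rar i_*\Hlp^{-1}i^*M\rar j_!j^*M\rar M\rar i_*\Hlp^0i^*M\rar 0$.

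I do not expect a serious obstacle: the argument is essentially bookkeeping around \Cref{6 foncteurs} and the standing hypothesis, and the only non-formal inputs — the affine Lefschetz theorem (for the affine refinements in (1) and (4)) and the good behaviour of the perverse $t$-structure over excellent bases — are already in place. The one thing requiring genuine care is tracking the shifts by $d$ correctly in (2) and (3), in particular ensuring that the isomorphism produced in (3) is obtained functorially by applying $f^*$ and $q_!$ to canonical truncation triangles, so that it really is the comparison map asserted in the statement.
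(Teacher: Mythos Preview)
Your proposal is correct and follows essentially the same approach as the paper: the paper's proof simply cites \cite[4.1.2, 4.2.4]{bbd} for the $t$-exactness inputs in (1) and (2), \cite[1.3.17]{bbd} for the formal remark you isolate about right $t$-exact functors inducing right exact functors on hearts, proper base change together with (1), (2) and \cite[1.3.17]{bbd} for (3), and \cite[1.4.19]{bbd} for (4). You have unpacked exactly what those references say.
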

\begin{proof} (1) and (2) follow from the analogous properties of perverse sheaves described in \cite[4.1.2, 4.2.4]{bbd}, from the stability properties of Artin $\ell$-adic complexes of \Cref{6 foncteurs} and from general properties of t-structures described in \cite[1.3.17]{bbd}. Note that \cite[4.1.2]{bbd} still applies in this general setting (and not only when the base scheme is of finite type over a field) since it is a consequence of the absolute purity property of \cite[XIV.3.1]{sga4} which has been generalized by Gabber in \cite[XV.1.1.2]{travauxgabber} in the case of quasi-excellent base schemes. 

Property (3) follows from base change, from properties (1) and (2) and from  \cite[1.3.17]{bbd}. Finally, property (4) follows from \cite[1.4.19]{bbd} which is the analogous property on perverse sheaves and from (1) and (2).
\end{proof}

\subsection{Construction of Artin perverse sheaves over schemes of dimension \texorpdfstring{$2$}{2} or less}

To prove the main theorem of this paper, we first need a few lemmas. 
\begin{lemma}\label{easy} Let $k$ be a field, let $\ell$ be a prime number distinct from the characteristic of $k$ and let $p\colon E\rar \Spec(k)$ be a proper morphism. Let $$E\rar \pi_0(E/k) \overset{q}{\rar} \Spec(k)$$ be the Stein factorization of p.

Then, $R^0p_*\Z_{\ell,X}=q_* \Z_{\ell,\pi_0(E/k)}$.
\end{lemma}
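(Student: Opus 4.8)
The plan is to compute both $\ell$-adic sheaves on $\Spec(k)$ by evaluating them at a fixed geometric point $\overline{\eta}\colon\Spec(\overline{k})\to\Spec(k)$, that is, by regarding them as continuous representations of $G_k$ via \Cref{cas des corps}. Since $E$ is noetherian and $q$ is finite, both $R^0p_*\Z_{\ell,E}$ and $q_*\Z_{\ell,\pi_0(E/k)}$ are constructible---hence lisse, as $\Spec(k)$ admits no nontrivial stratification---so by \Cref{description des faisceaux l-adiques lisses} a morphism between them is an isomorphism as soon as it induces an isomorphism on the fibre at $\overline{\eta}$. Such a morphism comes from the Stein factorization $E\xrightarrow{f}\pi_0(E/k)\xrightarrow{q}\Spec(k)$: the unit $\Z_{\ell,\pi_0(E/k)}\to Rf_*f^*\Z_{\ell,\pi_0(E/k)}=Rf_*\Z_{\ell,E}$, composed with the truncation onto $\mathcal{H}^0$ (the source sitting in degree $0$), gives $\Z_{\ell,\pi_0(E/k)}\to R^0f_*\Z_{\ell,E}$; applying $q_*$, which is exact because $q$ is finite and hence commutes both with $\mathcal{H}^0$ and with the passage from $Rf_*$ to $Rp_*=Rq_*\circ Rf_*$, produces the natural map $q_*\Z_{\ell,\pi_0(E/k)}\to R^0p_*\Z_{\ell,E}$.

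It then remains to identify the fibres at $\overline{\eta}$. By proper base change---available since $\mc{D}^b_c(-,\Z_\ell)$ carries the six functors formalism over noetherian excellent schemes---together with the exactness of $\overline{\eta}^*$, the fibre of $R^0p_*\Z_{\ell,E}$ is $\Hl^0(E_{\overline{k}},\Z_\ell)$ and the fibre of $q_*\Z_{\ell,\pi_0(E/k)}$ is $\Hl^0(\pi_0(E/k)_{\overline{k}},\Z_\ell)$, each with its natural $G_k$-action. By the definition of the constant sheaf on the pro-\'etale site one has $\Hl^0(X,\Z_{\ell,X})=\mathcal{C}^0(\pi_0(X),\Z_\ell)$ for every scheme $X$, which is $\Z_\ell^{\pi_0(X)}$ when $X$ is noetherian; so we are reduced to checking that the map $\pi_0(E_{\overline{k}})\to\pi_0(\pi_0(E/k)_{\overline{k}})$ induced by $f$ is a $G_k$-equivariant bijection.

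This is where the only real content lies. By definition of the Stein factorization $\pi_0(E/k)=\Spec\Gamma(E,\mathcal{O}_E)$, so flat base change along $k\to\overline{k}$ gives $\Gamma(E,\mathcal{O}_E)\otimes_k\overline{k}=\Gamma(E_{\overline{k}},\mathcal{O}_{E_{\overline{k}}})$ and hence $\pi_0(E/k)_{\overline{k}}=\Spec\Gamma(E_{\overline{k}},\mathcal{O}_{E_{\overline{k}}})$. For any noetherian scheme $X$ the clopen subsets of $X$ and of $\Spec\Gamma(X,\mathcal{O}_X)$ agree---both being the idempotents of $\Gamma(X,\mathcal{O}_X)$---so the canonical map $\pi_0(X)\to\pi_0(\Spec\Gamma(X,\mathcal{O}_X))$ is a bijection; applied to $X=E_{\overline{k}}$ this yields the desired bijection $\pi_0(E_{\overline{k}})\xrightarrow{\ \sim\ }\pi_0(\pi_0(E/k)_{\overline{k}})$, and its naturality in $\overline{k}$ makes it $G_k$-equivariant. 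Consequently the map on fibres built above is the induced isomorphism $\Z_\ell^{\pi_0(\pi_0(E/k)_{\overline{k}})}\xrightarrow{\ \sim\ }\Z_\ell^{\pi_0(E_{\overline{k}})}$, so $q_*\Z_{\ell,\pi_0(E/k)}\to R^0p_*\Z_{\ell,E}$ is an isomorphism of $\ell$-adic sheaves on $\Spec(k)$. (Alternatively, one could invoke that the Stein factorization of a proper morphism has geometrically connected fibres, giving $R^0f_*\Z_{\ell,E}=\Z_{\ell,\pi_0(E/k)}$ directly; the argument above has the advantage of making the Galois action transparent and of avoiding the inseparable subtleties of that statement.)
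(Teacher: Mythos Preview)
Your proof is correct and follows essentially the same idea as the paper's: both sides are identified via the formula $\Hl^0(X,R)=R^{\pi_0(X)}$ together with proper base change to the geometric fibre. The paper's argument is considerably terser---it simply invokes $\Hl^0_{\et}(X,R)=R^{\pi_0(X)}$ for $R=\Z/\ell^n\Z$ and passes to the limit---whereas you work directly on the pro-\'etale site with $\Z_\ell$-coefficients, construct the comparison map explicitly, and spell out the Galois equivariance via flat base change for the Stein factorization; this extra care is sound but not required.
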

\begin{proof}For any scheme $X$ and any ring of coefficients in $R$, $$\Hl^0_{\et}(X,R)=R^{\pi_0(X)}.$$

The lemma follows from this result applied to $R=\Z/\ell^n\Z$ for all $n$.
\end{proof}
One of our main tools will be the following lemma, which is closely linked to the Rapoport-Zink spectral sequence \cite[2.8]{rz}.

\begin{lemma}\label{spectral sequence} Let $\ell$ be a prime number. Consider a diagram of $\Z[1/\ell]$-schemes:

$$\begin{tikzcd}E\ar[r,"p"]\ar[d,"i"] &F\\
Y &
\end{tikzcd}$$
where $p$ is a proper map and $i$ is a closed immersion of a simple normal crossing divisor $E$ into a regular scheme $Y$. 

Write $E=\bigcup\limits_{i\in I} E_i$ where $I$ is a finite set and for all $J\subseteq I$, the closed subscheme $E_J\colon =\bigcap\limits_{i \in J} E_i$ of $Y$ is a regular subscheme of codimension $|J|$.

For all $J\subseteq I$, let $p_J\colon E_J\rar F$ be the natural map. Then, there is a spectral sequence such that:

$$\mathrm{E}^{p,q}_1=\begin{cases}\bigoplus\limits_{\substack{J \subseteq I \\ |J|=p+1}}\Hlp^{q-2}\left((p_J)_*\Z_{\ell,E_J}\right)(-p-1) & \text{ if }p \geqslant 0 \\
0 &\text{ if }p<0\end{cases}\Longrightarrow \Hlp^{p+q}(p_*i^!\Z_{\ell,Y}).$$
\end{lemma}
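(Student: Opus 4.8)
The plan is to exhibit $p_*i^!\Z_{\ell,Y}$ as a finite geometric realization and to take the spectral sequence of the associated skeletal filtration for the perverse t-structure. Write $a_J\colon E_J\rar E$ for the closed immersion, so that $i_J=i\circ a_J$ and $p_J=p\circ a_J$. The functor $i_*$ is fully faithful and exact, hence preserves the (finite) colimit of \Cref{Mayer-Vietoris}; stripping it off gives an equivalence
$$i^!\Z_{\ell,Y}\ \simeq\ \colim_{n\in(\Delta^{\mathrm{inj}})^{op}}\ \bigoplus_{|J|=n+1}(a_J)_*\, i_J^!\Z_{\ell,Y}$$
in $\mc{D}^b_c(E,\Z_\ell)$. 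Each $i_J\colon E_J\rar Y$ is a closed immersion of codimension $|J|$ between regular schemes, so the absolute purity property \cite[XVI.3.1]{travauxgabber} identifies $i_J^!\Z_{\ell,Y}$ with $\Z_{\ell,E_J}(-|J|)[-2|J|]$. Applying $p_*$, which equals $p_!$ since $p$ is proper and therefore commutes with colimits, and using $p\circ a_J=p_J$, I get $p_*i^!\Z_{\ell,Y}\simeq|C_\bullet|$ for the semisimplicial object $C_\bullet$ with $C_n=\bigoplus_{|J|=n+1}(p_J)_*\Z_{\ell,E_J}(-n-1)[-2n-2]$.

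Next I would use the skeletal filtration of $|C_\bullet|$. Since $I$ is finite we have $C_n=0$ for $n\geqslant|I|$, so the skeleta stabilise and $p_*i^!\Z_{\ell,Y}$ carries a finite increasing filtration $0=F_{-1}\subseteq F_0\subseteq\cdots\subseteq F_{|I|-1}=p_*i^!\Z_{\ell,Y}$ with cofibre sequences $F_{n-1}\rar F_n\rar C_n[n]$, the $n$-th graded piece of a semisimplicial realization being $C_n\otimes(\Delta^n/\partial\Delta^n)\simeq C_n[n]$. The spectral sequence attached to this finite filtration and to the perverse t-structure (see \cite[1.2.2]{ha}) then takes the form $E_1^{p,q}=\Hlp^{p+q}(C_p[p])\Longrightarrow\Hlp^{p+q}(p_*i^!\Z_{\ell,Y})$, it is concentrated in columns $p\geqslant0$, its $d_1$ is induced by the alternating sum of the face maps of $C_\bullet$, and it converges since the filtration is finite.

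Finally, the map $p_J$ is proper, being the composite of the closed immersion $a_J$ with the proper map $p$, so $(p_J)_*\Z_{\ell,E_J}$ lies in $\mc{D}^b_c(F,\Z_\ell)$ and its perverse cohomology sheaves make sense; moreover the Tate twist is t-exact for the perverse t-structure and commutes with $\Hlp$. Hence, for $p\geqslant0$,
$$\Hlp^{p+q}(C_p[p])=\bigoplus_{|J|=p+1}\Hlp^{\,p+q-(2p+2)}\big((p_J)_*\Z_{\ell,E_J}\big)(-p-1)=\bigoplus_{|J|=p+1}\Hlp^{q-2}\big((p_J)_*\Z_{\ell,E_J}\big)(-p-1),$$
which is exactly the claimed $E_1$-page. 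The only real obstacle is getting the shift and twist bookkeeping of the geometric-realization spectral sequence right — in particular that simplicial degree $p$ sits in column $p$ with precisely a $[p]$-shift, to be combined with the $[-2|J|]$ from absolute purity and the Tate twist; everything else is formal.
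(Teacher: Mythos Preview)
Your argument is correct and follows essentially the same route as the paper: Mayer--Vietoris for closed coverings, absolute purity to identify $i_J^!\Z_{\ell,Y}$, then the spectral sequence of the skeletal/sequential filtration via \cite[1.2.2.14]{ha}. One cosmetic slip: in your displayed computation the intermediate exponent should be $p+q-(p+2)$ rather than $p+q-(2p+2)$ (the $[p]$ from the realization and the $[-2p-2]$ from purity combine to $[-p-2]$), but your final identification $\Hlp^{q-2}$ is correct.
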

\begin{proof}Denote $i_J\colon E_J\rar Y$ the inclusion. \Cref{Mayer-Vietoris} asserts that $i_*i^!\Z_{\ell,Y}$ is the colimit:

$$\colim\limits_{p \in \Delta^{inj}} \left(\bigoplus\limits_{\substack{J \subseteq I \\ |J|=p+1}} (i_J)_* i_J^! \Z_{\ell,Y}\right).$$ where $\Delta^{inj}$ is the category of finite ordered sets with morphisms the injective maps.

We can truncate (naively) this diagram in each degree to write $i_*i^!\Z_{\ell,Y}$ as the colimit of a sequential diagram $(M_p)_{p\geqslant 0}$ such that the cofiber of the map $M_{p-1}\rar M_{p}$ is $$M(p-1,p):=\bigoplus\limits_{\substack{J \subseteq I \\ |J|=p+1}} (i_J)_* i_J^! \Z_{\ell,Y}[p].$$

Pulling back to $E$, pushing forward to $F$ and using the absolute purity \cite[XV.1.1.2]{travauxgabber}, we can write $p_*i^!\Z_{\ell,Y}$ as the colimit of a sequential diagram $(N_p)_{p\geqslant 0}$ such that the cofiber of the map $N_{p-1}\rar N_{p}$ is $$N(p-1,p):=\bigoplus\limits_{\substack{J \subseteq I \\ |J|=p+1}} (p_J)_* \Z_{\ell,E_J}(-p-1)[-p-2].$$

But by \cite[1.2.2.14]{ha} such a sequential diagram gives rise to a spectral sequence such that $$\mathrm{E}^{p,q}_1=\bigoplus\limits_{\substack{J \subseteq I \\ |J|=p+1}}\Hlp^{p+q}(N(p-1,p))\Longrightarrow \Hlp^{p+q}(p_*i^!\Z_{\ell,Y}).$$

Finally, $$\Hlp^{p+q}(N(p-1,p))=\Hlp^{q-2}\left((p_J)_*\Z_{\ell,E_J}\right)(-p-1)$$ which finishes the proof.

\end{proof}

The following consequence of this lemma will be useful:

\begin{corollary}\label{calcul penible} Keep the notations of the lemma. Take the convention that $\delta(F)=\dim(F)$. Then, 

\begin{enumerate}\item $\Hlp^k(p_*i^!\Z_{\ell,Y})$ vanishes for $k=0,1$.
\item Write $I_0=\{i \in I\mid \delta(p_i(E_i))=0\}$. If $i\in I_0$, let $$E_i\rar Z_i\overset{q_i}{\rar} F.$$ be the Stein factorization of $p_i$, so that the map $q_i$ is finite. Then, 
\begin{enumerate}
    \item The image of $q_i$ is a finite subset of closed points of codimension $\delta(F)$ of $F$ if $i\in I_0$.
    \item We have $\Hlp^2(p_*i^!\Z_{\ell,Y})=\bigoplus\limits_{i \in I_0} (q_i)_* \Z_{\ell,Z_i}(-1).$
\end{enumerate}
\end{enumerate}
\end{corollary}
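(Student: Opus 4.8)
The plan is to run the whole argument through the spectral sequence of \Cref{spectral sequence},
\[
\mathrm{E}^{p,q}_1=\bigoplus_{|J|=p+1}\Hlp^{q-2}\bigl((p_J)_*\Z_{\ell,E_J}\bigr)(-p-1)\ \Longrightarrow\ \Hlp^{p+q}(p_*i^!\Z_{\ell,Y}),
\]
after fixing the dimension functions compatibly: the convention $\delta(F)=\dim F$, together with the fact that $E\to F$ is of finite type and $E\hookrightarrow Y$ is a closed immersion into a regular scheme, pins down $\delta(Y)=\dim Y$ and, since $E_J$ is regular of codimension $|J|$ in $Y$, $\delta(E_J)=\dim E_J$ on each connected component. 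The one computational input I need is a perverse-amplitude estimate: \emph{for every $J$ the complex $(p_J)_*\Z_{\ell,E_J}$ lies in ${}^{\mathrm{p}}\mc{D}^{\geqslant\,\delta(E_J)-\dim p_J}$, and $\delta(E_J)-\dim p_J\geqslant 0$}. Indeed, $\Z_{\ell,E_J}$ is lisse on the regular scheme $E_J$, hence concentrated in perverse degree $\delta(E_J)$ by \Cref{t-structure perverse lisse}; the morphism $p_J$ is proper with fibre dimension $\dim p_J\leqslant\dim E_J=\delta(E_J)$; and a proper morphism of fibre dimension $\leqslant d$ sends ${}^{\mathrm{p}}\mc{D}^{\geqslant n}$ into ${}^{\mathrm{p}}\mc{D}^{\geqslant n-d}$ (this is the Verdier dual of the right $t$-exactness up to shift already used in \Cref{properties of perverse sheaves}, valid here thanks to absolute purity). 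In particular $\mathrm{E}^{p,q}_1=0$ for $q<2$, and $\Hlp^0((p_J)_*\Z_{\ell,E_J})=0$ as soon as $\dim p_J<\dim E_J$.

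From this, part $(1)$ is immediate: for $n\in\{0,1\}$ every term $\mathrm{E}^{p,q}_1$ with $p+q=n$ and $p\geqslant 0$ has $q=n-p<2$, so it vanishes, and therefore $\Hlp^n(p_*i^!\Z_{\ell,Y})=0$. For part $(2)$, the only term on the diagonal $p+q=2$ with $p\geqslant 0$ and $q\geqslant 2$ is $\mathrm{E}^{0,2}_1=\bigoplus_{i\in I}\Hlp^0((p_i)_*\Z_{\ell,E_i})(-1)$. I then need to see that no nonzero differential of the spectral sequence meets the $(0,2)$-spot, so that $\Hlp^2(p_*i^!\Z_{\ell,Y})=\mathrm{E}^{0,2}_1$. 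This is the step I expect to be the main obstacle: the differentials of this spectral sequence come from the counit maps $\iota_!\iota^!\to\mathrm{id}$ attached to the codimension-one inclusions $E_{J'}\hookrightarrow E_J$ with $|J'|=|J|+1$ — that is, from Gysin maps, which raise the cohomological degree by $2$ and lower $|J|$. Together with the vanishings $\mathrm{E}^{p,q}_1=0$ for $q<2$ and $\mathrm{E}^{p,q}_1=0$ for $p<0$, this forces every differential into or out of $(0,2)$ to link it either to a term of the form $\Hlp^{<0}((p_J)_*\Z_{\ell,E_J})$, which vanishes by the estimate above, or to a term with negative $p$; hence $\mathrm{E}^{0,2}_\infty=\mathrm{E}^{0,2}_1$. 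Pinning down precisely which differentials occur is where I expect to have to be careful.

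It remains to compute $\Hlp^0((p_i)_*\Z_{\ell,E_i})$, working componentwise on $E_i$. If $i\notin I_0$, then $\delta(p_i(E_i))\geqslant 1$, so $p_i$ contracts no component of $E_i$ to a point — a component contracted to a point $x$ would force $\delta(x)=\delta(E_i)-\dim E_i=0$ — hence $\dim p_i<\dim E_i$ and $\Hlp^0((p_i)_*\Z_{\ell,E_i})=0$ by the estimate. If $i\in I_0$, then $p_i(E_i)$ is closed of dimension $0$, hence a finite set of closed points $x$ with $\delta(x)=0$, i.e.\ of codimension $\delta(F)$; applied to the Stein factorization $E_i\to Z_i\overset{q_i}{\to}F$ with $q_i$ finite, this is exactly statement $(2)(a)$. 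Then $p_i$ factors, on each connected component, through a single such point $x_i$ as $E_i\overset{\pi_i}{\to}\Spec\kappa(x_i)\overset{\iota_i}{\to}F$ with $\iota_i$ a closed immersion and $\delta(\Spec\kappa(x_i))=0$; since $(\iota_i)_*$ is $t$-exact for the perverse $t$-structures and the perverse $t$-structure on $\Spec\kappa(x_i)$ coincides with the ordinary one, $\Hlp^0((p_i)_*\Z_{\ell,E_i})=(\iota_i)_*R^0(\pi_i)_*\Z_{\ell,E_i}$, and \Cref{easy} identifies $R^0(\pi_i)_*\Z_{\ell,E_i}$ with $(q_i)_*\Z_{\ell,Z_i}$, where $Z_i=\pi_0(E_i/\kappa(x_i))$ is the middle term of the Stein factorization. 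Substituting into $\mathrm{E}^{0,2}_1$ yields $\Hlp^2(p_*i^!\Z_{\ell,Y})=\bigoplus_{i\in I_0}(q_i)_*\Z_{\ell,Z_i}(-1)$, which is $(2)(b)$.
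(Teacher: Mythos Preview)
Your approach is essentially the paper's, and almost all of it is correct, but there is a real gap in the case $i\notin I_0$.

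You assert that the convention $\delta(F)=\dim F$ forces $\delta(E_J)=\dim E_J$. It does not: one only gets $\delta(E_J)\geqslant\dim E_J$ (the paper proves this by taking a closed point $y\in E_J$ of maximal codimension and observing $\delta(E_J)-\dim E_J=\delta(y)=\delta(p(y))\geqslant 0$). This matters exactly in the situation you try to rule out: suppose $i\notin I_0$ and $p_i(E_i)$ is a single point $x$. Then $\delta(x)=\delta(p_i(E_i))\geqslant 1$ by hypothesis, and your own computation gives $\delta(x)=\delta(E_i)-\dim E_i$, so $\delta(E_i)-\dim E_i\geqslant 1$, not $0$. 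In this case your conclusion ``$\dim p_i<\dim E_i$'' is simply false --- the whole of $E_i$ is the fibre over $x$ --- yet the vanishing you want still holds, because $\delta(E_i)>\dim E_i=\dim p_i$ and the amplitude estimate then gives $(p_i)_*\Z_{\ell,E_i}\in{}^{\mathrm p}\mc{D}^{\geqslant 1}$. The paper handles this by splitting into two cases: if $\dim p_i(E_i)>0$ then irreducibility of $E_i$ forces $\dim p_i<\dim E_i\leqslant\delta(E_i)$; if $p_i(E_i)$ is a point, then $\delta(E_i)-\dim E_i=\delta(x)\geqslant 1$, so $\delta(E_i)>\dim p_i$. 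Either way $\Hlp^0((p_i)_*\Z_{\ell,E_i})=0$.

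Your discussion of the differentials touching $(0,2)$ is more explicit than the paper's (which simply records that $E^{0,2}_1$ is the only nonzero $E^{p,q}_1$ with $p+q\leqslant 2$), but the description of their direction is muddled. A clean argument that bypasses conventions entirely: $\Hlp^n(N_p/N_{p-1})=\bigoplus_{|J|=p+1}\Hlp^{n-p-2}((p_J)_*\Z_{\ell,E_J})(-p-1)$ vanishes for $n<p+2$, so the long exact sequences for $N_{p-1}\to N_p\to N_p/N_{p-1}$ give $\Hlp^n(N_0)\xrightarrow{\sim}\Hlp^n(N_p)$ for $n\leqslant 2$ and all $p$, whence $\Hlp^n(N_0)\simeq\Hlp^n(p_*i^!\Z_{\ell,Y})$ for $n\leqslant 2$.
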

\begin{proof} If $i$ is an element of $I$, we denote by $W_i$ the image of $E_i$ through the map $p$ and by $x_i\in F$ the generic point of $W_i$.

Notice first that if $x\in F$, we have $$\delta(x)\geqslant \dim(F)-\codim_F(x)\geqslant \dim(\overline{\{ x\}}).$$

Applying this to the point of $x_i$ for $i\in I_0$ proves (2)(a). We now prove the remaining statements. 

Letting $y$ be a closed point of $E_J$ of maximal codimension, We have $$\delta(E_J)-\dim(E_J)=\delta(E_J)-\codim_{E_J}(y)=\delta(y)\geqslant \delta(p(y))\geqslant 0.$$

Since the relative dimension of $p_J$ is at most $\dim(E_J)$, using \cite[4.2.4]{bbd} and the fact that $\Z_{\ell,E_J}$ lies in degree $\delta(E_J)$ with respect to the perverse t-structure, we get that $\Hlp^{q}\left((p_J)_*\Z_{\ell,E_J}\right)$ vanishes if $q$ is a negative integer. 

Therefore, the only non-vanishing term of the spectral sequence of \Cref{spectral sequence} such that $p+q\leqslant 2$ is the term $$\mathrm{E}^{0,2}_1=\bigoplus_{i \in I}\Hlp^0((p_i)_*\Z_{\ell,E_i})(-1).$$

Now, if $i \notin I_0$, we claim that $\Hlp^0((p_i)_*\Z_{\ell,E_i})$ vanishes. 

Assume first that $W_i$ is of positive dimension. Then, no fibers of $p_i$ can be of dimension $\dim(E_i)$ because the map $E_i\rar W_i$ is surjective and because $E_i$ is irreducible.  Therefore, the relative dimension of $p_i$ is at most $\dim(E_i)-1$. Since the sheaf $\Z_{\ell,E_i}$ lies in degree $\delta(E_i)\geqslant \dim(E_i)$, \cite[4.2.4]{bbd} ensures that the sheaf $\Hlp^0((p_i)_*\Z_{\ell,E_i})$ vanishes.

In the case where $W_i=\{ x_i\}$, we get as before that $$\delta(E_i)-\dim(E_i)\geqslant \delta(x_i)=\delta(W_i)> 0$$ and therefore, the sheaf $\Hlp^0((p_i)_*\Z_{\ell,E_i})$ vanishes by \cite[4.2.4]{bbd} as before.

The corollary then follows from \Cref{easy}.
\end{proof}

The following statement is the main result of this paper.
\begin{theorem}\label{main theorem} Assume that $S$ is an excellent scheme of dimension $2$ or less and $\ell$ be a prime number which is invertible on $S$, then, the perverse t-structure induces a t-structure on $\mc{D}^A(S,\Z_\ell)$.
\end{theorem}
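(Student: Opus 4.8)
\emph{Proof strategy.} The plan is to invoke \Cref{keur} for the perverse t-structure on $\mc{D}=\mc{D}^b_c(S,\Z_\ell)$ and the subcategory $\mc{D}'=\mc{D}^A(S,\Z_\ell)$, which is thick by construction. This splits the theorem into two tasks. The first is to exhibit a set of generators of $\mc{D}^A(S,\Z_\ell)$, as a thick subcategory, lying inside $\mathrm{Perv}^A(S,\Z_\ell):=\mathrm{Perv}(S,\Z_\ell)\cap\mc{D}^A(S,\Z_\ell)$. For this I would refine a stratification of $S$ (as in \Cref{stratification}) so that each stratum $V$ is regular and connected and the immersion $j\colon V\rar S$ is affine; then \Cref{stratification} and \Cref{t-structure ordinaire smooth} show that $\mc{D}^A(S,\Z_\ell)$ is the thick subcategory generated by the complexes $j_!L[\delta(V)]$, for $L$ a smooth Artin $\ell$-adic sheaf on $V$. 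Each such complex is Artin by \Cref{6 foncteurs}, and it is perverse: writing $j=k\circ a$ with $a\colon V\rar\overline V$ an affine open immersion and $k\colon\overline V\rar S$ a closed immersion, the functor $a_!$ is perverse t-exact by the affine Lefschetz theorem \cite[4.1.1]{bbd}, $k_*$ is perverse t-exact, and $L[\delta(V)]$ is perverse on the regular scheme $V$ by \Cref{t-structure perverse lisse}. So these generators lie in $\mathrm{Perv}^A(S,\Z_\ell)$.

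The second task — condition (3) of \Cref{keur} — is to show $\coker(f)\in\mc{D}^A(S,\Z_\ell)$ for every morphism $f\colon M\rar N$ in $\mathrm{Perv}^A(S,\Z_\ell)$. Since $\coker(f)=\Hlp^0(C)$ where $C$ is the cone of $f$, an Artin $\ell$-adic complex with $\Hlp^i(C)=0$ for $i\notin\{-1,0\}$, it is enough to prove that $\Hlp^0(C)$ is Artin for every $C\in\mc{D}^A(S,\Z_\ell)$ with $\Hlp^i(C)=0$ for $i\notin\{-1,0\}$. I would prove this by induction on $\dim(S)$, the case $\dim(S)=0$ being \Cref{t-structure perverse smooth fields}. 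Given such a $C$, choose by \Cref{ouvert dense lisse} a dense open immersion $j\colon U\rar S$ with $U$ regular connected and $C|_U$ smooth Artin, and let $i\colon Z\rar S$ be the reduced complementary closed immersion, so that $\dim(Z)<\dim(S)$. On $U$ the perverse t-structure is the ordinary one shifted by $\delta(U)$ (\Cref{t-structure perverse lisse}), hence $\Hlp^0(C)|_U=\Hlp^0(C|_U)$ is smooth Artin; as $j$ is quasi-finite, $j_!\bigl(\Hlp^0(C)|_U\bigr)$ is Artin (\Cref{6 foncteurs}), so $\Hlp^0(C)$ is Artin if and only if $i^*\Hlp^0(C)$ lies in $\mc{D}^A(Z,\Z_\ell)$ — a statement on $Z$, where the perverse t-structure already restricts to the Artin subcategory by the induction hypothesis.

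It remains to control $i^*\Hlp^0(C)$, together with the defects measuring the failure of $j_!$ and $j_*$ to be perverse t-exact. Here I would resolve singularities: using resolution of singularities and embedded resolution for excellent schemes of dimension at most $2$, together with cdh-descent (\Cref{cdh-descent}) and the Mayer--Vietoris triangles (\Cref{Mayer-Vietoris} and \Cref{Mayer-Vietoris2}), the perverse cohomology sheaves at issue reduce to those of complexes of the form $p_*i_E^!\Z_{\ell,Y}$, with $Y$ regular, $E\subseteq Y$ a simple normal crossing divisor and $p\colon E\rar Z$ proper. By \Cref{calcul penible}, such a complex has vanishing $\Hlp^0$ and $\Hlp^1$, while its $\Hlp^2$ is a direct sum of terms $(q_i)_*\Z_{\ell,Z_i}(-1)$ with $q_i$ finite onto a finite set of closed points of $Z$. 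Thus every contribution that is not visibly Artin sits in perverse degree $\geqslant 2$, hence does not affect $\Hlp^0(C)$ since $C$ is concentrated in perverse degrees $[-1,0]$; so the smooth Artin description of $\Hlp^0(C)$ over $U$ propagates across $Z$, up to contributions on $Z$ that are Artin by the induction hypothesis, and $\Hlp^0(C)$ is Artin.

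The hard part is the dévissage of the last step: arranging the resolutions so that all the error terms are precisely the complexes $p_*i_E^!\Z_{\ell,Y}$ governed by \Cref{calcul penible}, and keeping track of perverse degrees carefully enough to see that the non-Artin (Tate-twisted skyscraper) contributions all appear in perverse degree $\geqslant 2$. This is exactly where $\dim(S)\leqslant 2$ enters: in dimension $3$ the analogous contributions already occur in perverse degree $2$, so they do interfere with $\Hlp^0$ of a complex concentrated in perverse degrees $[-1,0]$ — which is the mechanism behind the counterexample over $\mb{A}^3_k$ recorded in \Cref{exemple dim 3}.
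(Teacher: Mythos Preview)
Your overall strategy via \Cref{keur} is sound and, once unwound, leads to the same reduction as the paper's gluing approach (\Cref{outil 1}): both boil down to showing that kernels (equivalently cokernels) of maps of the form $i_*M\to j_!N$, with $M\in\mathrm{Perv}^A(F,\Z_\ell)$ and $N\in\mathrm{Perv}^{smA}(U,\Z_\ell)$, are Artin. So the framing is not genuinely different. Your set of perverse generators is fine.

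However, your final paragraph has real gaps, not just imprecision.

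\textbf{(1) The reduction to the constant sheaf is missing.} \Cref{calcul penible} is a statement about $p_*i_E^!\Z_{\ell,Y}$, i.e.\ about the constant sheaf on a regular $Y$. Your complex $C$ is an arbitrary Artin complex; nothing in cdh-descent or Mayer--Vietoris reduces $i^*\Hlp^0(C)$ to such an expression. In the paper this reduction is the content of Steps~1--2 of \Cref{outil 2}: one first identifies that the only thing to control is $\Hlp^{-1}(i^*j_*N)$ for $N$ smooth Artin perverse, then reduces $N$ from a representation of Artin origin to an Artin representation, and finally, via finite \'etale covers and relative normalization, to $N=\Q_{\ell,U}[2]$ on a normal $S$. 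Only then does resolution of singularities produce the complex $p_*i_E^!\Z_{\ell,\widetilde S}$.

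\textbf{(2) The degree-counting argument is aimed at the wrong object.} That $\Hlp^k(p_*i_E^!\Z_{\ell,Y})=0$ for $k\leqslant 1$ and that $\Hlp^2$ is a Tate-twisted skyscraper is a statement about a complex on $F$, not about $C$ on $S$; it does not say that non-Artin contributions to $\Hlp^0(C)$ vanish. What one actually needs (and what the paper proves in Steps~3--4 of \Cref{outil 2}) is that for a further closed immersion $\iota\colon Z\to F$ with dense affine complement, the perverse sheaf $\Hlp^{-1}\bigl(\iota^*\Hlp^{-1}(i^*j_*N)\bigr)$ on $Z$ is Artin. This is strictly finer than ``degree $\geqslant 2$'' and requires the explicit computation of $\Hlp^1(p_*\Q_{\ell,E})$ via \Cref{Mayer-Vietoris2}.

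\textbf{(3) The Serre property is not invoked.} When $\dim(S)=2$ one has $\dim(F)\leqslant 1$ and $\dim(Z)\leqslant 0$; the decisive input is that $\mathrm{Perv}^A(Z,\Z_\ell)$ is a \emph{Serre} subcategory of $\mathrm{Perv}(Z,\Z_\ell)$ (\Cref{t-structure perverse smooth fields}), which is what forces the kernels in question to remain Artin (this is packaged as \Cref{outil 3}). Your induction hypothesis only gives that $\mathrm{Perv}^A(Z,\Z_\ell)$ is abelian, which is not enough.

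A minor correction: for $j$ an affine open immersion both $j_!$ and $j_*$ are perverse t-exact by the affine Lefschetz theorem, so there are no ``defects'' to control there; the genuine defect is in $i^*$, and more precisely in the interaction $i^*j_*$.
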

\begin{proof} We can assume that $S$ is reduced and connected. We proceed by noetherian induction on $S$. If $\dim(S)=0$, the result follows from \Cref{t-structure perverse smooth fields}.

Assume now that $\dim(S)>0$. If $j\colon U\rar S$ is an open immersion, denote by $\mc{D}^A_U(S,\Z_\ell)$ the subcategory of $\mc{D}^A(S,\Z_\ell)$ made of those complexes $M$ such that the $\ell$-adic complex $j^*M$ is smooth Artin. Using \Cref{ouvert dense lisse}, every object of $\mc{D}^A(S,\Z_\ell)$ lies in some $\mc{D}^A_U(S,\Z_\ell)$  for some  dense open immersion $j\colon U\rar S$. Shrinking $U$ if needed, we can assume that $U$ is regular since $S$ is excellent. Using \Cref{reduction affine} below, we may also assume that the morphism $j$ is affine. 

Therefore, it suffices to show that for any dense affine open immersion $j\colon U\rar S$ with $U$ regular, the perverse t-structure induces a t-structure on the subcategory $\mc{D}^A_U(S,\Z_\ell)$. Let now $j\colon U\rar S$ be such an immersion, let $i\colon F\rar S$ be the reduced closed complementary immersion. We let \begin{itemize}
\item $\mc{S}=\mc{D}^b_c(S,\Z_\ell)$,
\item $\mc{U}=\mc{D}^b_c(U,\Z_\ell)$ and $\mc{U}_0=\mc{D}^{smA}(U,\Z_\ell)$,
\item $\mc{F}=\mc{D}^b_c(F,\Z_\ell)$ and $\mc{F}_0=\mc{D}^A(F,\Z_\ell)$.
\end{itemize}

The category $\mc{S}$ is a gluing of the pair $(\mc{U},\mc{F})$ along the fully faithful functors $j_*$ and $i_*$ in the sense of \cite[A.8.1]{ha} and the perverse t-structure on $\mc{S}$ is obtained by gluing the perverse t-structures of $\mc{U}$ and $\mc{F}$ in the sense of \cite[1.4.10]{bbd}. In addition, the perverse t-structure induces a t-structure on $\mc{U}_0$ by \Cref{t-structure perverse smooth non connexe} and on $\mc{F}_0$ by induction. Furthermore, since the morphism $j$ is affine, the functor $j_!\colon \mc{U}\rar \mc{S}$ is t-exact by the Affine Lefschetz Theorem. 

Therefore, using \Cref{outil 1} below, it suffices to show that for any object $M$ of $\mathrm{Perv}^A(F,\Z_\ell)$, any object $N$ of $\mathrm{Perv}^{smA}(U,\Z_\ell)$ and any map $$f\colon M\rar \Hlp^{-1}(i^*j_*N),$$ 
the kernel $K$ of $f$ is an Artin $\ell$-adic complex on $F$.

If $\dim(S)=1$, the scheme $F$ is $0$-dimensional. 
Thus, \Cref{t-structure perverse smooth fields} implies that the subcategory $\mathrm{Perv}^A(F,\Z_\ell)$ of $\mathrm{Perv}(F,\Z_\ell)$ is Serre. 
Since $K$ is a subobject of $M$, it is an object of $\mathrm{Perv}^A(F,\Z_\ell)$ and thus it is an Artin $\ell$-adic complex.

If $\dim(S)=2$, the scheme $F$ is of dimension at most $1$. 

Let $Z\rar F$ be a closed immersion such that the open complementary immersion is dense and affine. 
Then, the scheme $Z$ is of dimension at most $0$ and therefore, by \Cref{t-structure perverse smooth fields}, the subcategory $\mathrm{Perv}^A(Z,\Z_\ell)$ of $\mathrm{Perv}(Z,\Z_\ell)$ is Serre. Furthermore, \Cref{outil 2} below implies that the perverse sheaf $\Hlp^{-1}(\iota^*\Hlp^{-1}(i^*j_*N))$ is Artin.

Finally, using \Cref{outil 3} below, those two facts imply together that $K$ is an Artin perverse sheaf over $F$ which finishes the proof.
\end{proof}

\begin{lemma}\label{reduction affine} Let $S$ be a (noetherian) scheme. Let $U\subseteq S$ be an open subscheme. Then, there is a dense open subscheme $V$ of $U$ such that the map $V\rar S$ is affine.
\end{lemma}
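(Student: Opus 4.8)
The plan is to reduce to an affine situation and then use the fact that quasi-affine morphisms can be made affine after shrinking. First I would reduce to the case where $S$ is integral: the closure $\overline{U}$ of any irreducible component of $S$ contributes an irreducible component of $U$, and since $S$ is noetherian there are finitely many of them, so working with one component at a time (and noting that a finite disjoint union of affine opens, lying over disjoint components, is affine over $S$ after intersecting with a suitable separating open) we may assume $S$ is irreducible, and then replacing $S$ by $S_{\mathrm{red}}$ (which does not change the underlying topological spaces or the affineness of a morphism) we may assume $S$ is integral.

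Next I would choose an affine open $W = \operatorname{Spec}(A)$ of $S$ with $W \subseteq U$; this is possible and $W$ is dense in $S$ since $S$ is irreducible, hence $W$ is dense in $U$ as well. The morphism $W \rar S$ is not affine in general, but it is an open immersion into a scheme, hence quasi-affine and in particular separated and quasi-compact; moreover $W$ is itself affine. The key point is then the following standard fact: if $X$ is an affine scheme and $f\colon X \rar S$ is a morphism, then there is a dense open $V \subseteq X$ such that $f|_V$ is affine. Indeed, cover $S$ by finitely many affine opens $S_1,\dots,S_n$; each $f^{-1}(S_i)$ is an open subscheme of the affine scheme $X$, hence quasi-affine, so contains a dense open $V_i$ of the form $D(g_i)$ (a principal open of $X$) which is affine and maps into the affine $S_i$, hence $V_i \rar S$ is affine; but a principal open $D(g_i)$ of the irreducible scheme $X$ that is nonempty is automatically dense, and taking $V = \bigcap_i D(g_i)$ — which is again a nonempty, hence dense, principal open of $X$ — the restriction $V \rar S$ is affine since each $V \cap f^{-1}(S_i) = D(g_i g_j \cdots)\cap\cdots$ is affine over $S_i$. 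Applying this to $X = W$ and $f\colon W \rar S$ yields a dense open $V$ of $W$, hence of $U$, with $V \rar S$ affine.

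I expect the only genuine subtlety to be the reduction to the integral (or at least irreducible) case when $S$ is reducible or has several components, since then "dense open" must be interpreted componentwise and one must check that a choice of affine opens on the various components can be glued into a single dense affine-over-$S$ open of $U$; this is handled by first separating the components using that $S$ is noetherian (so the generic points are finitely many and one can find an open $S' \subseteq S$ whose components are disjoint) and then working on each component. Everything else is a routine application of the fact that a nonempty open of an affine scheme is quasi-affine together with the observation that principal opens of an irreducible scheme are dense.
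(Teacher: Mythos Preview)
Your argument is correct and takes a genuinely different route from the paper's. The paper invokes the existence of affine stratifications on noetherian schemes (Stacks Project, Tag 0F2X) to obtain a dense open affine $W \subseteq S$ with $W \to S$ affine, replaces $S$ by $W$ so that $S$ is itself affine, and then uses prime avoidance to find an affine open $V \subseteq U$ containing all generic points of $S$; since $V$ and $S$ are both affine, $V \to S$ is affine. Your approach is more elementary and self-contained: you avoid the affine-stratification black box, reduce to $S$ integral, take any affine open $W \subseteq U$ (automatically dense by irreducibility), and shrink $W$ to a single principal open contained in $f^{-1}(S_i)$ for every member $S_i$ of a finite affine cover of $S$. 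The price is the component-separation bookkeeping you correctly flag as the only subtle point; one caveat there is that when you ``work on each component'' you should apply your core argument to $W_j \to S$ (covering the original $S$ by affines, not just the component $S'_j$), so that each $V_j \to S$ is already affine and the finite disjoint union $\coprod_j V_j$ is then affine over $S$ --- merely proving $V_j \to S'_j$ affine would not suffice, since open immersions are not affine in general.
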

\begin{proof}
The proof of the existence of affine stratifications on noetherian schemes of \cite[0F2X]{stacks} provides a dense open affine subscheme $W$ of $S$ such that the immersion $W\rar S$ is affine. 
Since a composition of affine morphisms is affine, we may replace $S$ with $W$. 
Therefore we can assume that $S$ is an affine scheme. By the prime avoidance lemma \cite[00DS]{stacks}, there is an affine open $V$ contained in $U$ which contains the generic points of $S$. 
The map $V\rar S$ is affine and thus, the lemma is proved.
\end{proof}

\begin{lemma}\label{outil 1} Let $\mc{U}$ and $\mc{F}$ be t-categories, let $j_*\colon \mc{U}\rar \mc{S}$ and $i_*\colon \mc{F}\rar \mc{S}$ be fully faithful functors such that $\mc{S}$ is a gluing of the pair $(\mc{U},\mc{F})$ in the sense of \cite[A.8.1]{ha} (and therefore the axioms of \cite[1.4.3]{bbd} are satisfied) and let $\mc{U}_0$ (resp. $\mc{F}_0$) be a sub-t-category of $\mc{U}$ (resp. $\mc{F}$). Endow $\mc{S}$ with the glued t-structure of \cite[1.4.10]{bbd} and denote by $i^*$ (resp. $j^*$) the left adjoint functor to $i_*$ (resp. $j_*$).  Assume that the following properties hold. 
\begin{enumerate}[label=(\roman*)]\item The stable subcategory $\mc{S}_0$ of $\mc{S}$ made of those objects $M$ such that $i^*M$ belongs to $\mc{F}_0$ and $j^*M$ belongs to $\mc{U}_0$ is bounded. 
\item The left adjoint functor $j_!$ to $j^*$ is t-exact. 
\item For any object $M$ of $\mc{F}_0^\heart$, any object $N$ of $\mc{U}_0^\heart$ and any map $$f\colon M\rar \Hl^{-1}(i^*j_*N),$$ the kernel of $f$ lies in $\mc{F}_0$.
\end{enumerate}
Then, the t-structure of $\mc{S}$ induces a t-structure on the stable subcategory $\mc{S}_0$. 
\end{lemma}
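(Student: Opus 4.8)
The plan is to deduce the statement from \Cref{keur} applied to $\mc{D} = \mc{S}$ and $\mc{D}' = \mc{S}_0$. First I would collect the elementary consequences of the recollement: the identities $j^*i_* = 0$, $i^*j_! = 0$, $i^!j_* = 0$ and the natural equivalence $i^!j_! \simeq i^*j_*[-1]$ coming from the colocalization triangle, together with the exactness properties of the glued t-structure ($j^*$ and $i_*$ are t-exact, $i^*$ and $j_!$ are right t-exact, $i^!$ and $j_*$ are left t-exact) and the extra input (ii) that $j_!$ is t-exact. Combined with the localization triangle $j_!j^* \rar \id \rar i_*i^*$, (ii) forces $i^*$ to carry $\mc{S}^\heart$ into $\mc{F}^{[-1,0]}$ and yields, for each $M$ in $\mc{S}^\heart$, the four-term exact sequence $0 \rar i_*\Hl^{-1}(i^*M) \rar j_!j^*M \rar M \rar i_*\Hl^0(i^*M) \rar 0$; moreover $j_!$ sends $\mc{U}_0^\heart$ and $i_*$ sends $\mc{F}_0^\heart$ into $\mc{S}_0 \cap \mc{S}^\heart$, and via $i^!j_! \simeq i^*j_*[-1]$ (with $i^!j_!N \in \mc{F}^{\geqslant 0}$ by (ii)) the $F$-supported subobjects of a $j_!N$ correspond compatibly with intersections to subobjects of $\Hl^{-1}(i^*j_*N)$.

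Next I would verify the hypotheses of \Cref{keur}. The subcategory $\mc{S}_0$ is the preimage of the thick subcategories $\mc{U}_0$, $\mc{F}_0$ under the exact functors $j^*$, $i^*$, hence is stable and idempotent complete, so thick; it lies in $\mc{S}^b$ by (i). Using the localization triangle $j_!j^*M \rar M \rar i_*i^*M$ and the fact (\Cref{D^b_A description}) that $\mc{U}_0$ and $\mc{F}_0$ are generated as thick subcategories by their hearts, every object of $\mc{S}_0$ lies in the thick subcategory generated by $\{\,j_!N : N \in \mc{U}_0^\heart\,\} \cup \{\,i_*L : L \in \mc{F}_0^\heart\,\}$, which by the previous paragraph is contained in $\mc{S}_0 \cap \mc{S}^\heart$. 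So by \Cref{keur} it suffices to prove that for every morphism $f\colon P \rar Q$ in $\mc{S}_0 \cap \mc{S}^\heart$ the image $\mathrm{im}(f)$ lies in $\mc{S}_0$ (this gives $\ker f = \mathrm{fib}(P \rar \mathrm{im}f)$ and $\coker f = \mathrm{cofib}(\mathrm{im}f \rar Q)$). Since $j^*$ is t-exact and $\mc{U}_0^\heart$ is weak Serre in $\mc{U}^\heart$ (\Cref{premieres prop du coeur}), $j^*\mathrm{im}(f) = \mathrm{im}(j^*f) \in \mc{U}_0^\heart$; by the $[-1,0]$-concentration it thus remains to show that $\Hl^{-1}(i^*\mathrm{im}f)$ and $\Hl^0(i^*\mathrm{im}f)$ belong to $\mc{F}_0^\heart$.

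For the first term I would apply $i^*$ to $0 \rar \mathrm{im}f \rar Q \rar \coker f \rar 0$; right t-exactness of $i^*$ and the $[-1,0]$-concentration give $\Hl^{-1}(i^*\mathrm{im}f) = \ker\bigl(\Hl^{-1}(i^*Q) \rar \Hl^{-1}(i^*\coker f)\bigr)$. The colocalization triangle for $\coker f$, together with $i^!(\coker f) \in \mc{F}^{\geqslant 0}$, produces a monomorphism $\Hl^{-1}(i^*\coker f) \hookrightarrow \Hl^{-1}(i^*j_*j^*\coker f)$, so $\Hl^{-1}(i^*\mathrm{im}f)$ is the kernel of a morphism $\Hl^{-1}(i^*Q) \rar \Hl^{-1}(i^*j_*N^c)$ with $N^c = \coker(j^*f) \in \mc{U}_0^\heart$ and $\Hl^{-1}(i^*Q) \in \mc{F}_0^\heart$; hypothesis (iii) then puts $\Hl^{-1}(i^*\mathrm{im}f)$ in $\mc{F}_0$.

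The remaining term $\Hl^0(i^*\mathrm{im}f)$ is a quotient of $\Hl^0(i^*P) \in \mc{F}_0^\heart$, and a diagram chase with the four-term sequences reduces the claim $\mathrm{im}(f) \in \mc{S}_0$ to $\Hl^0(i^*\ker f) \in \mc{F}_0$. To obtain this I would descend one more step along the recollement: setting $(\ker f)^\flat = \mathrm{im}(j_!j^*\ker f \rar \ker f)$ and $N'' = \mathrm{im}(j^*f)$, the object $P/(\ker f)^\flat$ is an extension of $i_*\Hl^0(i^*P)$ by a quotient of $j_!N''$ of the form $j_!N''/i_*\mathrm{im}(\psi)$, where $\psi\colon \Hl^{-1}(i^*P) \rar \Hl^{-1}(i^*j_*N'')$ is the canonical map; the correspondence between $F$-supported subobjects of $j_!N$ and subobjects of $\Hl^{-1}(i^*j_*N)$, together with left exactness of $N \mapsto \Hl^{-1}(i^*j_*N)$, identifies $\ker\psi$ with a kernel of the type controlled by (iii), so $\mathrm{im}(\psi) \in \mc{F}_0^\heart$ and $P/(\ker f)^\flat \in \mc{S}_0$; it then remains to see that $i_*\Hl^0(i^*\ker f) = \ker f/(\ker f)^\flat$ is itself in $\mc{S}_0$, by running the same analysis on the inclusion $\ker f \hookrightarrow P$. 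I expect this last step to be the main obstacle: the real work is to follow, through the localization and colocalization triangles, how each $F$-supported subquotient that appears is realized as the kernel of a morphism into some $\Hl^{-1}(i^*j_*N)$ with $N \in \mc{U}_0^\heart$, so that (iii) applies; everything else is formal manipulation with the glued t-structure.
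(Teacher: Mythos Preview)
Your strategy via \Cref{keur} is natural and the first two-thirds are correct: $\mc{S}_0$ is thick, it is generated as a thick subcategory by $j_!(\mc{U}_0^\heart) \cup i_*(\mc{F}_0^\heart) \subseteq \mc{S}_0 \cap \mc{S}^\heart$, and your argument for $\Hl^{-1}(i^*\mathrm{im}f) \in \mc{F}_0$ via the colocalization triangle and hypothesis~(iii) is sound. The gap is exactly where you locate it. Your reduction ``$\Hl^0(i^*\mathrm{im}f) \in \mc{F}_0$ if $\Hl^0(i^*\ker f) \in \mc{F}_0$'' uses the weak Serre property applied to $\Hl^0(i^*\ker f) \to \Hl^0(i^*P) \to \Hl^0(i^*\mathrm{im}f) \to 0 \to 0$, which needs the \emph{source} $P$ to lie in $\mc{S}_0$. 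But ``running the same analysis on $\ker f \hookrightarrow P$'' is circular: for that inclusion the source is $\ker f$, not known to lie in $\mc{S}_0$; and if instead you pass to the $F$-supported inclusion $i_*\Hl^0(i^*\ker f) \hookrightarrow P/(\ker f)^\flat$, the reduction collapses to the tautology ``$L \in \mc{F}_0$ given $L \in \mc{F}_0$''. The underlying obstruction is that $\mc{F}_0^\heart$ is only weak Serre in $\mc{F}^\heart$, so neither subobjects nor quotients of its objects are automatically in $\mc{F}_0^\heart$; your descent never produces a second genuine application of~(iii) for the $\Hl^0$-piece.

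The paper's proof avoids this by not invoking \Cref{keur} at all. Rather than controlling kernels of arbitrary maps in $\mc{S}_0 \cap \mc{S}^\heart$, it shows directly that $\Hl^n(P) \in \mc{S}_0$ for every $P \in \mc{S}_0$. Step~1: any map $f\colon i_*M \to j_!N$ with $M \in \mc{F}_0^\heart$, $N \in \mc{U}_0^\heart$ factors through $i_*\Hl^{-1}(i^*j_*N)$ (because $\Hom(i_*M, \tau_{\leqslant 0}j_*N) = 0$), so $\ker f = i_*\ker(M \to \Hl^{-1}(i^*j_*N)) \in \mc{S}_0$ by~(iii), and $\coker f \in \mc{S}_0$ follows from two cofiber sequences. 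Step~2: the localization triangle together with t-exactness of $j_!$ and $i_*$ gives
\[
i_*\Hl^{n-1}(i^*P) \xrightarrow{\alpha} j_!\Hl^n(j^*P) \to \Hl^n(P) \to i_*\Hl^n(i^*P) \xrightarrow{\beta} j_!\Hl^{n+1}(j^*P),
\]
so $\Hl^n(P)$ is an extension of $\ker\beta$ by $\coker\alpha$, both in $\mc{S}_0$ by Step~1. The decisive observation is that the connecting maps in this long exact sequence always have the special shape $i_*(\text{--}) \to j_!(\text{--})$, precisely the form to which~(iii) applies after one factorisation; working with general $P, Q \in \mc{S}_0 \cap \mc{S}^\heart$ as you do loses this structure and forces the unresolved chase.
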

\begin{proof} \textbf{Step 1:} 
Let $M$ be an object of $\mc{F}_0^\heart$, let $N$ be an object of $\mc{U}_0^\heart$. By \cite[1.4.16]{bbd}, the object $i_*M$ belongs to $\mc{S}^\heart$. Furthermore, using our second hypothesis, the object $j_!N$ also belongs to $\mc{S}^\heart$. Let $f\colon i_*M\rar j_!N$ be an arbitrary map. In this step we prove that the objects $\ker(f)$ and $\coker(f)$ belong to $\mc{S}_0$. 

By \cite[A.8.13]{ha}, we have an exact triangle:
$$j_!N\rar j_*N\rar i_* i^* j_* N$$
which by \cite[1.4.16]{bbd} yields an exact sequence:
$$0\rar i_* \Hlp^{-1}(i^* j_* N) \rar j_! N \rar \Hl^0(j_*N).$$

Notice that by \cite[1.4.16]{bbd}, we have $$\Hl^0(j_*M)=\tau_{\leqslant 0}(j_*M).$$

Since $j^*i_*=0$, we get $$\Hom(i_*M,\tau_{\leqslant 0}(j_*M))=\Hom(i_*M,j_*N)=0.$$ Thus, the map $f$ factors through the object $i_* \Hl^{-1}(i^* j_* N)$. Since the functor $i_*$ is t-exact, we get $$\ker(f)=i_* \ker\left(M\rar  \Hl^{-1}(i^* j_* N)\right).$$

Therefore, our last assumption implies that object $\ker(f)$ belongs to $\mc{S}_0$. 

To finish the first step, let $C$ be the kernel of the map $j_!N\rar \coker(f)$. We have exact triangles:
$$\ker(f)\rar i_* M\rar C$$ 
$$C\rar j_! N\rar \coker(f).$$

The first exact triangle shows that $C$ is belongs to $\mc{S}_0$. Thus, using the second exact triangle, the object $\coker(f)$ also lies in $\mc{S}_0$. 

\textbf{Step 2:} We now prove the lemma. First notice that by dévissage, it suffices to show that if $P$ belongs to $\mc{S}_0$ and $n$ is an integer, then, the object $\Hl^n(P)$ of $\mc{S}$ lies in $\mc{S}_0$.

The localization triangle \cite[1.4.3.4]{bbd} yields an exact sequence:
$$i_* \Hl^{n-1}(i^*P)\overset{\alpha}{\rar} j_! \Hl^n(j^*P)\rar \Hl^n(P)\rar i_* \Hl^n(i^*P)\overset{\beta}{\rar} j_! \Hl^{n+1}(j^*P)$$
which yields an exact triangle:
$$\coker(\alpha)\rar \Hl^n(P)\rar \ker(\beta).$$

For any integer $m$, the object $\Hl^{m}(i^*P)$ belongs to $\mc{F}_0^\heart$ and the object $\Hl^m(j^*P)$ lies in $\mc{U}_0^\heart$.
Thus, using our first step, the objects $\coker(\alpha)$ and $\ker(\beta)$ belong to $\mc{S}_0$ and so does the object $\Hl^n(P)$ which finishes the proof.
\end{proof}

\begin{lemma}\label{outil 3} Let $F$ be a reduced excellent scheme and $\ell$ be a prime number which is invertible on $F$. Assume that the perverse t-structure on $\mc{D}^b_c(F,\Z_\ell)$ induces a t-structure on $\mc{D}^A(F,\Z_\ell)$. 
Let $M$ be an Artin perverse sheaf over $F$, let $P$ be a perverse sheaf over $F$ and let $f\colon M\rar P$ be an arbitrary map of perverse sheaves. 

Assume that for any reduced closed immersion $\iota\colon Z\rar F$ such that the open complementary immersion is dense and affine,
\begin{enumerate}[label=(\roman*)]
    \item The subcategory $\mathrm{Perv}^A(Z,\Z_\ell)$ of $\mathrm{Perv}(Z,\Z_\ell)$ is Serre.
    \item The perverse sheaf $\Hlp^{-1}(\iota^*P)$ over $Z$ is an Artin perverse sheaf.
\end{enumerate}
Then, the kernel of $f$ is an Artin perverse sheaf over $F$.
\end{lemma}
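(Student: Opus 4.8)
The plan is as follows. Write $K=\ker(f)$, which is a priori a perverse sheaf on $F$; we must show $K$ is an Artin $\ell$-adic complex, equivalently $K\in\mathrm{Perv}^A(F,\Z_\ell)$. First I would pass to a well-chosen dense open and then control $K$ along the closed complement. For the open set: since $M$ is Artin, \Cref{ouvert dense lisse} provides a dense open on which $M$ is smooth Artin; since $P$ is perverse hence constructible, its ordinary cohomology sheaves are lisse on a dense open; the regular locus of the reduced excellent scheme $F$ is dense open; and by \Cref{reduction affine} we may intersect these and shrink further to a dense open immersion $j\colon V\hookrightarrow F$ with $V$ regular, $j$ affine, $M|_V$ smooth Artin, and $P|_V$ with lisse ordinary cohomology. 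On the regular scheme $V$ a perverse sheaf with lisse cohomology must be a single shifted lisse sheaf: the $i_x^*$-perversity condition at the generic points forces its cohomology into degrees $\leqslant-\delta(V)$, while the $i_x^!$-condition, rewritten via absolute purity exactly as in the proof of \Cref{t-structure perverse lisse}, forces it into degrees $\geqslant-\delta(V)$. Thus, treating the connected components of $V$ separately as in \Cref{t-structure perverse smooth non connexe}, we have $M|_V=L_M[\delta(V)]$ with $L_M\in\Sh^{smA}(V,\Z_\ell)$ and $P|_V=L_P[\delta(V)]$ with $L_P$ lisse.

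Next, $K$ is smooth Artin on $V$. As $j$ is an open immersion, $j^*$ is $t$-exact for the perverse t-structure, hence exact on perverse hearts, so $j^*K=\ker(j^*f)$, and $j^*f$ is a morphism $L_M\to L_P$ of lisse sheaves shifted by $\delta(V)$. Its kernel is again lisse ($V$ is geometrically unibranch) and is a subsheaf of $L_M$, hence smooth Artin because $\Sh^{smA}(V,\Z_\ell)$ is a Serre subcategory of $\mathrm{Loc}_V(\Z_\ell)$ (\Cref{Artin origin}); since $\mathrm{Loc}_V(\Z_\ell)[\delta(V)]$ is a weak Serre subcategory of $\mathrm{Perv}(V,\Z_\ell)$ by \Cref{t-structure perverse lisse,premieres prop du coeur}, this kernel, shifted, is exactly $j^*K$. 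So $j^*K$ is a smooth Artin perverse sheaf on $V$, and therefore $j_!j^*K$ is an Artin $\ell$-adic complex on $F$, as $j$ is étale (\Cref{6 foncteurs}).

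Now for the defect along $Z:=F\smallsetminus V$: let $\iota\colon Z\hookrightarrow F$ be the reduced closed complement. As $V$ is dense and $j$ is affine, hypotheses (i) and (ii) apply to $\iota$. Factor $f$ as $M\twoheadrightarrow M'\hookrightarrow P$ with $M'=\operatorname{im}(f)$, and put $C'=\operatorname{coker}(f)$. The key observation is that for any perverse sheaf $Q$ on $F$ the localization triangle $j_!j^*Q\to Q\to\iota_*\iota^*Q$, together with the $t$-exactness of $j_!$ (Affine Lefschetz, $j$ affine), exhibits $\iota_*\iota^*Q$ as the cofibre of a map of perverse sheaves, so $\iota^*Q$ has vanishing perverse cohomology outside degrees $-1$ and $0$. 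Applying this to $Q=M',P,C'$, the long exact sequence of perverse cohomology attached to $M'\to P\to C'$ yields an injection $\Hlp^{-1}(\iota^*M')\hookrightarrow\Hlp^{-1}(\iota^*P)$; by (ii) the target is an Artin perverse sheaf on $Z$, hence so is $\Hlp^{-1}(\iota^*M')$ by the Serre hypothesis (i). Feeding this into the long exact sequence attached to $K\to M\to M'$, and using that $\iota^*M$ is an Artin $\ell$-adic complex on $Z$ (\Cref{6 foncteurs}), so that $\Hlp^{-1}(\iota^*M)$ and $\Hlp^{0}(\iota^*M)$ are Artin perverse sheaves on $Z$, one finds that $\Hlp^{-1}(\iota^*K)$ is a subobject of $\Hlp^{-1}(\iota^*M)$ and that $\Hlp^{0}(\iota^*K)$ is an extension of a subobject of $\Hlp^{0}(\iota^*M)$ by a quotient of $\Hlp^{-1}(\iota^*M')$; since $\mathrm{Perv}^A(Z,\Z_\ell)$ is a Serre subcategory of $\mathrm{Perv}(Z,\Z_\ell)$ by (i), both $\Hlp^{-1}(\iota^*K)$ and $\Hlp^{0}(\iota^*K)$ are Artin. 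Hence $\iota_*\Hlp^{-1}(\iota^*K)$ and $\iota_*\Hlp^{0}(\iota^*K)$ are Artin $\ell$-adic complexes on $F$, as $\iota$ is finite (\Cref{6 foncteurs}).

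Finally, the four-term exact sequence of \Cref{properties of perverse sheaves} for $K$ relative to the affine open immersion $j$,
$$0\to\iota_*\Hlp^{-1}(\iota^*K)\to j_!j^*K\to K\to\iota_*\Hlp^{0}(\iota^*K)\to 0,$$
has all of its terms except $K$ Artin by the previous two steps; cutting this sequence at $\operatorname{im}(j_!j^*K\to K)$ and using twice that $\mathrm{Perv}^A(F,\Z_\ell)$ is a weak Serre subcategory of $\mathrm{Perv}(F,\Z_\ell)$ (\Cref{premieres prop du coeur}) forces $K=\ker(f)$ to be Artin, which is the claim. The delicate point is the third step: one must spot that $f$ should be replaced by its image, and that the affineness of $j$ is exactly what bounds the perverse amplitude of $\iota^*$ to $[-1,0]$, so that the single hypothesis (ii) on $\Hlp^{-1}(\iota^*P)$ suffices to control $\Hlp^{-1}(\iota^*M')$; after that, the Serre hypothesis (i) lets Artin-ness propagate through the long exact sequences. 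The remaining steps are routine, the only non-formal input being the purity computation identifying perverse sheaves with lisse cohomology on a regular base as shifted local systems.
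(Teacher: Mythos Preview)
Your proof is correct and achieves the same conclusion as the paper, but the organisation is genuinely different. The paper abstracts the gluing situation into an auxiliary lemma (\Cref{outil 3bis}): it writes down the two four-term localisation exact sequences for $M$ and for $P$ side by side, applies the snake lemma twice, and then tracks the resulting kernels and cokernels through the Serre hypotheses. In particular it never factors $f$ through its image; instead it controls $\ker(f)$ via $\ker(u),\ker(v),\ker(w),\coker(u)$ where $u,v,w$ are the maps induced by $f$ on $\Hlp^{-1}\iota^*$, on $\gamma^*$, and on $\Hlp^{0}\iota^*$ respectively.

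Your route---factor $f$ as $M\twoheadrightarrow M'\hookrightarrow P$, apply $\iota^*$ to the two short exact sequences $K\to M\to M'$ and $M'\to P\to C'$, and read off the perverse cohomology---is more elementary: it avoids the abstract gluing lemma entirely and replaces the double snake-lemma computation with two standard long exact sequences. The crucial observation in both arguments is the same, namely that the affineness of $j$ forces $\iota^*$ to have perverse amplitude $[-1,0]$, so hypothesis~(ii) on $\Hlp^{-1}(\iota^*P)$ alone, combined with the Serre hypothesis~(i), suffices to make everything on $Z$ Artin. What the paper's abstraction buys is a reusable statement (\Cref{outil 3bis}) phrased purely in the language of glued t-categories; what your argument buys is a self-contained proof that does not require setting up that framework.
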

\begin{remark} The notation $\mathrm{Perv}^A(Z,\Z_\ell)$ in (i) makes sense by \Cref{gluing 0}.
\end{remark}
\begin{proof} Using \Cref{ouvert dense lisse} we have $\gamma\colon V\rar F$ a dense open immersion such that $\gamma^*M$ is a smooth Artin $\ell$-adic complex. 
Shrinking $V$ if needed, we can assume that $V$ is regular and that $\gamma^*P$ is a lisse perverse sheaf. 
Using \Cref{reduction affine}, we may assume that $\gamma$ is affine so that the functor $\gamma_!$ is t-exact by the Affine Lefschetz Theorem. Let $\iota\colon Z\rar F$ be the complementary closed immersion of $\gamma$. We let 
\begin{itemize}
\item $\mc{F}=\mc{D}^b_c(F,\Z_\ell)$,
\item $\mc{V}=\mc{D}_{\mathrm{lisse}}(V,\Z_\ell)$ and $\mc{V}_0=\mc{D}^{smA}(U,\Z_\ell)$,
\item $\mc{Z}=\mc{D}^b_c(Z,\Z_\ell)$ and $\mc{Z}_0=\mc{D}^A(Z,\Z_\ell)$.
\end{itemize}
By \Cref{t-structure perverse smooth non connexe}, the subcategory $\mc{V}_0^\heart=\mathrm{Perv}^{smA}(V,\Z_\ell)$ of $\mc{V}^\heart$ is Serre while the subcategory $\mc{Z}_0^\heart$ of $\mc{Z}^\heart$ is Serre by assumption.

Furthermore, using \cite[1.4.17.1]{bbd}, the exact and fully faithful functor $$\iota_*\colon \mathrm{Perv}(Z,\Z_\ell)\rar \mathrm{Perv}(F,\Z_\ell)$$ maps $\mathrm{Perv}(Z,\Z_\ell)$ into a Serre subcategory. 

Finally, by \Cref{t-structure perverse smooth non connexe}, the perverse t-structure on $\mc{D}^A(F,\Z_\ell)$ induces a t-structure on the subcategory $\mc{D}^A_V(F,\Z_\ell)$ made of those complexes $M$ such that the $\ell$-adic complex $\gamma^*M$ is smooth Artin. Therefore, the result follows from \Cref{outil 3bis} below.
\end{proof}

\begin{lemma}\label{outil 3bis} Let $\mc{V}$ and $\mc{Z}$ be t-categories, let $\gamma_*\colon \mc{V}\rar \mc{F}$ and $f\iota_*\colon \mc{Z}\rar \mc{F}$ be fully faithful functors such that $\mc{F}$ is a gluing of the pair $(\mc{U},\mc{F})$ in the sense of \cite[A.8.1]{ha} (and therefore the axioms of \cite[1.4.3]{bbd} are satisfied) and let $\mc{V}_0$ (resp. $\mc{Z}_0$) be a sub-t-category of $\mc{V}$ (resp. $\mc{Z}$). Endow $\mc{F}$ with the glued t-structure of \cite[1.4.10]{bbd} and denote by $\iota^*$ (resp. $\gamma^*$) the left adjoint functor to $\iota_*$ (resp. $\gamma_*$). Assume that the following properties hold. 
\begin{enumerate}[label=(\roman*)]\item The t-structure on $\mc{F}$ induces a bounded t-structure on the stable subcategory $\mc{F}_0$ of $\mc{F}$ made of those objects $M$ such that $\iota^*M$ belongs to $\mc{F}_0$ and $\gamma^*M$ belongs to $\mc{U}_0$.
\item The left adjoint functor $\gamma_!$ to $\gamma^*$ is t-exact. 
\item The exact and fully faithful functor $\iota_*\colon \mc{Z}^\heart\rar \mc{F}^\heart$ maps $\mc{Z}^\heart$ into a Serre subcategory.
\item The subcategory $\mc{V}_0^\heart$ (resp. $\mc{Z}_0^\heart$) of $\mc{V}^\heart$ (resp. $\mc{Z}^\heart$) is Serre.
\end{enumerate}
Then, if $M$ is an object of $\mc{F}_0^\heart$, if $P$ is an object of $\mc{F}^\heart$ such that $\Hl^{-1}(\iota^*P)$ belongs to $\mc{Z}_0^\heart$ and if $f\colon M\rar N$ is a map in $\mc{F}^\heart$, the kernel of $f$ belongs to $\mc{F}_0$.
\end{lemma}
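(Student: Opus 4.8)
The plan is to argue directly from the definition of $\mc{F}_0$: writing $f\colon M\rar P$ for the given map, it is enough to check that $\gamma^*\ker(f)$ lies in $\mc{V}_0$ and that $\iota^*\ker(f)$ lies in $\mc{Z}_0$. The first is immediate and uses only hypothesis (iv): $\gamma^*$ is t-exact for the glued t-structure (\cite[1.4.10]{bbd}), hence exact on hearts, so $\gamma^*\ker(f)=\ker(\gamma^*f)$ is a subobject of $\gamma^*M$; since $M\in\mc{F}_0^\heart$ we have $\gamma^*M\in\mc{V}_0^\heart$, and $\mc{V}_0^\heart$ is Serre in $\mc{V}^\heart$, hence closed under subobjects. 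So the content is the statement about $\iota^*$.

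The one step that is not pure diagram chasing is the observation that hypothesis (ii) forces $\iota^*$ to have cohomological amplitude $[-1,0]$ on the heart $\mc{F}^\heart$. Indeed, for any object $X$ of $\mc{F}^\heart$ the object $\gamma^*X$ lies in $\mc{V}^\heart$ (t-exactness of $\gamma^*$), hence $\gamma_!\gamma^*X$ lies in $\mc{F}^\heart$ (t-exactness of $\gamma_!$); the localization triangle $\gamma_!\gamma^*X\rar X\rar \iota_*\iota^*X$ then exhibits $\iota_*\iota^*X$ as the cofiber of a morphism between objects of $\mc{F}^\heart$, so it lies in $\mc{F}^{\leqslant 0}\cap\mc{F}^{\geqslant -1}$, with $\Hl^{-1}(\iota_*\iota^*X)=\ker(\gamma_!\gamma^*X\rar X)$ and $\Hl^0(\iota_*\iota^*X)=\coker(\gamma_!\gamma^*X\rar X)$. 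As $\iota_*$ is t-exact and fully faithful (\cite[1.4.16]{bbd}), this gives $\iota^*X\in\mc{Z}^{\leqslant 0}\cap\mc{Z}^{\geqslant -1}$. I apply this to $X=\ker(f)$, $M$, $\mathrm{im}(f)$, $\coker(f)$ and $P$, and then apply the exact functor $\iota^*$ to the two short exact sequences $0\rar\ker(f)\rar M\rar\mathrm{im}(f)\rar 0$ and $0\rar\mathrm{im}(f)\rar P\rar\coker(f)\rar 0$ of $\mc{F}^\heart$; because all six of the relevant $\iota^*$'s are concentrated in degrees $-1$ and $0$, this produces two six-term exact sequences in $\mc{Z}^\heart$ relating only the $\Hl^{-1}$ and $\Hl^0$ of those objects.

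The remainder is a chase inside the Serre subcategory $\mc{Z}_0^\heart\subseteq\mc{Z}^\heart$ of (iv). From the second six-term sequence, $\Hl^{-1}(\iota^*\mathrm{im}(f))$ is a subobject of $\Hl^{-1}(\iota^*P)$, which is in $\mc{Z}_0^\heart$ by hypothesis, so $\Hl^{-1}(\iota^*\mathrm{im}(f))\in\mc{Z}_0^\heart$. From the first six-term sequence, $\Hl^{-1}(\iota^*\ker(f))$ is a subobject of $\Hl^{-1}(\iota^*M)$; the latter lies in $\mc{Z}_0^\heart$ since $\iota^*M\in\mc{Z}_0$ and $\mc{Z}_0$ is a sub-t-category of $\mc{Z}$; hence $\Hl^{-1}(\iota^*\ker(f))\in\mc{Z}_0^\heart$. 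Still from the first sequence, $\Hl^0(\iota^*\ker(f))$ sits in an extension of a subobject of $\Hl^0(\iota^*M)$ by a quotient of $\Hl^{-1}(\iota^*\mathrm{im}(f))$, and both are in $\mc{Z}_0^\heart$, so $\Hl^0(\iota^*\ker(f))\in\mc{Z}_0^\heart$ as well. Since $\iota^*\ker(f)$ is concentrated in degrees $-1,0$ with both cohomology objects in $\mc{Z}_0^\heart$ and $\mc{Z}_0$ is a sub-t-category of $\mc{Z}$, we get $\iota^*\ker(f)\in\mc{Z}_0$; combined with $\gamma^*\ker(f)\in\mc{V}_0$ this yields $\ker(f)\in\mc{F}_0$.

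The main obstacle, then, is getting the amplitude bound on $\iota^*$ right: without the t-exactness of $\gamma_!$ in (ii) the objects $\iota^*(-)$ would a priori have cohomology in arbitrarily negative degrees, and those lower degrees could not be controlled by the single hypothesis $\Hl^{-1}(\iota^*P)\in\mc{Z}_0^\heart$. Once that bound is in place, the only other ingredients are the Serre properties of hypothesis (iv); hypotheses (i) and (iii) are part of the ambient setup but are not invoked in this chase.
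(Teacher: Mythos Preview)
Your proof is correct and takes a genuinely different, in fact cleaner, route than the paper's.

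The paper works entirely inside $\mc{F}^\heart$: it applies the recollement four-term exact sequence $0\rar\iota_*\Hl^{-1}(\iota^*X)\rar\gamma_!\gamma^*X\rar X\rar\iota_*\Hl^0(\iota^*X)\rar 0$ to $X=M$ and $X=P$, obtains two commutative diagrams with exact rows, and runs the snake lemma twice to build $K$ out of pieces of the form $\gamma_!(-)$ and $\iota_*(-)$. In that argument hypothesis (iii) is essential: the images of the maps $b$ and $c$ land in objects of the form $\iota_*(-)$, and one needs to know that a subobject of $\iota_*(-)$ in $\mc{F}^\heart$ is again of the form $\iota_*(-)$ in order to conclude. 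By contrast, you verify membership in $\mc{F}_0$ directly from its defining condition $\gamma^*(-)\in\mc{V}_0$, $\iota^*(-)\in\mc{Z}_0$: the open part is immediate from t-exactness of $\gamma^*$ and the Serre property of $\mc{V}_0^\heart$, and for the closed part you bound the amplitude of $\iota^*$ via (ii), then chase the long exact sequences of the triangles $\iota^*\ker(f)\rar\iota^*M\rar\iota^*\mathrm{im}(f)$ and $\iota^*\mathrm{im}(f)\rar\iota^*P\rar\iota^*\coker(f)$ inside $\mc{Z}^\heart$. This avoids ever needing to recognise subobjects of $\iota_*(-)$ inside $\mc{F}^\heart$, so hypothesis (iii) is never invoked; your closing remark that (iii) (and the t-structure part of (i)) are not used in the chase is accurate and is a mild strengthening of the lemma. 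The paper's approach has the advantage of producing an explicit filtration of $K$ by objects of $\mc{F}_0^\heart$, but for the bare statement your argument is both shorter and uses fewer hypotheses.
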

\begin{proof} Let $K$ be the kernel of $f$. Using the t-exactness assumption on $\gamma_!$ and \cite[1.4.16]{bbd}, we have commutative diagrams with exact rows in $\mc{F}^\heart$.

$$\begin{tikzcd}
    0 \ar[r] &
    \iota_*\Hl^{-1}(\iota^*M)\ar[r]\ar[d,"\iota_*u"]&
    \gamma_!\gamma^*M \ar[d,"\gamma_!v"]\ar[r]&
    Q\ar[r]\ar[d,"g"]&
    0 \\
    0 \ar[r] &
    \iota_*\Hl^{-1}(\iota^*P)\ar[r]&
    \gamma_!\gamma^*P \ar[r]&
    R\ar[r]&
    0
\end{tikzcd}$$
$$\begin{tikzcd}
    0 \ar[r] &
    Q\ar[r]\ar[d,"g"]&
    M \ar[d,"f"]\ar[r]&
    \iota_*\Hl^{0}(\iota^*M)\ar[r]\ar[d,"\iota_*w"]&
    0 \\
    0 \ar[r] &
    R\ar[r]&
    P \ar[r]&
    \iota_*\Hl^{0}(\iota^*P)\ar[r]&
    0
\end{tikzcd}$$
which, by the snake lemma, yield exact sequences: 
$$0\rar \iota_*\ker(u)\overset{a}{\rar} \gamma_!\ker(v)\rar \ker(g)\overset{b}{\rar} \iota_*\coker(u)$$
$$0\rar \ker(g) \rar K\overset{c}{\rar} \iota_*\ker(w).$$

Since by assumption, the subcategory $\mc{V}_0^\heart$ of $\mc{V}^\heart$ is Serre, the object $\ker(v)$ belongs to $\mc{V}_0^\heart$. Moreover, the object $\Hl^{0}(\iota^*M)$, $\Hl^{-1}(\iota^*M)$ and $\Hl^{-1}(\iota^*P)$ belong to $\mc{Z}_0^\heart$. Since the subcategory $\mc{Z}_0^\heart$ of $\mc{Z}^\heart$ is Serre, the objects $\ker(w)$, $\ker(u)$ and $\coker(u)$ then belong to $\mc{Z}_0^\heart$. 

Since we have an exact triangle $$\iota_*\ker(u)\overset{a}{\rar} \gamma_!\ker(v)\rar \coker(a),$$
the object $\coker(a)$ therefore belongs to $\mc{F}_0^\heart$.

As the functor $\iota_*\colon \mc{Z}^\heart\rar \mc{F}^\heart$ maps $\mc{Z}^\heart$ into a Serre subcategory, the image of $\ker(g)$ through $b$ is of the form $\iota_*C$, where $C$  is a subobject of $\coker(u)$ and therefore belongs to $\mc{Z}_0^\heart$.

Likewise, the image of $K$ through $c$ is of the form $\iota_*K'$ where $K'$ is a subobject of $\ker(w)$ and is therefore belongs to $\mc{Z}_0^\heart$.

Thus, we get exact triangles:
$$\coker(a)\rar \ker(g)\rar \iota_*C $$
$$\ker(g)\rar K\rar \iota_*K'$$

Hence, the object $\ker(g)$ belongs to $\mc{F}_0^\heart$ and so does $K$ which finishes the proof.
\end{proof}

\begin{lemma}\label{outil 2} Let $S$ be a reduced excellent connected scheme of dimension $2$ and $\ell$ be a prime number which is invertible on $S$.
Let $j\colon U\rar S$ be a dense affine open immersion with $U$ regular. 
Let $i\colon F\rar S$ be the reduced closed complementary immersion. 
Let $N$ be an object of $\mathrm{Perv}^{smA}(U,\Z_\ell)$. 
Let $\iota\colon Z\rar F$ be a reduced closed immersion of positive codimension.
Then, the perverse sheaf $\Hlp^{-1}(\iota^*\Hlp^{-1}(i^*j_* N))$ is Artin.
\end{lemma}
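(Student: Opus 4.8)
The plan is to reduce the statement, in a few steps, to the explicit computation of \Cref{spectral sequence} and \Cref{calcul penible}. \emph{Step 1 (coefficients and dévissage).} By \Cref{passage coeffs Ql} it suffices to treat $\Q_\ell$-coefficients, since $-\otimes_{\Z_\ell}\Q_\ell$ commutes with $i^*$, $j_*$ and $\iota^*$ and is t-exact for the ordinary and the perverse t-structures. Next, observe that since $j$ is affine the Affine Lefschetz theorem makes $j_!M$ and $j_*M$ perverse for every perverse sheaf $M$ on $U$, so that $i^*j_*M=\mathrm{cofib}(j_!M\rar j_*M)$ lies in perverse degrees $[-1,0]$; combined with the vanishing $\Hlp^{-2}(\kappa^*Q)=0$ for a perverse sheaf $Q$ on the curve $F$ and an immersion $\kappa$ of a $0$-dimensional scheme into $F$, a short exact sequence $0\rar N'\rar N\rar N''\rar 0$ in $\mathrm{Perv}^{smA}(U,\Q_\ell)$ yields that $\Hlp^{-1}(\iota^*\Hlp^{-1}(i^*j_*N))$ is an extension of a subobject of $\Hlp^{-1}(\iota^*\Hlp^{-1}(i^*j_*N''))$ by $\Hlp^{-1}(\iota^*\Hlp^{-1}(i^*j_*N'))$. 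As $\dim Z=0$, the category $\mathrm{Perv}^A(Z,\Q_\ell)$ is a Serre subcategory of $\mathrm{Perv}(Z,\Q_\ell)$ by \Cref{t-structure perverse smooth fields} and \Cref{Artin origin}, and, up to the shift by $\delta(U)$, every object of $\mathrm{Perv}^{smA}(U,\Q_\ell)$ is an iterated extension of Artin representations, each of which — over $\Q_\ell$ — is a direct summand of some $g_*\Q_{\ell,V}$ with $g\colon V\rar U$ finite étale and $V$ connected. Since $N\mapsto\Hlp^{-1}(\iota^*\Hlp^{-1}(i^*j_*N))$ is additive, we are reduced to $N=g_*\Q_{\ell,V}[\delta(U)]$ with $g\colon V\rar U$ finite étale and $V$ connected, hence regular.

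\emph{Step 2 (normalization and colocalization).} Let $\overline g\colon\overline V\rar S$ be the normalization of $S$ in $V$; it is finite, $\overline V$ is a reduced excellent connected normal scheme of dimension $\leqslant 2$ with $\overline g^{-1}(U)=V$ regular (so $\mathrm{Sing}(\overline V)\subseteq\overline F:=\overline g^{-1}(F)$), and $j_V\colon V\rar\overline V$ is a dense affine open immersion. Proper base change identifies $i^*j_*N$ with the pushforward along the finite map $\overline F\rar F$ of $i_{\overline F}^*(j_V)_*\Q_{\ell,V}[\delta(V)]$, and since finite pushforwards are t-exact for the perverse t-structures, commute with $\iota^*$, and preserve Artin complexes (\Cref{6 foncteurs}), we may assume $S$ normal, $U$ regular and $N=\Q_{\ell,U}[\delta(U)]$. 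The colocalization triangle \eqref{colocalization} then gives $i^*j_*\Q_{\ell,U}=\mathrm{cofib}(i^!\Q_{\ell,S}\rar\Q_{\ell,F})$, and since $\Q_{\ell,F}[\delta(U)]$ is an Artin complex — whose perverse cohomology sheaves are therefore Artin by the already established case $\dim\leqslant 1$ of \Cref{main theorem} — everything comes down to proving that $\Hlp^{-1}(\iota^*\Hlp^{\delta(S)}(i^!\Q_{\ell,S}))$ is an Artin perverse sheaf on $Z$ (here $\delta(S)=2$ for the standard normalization, which we use to match \Cref{calcul penible}).

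\emph{Step 3 (resolution and the Rapoport--Zink spectral sequence).} Choose a proper birational morphism $h\colon Y\rar S$ with $Y$ regular and $E:=(h^{-1}F)_{\mathrm{red}}$ a simple normal crossing divisor, which is an isomorphism over the complement of a closed subset $Z_0\subseteq F$ of dimension $\leqslant 1$; such an $h$ exists in dimension $\leqslant 2$. Applying $i^!$ to the cdh-descent triangle of \Cref{cdh-descent} attached to the cdh-distinguished square $(h,Z_0)$ and using proper base change, $i^!\Q_{\ell,S}$ becomes an iterated extension of $i^!h_*\Q_{\ell,Y}=(h_E)_*i_E^!\Q_{\ell,Y}$ — with $h_E\colon E\rar F$ proper and $i_E\colon E\rar Y$ the inclusion of the SNC divisor — and of complexes supported on the curve $Z_0$, namely pushforwards of constant sheaves from $Z_0$ and from $h^{-1}(Z_0)$. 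To the first term one applies \Cref{spectral sequence} and \Cref{calcul penible} with $(p,i)=(h_E,i_E)$: the perverse cohomology sheaves of $(h_E)_*i_E^!\Q_{\ell,Y}$ in degrees $0$ and $1$ vanish, while in degree $\delta(S)$ it is a direct sum $\bigoplus_{i\in I_0}(q_i)_*\Q_{\ell,Z_i}(-1)$ of Tate-twisted skyscrapers supported at finitely many closed points of $F$. The terms supported on $Z_0$ are controlled by noetherian induction together with the case $\dim\leqslant 1$ of \Cref{main theorem}: they are Artin in perverse degrees $<\delta(S)$, and in degree $\delta(S)$ they are, up to Artin perverse sheaves, again supported at finitely many closed points and placed in perverse degree $0$.

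\emph{Conclusion and main obstacle.} Putting these together, $\Hlp^{\delta(S)}(i^!\Q_{\ell,S})$ is an extension of Artin perverse sheaves by a perverse sheaf supported at finitely many closed points and concentrated in perverse degree $0$. Now $\iota^*$ of such a skyscraper is again concentrated in perverse degree $0$ on $Z$, hence annihilated by $\Hlp^{-1}$, whereas $\iota^*$ of an Artin perverse sheaf on $F$ lies in $\mc{D}^A(Z,\Q_\ell)$ and its $\Hlp^{-1}$ is Artin because $\dim Z=0$; the Serre property of $\mathrm{Perv}^A(Z,\Q_\ell)$ then yields the claim after chasing the relevant long exact sequences. \textbf{The main difficulty} is exactly this bookkeeping: one has to introduce an auxiliary class of ``almost Artin'' perverse sheaves on curves (those which are smooth Artin on a dense open and whose failure to be Artin is a skyscraper in perverse degree $0$), verify it is stable under the operations occurring in Steps 2--3, and check that every non-Artin (Tate-twisted) contribution produced along the way — both from \Cref{calcul penible} and from the $Z_0$-correction terms in the cdh triangle — stays confined to the top perverse degree as such a skyscraper, so that the final functor $\Hlp^{-1}\iota^*$ kills it; and one must arrange the noetherian induction so that the correction terms over $Z_0$, which are instances of the same kind of statement, feed back coherently.
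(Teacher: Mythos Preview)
Your Steps~1--2 (passage to $\Q_\ell$, d\'evissage to $N=g_*\Q_{\ell,V}[\delta(U)]$, normalization of $S$ in $V$) coincide with the paper's, and your subsequent reduction via the colocalization triangle on $S$ to analyzing $\Hlp^{2}(i^!\Q_{\ell,S})$ is a correct reformulation. From Step~3 on the two arguments genuinely diverge.

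The paper does \emph{not} attack $i^!\Q_{\ell,S}$ through cdh-descent. Instead it first shrinks $S$ by throwing away every closed subset disjoint from $Z$ (which leaves $\Hlp^{-1}(\iota^*\Hlp^{-1}(i^*j_*N))$ unchanged), so that one may arrange the singular locus of $S$ and all crossings of components of $F$ to lie in $Z$. Lipman's embedded resolution then yields $f\colon\widetilde S\to S$ which is an isomorphism over $S\setminus Z$; in your notation this forces $Z_0\subseteq Z$, hence $\dim Z_0=0$. The paper then simply factors $j=f\circ\gamma$ and applies proper base change to get $i^*j_*\Q_{\ell,U}=p_*i_E^*\gamma_*\Q_{\ell,U}$, and the localization triangle on $\widetilde S$ produces
\[
p_*i_E^!\Q_{\ell,\widetilde S}\;\longrightarrow\; p_*\Q_{\ell,E}\;\longrightarrow\; i^*j_*\Q_{\ell,U}.
\]
Now \Cref{calcul penible} controls the left term (its $\Hlp^1$ vanishes and $\Hlp^2$ is a skyscraper over $Z$), and the paper finishes by a direct Mayer--Vietoris computation of $\Hlp^1(p_*\Q_{\ell,E})$, identifying $\Hlp^{-1}(\iota^*\Hlp^1(p_*\Q_{\ell,E}))$ as a subobject of a finite-pushforward sheaf, hence Artin. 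No auxiliary ``almost Artin'' class is needed.

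Your route via cdh-descent on $i^!\Q_{\ell,S}$ is workable, but because you do not first shrink $S$ to push the resolution center into $Z$, your correction terms $(\iota_{Z_0}^F)_*(h_0)_*\Q_{\ell,h^{-1}(Z_0)}$ live over a possibly $1$-dimensional $Z_0$, and their $\Hlp^1$ already mixes Artin pieces (from the generic fibers, where $h_0$ is finite) with genuine $H^1$-of-curves skyscrapers. Tracking this through two long exact sequences is exactly the ``almost Artin'' bookkeeping you flag as the main obstacle; it can be carried out, but the paper's preprocessing step and its direct Mayer--Vietoris computation of $p_*\Q_{\ell,E}$ eliminate it entirely. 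If you graft that single shrinking trick onto your argument (so that $Z_0\subseteq Z$ is $0$-dimensional), your cdh correction terms become honest skyscrapers over $Z$ and your conclusion goes through without the auxiliary class.
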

\begin{proof} We may assume that $\delta(S)=2$.
Using \Cref{passage coeffs Ql}, it suffices to show the same result with coefficients in $\Q_\ell$. 
Furthermore, replacing $S$ with the closure of a connected component of $U$, we may assume that $U$ is connected. 
Thus, since $U$ is regular, \Cref{t-structure perverse smooth} ensures that $$\mathrm{Perv}^{smA}(U,\Z_\ell)=\Sh^{smA}(U,\Z_\ell)[2].$$

\textbf{Step 1:} We show that we can assume that the image of $N$ through the equivalence of \Cref{description des faisceaux l-adiques lisses} is an Artin representation of $\pi_1^{\mathrm{pro\acute{e}t}}(U)$ (throughout this proof, we omit the choice of a base point to lighten the notations). 

Assume the result for the objects whose image through the equivalence of \Cref{description des faisceaux l-adiques lisses} is an Artin representation. Let $\mathrm{A}_0=\Rep^A(\pi_1^{\mathrm{pro\acute{e}t}}(U),\Q_\ell)$ and if $n\geqslant 0$ is an integer, let $\mathrm{A}_{n+1}$ be the full subcategory of $\Rep(\pi_1^{\mathrm{pro\acute{e}t}}(U),\Q_\ell)$ whose objects are the extensions of objects of $\mathrm{A}_n$ in $\Rep(\pi_1^{\mathrm{pro\acute{e}t}}(U),\Q_\ell)$. 
There is a non-negative integer $n$ such that the image of $N$ through the equivalence of \Cref{description des faisceaux l-adiques lisses} is an object of $\mathrm{A}_n$. 

We now prove the result by induction on $n$. If $n=0$, this is our assumption. 
Suppose that the result holds for an integer $n$. 
Let $N$ be an object of $\mathrm{A}_{n+1}$.
We have an exact sequence $$0\rar N'\rar N \rar N''\rar 0$$
where $N'$ and $N''$ are objects of $\mathrm{A}_n$.

This exact sequence yields an exact triangle:
$$i^*j_ *N'\rar i^*j_*N \rar i^*j_*N''.$$
Since $i^*j_*$ is of cohomological amplitude $[-1,0]$, this yields an exact sequence:
$$0\rar \Hlp^{-1}(i^*j_*N')\rar \Hlp^{-1}(i^*j_*N)\rar \Hlp^{-1}(i^*j_*N'').$$

Letting $C$ be the image of the map $\Hlp^{-1}(i^*j_*N)\rar \Hlp^{-1}(i^*j_*N'')$ and $C'$ be the cokernel of the map $C\rar \Hlp^{-1}(i^*j_*N'')$, we get two exact triangles:
$$\Hlp^{-1}(i^*j_*N')\rar \Hlp^{-1}(i^*j_*N)\rar C$$
$$C\rar \Hlp^{-1}(i^*j_*N'')\rar C'.$$

Applying $\iota^*$ to the first exact triangle yields an exact sequence:
$$\Hlp^{-1}(\iota^*\Hlp^{-1}(i^*j_*N'))\rar \Hlp^{-1}(\iota^*\Hlp^{-1}(i^*j_*N))\rar \Hlp^{-1}(\iota^*C)$$

Furthermore, since $\iota^*$ has cohomological amplitude $[-1,0]$, the second exact triangle yields an exact sequence:
$$0\rar \Hlp^{-1}(\iota^*C)\rar \Hlp^{-1}(\iota^*\Hlp^{-1}(i^*j_*N''))$$

Since $Z$ is at most $0$-dimensional, \Cref{t-structure perverse smooth fields} implies that $\mathrm{Perv}^{A}(Z,\Q_\ell)$ is a Serre subcategory of $\mathrm{Perv}(Z,\Q_\ell).$ Furthermore, by our induction hypothesis, the perverse sheaves $\Hlp^{-1}(\iota^*\Hlp^{-1}(i^*j_*N''))$ and $\Hlp^{-1}(\iota^*\Hlp^{-1}(i^*j_*N'))$ are objects of $\mathrm{Perv}^{A}(Z,\Q_\ell)$. Thus, so is $\Hlp^{-1}(\iota^*C)$ and therefore, so is $\Hlp^{-1}(\iota^*\Hlp^{-1}(i^*j_*N))$.

\textbf{Step 2:} In this step we prove that we can assume $S$ to be normal and that we can assume that $N=\Q_{\ell,U}[2]$. 

Since the category $\Rep^A(\pi_1^{\mathrm{pro\acute{e}t}}(U),\Q_\ell)$ is semisimple by Maschke's lemma, the the image of $N$ through the equivalence of \Cref{description des faisceaux l-adiques lisses} is a direct factor of an object whose image is of the form $$\Q_\ell[G_1]\bigoplus \cdots \bigoplus \Q_\ell[G_n]$$ where $G_1,\ldots,G_n$ are finite quotients of $\pi_1^{\mathrm{pro\acute{e}t}}(U)$. 

Thus, $N$ is a direct factor of an object of the form $f_*\Q_{\ell,V}$ where $f\colon V\rar U$ is a finite étale map. Therefore, we can assume that $N=f_*\Q_{\ell,V}$.

Let $\overline{V}$ be the relative normalization of $S$ in $V$. Since $S$ is excellent and noetherian, it is Nagata by \cite[033Z]{stacks}. Thus, \cite[0AVK]{stacks} ensures that the structural map $\nu\colon \overline{V}\rar S$ is finite. Furthermore, the scheme $\overline{V}$ is normal by \cite[035L]{stacks}.

Consider the following commutative diagram
$$\begin{tikzcd} V \ar[d,"f"]\ar[r,"\gamma"] & \overline{V} \ar[d,"\nu"]& F' \ar[l,sloped,"\psi"] \ar[d,"\nu_F"] \\
U \ar[r,"j"]& S & F \ar[l,sloped,"i"]
\end{tikzcd}$$
made of cartesian squares (the left-hand square is cartesian by construction of the relative normalization). 
In particular, the map $\psi$ is a closed immersion and the map $\gamma$ is the open complementary immersion.

Now, we have $$i^*j_*N=i^*j_*f_*\Q_{\ell,V}[2]=i^*\nu_* \gamma_*\Q_{\ell,V}[2]=(\nu_F)_* \psi^*\gamma_*\Q_{\ell,V}[2].$$

Since $\nu_F$ is finite, it is perverse t-exact. Thus,  $$\Hlp^{-1}(\iota^*\Hlp^{-1}(i^*j_* N))= \Hlp^{-1}(\iota^*(\nu_F)_*\Hlp^{-1}(\psi^*\gamma_* \Q_{\ell,V}[2])).$$

Let $\nu_Z\colon Z'\rar Z$ be the pullback of $\nu_F$ along the map $\iota$ and $\varepsilon\colon Z'\rar F'$ be the obvious map. 
Since $\nu_Z$ is finite, we get
$$\Hlp^{-1}(\iota^*(\nu_F)_*\Hlp^{-1}(\psi^*\gamma_* \Q_{\ell,V}[2]))=(\nu_Z)_*\Hlp^{-1}(\varepsilon^*\Hlp^{-1}(\psi^*\gamma_* \Q_{\ell,V}[2])).$$



Therefore, we can replace $S$ with $\overline{V}$, $U$ with $V$, $Z$ with $Z'$ in order to assume that $S$ is normal and that $N=\Q_{\ell,U}[2]$.

\textbf{Step 3:} In this step, we study the singularities of $S$. We want to prove that the perverse sheaf $\Hlp^{-1}(\iota^*\Hlp^{1}(i^*j_* \Q_{\ell,U}))$ is Artin.

First, notice that if $Y\rar S$ is a closed immersion and $Z \cap Y=\varnothing$, letting $\xi\colon F\setminus (Y\cap F)\rar F$ be the canonical open immersion, and letting $u\colon Z\rar F\setminus (Y\cap F)$ be the canonical closed immersion, we have $\iota=\xi \circ u$. Since $\xi$ is an open immersion, we get $$\Hlp^{-1}(\iota^*\Hlp^{1}(i^*j_* \Q_{\ell,U}))=\Hlp^{-1}(u^*\Hlp^{1}(\xi^*i^*j_* \Q_{\ell,U})).$$

Notice that the immersion $i\circ \xi$ induces a closed immersion $F \setminus (Y\cap F) \rar S\setminus Y$. We can therefore remove any closed subset which does not intersect $Z$ from $S$ without changing $\Hlp^{-1}(\iota^*\Hlp^{1}(i^*j_* \Q_{\ell,U})).$

Since $S$ is normal, we can therefore assume that its singular locus is contained in $Z$. 
Moreover, we can assume that the irreducible components of $F$ only cross at points of $Z$. 

Since $S$ is excellent, Lipman's Theorem on embedded resolution of singularities applies (see \cite[0BGP,0BIC,0ADX]{stacks} for precise statements). We get a cdh-distinguished square:

$$\begin{tikzcd}
    E\ar[r,"i_E"]\ar[d,"p"] &\widetilde{S}\ar[d,"f"]\\
    F \ar[r,"i"] & S
\end{tikzcd}$$
such that the map $f$ induces an isomorphism over $S\setminus Z$, the scheme $\widetilde{S}$ is regular and the subscheme $E$ of $\widetilde{S}$ is a simple normal crossing divisor. By removing a closed subset from $S$ which does not intersect $Z$, we can assume that crossings only occur over $Z$. 

Let $\gamma\colon U\rar \widetilde{S}$ be the complementary open immersion of $i_E$. We have $j=f\circ \gamma$. 
Therefore, we get $$i^*j_* \Q_{\ell,U}=i^*f_*\gamma_*\Q_{\ell,U}=p_* i_E^*\gamma_*\Q_{\ell,U}.$$

We have a localization exact triangle:
$$(i_E)_*i_E^!\Q_{\ell,\widetilde{S}}\rar \Q_{\ell,\widetilde{S}}\rar \gamma_* \Q_{\ell,U}.$$

Applying $p_*i_E^*$, we get an exact triangle:
$$p_*i_E^!\Q_{\ell,\widetilde{S}}\rar p_*\Q_{\ell,E}\rar i^*j_* \Q_{\ell,U}.$$

Since $E$ is a simple normal crossing divisor, \Cref{calcul penible} ensures that the perverse sheaf $\Hlp^1(p_* i_E^!\Q_{\ell,\widetilde{S}})$ is trivial. Furthermore, since $p$ induces an isomorphism over $F\setminus Z$, the only irreducible components of $E$ whose image through $p$ is $0$-dimensional have their image contained in $Z$. Therefore, \Cref{calcul penible} also ensures that $$\Hlp^2(p_* i_E^!\Q_{\ell,\widetilde{S}})=\iota_*M$$ where $M$ is a perverse sheaf over $Z$.

Hence, we have an exact sequence:
$$0\rar \Hlp^1(p_*\Q_{\ell,E})\rar \Hlp^1(i^*j_*\Q_{\ell,U})\overset{\alpha}{\rar} \iota_*M.$$

Using \cite[1.4.17.1]{bbd}, the exact and fully faithful functor $$\iota_*\colon \mathrm{Perv}(Z,\Q_\ell)\rar \mathrm{Perv}(F,\Q_\ell)$$ maps $\mathrm{Perv}(Z,\Q_\ell)$ to a Serre subcategory. Thus, the image of $\Hlp^1(i^*j_*\Q_{\ell,U})$ through $\alpha$ is of the form $\iota_*M'$ where $M'$ is an object of $\mathrm{Perv}(Z,\Q_\ell)$.

Thus, we have an exact triangle:
$$\Hlp^1(p_*\Q_{\ell,E})\rar \Hlp^1(i^*j_*\Q_{\ell,U})\rar \iota_*M'.$$
Applying $\iota^*$, we get an exact triangle: 
$$\iota^*\Hlp^1(p_*\Q_{\ell,E})\rar \iota^*\Hlp^1(i^*j_*\Q_{\ell,U})\rar M'.$$
Since $M'$ is placed in degree $0$, we get:
$$\Hlp^{-1}(\iota^*\Hlp^1(i^*j_*\Q_{\ell,U}))=\Hlp^{-1}(\iota^*\Hlp^1(p_*\Q_{\ell,E})).$$

\textbf{Step 4:} In this step we compute $\Hlp^1(p_*\Q_{\ell,E})$ and we finish the proof. 

There is a finite set $I$ and regular 1-dimensional closed subschemes $E_i$ of $\widetilde{X}$ for $i\in I$, such that $E=\bigcup\limits_{i \in I} E_i$. Choose a total order on the finite set $I$. If $i<j$, write $E_{ij}=E_i\cap E_j$. Consider for $J\subseteq I$ of cardinality at most $2$ the obvious diagram: 
$$\begin{tikzcd}
    E_J \ar[r,"u_{E_J}"] \ar[rd,swap,"p_J"] & E \ar[d,"p"] \\
    & F
\end{tikzcd}$$

\Cref{Mayer-Vietoris2} then yields an exact triangle:

$$\Q_{\ell,E} \rar\bigoplus_{i \in I}(u_{E_i})_*(u_{E_i})^* \Q_{\ell,E} \rar  \bigoplus\limits_{i< j} (u_{E_{ij}})_*(u_{E_{ij}})^* \Q_{\ell,E}.$$

Applying $p_*$, we get an exact triangle:
$$p_* \Q_{\ell,E} \rar\bigoplus_{i \in I}(p_i)_* \Q_{\ell,E_i} \rar  \bigoplus\limits_{i< j} (p_{ij})_* \Q_{\ell,E_{ij}}.$$

Since crossings only occur over $Z$, the map $p_{ij}$ factors through $Z$ for any $i<j$. Write $p_{ij}=\iota \circ \pi_{ij}$. The morphism $\pi_{ij}$ is then finite. 

Let $$I_0=\{ i \in I \mid \delta(p(E_i))=0\}.$$ 
If $i \in I\setminus I_0$, we have $\delta(E_i)=1$ and $p_i$ is finite. If $i\in I_0$, the map $p_i$ factors through $Z$ and we can write $p_i=\iota \circ \pi_i$. Finally, notice that $\delta(Z)=0$ and thus, for any $i<j$, we have $\delta(E_{ij})=0$. Hence, we get an exact sequence:

$$0 \rar \iota_* P\rar \Hlp^1(p_*\Q_{\ell,E})\rar R \oplus \iota_*Q\rar 0$$
where $$P=\coker\left(\bigoplus\limits_{i\in I_0} \Hlp^0((\pi_i)_*\Q_{\ell,E_i}) \rar \bigoplus\limits_{i< j} (\pi_{ij})_* \Q_{\ell,E_{ij}}\right),$$ $$Q=\bigoplus_{i \in I} \Hlp^1((\pi_i)_*\Q_{\ell,E_i})\text{ and }R=\bigoplus_{i \in I\setminus I_0} (p_i)_* \Q_{\ell,E_i}[1].$$

Therefore, we get an exact triangle:
$$\iota_* P\rar \Hlp^1(p_*\Q_{\ell,E})\rar R \oplus \iota_*Q$$ which yields an exact triangle:
$$P\rar \iota^*\Hlp^1(p_*\Q_{\ell,E})\rar \iota^*R \oplus Q.$$

Thus, the perverse sheaf $\Hlp^{-1}(\iota^*\Hlp^1(p_*\Q_{\ell,E}))$ over $Z$ is a subobject of the perverse sheaf $\Hlp^{-1}(\iota^*R)$. Now, we have $$\Hlp^{-1}(\iota^*R)=\bigoplus\limits_{i \in I\setminus I_0} \Hlp^0((q_i)_*\Q_{\ell,E_i^Z})$$ where for all $i\in I \setminus J$, the map $q_i\colon E_i^Z\rar Z$ is the pullback of $p_i$ along $\iota$. Since $p_i$ is finite, so is $q_i$ and thus, we have $$\Hlp^0((q_i)_*\Q_{\ell,E_i^Z})=(q_i)_*\Q_{\ell,E_i^Z}.$$ 

Thus, the perverse sheaf $\Hlp^{-1}(\iota^*R)$ is Artin. 

Hence, the perverse sheaf $\Hlp^{-1}(\iota^*\Hl^1(i^*j_*\Q_{\ell,U}))$ is also Artin.
\end{proof}

\subsection{The case of 3-folds with real closed or separably closed closed points.}
In the case of 3-folds, we will show that the perverse t-structure can be restricted to Artin $\ell$-adic complexes in some cases, and that it is impossible in other cases.

\begin{proposition}\label{main theorem 2}Let $S$ be an excellent $3$-fold such that all the closed points of $S$ have a separably closed or real closed residue field and $\ell$ be a prime number which is invertible on $S$. Then, the perverse t-structure induces a t-structure on $\mc{D}^A(S,\Z_\ell)$.
\end{proposition}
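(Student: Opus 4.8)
The plan is to run the proof of \Cref{main theorem} (the case $\dim S\le 2$) almost verbatim in dimension $3$, isolating the two places where the hypothesis on the residue fields of the closed points is used. After reducing to the case $S$ reduced and connected, one uses \Cref{ouvert dense lisse,reduction affine} to reduce to showing that for every dense affine open immersion $j\colon U\rar S$ with $U$ regular the perverse $t$-structure restricts to $\mc{D}^A_U(S,\Z_\ell)$. Writing $i\colon F\rar S$ for the reduced complementary immersion, we have $\dim F\le 2$, every closed point of $F$ still has separably closed or real closed residue field, $S$ is the gluing of $(\mc{D}^b_c(U,\Z_\ell),\mc{D}^b_c(F,\Z_\ell))$, the perverse $t$-structure restricts to $\mc{D}^{smA}(U,\Z_\ell)$ by \Cref{t-structure perverse smooth non connexe} and to $\mc{D}^A(F,\Z_\ell)$ by \Cref{main theorem} (no noetherian induction being needed, $\dim F\le 2$ being covered directly), and $j_!$ is $t$-exact by the Affine Lefschetz Theorem. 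So by \Cref{outil 1} it suffices to prove that for $M\in\mathrm{Perv}^A(F,\Z_\ell)$, $N\in\mathrm{Perv}^{smA}(U,\Z_\ell)$ and a map $f\colon M\rar\Hlp^{-1}(i^*j_*N)$, the kernel of $f$ is Artin on $F$; and this will follow from \Cref{outil 3} applied with $P=\Hlp^{-1}(i^*j_*N)$ once its two hypotheses are verified.

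The key elementary input is that over a separably closed or real closed field $k$ the group $G_k$ is trivial or isomorphic to $\Z/2$, so every continuous representation of $G_k$ is an Artin representation and $\mc{D}^A(k,\Z_\ell)=\mc{D}^b_c(k,\Z_\ell)$ by \Cref{t-structure ordinaire smooth fields}; in particular every constructible complex over a zero-dimensional scheme all of whose points have separably or real closed residue field is Artin. Using this I would check hypothesis (i) of \Cref{outil 3}, namely that $\mathrm{Perv}^A(Z,\Z_\ell)$ is a Serre subcategory of $\mathrm{Perv}(Z,\Z_\ell)$ for $\iota\colon Z\rar F$ a reduced closed subscheme with dense affine complement; such a $Z$ has $\dim Z\le 1$. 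Since $\mathrm{Perv}^A(Z,\Z_\ell)$ is already a weak Serre subcategory (\Cref{premieres prop du coeur}) it is closed under extensions and under cokernels of its morphisms, so it suffices to show it is stable under subobjects. Given a perverse subobject $Q\subseteq P$ with $P$ Artin, \Cref{ouvert dense lisse} provides a dense regular open $V\subseteq Z$ on which $P$ is smooth Artin; shrinking $V$ so that $\Hl^0(Q)|_V=0$ and $\Hl^{-1}(Q)|_V$ is lisse, the sheaf $\Hl^{-1}(Q)|_V$ is a lisse subsheaf of the smooth Artin sheaf $\Hl^{-1}(P)|_V$, hence smooth Artin because $\Sh^{smA}(V,\Z_\ell)$ is a Serre subcategory of $\mathrm{Loc}_V(\Z_\ell)$ (\Cref{Artin origin}), so $Q|_V$ is smooth Artin; and $Z\setminus V$, being zero-dimensional with separably or real closed residue fields, carries only Artin complexes. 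By \Cref{stratification,t-structure ordinaire}, $Q$ is then an Artin $\ell$-adic complex, hence lies in $\mathrm{Perv}^A(Z,\Z_\ell)$.

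Hypothesis (ii) of \Cref{outil 3} — that $\Hlp^{-1}(\iota^*\Hlp^{-1}(i^*j_*N))$ is an Artin perverse sheaf on $Z$ — is the $3$-dimensional analogue of \Cref{outil 2}, and is where I expect the main difficulty. I would follow the proof of \Cref{outil 2}: passing to $\Q_\ell$-coefficients via \Cref{passage coeffs Ql} and using the Serre property just established, one reduces by dévissage to the case where $N$ is an Artin representation, then — realizing $N$ as a direct factor of $f_*\Q_{\ell,V}$ for $f$ finite étale and replacing $S$ by the relative normalization of $S$ in $V$ — to the case $S$ normal and $N=\Q_{\ell,U}[\dim U]$; one then applies embedded resolution of singularities for excellent schemes of dimension $\le 3$ to produce a cdh-distinguished square over $S$ with $\widetilde S$ regular and $E\subseteq\widetilde S$ a simple normal crossing divisor, writes $i^*j_*\Q_{\ell,U}=p_*i_E^*\gamma_*\Q_{\ell,U}$, and computes via the localization triangle $p_*i_E^!\Q_{\ell,\widetilde S}\rar p_*\Q_{\ell,E}\rar i^*j_*\Q_{\ell,U}$ together with \Cref{Mayer-Vietoris,Mayer-Vietoris2,spectral sequence,calcul penible}. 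The new feature relative to the surface case is the appearance of the Tate twists $\Q_\ell(-1)$ and $\Q_\ell(-2)$: one checks, as in \Cref{outil 2}, that the twisted contributions supported on the lower-dimensional strata are annihilated by the outer $\Hlp^{-1}(\iota^*(-))$, and that what survives is, up to finite pushforwards, built from the constant sheaves on the $E_i$ together with Tate twists over the zero- and one-dimensional boundary $Z$ — which are Artin precisely because $\Q_\ell(n)$ is an Artin representation over a separably closed or real closed field (the cyclotomic character there has image in $\{\pm1\}$), whereas over a finite field it has weight $2n\ne 0$, which is the mechanism behind the counterexample of \Cref{exemple dim 3}.

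The hard parts are therefore having at one's disposal a sufficiently functorial embedded resolution of excellent $3$-folds, and carrying out the now genuinely $3$-dimensional bookkeeping of these Tate-twisted pieces (keeping track of which cohomological and perverse degrees they land in after $\iota^*$ and the two truncations); everything else is formal from the lemmas already established, the gluing formalism of \Cref{outil 1,outil 3}, and the stability properties of Artin $\ell$-adic complexes of \Cref{6 foncteurs}.
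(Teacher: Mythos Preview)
Your reduction via \Cref{outil 1} and \Cref{outil 3} matches the paper's, and your argument for hypothesis (i) --- that $\mathrm{Perv}^A(Z,\Z_\ell)$ is Serre in $\mathrm{Perv}(Z,\Z_\ell)$ for $\dim Z\le 1$ --- is essentially the content of the paper's \Cref{outil 4}(2), proved the same way.

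The divergence is in hypothesis (ii). You propose to rerun the proof of \Cref{outil 2} in dimension $3$: resolve $S$, compute via \Cref{Mayer-Vietoris,Mayer-Vietoris2,calcul penible}, and argue that the surviving Tate twists are Artin because the closed points have finite Galois groups. The paper takes a much shorter route that avoids $3$-fold resolution entirely. Using \Cref{outil 4}(1), it suffices to check that $i_\eta^*\Hlp^{-1}(\iota^*\Hlp^{-1}(i^*j_*N))$ is Artin at each generic point $\eta$ of $Z$; one then passes along the pro-open immersion $\Spec(\mc{O}_{S,\eta})\rar S$, uses base change (\Cref{exchange proetale}) to rewrite the whole expression over the \emph{two}-dimensional local scheme $\Spec(\mc{O}_{S,\eta})$, and invokes \Cref{outil 2} as a black box. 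Nothing beyond Lipman's theorem for surfaces is needed.

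Your route is plausible but carries two real burdens you flag only in passing. First, embedded resolution for excellent $3$-folds is a far deeper input than Lipman's theorem. Second, your Tate-twist heuristic is not quite right as stated: the hypothesis controls only the \emph{closed} points of $S$, while $Z$ is $1$-dimensional and its generic points are not closed in $S$, so $\Q_\ell(n)$ over $Z$ is \emph{not} automatically Artin. To make your argument go through you would have to show --- as happens in the surface case --- that after the double truncation $\Hlp^{-1}(\iota^*\Hlp^{-1}(-))$ the Tate-twisted contributions are supported only on $0$-dimensional loci. The paper's localization trick sidesteps both issues.
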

\begin{remark} In the proof, we only use that the residue fields of the closed points of $S$ have finite absolute Galois groups. Recall that a field has a finite absolute Galois group if and only if it is separably closed or real closed.
\end{remark}
\begin{proof} The proof is almost the same as the proof of \Cref{main theorem} but we need to use \Cref{outil 4} below.

Using \Cref{outil 1}, it suffices to show that if $j\colon U\rar S$ is a dense affine open immersion with $U$ regular, letting $i\colon F\rar S$ be the reduced closed complementary immersion, for any object $M$ of $\mathrm{Perv}^A(F,\Z_\ell)$, any object $N$ of $\mathrm{Perv}^{smA}(U,\Z_\ell)$ and any map $$f\colon M\rar \Hlp^{-1}(i^*j_*N),$$ 
the kernel $K$ of $f$ is an Artin $\ell$-adic complex on $F$.

The scheme $F$ is of dimension at most $2$. Let $\iota \colon Z\rar F$ be a closed immersion such that the open complementary immersion is dense and affine. 
Then, $Z$ is of dimension at most $1$ and therefore, \Cref{outil 4}(2) below ensures that the subcategory $\mathrm{Perv}^A(Z,\Z_\ell)$ of $\mathrm{Perv}(Z,\Z_\ell)$ is Serre. 

\Cref{outil 3} shows that to finish the proof, it suffices to prove that the perverse sheaf $\Hlp^{-1}(\iota^*\Hlp^{-1}(i^*j_* N))$ is Artin.

Using \Cref{outil 4}(1) below, we only need to prove that if $\eta$ is a generic point of $Z$, and $i_\eta\colon\{ \eta \} \rar Z$ is the canonical immersion, the perverse sheaf $i_\eta^*\Hlp^{-1}(\iota^*\Hlp^{-1}(i^*j_* N))$ over $k(\eta)$ is Artin.

We have a commutative diagram:

$$\begin{tikzcd}
    \{ \eta \} \ar[d,"i_\eta"] \ar[r,"u_\eta"] & \Spec(\mc{O}_{F,\eta}) \ar[r,"\psi"]\ar[d,"\xi"] & \Spec(\mc{O}_{S,\eta})\ar[d,"\alpha"]& U \times_S \Spec(\mc{O}_{S,\eta}) \ar[d,"\beta"]\ar[l,sloped,"\gamma"] \\
    Z \ar[r,"\iota"] & F \ar[r,"i"] & S & U \ar[l,sloped,"j"]
\end{tikzcd}$$
made of cartesian squares and such that the vertical maps are filtering limits of open immersions with affine transition maps. 
If $f$ is such a filtering limit, the functor $f^*$ is perverse t-exact by definition of the perverse t-structure. Thus, we have $$\begin{aligned}i_\eta^*\Hlp^{-1}(\iota^*\Hlp^{-1}(i^*j_* N))&=\Hlp^{-1}(u_\eta^*\xi^*\Hlp^{-1}(i^*j_* N))\\
&=\Hlp^{-1}(u_\eta^*\Hlp^{-1}(\psi^*\alpha^*j_* N)).\end{aligned}$$

Now, since $j$ is of finite type and since $\alpha$ is filtering limits of open immersions with affine transition maps, \Cref{exchange proetale} below yields $$i_\eta^*\Hlp^{-1}(\iota^*\Hlp^{-1}(i^*j_* N))=\Hlp^{-1}(u_\eta^*\Hlp^{-1}(\psi^*\gamma_* (\beta^*N))).$$

We can now apply \Cref{outil 2}: the scheme $\Spec(\mc{O}_{S,\eta})$ is of dimension $2$, the perverse sheaf $\beta^*N$ is a smooth Artin over $U \times_S \Spec(\mc{O}_{S,\eta})$, the open immersion $\gamma$ is affine and dense and the complementary open immersion of $u_\eta$ is also affine and dense. 

Thus, the perverse sheaf $i_\eta^*\Hlp^{-1}(\iota^*\Hlp^{-1}(i^*j_* N))$ over $k(\eta)$ is Artin. This finishes the proof.
 \end{proof}

\begin{lemma}\label{outil 4} Let $C$ be an excellent $1$-dimensional connected scheme and $\ell$ be a prime number which is invertible on $C$. Assume that the closed points of $C$ have finite absolute Galois groups. Then,
\begin{enumerate}
    \item Let $M$ be a constructible $\ell$-adic complex over $C$. Then, the complex $M$ is Artin if and only if for any generic point $\eta$ of $C$, the complex $\eta^*M$ is over $k(\eta)$ is Artin.
    \item The subcategory $\mathrm{Perv}^A(C,\Z_\ell)$ of $\mathrm{Perv}(C,\Z_\ell)$ is Serre.
\end{enumerate}
\end{lemma}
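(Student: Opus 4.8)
We prove (1) first and deduce (2) from it. The ``only if'' part of (1) is immediate from the stability of Artin $\ell$-adic complexes under $f^*$ (\Cref{6 foncteurs}). For the converse, using that $\eta^*$ is exact for the ordinary t-structure together with \Cref{t-structure ordinaire}, we reduce to the case where $M$ is a single constructible $\ell$-adic sheaf, and we may replace $C$ by $C_{\mathrm{red}}$ by topological invariance of the small étale site. Since $C$ is reduced and excellent, its regular locus is dense open; shrinking it, fix a dense open immersion $j\colon V\hookrightarrow C$ with $V$ regular and $M|_V$ lisse, write $V=\coprod_k V_k$ with each $V_k$ a connected (hence integral) regular curve with generic point $\eta_k$, and let $i\colon F\hookrightarrow C$ be the reduced complementary closed immersion. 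Because $\dim C=1$, the scheme $F$ is a finite disjoint union of spectra of residue fields $k(x)$ of closed points $x$ of $C$; by hypothesis each $G_{k(x)}$ is finite, so every continuous representation of $G_{k(x)}$ on a $\Z_\ell$-module of finite type factors through $G_{k(x)}$ and is an Artin representation, i.e. $M|_F$ is smooth Artin. By \Cref{stratification}, applied to the stratification of $C$ given by the $V_k$ together with the points of $F$, it then suffices to show that each $M|_{V_k}$ is smooth Artin.

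The heart of the argument is the following descent statement: a lisse sheaf $L$ on an integral regular curve $W$ with generic point $\eta$ is smooth Artin provided $\eta^*L$ is (equivalently, provided $\eta^*L$ is of Artin origin as a representation of $G_{k(\eta)}$). Through \Cref{description des faisceaux l-adiques lisses} (in its étale form, valid for $\Z_\ell$-coefficients), $L$ corresponds to a continuous representation $\rho$ of $\pi_1^{\et}(W)$ and $\eta^*L$ to its inflation along the canonical map $\phi\colon\pi_1^{\et}(\eta)\to\pi_1^{\et}(W)$. This map is surjective: a connected finite étale cover of $W$ stays connected over $\eta$, its fibre over $\eta$ being the spectrum of the function field of the (irreducible) cover. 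Hence $\phi^*$ is fully faithful and exact and, $\phi$ being surjective, induces a bijection between subrepresentations of $\eta^*L$ and of $L$; pulling back along $\phi^*$ the filtration realizing $\eta^*L$ as an iterated extension of Artin representations gives a filtration of $L$ whose graded pieces $N$ satisfy that $\pi_1^{\et}(W)$ and $G_{k(\eta)}$ have the same (finite) image in $GL(N)$, so each $N$ is an Artin representation of $\pi_1^{\et}(W)$; thus $L$ is of Artin origin. Applying this with $W=V_k$, $\eta=\eta_k$ completes (1). Over $\Q_\ell$ or an algebraic extension of it one argues identically with the pro-étale fundamental group, or reduces to the $\Z_\ell$-case via \Cref{passage coeffs Ql}.

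To deduce (2), note that $\mathrm{Perv}^A(C,\Z_\ell)$ is defined — the perverse t-structure restricts to $\mc{D}^A(C,\Z_\ell)$ by \Cref{main theorem}, as $\dim C\le 2$ — and that it is a weak Serre, in particular extension-closed and strictly full, subcategory of $\mathrm{Perv}(C,\Z_\ell)$ by \Cref{premieres prop du coeur}. It remains to show it is stable under subobjects and quotients. Let $0\to M'\to M\to M''\to 0$ be exact in $\mathrm{Perv}(C,\Z_\ell)$ with $M$ Artin. For a generic point $\eta$ of $C$, perversity on the curve $C$ forces $\eta^*M=\eta^!M$ to be concentrated in a single ordinary degree ($-\delta(\eta)$), and applying the exact functor $\eta^*$ to the sequence yields a short exact sequence $0\to L'_\eta\to L_\eta\to L''_\eta\to 0$ of $G_{k(\eta)}$-representations placed in that degree. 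Since $M$ is Artin, $\eta^*M$ is Artin over $k(\eta)$, so $L_\eta$ is of Artin origin; representations of Artin origin form a Serre subcategory (\Cref{Artin origin}), so $L'_\eta$ and $L''_\eta$ are of Artin origin, i.e. $\eta^*M'$ and $\eta^*M''$ are Artin over $k(\eta)$. Running over all generic $\eta$ and applying part (1) to the constructible complexes $M'$ and $M''$ shows $M'$ and $M''$ are Artin, hence lie in $\mathrm{Perv}^A(C,\Z_\ell)$. Combined with extension-closedness, this shows $\mathrm{Perv}^A(C,\Z_\ell)$ is a Serre subcategory.

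I expect the descent statement in the second paragraph — detecting ``Artin origin'' of a lisse sheaf on a regular curve from its generic fibre — to be the crux; it rests on the surjectivity of $\pi_1^{\et}(\eta)\to\pi_1^{\et}(W)$ (equivalently, on a connected finite étale cover of an integral normal scheme staying connected over the generic point) and the resulting identification of lattices of subrepresentations. The rest is bookkeeping: organizing the reductions in (1) and using that $1$-dimensionality confines the non-lisse locus to finitely many spectra of fields with finite absolute Galois group.
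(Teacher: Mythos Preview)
Your proof is correct and follows essentially the same route as the paper's: reduce (1) to a single lisse sheaf on a regular open, use that the closed complement has only fields with finite Galois group (so everything there is automatically Artin), and then detect the Artin-origin property at the generic point via the surjection $G_{k(\eta)}\twoheadrightarrow\pi_1^{\et}(U,\overline{\eta})$; for (2), pull back a short exact sequence along the perverse $t$-exact functor $\eta^*$ and use that representations of Artin origin form a Serre subcategory, then invoke (1). Your treatment of the descent step is in fact more explicit than the paper's, which simply asserts that $N$ is of Artin origin iff $\phi^*N$ is, whereas you spell out the bijection on subrepresentation lattices and the resulting transport of the filtration.
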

\begin{proof} We first claim that if $F$ is a $0$-dimensional scheme such that the residue fields of $F$ have finite absolute Galois groups, then $$\mc{D}^b_c(F,\Z_\ell)=\mc{D}^A(F,\Z_\ell).$$

To prove this claim, we can assume that $F$ is reduced and connected and therefore that $F=\Spec(k)$ where $k$ is a field with finite Galois group. Since every continuous representation of $G_k$ is in fact an Artin representation, \Cref{t-structure ordinaire smooth fields} then implies our claim.

We now prove the first assertion. The "only if" part follows from the usual stability properties of Artin complexes of \Cref{6 foncteurs}. We now prove the "if" part. 

Let $M$ be a constructible $\ell$-adic complex over $C$. Assume that for any generic point $\eta$ of $C$, the $\ell$-adic complex $\eta^*M$ is Artin.
The above claim implies that the $\ell$-adic complex $M|_F$ is Artin for any closed subset $F$ of $C$ of positive codimension. Thus, using \Cref{stratification}, it suffices to find a dense open subset $U$ such that the $\ell$-adic complex $M|_U$ is Artin.
By \Cref{t-structure ordinaire}, we can assume that $M$ is concentrated in degree $0$ with respect to the ordinary t-structure of $\mc{D}^b_c(C,\Z_\ell)$ \textit{i.e.} that it lies in $\Sh_c(C,\Z_\ell)$.

Let $U$ be a dense open subset such that $M|_U$ is lisse \textit{i.e.} lies in $\mathrm{Loc}_U(\Z_\ell)$. Since $C$ is excellent, we can assume that $U$ is regular. We now claim that $M|_U$ lies in $\Sh^{smA}(U,\Z_\ell)$. To show this, we can assume that $U$ is connected and therefore only has one generic point $\eta$. Since $U$ is regular, by \cite[7.4.10]{bhatt-scholze}, we have $$\pi_1^{\mathrm{pro\acute{e}t}}(U,\overline{\eta})=\pi_1^{\et}(U,\overline{\eta}).$$ 

Let $\phi$ be the canonical group homomorphism $G_{k(\eta)}\rar \pi_1^{\et}(U,\overline{\eta})$. 
The fiber functor associated to the separable closure $\overline{\eta}$ of $\eta$ induces by \Cref{description des faisceaux l-adiques lisses} a commutative square 
$$\begin{tikzcd}
\mathrm{Loc}_U(\Z_\ell)\ar[r]\ar[d,"\eta^*"]& \Rep(\pi_1^{\et}(U,\overline{\eta}),\Z_\ell)\ar[d,"\phi^*"] \\
\mathrm{Loc}_{\eta}(\Z_\ell)\ar[r]& \Rep(G_{k(\eta)},\Z_\ell)
\end{tikzcd}$$
where the horizontal arrows are equivalences. By \cite[V.8.2]{sga1}, the homomorphism $\phi$ is surjective. Thus, a continuous representation $N$ of $\pi_1^{\et}(U,\overline{\eta})$ is of Artin origin if and only if $\phi^*N$ is of Artin origin as a representation of $G_{k(\eta)}$. Hence, since $\eta^*M$ lies in $\Sh^{smA}(\eta,\Z_\ell)$, the lisse sheaf $M|_U$ is smooth Artin which proves the first assertion.

Finally, the first assertion implies the second one using that $$\mathrm{Perv}^A(C,\Z_\ell)=\mc{D}^A(C,\Z_\ell)\cap \mathrm{Perv}(C,\Z_\ell),$$ that the functor $\eta^*$ is perverse t-exact and that $\mathrm{Perv}^A(k(\eta),\Z_\ell)$ is a Serre subcategory of $\mathrm{Perv}(k(\eta),\Z_\ell)$.
\end{proof}

\begin{lemma}\label{exchange proetale} Let $\ell$ be a prime number. Let 
$$\begin{tikzcd}X'\ar[r,"g"]\ar[d,"p"]& Y' \ar[d,"q"]\\
X \ar[r,"f"]& Y
\end{tikzcd}$$ be a cartesian square of $\Z[1/\ell]$-schemes such that $f$ is a filtering limit of étale morphisms with affine transition maps and $q$ is of finite type. Then, the exchange transformation $$f^*q_*\rar p_*p^*f^*q_* \simeq p_* g^* q^*q_* \rar p_* g^*$$ is an equivalence of functors from $\mc{D}^b_c(Y',\Z_\ell)$ to $\mc{D}^b_c(X,\Z_\ell)$.
\end{lemma}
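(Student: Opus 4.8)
The plan is to reduce to the case where $f$ itself is étale — where the statement is the étale (a fortiori smooth) base change theorem — and then recover the general case by a continuity argument. First I would write $f$ as a filtering limit $f=\lim_{i\in I}f_i$ of étale morphisms $f_i\colon X_i\to Y$ with affine transition maps, so that $X=\lim_i X_i$. Base-changing along $q$ produces cartesian squares with top map $g_i\colon X_i':=X_i\times_Y Y'\to Y'$ and left map $p_i\colon X_i'\to X_i$; since base change preserves affineness and commutes with fibre products, one has $X'=\lim_i X_i'$ with affine transition maps, $g=\lim_i g_i$, $p=\lim_i p_i$, and the natural square relating $X',X_i',X,X_i$ is cartesian with affine horizontal (limit-projection) maps. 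As $q$ — hence $p$ and each $p_i$ — is of finite type, the finiteness theorem (\cite{travauxgabber}) guarantees that $q_*$, $p_*$ and the $(p_i)_*$ preserve constructibility, so every functor in sight lands in the bounded constructible categories. For each $i$, étale base change says the exchange transformation $f_i^*q_*\to (p_i)_*g_i^*$ is an equivalence on $\mc{D}^b_c(Y',\Z_\ell)$: with $\Z/\ell^n\Z$-coefficients this is \cite{sga4}, and it passes to $\Z_\ell$-coefficients by taking the limit over $n$.

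To pass to the limit I would invoke continuity of the constructible $\ell$-adic formalism along filtering limits of quasi-compact quasi-separated schemes with affine transition maps — which itself reduces, level by level in $n$, to the corresponding statement of \cite{sga4} for torsion étale sheaves. Concretely, the transition pullbacks exhibit $\mc{D}^b_c(X,\Z_\ell)$ as the filtering $2$-colimit of the $\mc{D}^b_c(X_i,\Z_\ell)$, the functor $f^*$ is the colimit of the $f_i^*$, and — using that all the relevant morphisms are of finite type — $p_*g^*M$ is the image of $(p_i)_*g_i^*M$ under the canonical functor $\mc{D}^b_c(X_i,\Z_\ell)\to\mc{D}^b_c(X,\Z_\ell)$, compatibly in $i$. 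Since the exchange transformation is natural in the base scheme, the exchange map $f^*q_*\to p_*g^*$ is identified with the colimit of the maps $f_i^*q_*\to (p_i)_*g_i^*$, and is therefore an equivalence.

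The main obstacle is the book-keeping needed to make this last identification precise: one must check that the exchange $2$-morphisms are compatible with the transition functors of the limit, so that their $2$-colimit is again the exchange $2$-morphism attached to $f$, and one must make sure that the continuity statements are available in the generality needed here. This is exactly where the hypotheses that the transition maps be affine and that $q$ be of finite type enter, everything ultimately resting on the torsion-coefficient results of \cite{sga4} together with the construction and finiteness properties of $\mc{D}^b_c(-,\Z_\ell)$ from \cite{bhatt-scholze,travauxgabber}.
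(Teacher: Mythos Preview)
Your argument is correct, but the paper takes a much shorter route. Rather than organizing a continuity argument directly at the $\Z_\ell$-level, the paper observes that the functor $-\otimes_{\Z_\ell}\Z/\ell\Z\colon \mc{D}^b_c(S,\Z_\ell)\to \mc{D}(S_{\et},\Z/\ell\Z)$ is conservative by the Nakayama--Ekedahl lemma \cite[3.6(ii)]{torsten}; since this functor commutes with $f^*$, $g^*$, $q_*$, $p_*$ and the exchange transformation, it suffices to check the statement after applying it, and the torsion case is then a single citation to \cite[1.1.14]{em}. Your approach instead passes to the limit over the étale $f_i$ first, and only then (inside the continuity argument) reduces to torsion level by level in $n$; this works, but once you have granted yourself the reduction to torsion, the continuity over $i$ is already contained in the torsion reference, so the two-step limit is redundant. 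One small point worth noting in your write-up: the identification of $p_*g^*$ with the colimit of the $(p_i)_*g_i^*$ requires base change along the transition maps $X\to X_i$, which is formally the same statement you are proving --- you avoid circularity precisely because you drop to torsion coefficients before invoking it, but it would be cleaner to say so explicitly. In short: both proofs ultimately rest on the torsion case from \cite{sga4}/\cite{em}, but the paper's conservativity trick gets there in two lines where yours spends a page on continuity book-keeping.
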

\begin{proof} If $S$ is a scheme and $\ell$ is invertible on $S$, the functor $$-\otimes_{\Z_\ell}\Z/\ell\Z\colon  \mc{D}^b_c(S,\Z_\ell)\rar \mc{D}(S_{\et},\Z/\ell\Z)$$ is conservative according to the Nakayama-Ekedahl Lemma \cite[3.6(ii)]{torsten}. Thus, the result follows from \cite[1.1.14]{em}.
\end{proof}

\subsection{A counter-example}
\begin{proposition} Let $X$ be a normal scheme of dimension $3$ with singular locus a closed point $x$ of codimension $3$. Let $\ell$ be a prime number which is invertible on $X$. Assume that the field $k(x)$ is finite. Let $f\colon \widetilde{X}\rar X$ be a resolution of singularities of $X$ and assume that the exceptional divisor $E$ is smooth over $k(x)$ and has a non-zero first $\ell$-adic betti number.

Then, taking the convention that $\delta(X)=3$, the $\ell$-adic complex $\Hlp^2(\Z_{\ell,X})$ is not an Artin $\ell$-adic complex over $X$.
\end{proposition}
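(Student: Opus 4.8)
The plan is to identify $\Hlp^2(\Z_{\ell,X})$ with $i_*V$ for a single continuous $\ell$-adic representation $V$ of $G_{k(x)}$ (where $i\colon\{x\}\rar X$ is the closed immersion), to produce an injection $\Hl^1_{\et}(E_{\overline x},\Z_\ell)\hookrightarrow V$ of $G_{k(x)}$-representations via the resolution, and then to invoke weights: since $E$ is smooth and proper over the finite field $k(x)$ with non-zero first $\ell$-adic Betti number, Deligne's theorem forces the Frobenius eigenvalues on $\Hl^1_{\et}(E_{\overline x},\overline{\Q_\ell})$ to have absolute value $|k(x)|^{1/2}>1$, so $\Hl^1_{\et}(E_{\overline x},\Z_\ell)$ is not strongly of weight $0$, hence not of Artin origin by \Cref{strongly of weight 0}. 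As representations of Artin origin form a Serre subcategory by \Cref{Artin origin}, $V$ cannot be of Artin origin, whence $\Hlp^2(\Z_{\ell,X})$ cannot be an Artin $\ell$-adic complex.

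First I would set up the geometry. We may assume $X$ connected, hence (being normal) irreducible. Write $U=X\setminus\{x\}$ and $j\colon U\rar X$ for the open immersion; $U$ is regular connected of dimension $3$ because the singular locus of $X$ is exactly $\{x\}$, and $\delta(x)=0$ since $X$ is catenary and $\delta(X)=3$. By \Cref{t-structure perverse lisse}, the sheaf $\Z_{\ell,X}\vert_U=\Z_{\ell,U}$ lies in perverse degree $\delta(U)=3$, so $\Hlp^k(\Z_{\ell,X})$ is supported on $\{x\}$ for every $k\leqslant 2$. Applying perverse cohomology to the colocalisation triangle \eqref{colocalization}, namely $i_*i^!\Z_{\ell,X}\rar \Z_{\ell,X}\rar j_*\Z_{\ell,U}$, and using that $j_*$ is left perverse t-exact so that $\Hlp^k(j_*\Z_{\ell,U})=0$ for $k\leqslant 2$, one obtains $\Hlp^2(\Z_{\ell,X})\simeq i_*\Hlp^2(i^!\Z_{\ell,X})$. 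Since $\delta(x)=0$, the perverse t-structure on $\mc{D}^b_c(k(x),\Z_\ell)$ is the ordinary one (\Cref{cas des corps}), so $V:=\Hlp^2(i^!\Z_{\ell,X})=\Hl^2(i^!\Z_{\ell,X})$ is a continuous representation of $G_{k(x)}$.

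Next I would compute enough of $i^!\Z_{\ell,X}$ using $f\colon\widetilde X\rar X$. The cdh-distinguished square with vertices $E,\widetilde X,\{x\},X$ yields, by \Cref{cdh-descent}, a homotopy cartesian square of constant sheaves, hence an exact triangle $\Z_{\ell,X}\rar f_*\Z_{\ell,\widetilde X}\rar i_*C$ with $C=\mathrm{cofib}(\Z_{\ell,x}\rar p_*\Z_{\ell,E})$; as the unit map $\Z_{\ell,x}\rar p_*\Z_{\ell,E}=R\Gamma(E_{\overline x},\Z_\ell)$ affects only $\Hl^0$, one has $\Hl^n(C)=\Hl^n_{\et}(E_{\overline x},\Z_\ell)$ for all $n\geqslant 1$. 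Applying $i^!$, and using the base-change isomorphism $i^!f_*\simeq p_*i_E^!$ (valid since $f$ is proper) together with absolute purity $i_E^!\Z_{\ell,\widetilde X}\simeq\Z_{\ell,E}(-1)[-2]$ (valid since $\widetilde X$ is regular and $E$, being smooth over the perfect field $k(x)$, is a regular divisor), I obtain an exact triangle $i^!\Z_{\ell,X}\rar R\Gamma(E_{\overline x},\Z_\ell)(-1)[-2]\rar C$. Its long exact cohomology sequence reads, around degrees $1$ and $2$, as $0=\Hl^{-1}_{\et}(E_{\overline x},\Z_\ell)(-1)\rar \Hl^1_{\et}(E_{\overline x},\Z_\ell)\rar \Hl^2(i^!\Z_{\ell,X})=V$, giving the desired injection $\Hl^1_{\et}(E_{\overline x},\Z_\ell)\hookrightarrow V$. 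If now $\Hlp^2(\Z_{\ell,X})$ were an Artin $\ell$-adic complex, then $V=i^*\Hlp^2(\Z_{\ell,X})$ would be an Artin $\ell$-adic complex over $\Spec k(x)$ by \Cref{6 foncteurs}, hence of Artin origin as a representation by \Cref{t-structure ordinaire smooth fields}; by \Cref{Artin origin} its subrepresentation $\Hl^1_{\et}(E_{\overline x},\Z_\ell)$ would be of Artin origin too, hence strongly of weight $0$ by \Cref{strongly of weight 0}, so every Frobenius eigenvalue on $\Hl^1_{\et}(E_{\overline x},\Z_\ell)\otimes_{\Z_\ell}\overline{\Q_\ell}=\Hl^1_{\et}(E_{\overline x},\overline{\Q_\ell})$ would be a root of unity. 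This contradicts the Weil conjectures (Deligne), since these eigenvalues are Weil numbers of weight $1$ and the group is non-zero by the hypothesis on the first Betti number. Hence $\Hlp^2(\Z_{\ell,X})$ is not Artin.

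The step needing most care is the middle one: producing the triangle $i^!\Z_{\ell,X}\rar R\Gamma(E_{\overline x},\Z_\ell)(-1)[-2]\rar C$ by combining the $!$-base change along the cdh square with absolute purity, while keeping track of which ordinary and perverse truncations are t-exact. The point worth emphasising is that, the coefficient ring being $\Z_\ell$, no decomposition theorem is available, so the subrepresentation $\Hl^1_{\et}(E_{\overline x},\Z_\ell)\hookrightarrow V$ must be extracted directly from the localisation and purity triangles rather than by splitting $f_*\Z_{\ell,\widetilde X}$; and one genuinely needs $\Z_{\ell,U}$ to sit in a single perverse degree (\Cref{t-structure perverse lisse}) in order to know that $\Hlp^2(\Z_{\ell,X})$ is concentrated at the point $x$ in the first place.
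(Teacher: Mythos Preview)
Your argument is correct and follows the same overall strategy as the paper's proof: both combine the cdh-descent triangle for the resolution, a localization/colocalization triangle, absolute purity along the smooth divisor $E\hookrightarrow\widetilde X$, and a weight argument on $\Hl^1_{\et}(E_{\overline x},\Z_\ell)$ to obstruct Artin origin.

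The organization differs in one respect worth recording. You first apply the colocalization triangle on $X$ to identify $\Hlp^2(\Z_{\ell,X})=i_*\Hl^2(i^!\Z_{\ell,X})$, then compute $i^!\Z_{\ell,X}$ by pulling the cdh triangle back along $i^!$ and invoking $i^!f_*\simeq p_*i_E^!$ together with absolute purity; the injection $\Hl^1_{\et}(E_{\overline x},\Z_\ell)\hookrightarrow V$ then drops out of a single long exact sequence. The paper instead keeps the cdh triangle on $X$, computes $\Hlp^1(f_*\Z_{\ell,\widetilde X})=0$ and $\Hlp^2(f_*\Z_{\ell,\widetilde X})$ via the colocalization triangle on $\widetilde X$ and \Cref{calcul penible} (the Rapoport--Zink type corollary), and then extracts the same subobject $\Hlp^1(p_*\Z_{\ell,E})\hookrightarrow P$ using that $i_*\mathrm{Perv}(k(x),\Z_\ell)$ is a Serre subcategory. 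Your route is a bit more direct here because $E$ is smooth (so \Cref{calcul penible}, which is designed for simple normal crossing divisors, is not really needed), while the paper's route makes the connection to the general machinery of \Cref{spectral sequence}/\Cref{calcul penible} explicit. The concluding weight step is the same in both: the paper cites \cite[5.1.14,\,5.4.4]{bbd}, you cite Deligne's purity directly; either way $\Hl^1_{\et}(E_{\overline x},\Z_\ell)$ is pure of weight $1$ and nonzero, hence not strongly of weight $0$, hence not of Artin origin by \Cref{strongly of weight 0} and \Cref{Artin origin}.
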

\begin{proof} We have a cdh-distinguished square:
$$\begin{tikzcd}
    E \ar[d,"p"] \ar[r,"i_E"] & \widetilde{X} \ar[d,"f"]\\
    \{ x\} \ar[r,"i"] & X
\end{tikzcd}
$$
which by \Cref{cdh-descent} yields an exact triangle:
$$\Z_{\ell,X} \rar f_* \Z_{\ell,\widetilde{X}} \oplus i_*\Z_{\ell,x} \rar i_* p_* \Z_{\ell,E}.$$

Letting $j\colon U\rar X$ be the complementary open immersion of $i$, the localization triangle \eqref{colocalization} associated to the immersions $U\rar \widetilde{X}$ and $i_E$ yields after applying $f_*$ an exact triangle:
$$i_* p_* i_E^! \Z_{\ell,\widetilde{X}} \rar f_* \Z_{\ell,\widetilde{X}} \rar j_*\Z_{\ell,U}.$$

Now, since $j$ is of relative dimension $0$, the functor $j_*$ is perverse left t-exact. Since $U$ is regular and $\delta(U)=3$, the complex $j_*\Z_{\ell,U}$ is in perverse degree at least $3$. Hence, applying \Cref{calcul penible}, the perverse sheaf $\Hlp^1(f_*\Z_{\ell,\widetilde{X}})$ is trivial and $$\Hlp^2(f_*\Z_{\ell,\widetilde{X}})=i_* \Hlp^2(p_* i_E^! \Z_{\ell,\widetilde{X}}).$$

Using \cite[1.4.17.1]{bbd}, the fully faithful exact functor $i_*$ maps $\mathrm{Perv}(k(x),\Z_\ell)$ to a Serre subcategory of $\mathrm{Perv}(X,\Z_\ell)$. Therefore, we get an exact sequence:

$$0\rar i_*\Hlp^1(p_*\Z_{\ell,X}))\rar \Hlp^2(\Z_{\ell,X})\rar i_*M\rar 0$$ where $M$ is a perverse sheaf over $k(x)$. 

Hence, once again using \cite[1.4.17.1]{bbd}, there is a perverse sheaf $P$ over $k(x)$ such that $\Hlp^2(\Z_{\ell,X})=i_* P$. The perverse sheaf $\Hlp^1(p_*\Z_{\ell,X}))$ is a subobject of $P$.

It now suffices to show that $\Hlp^1(p_*\Z_{\ell,X})$ does not correspond to representation of Artin origin of $G_{k(x)}$. By \Cref{strongly of weight 0} and \Cref{weight 0}, it suffices to show that it is not of weight $0$ as a perverse sheaf over $k(x)$.

But by \cite[5.1.14, 5.4.4]{bbd}, the perverse sheaf $\Hlp^1(p_*\Z_{\ell,X})$ is pure of weight $1$. Since the first $\ell$-adic Betti number $b_1(E)$ is non-zero, the perverse sheaf $\Hlp^1(p_*\Z_{\ell,X})$ is non-zero and thus, it is not of Artin origin.
\end{proof}

\begin{example}\label{exemple dim 3} Let $E$ be an elliptic curve over a finite field $k$ and $X$ be the affine cone over $E \times \mb{P}^1_k$. Let $\ell$ be a prime number distinct from the characteristic of $k$. The scheme $X$ has a single singular point $x$ and the blow-up $\widetilde{X}$ of $X$ at $x$ is a resolution of singularities of $X$ with exceptional divisor $E \times \mb{P}^1_k$.

As $b_1(E\times \mb{P}^1_k)=1$, the proposition applies and therefore, the perverse t-structure does not induce a t-structure on $\mc{D}^A(X,\Z_\ell)$.

Finally, notice that by Noether's normalization lemma, there is a finite map $X\rar \mb{A}^3_k$. Thus, the perverse t-structure does not induce a t-structure on $\mc{D}^A(\mb{A}^3_k,\Z_\ell)$. As $\mb{A}^3_k$ is a closed subscheme of $\mb{A}^n_k$ for $n\geqslant 3$, the perverse t-structure does not induce a t-structure on $\mc{D}^A(\mb{A}^n_k,\Z_\ell)$ if $n \geqslant 3$.
\end{example}

\begin{remark}The condition that $\dim(S)\leqslant 2$ or $\dim(S)\leqslant 3$ and the residue field of closed points of $S$ are separably closed or real closed seems rather optimal: loosely speaking, if $X$ becomes more singular, the perverse cohomology sheaves of $\Z_{\ell,X}$ should become more complicated. Here, the simplest possible singularity on a scheme of dimension $3$ over a finite field already renders the cohomology sheaf not Artin.
\end{remark}

\section{The perverse homotopy t-structure}
Recall (see \Cref{t-structure generated} below) that we can formally define t-structures on a presentable stable category by setting a certain set of objects to be the t-negative objects of our t-structure. Thus, an other attempt to define a t-structure on $\mc{D}^A(S,\Z_\ell)$ is to define it by generators on $\mc{D}^A_{\In}(S,\Z_\ell)$ and then to see when this t-structure can be restricted to $\mc{D}^A(S,\Z_\ell)$. 


\subsection{Cohomological \texorpdfstring{$\ell$}{\textell}-adic complexes and the six functors for them}\label{Dbbcoh}
\begin{definition}\label{Dbcoh} Let $S$ be a scheme and $\ell$ be a prime number invertible on $S$. The category of \emph{cohomological} (resp. \emph{ind-cohomological}) \emph{$\ell$-adic complexes over $S$} is the thick (resp. localizing) subcategory $\mc{D}^{\coh}(S,\Z_\ell)$ (resp. $\mc{D}^{\coh}_{\In}(S,\Z_\ell)$) of $\mc{D}(S,\Z_\ell)$ generated by the objects $f_*\Z_{\ell,X}$ for $f\colon X\rar S$ proper.
\end{definition}

In \cite[1.12]{plh}, Pepin Lehalleur proved that (constructible) cohomological motives are stable under some of the six operations when the base schemes have resolution of singularities by alterations. In \cite[6.2]{em}, using Gabber's method, Cisinski and Déglise showed that constructible étale motives are endowed with the six functors formalism when the base schemes are quasi-excellent. One can mimic their proof in the case of cohomological $\ell$-adic complexes. We outline how to do this:

\begin{enumerate} \item The fibred subcategory $\mc{D}^{\coh}(-,\Z_\ell)$ of $\mc{D}(-,\Z_\ell)$ is closed under the tensor product operation, negative Tate twists, the functors $f^*$ for any morphism $f$ and the functors $f_!$ for any separated morphism of finite type $f$.
\item From the absolute purity property and the stability of $\mc{D}^{\coh}$ under negative Tate twists and \Cref{Mayer-Vietoris}, we deduce that if $i$ is the immersion of a simple normal crossing divisor with regular target $X$, the $\ell$-adic complex $i^!\Z_{\ell,X}$ is cohomological.
\item We want to prove Gabber's lemma for cohomological complexes: if $X$ is quasi-excellent, then for any dense open immersion $U\rar X$, the $\ell$-adic complex $j_*\Z_{\ell,U}$ is cohomological. Notice that the fibred stable category $\mc{D}^{\coh}(-,\Z_\ell)$ has the properties \cite[6.2.9(b)(c)]{em} replacing $\mc{DM}_{h}(-,R)$ with $\mc{D}(-,\Z_\ell)$. It does not necessarily have property \cite[6.2.9(a)]{em}; however, in the proof of Gabber's lemma of \cite[6.2.7]{em}, we only need the following weaker version: for any scheme $X$, the subcategory $\mc{D}^{\coh}(X,\Z_\ell)$ of $\mc{D}(X,\Z_\ell)$ is thick and contains the object $\Z_{\ell,X}$. 
\item Follow the proofs of \cite[6.2.13, 6.2.14]{em} to prove the following result.
\end{enumerate}
\begin{proposition}
The fibered category $\mc{D}^{\coh}(-,\Z_\ell)$ over the category of quasi-excellent (noetherian) $\Z[1/\ell]$-schemes is stable under the following operations:
\begin{itemize}
    \item tensor product and negative Tate twists.
    \item $f^*$ for any morphism $f$.
    \item $f_*$ and $f_!$ for any separated and of finite type morphism $f$.
    \item $f^!$ for any quasi-finite morphism $f$.
\end{itemize}
\end{proposition}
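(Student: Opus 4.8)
The plan is to follow the blueprint of Cisinski--Déglise's proof \cite[6.2]{em} for constructible étale motives, adapting each step to the fibered category $\mc{D}^{\coh}(-,\Z_\ell)$, exactly as sketched in points (1)--(4) preceding the statement. The structure of the argument is: first establish the easy stability properties ($f^*$, $\otimes$, negative Tate twists, $f_!$ for $f$ separated of finite type) by a direct generator-chasing argument; then prove the absolute purity input giving $i^!\Z_{\ell,X}$ cohomological for $i$ a simple normal crossing divisor immersion into a regular scheme; then prove the key \emph{Gabber lemma} that $j_*\Z_{\ell,U}$ is cohomological for $j$ a dense open immersion into a quasi-excellent scheme; and finally bootstrap from these to the full list via the standard localization/dévissage formalism.

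First I would verify point (1): the subcategory generated by the $f_*\Z_{\ell,X}$ with $f$ proper is clearly stable under $\otimes$ (using the projection formula and Künneth for proper maps, together with $\Z_{\ell,X}\otimes\Z_{\ell,X}=\Z_{\ell,X}$), under $g^*$ for arbitrary $g$ (by proper base change, $g^*f_*\Z_{\ell,X}=f'_*\Z_{\ell,X'}$ for the base-changed proper map $f'$), under negative Tate twists (formal, since $\mc{D}^{\coh}$ is a thick tensor ideal closed under $\otimes$ and $\Z_\ell(-n)$ is a retract of a tensor power in the relevant range, or simply by building it in), and under $f_!$ for $f$ separated of finite type (factor $f=p\circ j$ with $p$ proper and $j$ an open immersion, reduce to $j_!$, and handle $j_!\Z_{\ell,U}$ via the localization triangle $j_!\Z_{\ell,U}\rar\Z_{\ell,S}\rar i_*\Z_{\ell,Z}$ as in the proof of \Cref{generators}). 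For point (2), I would combine the absolute purity property \cite[XVI.3.1]{travauxgabber} with \Cref{Mayer-Vietoris}: writing $E=\bigcup E_i$ as a union of regular closed subschemes $E_J=\bigcap_{i\in J}E_i$ of codimension $|J|$ in the regular scheme $X$, the complex $i_*i^!\Z_{\ell,X}$ is the colimit $\colim_{p}\bigoplus_{|J|=p+1}(i_J)_*i_J^!\Z_{\ell,X}$, and by absolute purity $(i_J)_*i_J^!\Z_{\ell,X}=(i_J)_*\Z_{\ell,E_J}(-|J|)[-2|J|]$, each term of which is cohomological (proper pushforward of the constant sheaf, Tate-twisted negatively), so the colimit lies in the thick category $\mc{D}^{\coh}(X,\Z_\ell)$ — here one uses that the colimit is a finite colimit after naive truncation of the simplicial diagram, as in \Cref{spectral sequence}.

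The main obstacle is point (3), Gabber's lemma: for $X$ quasi-excellent and $j\colon U\rar X$ a dense open immersion, $j_*\Z_{\ell,U}$ is cohomological. I would run the argument of \cite[6.2.7]{em} verbatim, the point being (as noted in the excerpt) that the proof there uses only the weak version of property \cite[6.2.9(a)]{em}, namely that $\mc{D}^{\coh}(X,\Z_\ell)$ is thick and contains $\Z_{\ell,X}$ — which holds by definition — together with properties \cite[6.2.9(b)(c)]{em} which do hold for $\mc{D}^{\coh}(-,\Z_\ell)$. The mechanism: by noetherian induction on $X$ and de Jong--Gabber's theorem on resolution of singularities by alterations (valid over quasi-excellent bases), one reduces to the situation where $X$ is regular and $X\setminus U$ is a simple normal crossing divisor, whence $j_*\Z_{\ell,U}$ fits in the colocalization triangle $i_*i^!\Z_{\ell,X}\rar\Z_{\ell,X}\rar j_*\Z_{\ell,U}$ and step (2) applies. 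The delicate part is handling the alteration: one descends along a proper surjective generically finite map using the fact that $\Z_{\ell,X}$ is a retract (after a trace argument, rationally, then controlling torsion) of $g_*\Z_{\ell,X'}$ for $g$ the alteration, which is where one needs the $\Z_\ell$-linear analog of the trace formalism — this works because $\mc{D}^b_c(-,\Z_\ell)$ has the six functors and duality over excellent schemes \cite[6.7]{bhatt-scholze}, so the same formal arguments go through.

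Finally, for point (4), I would follow \cite[6.2.13, 6.2.14]{em}: from Gabber's lemma one deduces stability of $\mc{D}^{\coh}$ under $f_*$ for $f$ separated of finite type by noetherian induction and localization (reducing via the triangle $j_!j^*\rar\id\rar i_*i^*$ and the open case, which follows from Gabber's lemma plus $f_*$ for proper $f$ being built in), then stability under $f^!$ for $f$ quasi-finite by factoring through an open immersion and a finite map and using relative purity / the localization triangle \eqref{colocalization}; stability under $\otimes$ and negative Tate twists was already obtained in step (1). This yields the four bullet points of the proposition.
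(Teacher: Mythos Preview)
Your proposal is correct and follows essentially the same approach as the paper: the paper itself gives only the four-step outline (1)--(4) pointing to \cite[6.2]{em}, and you have faithfully expanded each of those steps with the expected details (generator-chasing for the easy functors, absolute purity plus \Cref{Mayer-Vietoris} for the SNC divisor case, the observation that only the weak form of \cite[6.2.9(a)]{em} is needed in Gabber's lemma, and the final bootstrap via \cite[6.2.13, 6.2.14]{em}). Your sketch of the alteration descent in step (3) is slightly informal (the precise mechanism in \cite{em} uses prime-to-$\ell$ alterations and a trace splitting rather than a separate rational/torsion argument), but this is consistent with the level of detail the paper itself provides.
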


\subsection{Definition for Ind-Artin \texorpdfstring{$\ell$}{\textell}-adic complexes}
\begin{proposition}\label{t-structure generated} Let $\mc{C}$ be a presentable stable category which admits small coproducts and push-outs. Given a family $\mc{E}$ of objects, the smallest subcategory $\mc{E}_-$ closed under extensions, small coproducts and positive shifts is the set of non-positive objects of a t-structure.
\end{proposition}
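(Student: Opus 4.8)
The plan is to verify the three axioms of a t-structure for the pair $(\mc{E}_-, \mc{E}_-^{\perp}[1])$, where $\mc{E}_-^{\perp}$ denotes the full subcategory of objects $N$ such that $\pi_0\Map(M,N)=0$ for every $M$ in $\mc{E}_-$. The crucial input is that $\mc{E}_-$, being the smallest subcategory of the presentable stable category $\mc{C}$ closed under extensions, small coproducts and positive shifts, is generated under these operations by a \emph{set} rather than a proper class: indeed one can build $\mc{E}_-$ by transfinite iteration starting from $\mc{E}$ (close off under the shift $[1]$, under small coproducts, and under cofibers of maps whose source and target are already in the class), and since $\mc{C}$ is presentable this process stabilizes at a small stage, producing an accessible subcategory closed under the relevant colimits. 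This is exactly the setting of \cite[1.4.4.11]{ha} (or \cite[1.2.1.16]{ha}), which I would cite for the existence of the t-structure; the present statement is essentially a repackaging of that result, so the "proof" is really a matter of identifying $\mc{E}_-$ with the t-negative part produced there.

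First I would record that $\mc{E}_-$ is closed under the suspension $[1]$ by construction, which gives the second axiom $\mc{C}^{\leqslant 0}\subseteq \mc{C}^{\leqslant 0}[-1]$ in the cohomological convention (equivalently $\mc{E}_-[-1]\subseteq \mc{E}_-$ fails, but $\mc{E}_-\subseteq\mc{E}_-[-1]$ holds — one must be careful that "positive shifts" here means the directions making these closure properties consistent with $\mc{E}_-$ being the $\leqslant 0$ part). Then I would define $\mc{E}_-^{\perp}$ and observe that the orthogonality axiom ($\pi_0\Map(M,N[-1])=0$ for $M\in\mc{E}_-$, $N[-1]\in\mc{E}_-^{\perp}[1]\cap\ldots$, rephrased appropriately) is immediate from the definition of $\mc{E}_-^{\perp}$ together with the fact that $\mc{E}_-$ is closed under positive shifts. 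The real content is the third axiom: the existence, for every object $X$ of $\mc{C}$, of a truncation triangle $X'\to X\to X''$ with $X'\in\mc{E}_-$ and $X''\in\mc{E}_-^{\perp}[1]$.

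The main obstacle is precisely constructing this truncation, and here presentability is essential: one runs a small-object-type argument. Concretely, because $\mc{E}_-$ is generated under colimits by a set $S_0$ of objects (e.g. the objects of $\mc{E}$ together with their coproducts and iterated extensions up to the stabilization stage), one forms, for a given $X$, the cofiber sequence built by attaching all maps from objects of $S_0$ (and their shifts) into $X$; transfinitely iterating and taking the colimit — which exists since $\mc{C}$ is presentable — yields an object $X'$ in $\mc{E}_-$ together with a map $X'\to X$ whose cofiber $X''$ is right-orthogonal to $\mc{E}_-$, i.e. lies in $\mc{E}_-^{\perp}[1]$. The verification that the colimit stays in $\mc{E}_-$ uses closure under small coproducts and extensions (hence under all the colimits appearing in the bar-type construction), and the verification that the cofiber is orthogonal uses that $\Map(M,-)$ commutes with the relevant filtered colimits for $M$ in the generating set, i.e. a compactness/accessibility statement for the generators, which again is supplied by presentability of $\mc{C}$ and accessibility of $\mc{E}_-$.

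Finally, once the t-structure exists, I would check that its coconnective part $\mc{C}^{\leqslant 0}$ is exactly $\mc{E}_-$: by construction every truncation functor $\tau^{\leqslant 0}$ lands in $\mc{E}_-$, and conversely any object of $\mc{E}_-$ is its own $\tau^{\leqslant 0}$ because it is left-orthogonal to $\mc{E}_-^{\perp}[1]$ (as $\mc{E}_-$ is closed under positive shifts, a map from $M\in\mc{E}_-$ to something in $\mc{E}_-^{\perp}[1]$ is null). Thus $\mc{E}_-$ is the set of non-positive objects, as claimed. I would present all of this compactly by invoking \cite[1.4.4.11]{ha} for the existence and then spending a sentence or two identifying the non-positive part with $\mc{E}_-$; grinding through the transfinite construction by hand would be redundant given that reference.
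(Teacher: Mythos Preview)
Your proposal is correct and arrives at exactly the same conclusion as the paper: the paper's entire proof is the single citation \cite[1.2.1.16]{ha}, which you also identify as the appropriate reference. Your elaboration of the small-object argument behind that result is accurate but, as you yourself note at the end, redundant once the citation is in place.
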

\begin{proof}\cite[1.2.1.16]{ha}.
\end{proof}

In the setting of \Cref{t-structure generated} we will call this t-structure the \emph{t-structure generated by} $\mc{E}$. In the motivic setting, t-structures given by a set of generators have been used to define the ordinary homotopy t-structure in \cite[2.2.3]{ayo07}, the perverse homotopy t-structure in \cite[2.2.3]{ayo07} or \cite{bondarko-deglise} and the ordinary motivic t-structure on $0$-motives and $1$-motives in \cite{plh}. Our perverse homotopy t-structure is similar to a "shifted" version of Pepin Lehalleur's ordinary motivic t-structure.

\begin{definition} Let $S$ be a scheme and $\ell$ be a prime number that is invertible on $S$. Then, the \emph{perverse homotopy t-structure} on $\mc{D}^A_{\In}(S,\Z_\ell)$ is the t-structure generated by the complexes $f_*\Z_{\ell,X}[\delta(X)]$ for $f\colon X\rar S$ finite.
\end{definition}
 We use the notation $\mathrm{hp}$ to denote any object related to the perverse homotopy t-structure. For instance, letting $n$ be an integer, we denote by $\Hlh^n$ the $n$-th cohomology functor with respect to this t-structure.

The same method as in \Cref{generators} shows that the perverse homotopy t-structure is also generated by the $f_!\Z_{\ell,X}[\delta(X)]$ for $f\colon X\rar S$ quasi-finite. By definition, an object $M$ of $\mc{D}^A_{\In}(S,\Z_\ell)$ lies in ${}^{\mathrm{hp}}\mc{D}^A_{\In}(S,\Z_\ell)^{\geqslant n}$ if and only if for any finite map $f\colon  X\rar S$ and any integer $p>-n$, we have $$\Hom_{\mathrm{D}(S,\Z_\ell)}(f_* \Z_{\ell,X}[\delta(X)+p],M)=0.$$

In other words, the object $M$ lies in ${}^{\mathrm{hp}}\mc{D}^A_{\In}(S,\Z_\ell)^{\geqslant n}$ if and only if the chain complex $\Map(f_*\Z_{\ell,X},M)$ is $(n-\delta(X)-1)$-connected.

To study this t-structure, we need to introduce the Artin truncation functor:

\begin{definition}Let $S$ be a scheme and $\ell$ be a prime number that is invertible on $S$. Since the category $\mc{D}^A_{\In}(S,\Z_\ell)$ is presentable by \Cref{presentability l-adique}, the adjunction theorem \cite[5.5.1.9]{htt} gives rise to a right adjoint functor to the inclusion of Ind-Artin $\ell$-adic complexes into Ind-cohomological complexes of $\ell$-adic sheaves:

$$\omega^0\colon\mc{D}^{\coh}_{\In}(S,\Z_\ell)\rar \mc{D}^A_{\In}(S,\Z_\ell).$$

We call this functor the \emph{Artin truncation functor}.
\end{definition}
\begin{remark} In the motivic setting, this functor was first introduced in \cite[2.2]{az}. 
Ayoub and Zucker predicted that the $\ell$-adic realization of their functor was a punctual weightless truncation of the $\ell$-adic realization functor.

Such a punctual weightless truncation functor $w_{\leqslant \id}$ was then introduced when $S$ is of finite type over a finite field in \cite{nv} by Vaish and Nair over the category $\mc{D}^b_m(S,\Q_\ell)$ of mixed $\ell$-adic complexes of \cite[5]{bbd}.
Finally, by giving an explicit description of Ayoub and Zucker's functor which mimics the construction of \cite{nv}, Vaish showed in \cite{vaish2} that the $\ell$-adic realization of Ayoub and Zucker's functor is the weightless truncation of its $\ell$-adic realization.

We will show that when restricted to cohomological $\ell$-adic complexes, our functor coincides with Vaish and Nair's (see \Cref{je suis pareil que Vaish et Nair} below).
\end{remark}

\begin{proposition}\label{t-adj} Let $\ell$ be a prime number. Let $f$ be a quasi-finite morphism of quasi-excellent $\Z[1/\ell]$-schemes and $g$ be morphism of $\Z[1/\ell]$-schemes which is essentially of finite type. Then, with respect to the perverse homotopy t-structure,
\begin{enumerate} 
\item The adjunction $(f_!,\omega^0f^!)$ is a t-adjunction.
\item If $M$ is t-non-positive, the functor $-\otimes_S M$ is right t-exact.
\item If $\dim(g)\leq d$, the adjunction $(g^*[d],\omega^0g_*[-d])$ is a t-adjunction.
\end{enumerate}
\end{proposition}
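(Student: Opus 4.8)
The statement to prove is \Cref{t-adj}, asserting that three adjunctions are t-adjunctions for the perverse homotopy t-structure on the categories $\mc{D}^A_{\In}(-,\Z_\ell)$. The general strategy is that each of these is a formal consequence of the way the perverse homotopy t-structure is \emph{generated}: a left adjoint $L$ between t-structures generated by sets of objects $\mc{E}_S$ and $\mc{E}_T$ is right t-exact as soon as $L$ sends the generators $\mc{E}_S$ into the non-positive part of the target (since the non-positive part is the smallest subcategory containing the generators and closed under extensions, coproducts and positive shifts, all of which $L$ preserves being a left adjoint between presentable stable categories); and then the right adjoint is automatically left t-exact, i.e. the pair is a t-adjunction. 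So in each case the plan is to reduce to a statement about what the relevant left adjoint does to the explicit generators $f_*\Z_{\ell,X}[\delta(X)]$ for $f\colon X\to S$ finite (equivalently, by the remark following the definition, $f_!\Z_{\ell,X}[\delta(X)]$ for $f$ quasi-finite).

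First I would record the formal lemma above: if $\mc{C}, \mc{D}$ are presentable stable categories with t-structures generated by $\mc{E}_{\mc{C}}$, $\mc{E}_{\mc{D}}$ respectively, and $L\colon \mc{C}\to \mc{D}$ is a left adjoint (hence colimit-preserving) with $L(\mc{E}_{\mc{C}})\subseteq {}^{\mathrm{hp}}\mc{D}^{\leqslant 0}$, then $(L,R)$ is a t-adjunction. For (1), given $g\colon Y\to T$ quasi-finite and $f\colon T\to S$ quasi-finite, the composite $f_!g_!\Z_{\ell,Y}[\delta(Y)] = (f\circ g)_!\Z_{\ell,Y}[\delta(Y)]$ is one of the generators of the perverse homotopy t-structure on $S$ (using that $\delta$ on $Y$ is the canonical dimension function and that $f\circ g$ is quasi-finite), so $f_!$ is right t-exact and $(f_!,\omega^0 f^!)$ is a t-adjunction; here one uses \Cref{generators} (or rather the remark after the definition of the perverse homotopy t-structure) that quasi-finite $f_!$-generators and finite $f_*$-generators give the same t-structure. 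For (3), with $\dim(g)\leqslant d$ and $h\colon Y\to T$ quasi-finite, $g^*[d]h_!\Z_{\ell,Y}[\delta(Y)]$ needs to be shown non-positive: by the base change $g^* h_! \simeq h'_! (g')^*$ along the cartesian square, and since $\dim(g')\leqslant d$ as well, $(g')^*\Z_{\ell,Y} = \Z_{\ell,Y'}$ and $\delta(Y')\geqslant \delta(Y)+\dim(g')$ pointwise on each component... more precisely one checks $Y'\to T$ is quasi-finite with $\delta$-shifts bounded so that $h'_!\Z_{\ell,Y'}[\delta(Y)+d]$ lies in ${}^{\mathrm{hp}}\mc{D}^{\leqslant 0}$, which follows because $h'_!\Z_{\ell,Y'}[\delta(Y')]$ is a generator and $\delta(Y') \geqslant \delta(Y)+d$ on each connected component of $Y'$ so a further positive shift lands it in the non-positive part. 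For (2), since $-\otimes_S M$ is colimit-preserving, it suffices (taking $M$ t-non-positive, hence an iterated extension/coproduct/positive shift of generators, so by a further dévissage $M$ itself a generator $g_!\Z_{\ell,Y}[\delta(Y)]$) to see $f_!\Z_{\ell,X}[\delta(X)]\otimes_S g_!\Z_{\ell,Y}[\delta(Y)]$ is non-positive; by the projection formula this is $(f\times_S g)_!\Z_{\ell,X\times_S Y}[\delta(X)+\delta(Y)]$ with $f\times_S g$ quasi-finite, and $\delta(X)+\delta(Y) \geqslant \delta(X\times_S Y)$ on each component (the fiber product of quasi-finite schemes over $S$ has $\delta$ bounded above by the sum), so again a positive shift of a generator.

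The main obstacle I expect is the bookkeeping with dimension functions: one must verify carefully that on each connected component of the relevant quasi-finite $S$-scheme ($X\times_S Y$, or the pullback $Y'$ in (3)) the canonical dimension function is bounded above by the expected sum/shift, so that a positive shift of the canonical generator $h_!\Z[\delta]$ indeed lands in ${}^{\mathrm{hp}}\mc{D}^{\leqslant 0}$ — the point being that $\mc{D}^{\leqslant 0}$ is closed under \emph{positive} shifts only, so the inequalities must go the right way. This uses the formula $\delta(x)=\delta(f(x))+\mathrm{tr.deg}\,k(x)/k(f(x))$ and, in the fiber-product case, that the transcendence degree of a compositum is at most the sum. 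Once these inequalities are in hand, everything else is the formal colimit-preservation argument, and the t-adjunction conclusion (left adjoint right t-exact $\iff$ right adjoint left t-exact, applied on both sides since we want full t-exactness of the pair in the sense of \cite[1.3.17]{bbd}) follows immediately. I would also note that presentability of $\mc{D}^A_{\In}(S,\Z_\ell)$ (from \Cref{presentability l-adique}) is what licenses the use of \cite[5.5.1.9]{htt} for the right adjoints $\omega^0 f^!$ and $\omega^0 g_*[-d]$ to exist in the first place.
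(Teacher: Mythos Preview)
Your approach is exactly the paper's: show each left adjoint ($f_!$, $g^*[d]$, $-\otimes M$) is right t-exact by verifying it sends generators to t-non-positive objects, using that the non-positive part is the closure of the generators under extensions, coproducts and positive shifts. The paper compresses all three cases into a single sentence and does not spell out the base-change, projection-formula, or dimension-function details you sketch.

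Two slips in your bookkeeping. In (3) you want $h\colon Y\to S$ (not $Y\to T$), since $g^*$ has source $\mc{D}^A_{\In}(S,\Z_\ell)$. More importantly, the inequality you record is backwards: for $y'\in Y'$ lying over $y\in Y$ via the base change of $g$, one has $\delta(y')=\delta(h'(y'))\leqslant \delta(g(h'(y')))+d=\delta(y)+d$, i.e.\ $\delta(Y')\leqslant \delta(Y)+d$. It is precisely this direction that makes $h'_!\Z_{\ell,Y'}[\delta(Y)+d]$ a \emph{non-negative} shift of the generator $h'_!\Z_{\ell,Y'}[\delta(Y')]$, hence t-non-positive; with the inequality as you wrote it the conclusion would fail. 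The phrase ``a further positive shift lands it in the non-positive part'' is correct only once the inequality is reversed.
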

\begin{proof} The full subcategory of those Ind-Artin $\ell$-adic complexes $N$ such that $f_!N$, $g^*N[d]$ and $N \otimes M$ are perverse homotopy t-non-positive is closed under extensions, small coproducts and negative shifts and contains the set of generators of the t-structure. Thus, by definition of the perverse homotopy t-structure, this subcategory contains all perverse homotopy t-non-positive objects. Hence, the functors $f_!$, $g^*[d]$ and $- \otimes M$ are right t-exact with respect to the perverse homotopy t-structure and the result follows.
\end{proof}
\begin{corollary}\label{t-adj2} Let $\ell$ be a prime number. Let $f$ be a morphism of quasi-excellent $\Z[1/\ell]$-schemes. Then, with respect to the perverse homotopy t-structure,
\begin{enumerate} 
\item If $f$ is étale, the functor $f^*=f^!=\omega^0f^!$ is t-exact.
\item If $f$ is finite, the functor $f_!=f_*=\omega^0f_*$ is t-exact.
\end{enumerate}
\end{corollary}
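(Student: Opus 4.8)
The plan is to deduce \Cref{t-adj2} from \Cref{t-adj} together with the general compatibility properties of the six functors (base change, purity) already recorded in the excerpt. I will treat the two cases in turn.

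\textbf{Case (1): $f$ étale.} For an étale morphism we have $f^! = f^*$ canonically (this is relative purity in dimension $0$, already implicit in \Cref{6 foncteurs} and the conventions preceding it), so it suffices to check that $f^*$ is t-exact and that $\omega^0 f^! = f^!$ on Artin $\ell$-adic complexes. The latter follows because $f^*$ already preserves $\mc{D}^A_{\In}$ (by \Cref{6 foncteurs} and the ``Ind'' version noted in the remark after it), so the right adjoint $\omega^0$ acts as the identity on objects already in the subcategory; concretely, for $M \in \mc{D}^A_{\In}$ one has $\omega^0 f^! M = \omega^0 f^* M = f^* M$. For t-exactness: right t-exactness of $f^* = f^*[d]$ with $d = 0$ is exactly \Cref{t-adj}(3); left t-exactness is \Cref{t-adj}(1), since $f_! = f_!$ is quasi-finite (indeed étale) so $(f_!, \omega^0 f^!) = (f_!, f^!) = (f_!, f^*)$ is a t-adjunction, which forces $f^* = \omega^0 f^!$ to be left t-exact. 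Combining, $f^* = f^! = \omega^0 f^!$ is t-exact.

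\textbf{Case (2): $f$ finite.} For a finite (hence proper and quasi-finite) morphism, $f_! = f_*$ canonically. Right t-exactness of $f_! = f_*$ is immediate from \Cref{t-adj}(1) or (3) (a finite morphism has $\dim(f) \leqslant 0$, so $(f^*, \omega^0 f_*)$ is a t-adjunction, giving right t-exactness of $f^*$ and hence, by adjunction, left t-exactness of $\omega^0 f_*$ — but one wants the statement for $f_*$ itself). The cleaner route: $f_*\Z_{\ell,X}[\delta(X)]$ for $f\colon X\to S$ finite are by definition among the generators of the perverse homotopy t-structure, and more generally for $g\colon Y \to X$ finite the composite $fg$ is finite, so $f_*(g_*\Z_{\ell,Y}[\delta(Y)]) = (fg)_*\Z_{\ell,Y}[\delta(Y)]$ is again a generator. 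Since $f_*$ commutes with small colimits (it has a left adjoint $f^*$ on the Ind-categories, as the finite morphism case is covered by the formal adjoints of \Cref{6 foncteurs}) and with shifts, the subcategory of $N$ with $f_*N$ t-non-positive is closed under extensions, coproducts and negative shifts and contains the generators, hence is everything; so $f_*$ is right t-exact. For left t-exactness, observe that $f^* = \omega^0 f^*$ (as $f^*$ preserves Artin complexes) is right t-exact by \Cref{t-adj}(3) with $d=0$; therefore its right adjoint $\omega^0 f_* = \omega^0 f_! = f_*$ is left t-exact — here the identification $\omega^0 f_* = f_*$ on $\mc{D}^A_{\In}$ holds because $f_*$ preserves Artin complexes (\Cref{6 foncteurs}, the motivic proof via $R^0p_*$ and dévissage, or directly \Cref{generators}), so $\omega^0$ is the identity on them. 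Combining the two halves, $f_! = f_* = \omega^0 f_*$ is t-exact.

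The only genuinely non-formal point, and the step I expect to need a line of care, is the identification of $\omega^0 f_*$ (resp. $\omega^0 f^!$) with $f_*$ (resp. $f^!$) on Artin objects; this rests entirely on the fact — established in \Cref{6 foncteurs} and \Cref{generators} — that $f_*$ for $f$ finite and $f^*$, $f^!$ for $f$ étale preserve the subcategory $\mc{D}^A_{\In}$, so that the reflection $\omega^0$ acts trivially. Everything else is a formal consequence of \Cref{t-adj} and the uniqueness of adjoints. I would write this up in a short paragraph rather than a displayed computation.
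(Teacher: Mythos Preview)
Your proof is correct and follows the intended route: the paper gives no separate argument for this corollary, and the evident one is exactly what you do --- combine \Cref{t-adj}(1) and \Cref{t-adj}(3), together with the fact that $f^*$ (resp.\ $f_*$) already preserves Artin complexes when $f$ is étale (resp.\ finite), so that $\omega^0$ is the identity on the relevant objects.

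One minor slip in your Case~(2): you justify that $f_*$ commutes with small colimits by saying ``it has a left adjoint $f^*$'', but having a left adjoint gives commutation with \emph{limits}; the correct reason is that $f_* = f_!$ has a \emph{right} adjoint $f^!$. This occurs in your ``cleaner route'' via generators, which is in any case redundant --- you already observed that right t-exactness of $f_! = f_*$ is immediate from \Cref{t-adj}(1), and that suffices. I would drop the generator argument entirely and keep only: right t-exactness from \Cref{t-adj}(1), left t-exactness from \Cref{t-adj}(3) plus $\omega^0 f_* = f_*$.
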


\subsection{The smooth Artin case}
Compare the following with \Cref{t-structure perverse smooth}.
\begin{proposition}\label{t-structure h-perverse smooth} Let $S$ be a regular scheme and $\ell$ be a prime number which is invertible on $S$. Then, the perverse homotopy t-structure induces a t-structure on $\mc{D}^{smA}(S,\Z_\ell)$ which coincides with the t-structure induced by the perverse t-structure. When $S$ is connected, it coincides with the ordinary t-structure shifted by $\delta(S)$.
\end{proposition}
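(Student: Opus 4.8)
The plan is to combine the analogous statement for the perverse $t$-structure (\Cref{t-structure perverse smooth}) with a comparison between the perverse and perverse homotopy $t$-structures on $\mc{D}^{smA}(S,\Z_\ell)$. First I would reduce to the case where $S$ is connected: the category $\mc{D}^{smA}(S,\Z_\ell)$ decomposes as a product over the connected components of $S$ via the clopen immersions $i_T\colon T\rar S$, and by \Cref{t-adj2}(2) each $(i_T)_*=(i_T)_!$ is $t$-exact for the perverse homotopy $t$-structure, so both the $t$-structure and the comparison may be checked componentwise. So assume $S$ is regular and connected with dimension function $\delta$; by \Cref{t-structure perverse smooth} the perverse $t$-structure induces on $\mc{D}^{smA}(S,\Z_\ell)$ the ordinary $t$-structure shifted by $\delta(S)$, with heart $\Sh^{smA}(S,\Z_\ell)[\delta(S)]\simeq\Rep^A(\pi_1^{\mathrm{pro\acute{e}t}}(S,\xi),\Z_\ell)^*[\delta(S)]$.

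Next I would show the perverse homotopy $t$-structure on $\mc{D}^A_{\In}(S,\Z_\ell)$ restricts to $\mc{D}^{smA}(S,\Z_\ell)$ and agrees there with the shifted ordinary $t$-structure. For this, observe that the generators $f_*\Z_{\ell,X}[\delta(X)]$ with $f\colon X\rar S$ finite étale are exactly the objects $f_*\Z_{\ell,X}[\delta(S)]$ (since $\delta(X)=\delta(S)$ for $f$ étale and $\delta$ the induced dimension function), and these lie in $\Sh^{smA}(S,\Z_\ell)[\delta(S)]$ by \Cref{6 functors lisse 2} and \Cref{t-structure ordinaire smooth}; moreover by \Cref{resolution de permutation} (applied as in the proof of \Cref{t-structure ordinaire smooth}) every object of $\Sh^{smA}(S,\Z_\ell)$ is built from such generators by extensions and shifts. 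The key computation is then that for finite étale $f,g\colon X,Y\rar S$ and any integer $p$,
$$\Hom_{\mc{D}(S,\Z_\ell)}\big(f_*\Z_{\ell,X}[\delta(S)],\, g_*\Z_{\ell,Y}[\delta(S)+p]\big)=0 \quad\text{for } p>0,$$
which holds because $g_*=g_!$ is t-exact for the ordinary t-structure and $f_*\Z_{\ell,X}$, $g_*\Z_{\ell,Y}$ are ordinary sheaves; this shows the generators of the perverse homotopy $t$-structure lie in the heart of the shifted ordinary $t$-structure on $\mc{D}^{smA}(S,\Z_\ell)$, hence ${}^{\mathrm{hp}}\mc{D}^A_{\In}(S,\Z_\ell)^{\leqslant 0}\cap\mc{D}^{smA}(S,\Z_\ell)$ is contained in $\mc{D}^{smA}(S,\Z_\ell)^{\leqslant\delta(S)}$ for the ordinary $t$-structure. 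Conversely the shifted ordinary connective objects of $\mc{D}^{smA}(S,\Z_\ell)$ are generated under extensions, coproducts and positive shifts by the $f_*\Z_{\ell,X}[\delta(S)]$ (using that $\Sh^{smA}$ is generated by the $f_*\Z_{\ell,X}$ and the permutation resolutions), so they are perverse homotopy t-non-positive. This gives the equality of $t$-structures on the ind-category, and since the shifted ordinary $t$-structure restricts to the subcategory $\mc{D}^{smA}(S,\Z_\ell)$ of compact objects (it is just \Cref{t-structure ordinaire smooth} shifted), so does the perverse homotopy $t$-structure, and they coincide there.

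Finally, comparing with the first paragraph, on $\mc{D}^{smA}(S,\Z_\ell)$ the perverse homotopy $t$-structure and the perverse $t$-structure both coincide with the ordinary $t$-structure shifted by $\delta(S)$, which is the last assertion. The main obstacle I anticipate is the bookkeeping in identifying ${}^{\mathrm{hp}}\mc{D}^A_{\In}(S,\Z_\ell)^{\leqslant 0}\cap\mc{D}^{smA}(S,\Z_\ell)$ with the shifted ordinary connective part: one direction (generators are connective for the ordinary $t$-structure, hence everything built from them is) is formal, but the reverse inclusion requires knowing that every object of $\Sh^{smA}(S,\Z_\ell)$, and not merely the Artin sheaves, is obtained from the étale pushforward generators by the operations defining the generated $t$-structure — this is where \Cref{resolution de permutation} and closure under extensions (\Cref{Artin origin}) do the real work, exactly as in the proof of \Cref{t-structure ordinaire smooth}.
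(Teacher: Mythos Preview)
Your overall strategy is the same as the paper's: reduce to $S$ connected, then show that the heart $\Sh^{smA}(S,\Z_\ell)[\delta(S)]$ of the shifted ordinary t-structure lands in the perverse homotopy heart, using \Cref{resolution de permutation} for the non-positivity direction. That part is fine and matches the paper.

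The gap is in the non-negativity direction. The perverse homotopy t-structure on $\mc{D}^A_{\In}(S,\Z_\ell)$ is generated by $f_*\Z_{\ell,X}[\delta(X)]$ for \emph{all} finite $f\colon X\rar S$, not only the finite étale ones. To show that $M\in\Sh^{smA}(S,\Z_\ell)[\delta(S)]$ is perverse homotopy $\geqslant 0$, you must verify
\[
\Hom_{\mc{D}(S,\Z_\ell)}\big(f_*\Z_{\ell,X}[\delta(X)+p],\,M\big)=0\quad\text{for all finite }f\text{ and all }p>0,
\]
and your ordinary-t-structure computation only handles finite étale $f$. For general finite $f$ with $\delta(X)<\delta(S)$ the shift mismatch means this vanishing does \emph{not} follow from the ordinary t-structure alone. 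The paper closes this gap by invoking the perverse t-structure on $\mc{D}^b_c(S,\Z_\ell)$: for any finite $f$ the complex $f_*\Z_{\ell,X}[\delta(X)]$ lies in ${}^{\mathrm{p}}\mc{D}^{\leqslant 0}$ (because $f_*$ is perverse t-exact and $\Z_{\ell,X}[\delta(X)]$ is perverse $\leqslant 0$), while $M$ is perverse of degree $0$ by \Cref{t-structure perverse smooth}, so the required Hom vanishes. You already quote \Cref{t-structure perverse smooth} in your first paragraph; you just need to use it here rather than the ordinary t-structure.

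Relatedly, your sentence ``hence ${}^{\mathrm{hp}}\mc{D}^A_{\In}(S,\Z_\ell)^{\leqslant 0}\cap\mc{D}^{smA}(S,\Z_\ell)$ is contained in $\mc{D}^{smA}(S,\Z_\ell)^{\leqslant\delta(S)}$'' is a non-sequitur as written: knowing that the finite étale generators lie in the shifted ordinary heart says nothing about objects of ${}^{\mathrm{hp}}\mc{D}^A_{\In}^{\leqslant 0}$ built from the non-étale finite generators. The correct logic (which the paper uses implicitly) is: once you know the inclusion $\mc{D}^{smA}(S,\Z_\ell)\rar\mc{D}^A_{\In}(S,\Z_\ell)$ is t-exact in \emph{both} directions, the restriction statement follows formally from full faithfulness. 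So fix the $\geqslant 0$ direction as above and then argue via t-exactness of the inclusion rather than trying to prove the $\leqslant 0$ containment directly.
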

\begin{proof} It suffices to show that the inclusion functor $$\mc{D}^{smA}(S,\Z_\ell)\rar \mc{D}^A_{\In}(S,\Z_\ell)$$ is t-exact when the left hand side is endowed with the perverse t-structure given by \Cref{t-structure perverse smooth} and the right hand side is endowed with the perverse homotopy t-structure.

We can assume that $S$ is connected. By dévissage, it suffices to show that $$\Sh^{smA}(S,\Z_\ell)[\delta(S)]\subseteq \mc{D}^{smA}(S,\Z_\ell) \cap {}^\mathrm{hp}\mc{D}^A_{\In}(S,\Z_\ell)^\heart$$

Let $M$ be an object of $\Sh^{smA}(S,\Z_\ell)[\delta(S)]$. The object $M$ is in perverse degree $0$ by \Cref{t-structure perverse smooth}. 
If $f\colon X\rar S$ is finite, the complex $f_*\mb{Z}_{\ell,X}$ is in perverse degree at most $\delta(X)$. 
Thus, the complex $\Map(f_* \Z_{\ell,X},M)$ is $(-\delta(X)-1)$-connected.
Hence $M$ is non-negative with respect to the perverse homotopy t-structure.

We now show that $M$ is non-negative with respect to the perverse homotopy t-structure. It suffices to show that $M$ can be obtained from the set of generators of the t-structure by successively taking negative shifts, coproducts and extensions.

Let $\xi$ be a geometric point of $S$. The sheaf $M[-\delta(S)]$ corresponds to a representation of Artin origin of $\pi_1^{\mathrm{pro\acute{e}t}}(S,\xi)$ that we denote by $N$. We can assume that $N$ is an Artin representation, as representations of Artin origin are obtained as a successive extensions of Artin representations.

 Let $G$ be a finite quotient of $\pi_1^{\mathrm{pro\acute{e}t}}(S,\xi)$ through which $N$ factors. Then, by \Cref{resolution de permutation}, the complex $N\oplus N[1]$ of $\Z_\ell[G]$-modules is quasi-isomorphic to a complex $$P=\cdots \rar 0 \rar P_n \rar P_{n-1} \rar \cdots \rar P_0 \rar 0 \rar \cdots$$ where each $P_i$ is placed in degree $-i$ and is isomorphic to $\Z_\ell[G/H_1]\oplus \cdots \oplus \Z_\ell[G/H_m]$ for some subgroups $H_1,\ldots,H_m$ of $G$.
	
If $H$ is a subgroup of $G$, there is a finite étale $S$-scheme $f\colon X\rar S$ such that the image of $\Z_\ell[G/H]$ through the exact functor $$\Psi\colon\mc{D}^b(\Rep(G,\Z_\ell))\rar \mc{D}(S,\Z_\ell)$$ is $f_*\Z_{\ell,X}$. Hence, the image $Q_i$ of $P_i[\delta(S)]$ is t-non-positive with respect to the perverse homotopy t-structure. 

Furthermore, we have exact triangles $$Q_n [n]\rar M \oplus M[1] \rar Q_{<n}$$
 $$Q_{k}[k] \rar Q_{<k+1}\rar Q_{<k} \text{ if }0<k<n$$
 where $Q_{<k}[-\delta(S)]$ is the image of the truncated complex $$\cdots \rar 0 \rar P_{k-1} \rar P_{k-2} \rar \cdots \rar P_0 \rar 0 \rar \cdots$$
 through $\Psi$.

Since $Q_{<1}=Q_0$, a quick induction shows that $Q_{<k}$ is t-non-positive with respect to the perverse homotopy t-structure. Thus, the complex $M\oplus M[1]$ is t-non-positive and therefore, the complex $M$ itself is.


\end{proof}

\subsection{Gluing formalism}
\begin{proposition}\label{glueing} Let $S$ be a quasi-excellent scheme. Let $\ell$ be a prime number that is invertible on $S$. Let $i\colon F\rar S$ be a closed immersion and $j\colon U\rar S$ be the open complement. Then, the category $\mc{D}^A_{\In}(S,\Z_\ell)$ is a gluing of the pair $(\mc{D}^A_{\In}(U,\Z_\ell),\mc{D}^A_{\In}(Z,\Z_\ell))$ along the fully faithful functors $i_*$ and $\omega^0j_*$ in the sense of \cite[A.8.1]{ha}, \textit{i.e.} the functors $i_*$ and $\omega^0j_*$ have left adjoint functors $i^*$ and $j^*$ such that
\begin{enumerate}
    \item We have $j^*i_*=0$.
    \item The family $(i^*,j^*)$ is conservative.
\end{enumerate}

In particular, by \cite[A.8.5, A.8.13]{ha}, the sequence $$\mc{D}^A_{\In}(F,\Z_\ell)\overset{i_*}{\rar}\mc{D}^A_{\In}(S,\Z_\ell)\overset{j^*}{\rar}\mc{D}^A_{\In}(U,\Z_\ell)$$ satisfies the gluing formalism of \cite[1.4.3]{bbd}.

Moreover, the perverse homotopy t-structure on $\mc{D}^A_{\In}(S,\Z_\ell)$ is obtained by gluing the t-structures of $\mc{D}^A_{\In}(U,\Z_\ell)$ and $\mc{D}^A_{\In}(F,\Z_\ell)$, (see \cite[1.4.9]{bbd}) \textit{i.e.} for all object $M$ of $\mc{D}^A(S,\Z_\ell)$, we have
\begin{enumerate}\item $M\geqslant_{\mathrm{hp}} 0$ if and only if $j^*M \geqslant_{\mathrm{hp}} 0$ and $\omega^0 i^! M \geqslant_{\mathrm{hp}} 0$.
\item $M \leqslant_{\mathrm{hp}} 0$ if and only if $j^*M\leqslant_{\mathrm{hp}} 0$ and $i^*M \leqslant_{\mathrm{hp}} 0$.
\end{enumerate}
\end{proposition}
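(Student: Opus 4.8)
The plan is to realize $\mc{D}^A_{\In}(S,\Z_\ell)$ as a recollement of its analogues over $U$ and $F$ in the sense of \cite[A.8.1]{ha}, and then to identify the perverse homotopy t-structure with the t-structure glued from those over $U$ and $F$ via \cite[1.4.10]{bbd}; once the second point is known, the displayed descriptions of $\geqslant_{\mathrm{hp}}0$ and $\leqslant_{\mathrm{hp}}0$ are exactly \cite[1.4.10]{bbd}.

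For the recollement: first one checks that $i^{*}$, $i_{*}=i_{!}$, $j^{*}=j^{!}$ and $j_{!}$ from $\mc{D}(-,\Z_\ell)$ restrict to functors between the categories $\mc{D}^A_{\In}(-,\Z_\ell)$ — they preserve colimits and, on generators, proper base change gives $i^{*}f_{*}\Z_{\ell,X}=(f_{F})_{*}\Z_{\ell,X\times_{S}F}$ and $j^{*}f_{*}\Z_{\ell,X}=(f_{U})_{*}\Z_{\ell,X\times_{S}U}$ for $f\colon X\rar S$ finite, while $j_{!}$ is handled by Zariski's main theorem and localization exactly as in \Cref{generators}. By \Cref{t-adj}, $i_{*}$ and $j^{*}$ admit right adjoints $\omega^{0}i^{!}$ and $\omega^{0}j_{*}$ with values in the ind-Artin categories; that these composites genuinely compute the adjoints uses that $i^{!}$ and $j_{*}$ carry constructible Artin complexes into $\mc{D}^{\coh}(-,\Z_\ell)$ — absolute purity together with \Cref{Mayer-Vietoris} for $i^{!}$, and the cohomological version of Gabber's lemma for $j_{*}$, both recalled in \Cref{Dbbcoh} — together with a compact-generation argument to pass to the ind-completions. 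Then $i_{*}$ is fully faithful since $i^{*}i_{*}\simeq\id$, and $\omega^{0}j_{*}$ is fully faithful since the counit $j^{*}\omega^{0}j_{*}\rar\id$ is an equivalence (from $j^{*}j_{*}\simeq\id$ and the compatibility of $\omega^{0}$ with $j^{*}$, obtained by passing to right adjoints in the commutative square relating $j_{!}$ with the inclusions $\mc{D}^A_{\In}\hookrightarrow\mc{D}^{\coh}_{\In}$). Finally $j^{*}i_{*}=0$ and the pair $(i^{*},j^{*})$ is conservative, both inherited from the localization recollement in $\mc{D}(-,\Z_\ell)$ (an object of $\mc{D}^A_{\In}(S,\Z_\ell)$ killed by $i^{*}$ and $j^{*}$ vanishes in $\mc{D}(S,\Z_\ell)$). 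These are the hypotheses of \cite[A.8.1]{ha}, so $\mc{D}^A_{\In}(S,\Z_\ell)$ is a gluing of $(\mc{D}^A_{\In}(U,\Z_\ell),\mc{D}^A_{\In}(F,\Z_\ell))$ along $i_{*}$ and $\omega^{0}j_{*}$, and \cite[A.8.5, A.8.13]{ha} provide the gluing formalism of \cite[1.4.3]{bbd}.

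For the t-structure, let $\mathcal{N}$ and $\mathcal{G}$ denote the non-positive parts of, respectively, the perverse homotopy t-structure and the glued t-structure on $S$; by \cite[1.4.10]{bbd}, $\mathcal{G}=\{M : i^{*}M\leqslant_{\mathrm{hp}}0,\ j^{*}M\leqslant_{\mathrm{hp}}0\}$. For $\mathcal{G}\subseteq\mathcal{N}$: apply the localization triangle $j_{!}j^{*}M\rar M\rar i_{*}i^{*}M$; by \Cref{t-adj,t-adj2}, $j_{!}$ (for $j$ quasi-finite) and $i_{*}$ (for $i$ finite) are right t-exact for the perverse homotopy t-structure, so $j_{!}j^{*}M,\ i_{*}i^{*}M\in\mathcal{N}$, whence $M\in\mathcal{N}$ by closure under extensions. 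For $\mathcal{N}\subseteq\mathcal{G}$: since $i^{*}$, $j^{*}$ are exact, $\mathcal{G}$ is closed under coproducts, extensions and positive shifts, so it suffices that each generator $f_{*}\Z_{\ell,X}[\delta(X)]$ ($f\colon X\rar S$ finite) lies in $\mathcal{G}$; proper base change identifies $j^{*}$ and $i^{*}$ of it with $(f_{U})_{*}\Z_{\ell,X\times_{S}U}[\delta(X)]$ and $(f_{F})_{*}\Z_{\ell,X\times_{S}F}[\delta(X)]$, and since the canonical dimension function of $X\times_{S}U$ and of $X\times_{S}F$ is the restriction of that of $X$ (the transcendence-degree term vanishing along an immersion), $\delta$ on each of these schemes is $\leqslant\delta(X)$, so these objects are finite pushforwards of constant sheaves shifted into the non-positive range and hence lie in ${}^{\mathrm{hp}}\mc{D}^A_{\In}(U,\Z_\ell)^{\leqslant0}$, resp.\ ${}^{\mathrm{hp}}\mc{D}^A_{\In}(F,\Z_\ell)^{\leqslant0}$. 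Thus $\mathcal{N}=\mathcal{G}$ and the two t-structures coincide.

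The step I expect to be the main obstacle is the one flagged in the second paragraph: verifying that $\omega^{0}\circ j_{*}$ and $\omega^{0}\circ i^{!}$ are legitimate functors on the ind-categories that are right adjoint to $j^{*}$ and $i_{*}$. This rests on the stability of $\mc{D}^{\coh}(-,\Z_\ell)$ under $j_{*}$ and $i^{!}$ developed in \Cref{Dbbcoh} (which itself relies on Gabber's lemma and absolute purity) and on a compactness argument to pass from constructible complexes to their ind-completions. Everything else — full faithfulness, the identities $j^{*}i_{*}=0$ and conservativity, and the t-exactness inputs to the t-structure comparison — is either formal from the recollement or quoted from \Cref{t-adj,t-adj2}.
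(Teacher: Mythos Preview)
Your argument is correct and is essentially the approach the paper takes: the paper's proof is a one-liner citing \Cref{t-adj}, \Cref{t-adj2}, and ``the usual properties of the six functors,'' and your proposal simply unpacks what those references entail.

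One point where you are working harder than necessary is the existence of the right adjoints $\omega^{0}j_{*}$ and $\omega^{0}i^{!}$. You invoke \Cref{Dbbcoh} (Gabber's lemma, absolute purity) plus a compact-generation argument to show that $j_{*}$ and $i^{!}$ land in $\mc{D}^{\coh}_{\In}$ so that $\omega^{0}$ can be applied. But this is not needed: since $\mc{D}^A_{\In}(S,\Z_\ell)$ is presentable (\Cref{presentability l-adique}) and $j^{*}$, $i_{*}$ preserve colimits, the right adjoints exist purely formally by the adjoint functor theorem --- this is exactly the content of the remark following \Cref{6 foncteurs}, and is what \Cref{t-adj} already presupposes. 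The notation $\omega^{0}j_{*}$, $\omega^{0}i^{!}$ is then just a name for these formal adjoints (the identification being immediate from the universal properties once you observe that the inclusion $\mc{D}^A_{\In}(S,\Z_\ell)\hookrightarrow\mc{D}(S,\Z_\ell)$ is fully faithful). So the step you flagged as ``the main obstacle'' is in fact the easiest one, and the cohomological-complex stability results are not required here.

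Your comparison of the glued and perverse-homotopy t-structures via the localization triangle (for $\mathcal{G}\subseteq\mathcal{N}$) and checking on generators with proper base change (for $\mathcal{N}\subseteq\mathcal{G}$) is clean and makes explicit what the paper leaves implicit.
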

\begin{proof} This follows from \Cref{t-adj}, \Cref{t-adj2} and the usual properties of the six functors.
\end{proof}

\begin{corollary}\label{locality} Let $S$ be a quasi-excellent scheme. Let $\ell$ be a prime number that is invertible on $S$. Let $M$ be an object of $\mc{D}^A_{\In}(S,\Z_\ell)$. Then, we have
\begin{enumerate}\item $M\geqslant_{\mathrm{hp}} 0$ if and only if there is a stratification of $S$ such that for any stratum $i\colon T\hookrightarrow S$, we have $\omega^0 i^! M \geqslant_{\mathrm{hp}} 0$.
\item $M \leqslant_{\mathrm{hp}} 0$ if and only if there is a stratification of $S$ such that for any stratum $i\colon T\hookrightarrow S$, we have $i^*M \leqslant_{\mathrm{hp}} 0$.
\end{enumerate}

In particular if $S$ is excellent and $M$ is an object of $\mc{D}^A(S,\Z_\ell)$, then, $M \leqslant_{\mathrm{hp}} 0$ if and only if there is a stratification of $S$ with nil-regular strata, such that for any stratum $i\colon T\hookrightarrow S$, the $\ell$-adic complex $i^*M$ is smooth Artin and $i^*M \leqslant_{\mathrm{p}} 0$.
\end{corollary}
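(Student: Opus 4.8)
The plan is to reduce everything to the gluing description of the perverse homotopy t-structure proved in \Cref{glueing}, and to run an induction on the number of strata. First note that both "only if" implications are trivial: taking the stratification $\{S\}$ with unique stratum $\mathrm{id}_S\colon S\rar S$, one has $\omega^0(\mathrm{id}_S)^!M=M$ and $(\mathrm{id}_S)^*M=M$, so the condition "$M\geqslant_{\mathrm{hp}}0$" (resp. "$M\leqslant_{\mathrm{hp}}0$") is precisely the stated condition for that stratification. So the content lies in the "if" implications.

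I would prove the "if" part of (2) by induction on the number of strata of the given stratification $\mc{S}$, the case of one stratum being vacuous. For the inductive step I use that any stratification of a noetherian scheme has an open stratum: picking a stratum $U$ which is maximal for the order $T\leqslant T'\iff T\subseteq\overline{T'}$, each $\overline{T'}$ with $T'\neq U$ is a union of strata not containing $U$, hence disjoint from $U$, so $S\setminus U=\bigcup_{T'\neq U}\overline{T'}$ is closed and $U$ is open. Let $j\colon U\rar S$ be such a stratum and $i\colon F\rar S$ the reduced complementary closed immersion; the remaining strata form a stratification $\mc{S}_F$ of $F$ with strictly fewer strata, and for a stratum $i_T^F\colon T\rar F$ of $\mc{S}_F$ one has $i_T=i\circ i_T^F$, so the hypothesis gives $(i_T^F)^*(i^*M)=i_T^*M\leqslant_{\mathrm{hp}}0$. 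The inductive hypothesis applied to $(F,\mc{S}_F,i^*M)$ (an Ind-Artin complex by the stability properties of \Cref{6 foncteurs}) yields $i^*M\leqslant_{\mathrm{hp}}0$, while $j^*M=i_U^*M\leqslant_{\mathrm{hp}}0$ since $U$ is itself a stratum; \Cref{glueing}(2) then gives $M\leqslant_{\mathrm{hp}}0$.

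For the "if" part of (1) I argue in the same way using \Cref{glueing}(1). With $j,i$ as above, the hypothesis for the stratum $U$ gives $\omega^0i_U^!M=\omega^0j^*M=j^*M\geqslant_{\mathrm{hp}}0$ (here $j^!=j^*$ for an open immersion by \Cref{t-adj2}, and $\omega^0$ restricts to the identity on Ind-Artin complexes as it is right adjoint to a fully faithful inclusion). For a stratum $i_T^F\colon T\rar F$ of $\mc{S}_F$ I claim $\omega^0i_T^!M\cong\omega^0(i_T^F)^!(\omega^0i^!M)$: both are functors $\mc{D}^A_{\In}(S,\Z_\ell)\rar\mc{D}^A_{\In}(T,\Z_\ell)$ right adjoint to $(i_T)_!=i_!\circ(i_T^F)_!$ (by functoriality of the lower-shriek functors and \Cref{t-adj}(1)), hence agree by uniqueness of adjoints. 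Thus the hypotheses of (1) for $\mc{S}_F$ are satisfied by the Ind-Artin complex $\omega^0i^!M\in\mc{D}^A_{\In}(F,\Z_\ell)$, induction gives $\omega^0i^!M\geqslant_{\mathrm{hp}}0$, and \Cref{glueing}(1) gives $M\geqslant_{\mathrm{hp}}0$. This compatibility of the recollement $i^!$-functors along a stratification is the one point requiring care; phrasing it through uniqueness of adjoints avoids having to check whether the ambient $\ell$-adic $i^!$ preserves Ind-cohomological complexes, and I expect it to be the main (mild) obstacle.

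It remains to treat the last assertion, for $S$ excellent and $M\in\mc{D}^A(S,\Z_\ell)$. If $M\leqslant_{\mathrm{hp}}0$, I take a stratification given by \Cref{stratification} on whose strata $M$ is smooth Artin, and refine it — using that the regular locus of a reduced excellent scheme is dense open — to a stratification with nil-regular strata; on each stratum $i_T\colon T\rar S$ the complex $i_T^*M$ is still smooth Artin, and it is $\leqslant_{\mathrm{hp}}0$ because $i_T^*$ is right t-exact for the perverse homotopy t-structure by \Cref{t-adj}(3) (an immersion has relative dimension $0$). Applying \Cref{t-structure h-perverse smooth} to $T_{\mathrm{red}}$ and using that $\mc{D}^b_c$ together with its perverse and ordinary t-structures is insensitive to nilpotent thickenings, the perverse homotopy t-structure on $\mc{D}^{smA}(T,\Z_\ell)$ agrees with the restriction of the perverse t-structure, so $i_T^*M\leqslant_{\mathrm{p}}0$. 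Conversely, given such a stratification with $i_T^*M$ smooth Artin and $\leqslant_{\mathrm{p}}0$ on every stratum, the same identification shows $i_T^*M\leqslant_{\mathrm{hp}}0$, and part (2) gives $M\leqslant_{\mathrm{hp}}0$.
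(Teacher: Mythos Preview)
Your proof is correct and follows the same approach as the paper: parts (1) and (2) are obtained by induction on the number of strata from the gluing description of \Cref{glueing}, and the last assertion is deduced from \Cref{stratification} and \Cref{t-structure h-perverse smooth}. The paper's own proof is extremely terse (it only records the last assertion explicitly), so your version simply spells out the standard recollement induction and the adjoint identity $\omega^0 i_T^!\simeq \omega^0 (i_T^F)^!\circ \omega^0 i^!$ that the paper leaves implicit.
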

\begin{proof} Recall that an excellent scheme admits a stratification with nil-regular strata. Therefore, the last point follows from \Cref{stratification} and \Cref{t-structure h-perverse smooth}.
\end{proof}

\subsection{The perverse homotopy and the perverse t-structures}

\begin{proposition}\label{perv vs hperv}Let $S$ be an excellent scheme and $\ell$ be a prime number that is invertible on $S$. Then, 
\begin{enumerate}
\item Let $M$ be an object of $\mc{D}^A(S,\Z_\ell)$, then $M$ is perverse homotopy t-negative if and only if it is perverse t-negative.
\item Let $M$ be an object of $\mc{D}^A(S,\Z_\ell)$, then if $M$ is perverse t-positive, then it is perverse homotopy t-positive.
\item If the perverse t-structure on $\mc{D}^b_c(S,\Z_\ell)$ induces a t-structure on $\mc{D}^A(S,\Z_\ell)$, then the perverse homotopy t-structure on $\mc{D}^A_{\In}(S,\Z_\ell)$ induces a t-structure on $\mc{D}^A(S,\Z_\ell)$. When this is the case, both induced t-structures coincide.
\end{enumerate}
\end{proposition}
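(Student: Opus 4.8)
The plan is to prove (1) first, then deduce (2) by a short adjunction computation, and finally obtain (3) as a formal consequence of (1) and (2).

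For (1), the implication ``perverse homotopy t-negative $\Rightarrow$ perverse t-negative'' I would get by comparing the generators of the two t-structures. Each generator $f_*\Z_{\ell,X}[\delta(X)]$, with $f\colon X\rar S$ finite, is perverse t-negative: $f_*$ is perverse t-exact because $f$ is finite, and $i_x^*(\Z_{\ell,X}[\delta(X)])=\Z_\ell[\delta(X)]$ sits in ordinary degree $-\delta(X)\leqslant -\delta(x)$ since $\delta(x)\leqslant \delta(X)$. Hence the full subcategory $\mc{N}$ of $\mc{D}(S,\Z_\ell)$ of objects $N$ with $i_x^*N$ in ordinary degree $\leqslant -\delta(x)$ for every $x$ contains all these generators; as $i_x^*$ is exact and preserves small coproducts, $\mc{N}$ is stable under small coproducts, extensions and positive shifts, so by minimality it contains ${}^{\mathrm{hp}}\mc{D}^A_{\In}(S,\Z_\ell)^{\leqslant 0}$. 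Since $\mc{N}\cap\mc{D}^b_c(S,\Z_\ell)={}^{\mathrm{p}}\mc{D}^b_c(S,\Z_\ell)^{\leqslant 0}$ by the very definition of the perverse t-structure, this settles that direction. For the converse, given a perverse t-negative Artin complex $M$, I would use \Cref{stratification} to pick a stratification of $S$ on which $M$ restricts to smooth Artin complexes, refining if necessary so that the strata are nil-regular (possible because $S$ is excellent), on which $M$ still restricts to smooth Artin complexes. Since $i_T^*$ is perverse right t-exact for a locally closed immersion $i_T$ (being the composite of the perverse right t-exact pullback along a closed immersion and the perverse t-exact pullback along an open immersion), each $i_T^*M$ is again perverse t-negative, and the last assertion of \Cref{locality} yields $M\leqslant_{\mathrm{hp}}0$.

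For (2), let $M$ be a perverse t-positive Artin complex. By the description of ${}^{\mathrm{hp}}\mc{D}^A_{\In}(S,\Z_\ell)^{\geqslant 0}$ in terms of the (co)generators recalled above, it is enough to show $\Hom_{\mc{D}(S,\Z_\ell)}(f_*\Z_{\ell,X}[\delta(X)+p],M)=0$ for every finite $f\colon X\rar S$ and every $p\geqslant 1$. As $f$ is finite, $f_*=f_!$, and the adjunction $(f_!,f^!)$ rewrites this group as $\Hom_{\mc{D}(X,\Z_\ell)}(\Z_{\ell,X}[\delta(X)],f^!M[-p])$. Now $f^!$ is perverse left t-exact because $f$ is quasi-finite, so $f^!M$ is perverse t-positive and $f^!M[-p]$ lies in ${}^{\mathrm{p}}\mc{D}^b_c(X,\Z_\ell)^{\geqslant p}\subseteq{}^{\mathrm{p}}\mc{D}^b_c(X,\Z_\ell)^{\geqslant 1}$, while $\Z_{\ell,X}[\delta(X)]$ is perverse t-negative by the same computation as in (1); hence the group vanishes.

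Finally, for (3), assume the perverse t-structure restricts to $\mc{D}^A(S,\Z_\ell)$ and let $M$ be an Artin complex. The perverse truncation triangle ${}^{\mathrm{p}}\tau_{\leqslant 0}M\rar M\rar {}^{\mathrm{p}}\tau_{\geqslant 1}M$ then lies entirely in $\mc{D}^A(S,\Z_\ell)$; by (1) its left term is perverse homotopy t-negative, and by (2) applied to the shift ${}^{\mathrm{p}}\tau_{\geqslant 1}M[1]$ its right term lies in ${}^{\mathrm{hp}}\mc{D}^A_{\In}(S,\Z_\ell)^{\geqslant 1}$. By uniqueness of truncation triangles in the t-category $\mc{D}^A_{\In}(S,\Z_\ell)$, this triangle is the perverse homotopy truncation triangle of $M$, so ${}^{\mathrm{hp}}\tau_{\leqslant 0}M={}^{\mathrm{p}}\tau_{\leqslant 0}M$ is Artin. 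Since this holds for all $M$, \cite[1.3.19]{bbd} shows the perverse homotopy t-structure induces a t-structure on $\mc{D}^A(S,\Z_\ell)$, and the equality of the truncation functors shows it coincides with the induced perverse t-structure. The only step I expect to require real input is the converse direction of (1): it is there that the gluing description of the perverse homotopy t-structure (\Cref{locality}, resting on \Cref{glueing} and the smooth comparison \Cref{t-structure h-perverse smooth}) is genuinely used, whereas everything else is either a direct manipulation of the defining (co)generators or purely formal from (1) and (2).
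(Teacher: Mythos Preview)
Your proof is correct and follows essentially the same route as the paper's. The only differences are stylistic: for the forward implication in (1) the paper invokes the stratification criterion of \Cref{locality}(2) directly (since that criterion already compares $\leqslant_{\mathrm{hp}}0$ with a perverse condition on strata), whereas you verify by hand that the generators $f_*\Z_{\ell,X}[\delta(X)]$ lie in ${}^{\mathrm{p}}\mc{D}^{\leqslant 0}$ and use minimality of the aisle; for (2) the paper observes that a perverse t-positive object is right orthogonal to every perverse t-non-positive object, hence in particular to the generators, while you reach the same vanishing via the $(f_!,f^!)$ adjunction and the perverse left t-exactness of $f^!$. Your argument for (3) via uniqueness of truncation triangles is a concrete rephrasing of the paper's comparison of the non-positive and positive parts.
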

\begin{remark}We can reformulate the first point: when the perverse homotopy t-structure induces a t-structure on $\mc{D}^A(S,\Z_\ell)$, it is final among those t-structures such that the inclusion $\mc{D}^A(S,\Z_\ell)\rar \mc{D}^b_c(S,\Z_\ell)$ is right t-exact when the right hand side is endowed with the perverse t-structure. 
\end{remark}
\begin{proof} The first point follows from \Cref{locality}(2). To prove the second assertion, let be $M$ a perverse t-positive constructible $\ell$-adic complex. Then, it is in the right orthogonal of the set of non-positive objects of the perverse t-structure. Thus, it is in the right orthogonal of the set of non-positive objects of the perverse homotopy t-structure that are Artin $\ell$-adic complexes; in particular for any finite map $f\colon X\rar S$ and any $p>-n$, the complex $M$ is right orthogonal to $f_* \Z_{\ell,X}[\delta(X)+p]$ and hence $M$ is perverse homotopy t-positive.

If the perverse t-structure induces a t-structure on $\mc{D}^A(S,\Z_\ell)$, the objects of the subcategory $$\mc{D}=\mc{D}^A_{\In}(S,\Z_\ell)^{\leqslant 0} \cap \mc{D}^A(S,\Z_\ell)$$ of $\mc{D}^A(S,\Z_\ell)$ are the t-non-positive objects of a t-structure. Furthermore, the subcategory $\mc{D}^+$ of positive objects of this t-structure is contained in the subcategory of perverse homotopy t-positive objects. But $\mc{D}^+$ contains the perverse homotopy t-positive objects since they are orthogonal to the elements of $\mc{D}$. This finishes the proof.
\end{proof}

\subsection{Comparison of the Artin truncation functor with the weightless truncation functor for schemes of finite type over a finite field and with coefficients \texorpdfstring{$\Q_\ell$}{Q\textell}}\label{je suis pareil que Vaish et Nair}

Let $S$ be a scheme of finite type over a finite field. In this paragraph and in the following ones, we set $\delta(S)=\dim(S)$ which is a consistent choice for any such $S$. Recall that Vaish and Nair introduced in \cite[3.1.1,3.1.5]{nv} an exact functor $$w_{\leqslant \id}\colon \mc{D}^b_m(S,\Q_\ell)\rar \mc{D}^b_m(S,\Q_\ell)$$ where $\mc{D}^b_m(S,\Q_\ell)$ is the category of mixed $\ell$-adic complexes of \cite[5]{bbd}. We now recall its construction and some of its fundamental properties.

First recall Morel's weight truncation of \cite{thesemorel}. If $T$ is a scheme of finite type over a finite field and if $a$ is an integer, Morel defined
\begin{enumerate}
    \item The subcategory ${}^w\mc{D}^b_m(T,\Q_\ell)^{\leqslant a}$ to be the full subcategory of $\mc{D}^b_m(S,\Q_\ell)$ made of those objects whose perverse cohomology sheaves are of weight $a$ or less.
    \item The subcategory ${}^w\mc{D}^b_m(T,\Q_\ell)^{>a}$ to be the full subcategory of $\mc{D}^b_m(S,\Q_\ell)$ made of those objects whose perverse cohomology sheaves are of weight $a$ or less.
\end{enumerate}
According to \cite[3.1.1]{thesemorel}, the pair $\left({}^w\mc{D}^b_m(T,\Q_\ell)^{\leqslant a},{}^w\mc{D}^b_m(T,\Q_\ell)^{> a}\right)$ forms a t-structure on $\mc{D}^b_m(T,\Q_\ell)$ and both categories ${}^w\mc{D}^b_m(T,\Q_\ell)^{\leqslant a}$ and ${}^w\mc{D}^b_m(T,\Q_\ell)^{> a}$ are stable. By definition, they are also closed under retract and therefore thick.

Given a stratification $\mc{S}$ of $S$, every stratum $T$ is therefore endowed with a t-structure $$\left({}^w\mc{D}^b_m(T,\Q_\ell)^{\leqslant \dim(T)},{}^w\mc{D}^b_m(T,\Q_\ell)^{>\dim(T)}\right).$$ Nair and Vaish then define a t-structure $\left({}^{\mc{S}}\mc{D}^b_m(S,\Q_\ell)^{\leqslant \id}, {}^\mc{S} \mc{D}^b_m(S,\Q_\ell)^{> \id}\right)$ on the stable category $ \mc{D}^b_m(S,\Q_\ell)$ by gluing those t-structures. Letting ${}^\mc{S} w_{\leqslant \id}$ be the non-negative truncation functor, by \cite[3.1.3]{nv}, the following properties hold. 
\begin{enumerate}
    \item The functor $$w_{\leqslant \id}=\lim\limits_{\mc{S}} {}^\mc{S} w_{\leqslant \id}$$ is well defined and we have $w_{\leqslant \id}={}^\mc{S} w_{\leqslant \id}$ for any sufficiently fine stratification $\mc{S}$.
    \item The functor $$w_{>\id}=\colim\limits_{\mc{S}} {}^\mc{S} w_{> \id}$$ is well defined and we have $w_{> \id}={}^\mc{S} w_{> \id}$ for any sufficiently fine stratification $\mc{S}$.
\end{enumerate}

Finally, by \cite[3.1.7]{nv}, the functors $(w_{\leqslant \id},w_{>\id})$ are the truncation functors with respect to t-structure $\left(\mc{D}^b_m(S,\Q_\ell)^{\leqslant \id}, \mc{D}^b_m(S,\Q_\ell)^{> \id}\right)$ and the subcategories $\mc{D}^b_m(S,\Q_\ell)^{\leqslant \id}$ and $ \mc{D}^b_m(S,\Q_\ell)^{> \id}$ are thick stable subcategories of $\mc{D}^b_m(S,\Q_\ell)$. Therefore, the induced functor $$w_{\leqslant \id}\colon \mc{D}^b_m(S,\Q_\ell)\rar \mc{D}^b_m(S,\Q_\ell)^{\leqslant \id}$$ is exact and is right adjoint to the inclusion functor $\mc{D}^b_m(S,\Q_\ell)^{\leqslant \id}\rar \mc{D}^b_m(S,\Q_\ell)$.


\begin{example}(Compare with \cite[3.11]{az}). 
    Let $k$ be a finite field. Then, the functor $w_{\leqslant \id}$ on $\mc{D}^b_m(S,\Q_\ell)$ is Morel's functor $w_{\leqslant 0}$. Therefore, if $f\colon X\rar \Spec(k)$ is a proper smooth morphism, letting $$X\rar Z\xrightarrow{g}\Spec(k)$$ be the Stein factorization of $f$, we have $$w_{\leqslant \id}f_*\Q_{\ell,X}=g_*\Q_{\ell,Z}.$$
\end{example}

\begin{proposition}\label{w<id vs omega0} Let $S$ be a scheme of finite type over $\mb{F}_p$. Let $\ell\neq p$ be a prime number. Then, the functors $\omega^0$ and $w_{\leqslant \id}$ induce equivalent functors $$\mc{D}^{\coh}(S,\Q_\ell)\rar \mc{D}^A(S,\Q_\ell)$$ on the subcategory $\mc{D}^{\coh}(S,\Q_\ell)$ of $\mc{D}^b_m(S,\Q_\ell)$.

\end{proposition}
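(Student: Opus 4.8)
The plan is to show that on $\mc{D}^{\coh}(S,\Q_\ell)$ the functor $w_{\leqslant\id}$ has the universal property that characterizes $\omega^0$, namely that it is right adjoint to the inclusion $\mc{D}^A(S,\Q_\ell)\hookrightarrow\mc{D}^{\coh}(S,\Q_\ell)$. For this I need two things: first, that $w_{\leqslant\id}$ sends cohomological $\ell$-adic complexes to Artin $\ell$-adic complexes, i.e. $w_{\leqslant\id}(\mc{D}^{\coh}(S,\Q_\ell))\subseteq \mc{D}^A(S,\Q_\ell)$; and second, that for $M\in\mc{D}^A(S,\Q_\ell)$ and $N\in\mc{D}^{\coh}(S,\Q_\ell)$ the natural map $\mathrm{Map}(M,w_{\leqslant\id}N)\to\mathrm{Map}(M,N)$ (induced by the counit $w_{\leqslant\id}N\to N$) is an equivalence. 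Granting these, the uniqueness of adjoints gives $\omega^0|_{\mc{D}^{\coh}}\simeq w_{\leqslant\id}|_{\mc{D}^{\coh}}$.

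First I would establish the inclusion $w_{\leqslant\id}(\mc{D}^{\coh}(S,\Q_\ell))\subseteq \mc{D}^A(S,\Q_\ell)$. Since $w_{\leqslant\id}$ is exact (it is a truncation functor for a t-structure) and $\mc{D}^A(S,\Q_\ell)$ is thick, it suffices to check this on the generators $f_*\Q_{\ell,X}$ for $f\colon X\to S$ proper. Here I would argue by noetherian induction on $S$, using the fact (recalled in the excerpt, via \cite[3.1.3]{nv}) that $w_{\leqslant\id}$ agrees with ${}^{\mc S}w_{\leqslant\id}$ for a sufficiently fine stratification $\mc S$, together with the gluing description: over a nil-regular stratum $T$, by \Cref{stratification} and \Cref{t-structure h-perverse smooth} it is enough to understand the restriction, and on a dense regular open $U\subseteq S$ the morphism $f$ becomes, generically, a composite of a finite étale map and a proper smooth map with connected Stein factorization, so that by the computation of the Example preceding the proposition $w_{\leqslant\id}(f_*\Q_{\ell,X})|_U$ is of the form $g_*\Q_{\ell,Z}$ with $g$ finite, hence smooth Artin; then one glues with the lower-dimensional stratum, where the result holds by induction, using that $i^*$ and $i_*$ preserve the relevant categories (\Cref{6 foncteurs}). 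I would use throughout that $w_{\leqslant\id}$ commutes with $i^*$ and with $j_!$ for a locally closed stratification, which is part of the gluing construction of Nair--Vaish.

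Second, for the adjunction property, I would reduce to checking $\mathrm{Map}(M,w_{\leqslant\id}N)\xrightarrow{\sim}\mathrm{Map}(M,N)$ on the generators $M=f_*\Q_{\ell,X}[\dim X]$ of $\mc{D}^A(S,\Q_\ell)$, $f$ finite; equivalently, it suffices to see that the fiber of the counit $w_{\leqslant\id}N\to N$, which is $w_{>\id}N$, is right-orthogonal to every Artin generator, i.e. $\mathrm{Map}(f_*\Q_{\ell,X}[n],w_{>\id}N)=0$ for all $n$. By adjunction $(f_!,f^!)$ and $f_!=f_*$ (as $f$ is finite, hence proper), this is $\mathrm{Map}(\Q_{\ell,X}[n],f^!w_{>\id}N)$, and since $f$ is finite $f^!$ commutes with $w_{>\id}$ in the appropriate sense (finite maps are perverse t-exact and weight-exact, cf. \Cref{t-adj2}), so it comes down to the statement that $w_{>\id}$ of a mixed complex has no maps out of the dualizing-type constant sheaf in the relevant degrees — concretely, that $\mathrm{Map}(\Q_{\ell,X}, w_{>\id}(-))$ vanishes, which is precisely the defining right-orthogonality built into the Nair--Vaish $w_{\leqslant\id}/w_{>\id}$ t-structure: the objects of $\mc{D}^b_m(S,\Q_\ell)^{>\id}$ are, stratum by stratum, in perverse degrees with strictly positive weight-versus-dimension defect, and the constant sheaf generators sit on the weightless side. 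I would spell this out by unwinding the gluing of t-structures: $w_{>\id}N$ is characterized by $i_T^*(w_{>\id}N)\in{}^w\mc{D}^b_m(T,\Q_\ell)^{>\dim T}$ on each stratum, and constant sheaves $\Q_{\ell,X}$ pulled back to a stratum lie in weight $\leqslant\dim$, giving the vanishing by Morel's weight t-structure orthogonality.

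The main obstacle I expect is not the formal adjunction bookkeeping but the first step — showing $w_{\leqslant\id}$ preserves the cohomological-to-Artin passage — and more precisely controlling the interaction of $w_{\leqslant\id}$ with the six functors along the induction (that $w_{\leqslant\id}$ commutes with $i^*$, $j_!$, and $f^!$ for finite $f$, and with $f_*$ for finite $f$). These compatibilities are implicit in \cite{nv} but need to be invoked carefully; in particular the key geometric input is the generic structure of a proper morphism (a proper smooth morphism up to finite étale base change, on a dense open of a reduced base), which lets the Example computing $w_{\leqslant\id}(f_*\Q_{\ell,X})$ for proper smooth $f$ do the work, with the lower-dimensional locus handled by the noetherian induction hypothesis and gluing.
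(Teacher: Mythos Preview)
Your overall architecture matches the paper's: show that $w_{\leqslant\id}$ sends $\mc{D}^{\coh}(S,\Q_\ell)$ into $\mc{D}^A(S,\Q_\ell)$, then show it satisfies the universal property of $\omega^0$. For the second step your argument is correct but more elaborate than needed. The paper simply observes (citing \cite[3.1.8]{nv}) that for $g$ finite the generator $g_*\Q_{\ell,Y}$ already lies in $\mc{D}^b_m(S,\Q_\ell)^{\leqslant\id}$; since $w_{\leqslant\id}$ is right adjoint to the inclusion of $\mc{D}^{\leqslant\id}$, the counit induces an equivalence $\Map(g_*\Q_{\ell,Y}, w_{\leqslant\id}M)\xrightarrow{\sim}\Map(g_*\Q_{\ell,Y},M)$ immediately, with no need to pass through $f^!$ or unwind strata.

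Where you genuinely diverge is the first step. The paper does \emph{not} argue directly in the $\ell$-adic world; instead it routes through motives. It invokes Vaish's motivic truncation $w_{\leqslant\id}^{mot}\colon \mc{DM}^{\coh}(S,\Q)\to \mc{DM}^A(S,\Q)$ from \cite{vaish2}, proves a short lemma that $\rho_\ell\circ w_{\leqslant\id}^{mot}\simeq w_{\leqslant\id}\circ\rho_\ell$ (because $\rho_\ell$ carries the motivic $\leqslant\id$ and $>\id$ subcategories into their $\ell$-adic counterparts, so uniqueness of truncation applies), and concludes: $w_{\leqslant\id}(f_*\Q_{\ell,X})=\rho_\ell(w_{\leqslant\id}^{mot}f_*\un_X)$ is the realization of an Artin motive, hence Artin. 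Your direct noetherian induction is a reasonable alternative and is essentially what lies behind Vaish's motivic statement, but your sketch has two inaccuracies. First, the Example you invoke is stated only over $\Spec(k)$; over a smooth open $U$ you instead need that $f_*\Q_{\ell}$ is pure of weight $0$ (Deligne) so that the decomposition theorem gives $\Hlp^n(f_*\Q_\ell)$ pure of weight $n$, whence after shrinking $U$ to make cohomology lisse the truncation $w_{\leqslant\id}$ retains only $R^0f_*\Q_\ell$, which is smooth Artin via Stein factorization. Second, the phrase ``composite of a finite \'etale map and a proper smooth map'' is not available in characteristic $p$ and is in any case unnecessary once you use purity. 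With these fixes your route works; it trades the dependence on \cite{vaish2} for a self-contained $\ell$-adic argument using \cite{bbd} and the t-exactness of $i^*$ for the Nair--Vaish t-structure.
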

\begin{remark}\label{corollaire de w<id vs omega0} \emph{A priori}, the functor $\omega^0$ sends cohomological $\ell$-adic complexes to Ind-Artin $\ell$-adic complexes. This proposition proves that in fact, it sends cohomological $\ell$-adic complexes to Artin $\ell$-adic complexes.
\end{remark}

\begin{proof} Recall that in \cite[5.1, 7.2.10]{em}, Cisinski and Déglise constructed a triangulated category $\mathrm{DM}(S,\Q)$ endowed with a realization functor $$\rho_\ell\colon\mathrm{DM}(S,\Q)\rar \mathrm{D}(S,\Q_\ell)$$ that commutes with the six operations by \cite[7.2.24]{em}. We can promote their construction to a stable $\infty$-categorical one following \cite{khan} and \cite{rob}.

Now, recall that Vaish constructed a functor (see \cite[3.1]{vaish2}) $w_{\leqslant \id}^{mot}$ from the category of cohomological motives to the category of Artin motives over $S$.

\begin{lemma}\label{omega0mot} Keep the same notations, then $\rho_\ell w_{\leqslant \id}^{mot}=w_{\leqslant \id} \rho_\ell$ \end{lemma}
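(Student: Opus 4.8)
The plan is to prove the compatibility $\rho_\ell w_{\leqslant \id}^{mot} = w_{\leqslant \id}\rho_\ell$ by reducing it to the universal property of the weightless truncation functor $w_{\leqslant \id}$ as the right adjoint to the inclusion $\mc{D}^b_m(S,\Q_\ell)^{\leqslant \id}\rar \mc{D}^b_m(S,\Q_\ell)$, combined with the analogous universal property of $w_{\leqslant \id}^{mot}$ in the motivic world and the fact that the $\ell$-adic realization $\rho_\ell$ commutes with the six operations. First I would recall that $w_{\leqslant \id}^{mot}$ is, by Vaish's construction in \cite[3.1]{vaish2}, obtained by gluing Morel-type weight truncations stratum by stratum, exactly mirroring the construction of $w_{\leqslant \id}$ recalled above; so the strategy is to check that $\rho_\ell$ transports the motivic gluing data to the $\ell$-adic gluing data. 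Concretely, for a fixed stratification $\mc{S}$ one forms the functors ${}^{\mc{S}}w_{\leqslant \id}^{mot}$ and ${}^{\mc{S}}w_{\leqslant \id}$ and argues by Noetherian induction on the number of strata, using the gluing formalism of \cite[1.4]{bbd}.

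The key steps, in order, are as follows. Step one: reduce to a single smooth stratum, i.e. to the statement that over a smooth scheme $T$ of finite type over $\mb{F}_p$, $\rho_\ell$ intertwines Morel's motivic weight truncation $w_{\leqslant a}^{mot}$ with the $\ell$-adic weight truncation $w_{\leqslant a}$. This is where the realization functor's compatibility with the perverse t-structure and with Deligne's weights enters: $\rho_\ell$ sends mixed motives of weight $\leqslant a$ to mixed $\ell$-adic complexes of weight $\leqslant a$ (after \cite{WeilII} and the weight formalism on $\mathrm{DM}$), and it is t-exact for the relevant perverse/weight t-structures on the two sides, so it commutes with the truncation triangles. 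Step two: propagate through the gluing. Given a closed–open decomposition $i\colon F\rar S$, $j\colon U\rar S$ adapted to $\mc{S}$, the glued truncation on $S$ is determined by its restriction $j^*$ to $U$ and the co-restriction $\omega^0 i^!$ (in the $\ell$-adic case) / $i^!$-data (in the motivic case) to $F$, via the recollement triangles. Since $\rho_\ell$ commutes with $i_*, i^*, i^!, j_!, j^*, j_*$, it carries the motivic recollement to the $\ell$-adic one; an induction on strata then yields $\rho_\ell\, {}^{\mc{S}}w_{\leqslant \id}^{mot} = {}^{\mc{S}}w_{\leqslant \id}\, \rho_\ell$. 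Step three: pass to the limit over stratifications. By \cite[3.1.3]{nv} (and its motivic analogue in \cite{vaish2}) one has $w_{\leqslant \id} = {}^{\mc{S}}w_{\leqslant \id}$ for $\mc{S}$ sufficiently fine, and likewise motivically; since any object of $\mc{D}^{\coh}(S,\Q_\ell)$ is realized from a motive and both sides stabilize on a common sufficiently fine stratification, the equality of functors follows.

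The main obstacle I expect is step two—specifically, verifying that $\rho_\ell$ really does exchange the \emph{glued} t-structures and not merely the stratum-by-stratum ones. The subtlety is that the gluing operation involves the functors $i^!$ and $j_*$, and one must know that $\rho_\ell$ commutes with these on the full subcategories in play and that it respects the boundedness/constructibility needed for the weight t-structures to be defined; this uses \cite[7.2.24]{em} for commutation with the six operations together with the fact that $\rho_\ell$ preserves mixedness of complexes, which requires that the motivic weight structure on $\mathrm{DM}$ realizes to Deligne's weights. A secondary technical point is ensuring the $\infty$-categorical enhancement of $\rho_\ell$ (via \cite{khan}, \cite{rob}) is compatible with all the adjunctions used, so that the natural transformations assembling the glued truncation are genuinely natural and not just defined objectwise. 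Once these compatibilities are in hand, the induction is formal and the limit step is immediate.
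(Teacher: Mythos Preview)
Your approach is workable but considerably more laborious than the paper's. The paper does not unwind the gluing construction or induct over strata at all. Instead, it invokes a single fact from \cite[4.2.4]{vaish2}: the realization $\rho_\ell$ sends $\mc{DM}^{\coh}(S,\Q)^{\leqslant \id}$ into $\mc{D}^b_m(S,\Q_\ell)^{\leqslant \id}$ and $\mc{DM}^{\coh}(S,\Q)^{>\id}$ into $\mc{D}^b_m(S,\Q_\ell)^{>\id}$. Once this is known, one applies $\rho_\ell$ to the motivic truncation triangle $w_{\leqslant \id}^{mot}M \to M \to w_{>\id}^{mot}M$ and observes that the resulting triangle has its outer terms in the correct halves of the $\ell$-adic t-structure; by \emph{uniqueness of truncation triangles}, this forces $\rho_\ell w_{\leqslant \id}^{mot}M \simeq w_{\leqslant \id}\rho_\ell M$. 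No recollement induction, no passage to the limit over stratifications.

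What your strategy buys is an independent verification of the cited inclusion $\rho_\ell(\leqslant \id)\subseteq (\leqslant \id)$ and its companion, since your steps one and two are precisely how one would prove that statement from scratch. But since the paper is happy to cite \cite{vaish2} for this, the entire gluing machinery is unnecessary here. Also note a minor confusion in your step three: the equality being proved is between functors with source $\mc{DM}^{\coh}(S,\Q)$, so there is no need to know that ``any object of $\mc{D}^{\coh}(S,\Q_\ell)$ is realized from a motive''; you only ever evaluate both sides on motives.
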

\begin{proof}We follow the ideas of \cite[4.2.4]{vaish2}. The proof of \cite[4.2.4]{vaish2} tells us that $$\rho_\ell \mc{DM}^{\coh}(S,\Q)^{\leqslant \id} \subseteq \mc{D}^b_m(S,\Q_\ell)^{\leqslant \id}$$ and that $\rho_\ell \mc{DM}^{\coh}(S,\Q)^{> \id} \subseteq \mc{D}^b_m(S,\Q_\ell)^{> \id}$.

Applying $\rho_\ell$ to the canonical $2$-morphism $\varepsilon^{mot}\colon w_{\leqslant \id}^{mot}\rar \id$ yields a $2$-morphism $$\rho_\ell\varepsilon^{mot}\colon \rho_\ell w_{\leqslant \id}^{mot}\rar \rho_\ell.$$
Since $\rho_\ell w_{\leqslant \id}^{mot}$ factors through $\mc{D}^b_m(S,\Q_\ell)^{\leqslant \id}$, 
the functor $\rho_\ell\varepsilon^{mot}$ factors as $$\rho_\ell w_{\leqslant \id}^{mot} \overset{\alpha}{\longrightarrow} w_{\leqslant \id}\rho_\ell \overset{\varepsilon \rho_\ell}{\longrightarrow} \rho_{\ell}$$
where $\varepsilon\colon w_{\leqslant \id}\rar \id$ is the canonical $2$-morphism. 
Therefore, we have a commutative square:
$$\begin{tikzcd}\rho_\ell w_{\leqslant \id}^{mot} \ar[r,"\rho_\ell\varepsilon^{mot}"] \ar[d,"\alpha"] & \rho_\ell \ar[d,equals] \\
w_{\leqslant \id}\rho_\ell \ar[r,"\varepsilon \rho_\ell"] & \rho_\ell    
\end{tikzcd}.$$

This yields a morphism of exact triangles:
$$\begin{tikzcd}\rho_\ell w_{\leqslant \id}^{mot} \ar[r] \ar[d,"\alpha"] & \rho_\ell \ar[d,equals] \ar[r] & \rho_\ell w^{mot}_{>\id} \ar[d]\\
w_{\leqslant \id}\rho_\ell \ar[r] & \rho_\ell\ar[r] & w_{>\id} \rho_\ell    
\end{tikzcd}.$$

If $M$ is a cohomological morive, as $\rho_\ell w_{\leqslant \id}^{mot} M $ lies in $\mc{D}^b_m(S,\Q_\ell)^{\leqslant \id}$ and $\rho_\ell w_{> \id}^{mot} M$ lies in $\mc{D}^b_m(S,\Q_\ell)^{> \id}$, by uniqueness of the truncation, 
the map $\alpha(M)$ is an isomorphism. Therefore, the map $\alpha$ is an isomorphism.

\end{proof}

Using the lemma, we see that if $f\colon X\rar S$ is proper, the $\ell$-adic complex $w_{\leqslant \id} f_*\Q_{\ell,X}$ is Artin. Hence, the functor $w_{\leqslant \id}$ has its essential image included in $\mc{D}^A(S,\Q_\ell)$.

Recall that the functor $w_{\leqslant \id}\colon\mc{D}^b_m(S,\Q_\ell) \rar \mc{D}^b_m(S,\Q_\ell)^{\leqslant \id}$ is exact and is right adjoint to the inclusion functor. Now, if $g\colon Y\rar S$ is finite, the motive $g_*\Q_{\ell,Y}$ lies in $\mc{D}^{\coh}(S,\Q_\ell)^{\leqslant \id}$ by \cite[3.1.8]{nv}, hence, if $M$ is a cohomological $\ell$-adic complex, the adjunction map $$\Map(g_*\Q_{\ell,Y}, w_{\leqslant \id} M)\rar \Map(g_*\Q_{\ell,Y},M)$$
is an equivalence. Therefore, if $N$ is an ind-Artin $\ell$-adic complex and $M$ is a cohomological $\ell$-adic complex, the adjunction map $$\Map(N, w_{\leqslant \id} M)\rar \Map(N,M)$$ is an equivalence. \Cref{w<id vs omega0} follows.
\end{proof}
Using the Artin truncation functor $\omega^0$, we prove that there is a trace of the six functors formalism on Artin $\ell$-adic sheaves with coefficients in $\Q_\ell$ over a finite field.

\begin{corollary}\label{six foncteurs constructibles} Let $\ell\neq p$ be prime numbers. Then, on schemes of finite type over $\mb{F}_p$, the six functors from $\mc{D}^{\coh}(-,\Q_\ell)$ (see \Cref{Dbbcoh}) induce the following adjunctions on $\mc{D}^A(-,\Q_\ell)$:
\begin{enumerate}\item $(f^*,\omega^0 f_*)$ for any separated morphism of finite type $f$,
\item $(f_!,\omega^0 f^!)$ for any quasi-finite morphism $f$,
\item $(\otimes,\omega^0\underline{\Hom})$.
\end{enumerate}
\end{corollary}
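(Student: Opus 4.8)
The plan is to deduce each adjunction from the corresponding operation on the fibered category $\mc{D}^{\coh}(-,\Q_\ell)$ (whose stability was recalled in \Cref{Dbbcoh}) by post-composing with the Artin truncation functor $\omega^0$, using the crucial fact established in \Cref{w<id vs omega0} and \Cref{corollaire de w<id vs omega0} that $\omega^0$ sends $\mc{D}^{\coh}(S,\Q_\ell)$ into $\mc{D}^A(S,\Q_\ell)$ rather than merely into Ind-Artin complexes. First I would record that, for $S$ of finite type over $\mb F_p$, the inclusion $\mc{D}^A(S,\Q_\ell)\hookrightarrow\mc{D}^{\coh}(S,\Q_\ell)$ admits the right adjoint $\omega^0$ (restriction of the functor of \Cref{t-adj}, landing in $\mc{D}^A$ by \Cref{w<id vs omega0}); this is the single adjunction from which everything else is assembled by composition.

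For (1): given a separated morphism of finite type $f\colon T\to S$, the functor $f^*$ preserves cohomological complexes and preserves Artin complexes (\Cref{6 foncteurs}), so $f^*\colon\mc{D}^A(S,\Q_\ell)\to\mc{D}^A(T,\Q_\ell)$ is defined; its right adjoint is obtained by composing the right adjoint $f_*\colon\mc{D}^{\coh}(T,\Q_\ell)\to\mc{D}^{\coh}(S,\Q_\ell)$ (which exists by the proposition of \Cref{Dbbcoh}) with $\omega^0$. Concretely, for $M\in\mc{D}^A(S,\Q_\ell)$ and $N\in\mc{D}^A(T,\Q_\ell)$ one has
$$\Map_{\mc{D}^A(S)}(M,\omega^0 f_* N)=\Map_{\mc{D}^{\coh}(S)}(M,f_*N)=\Map_{\mc{D}^{\coh}(T)}(f^*M,N)=\Map_{\mc{D}^A(T)}(f^*M,N),$$
where the first and last equalities use that $\omega^0$ is right adjoint to the inclusion and that $f^*M$, $N$ are already Artin. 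For (2): $f_!$ for $f$ quasi-finite preserves both cohomological and Artin complexes (\Cref{Dbbcoh}, \Cref{6 foncteurs}), $f^!$ for such $f$ preserves cohomological complexes, and the same two-step adjunction computation with $(f_!,f^!)$ replacing $(f^*,f_*)$ gives the right adjoint $\omega^0 f^!$ on $\mc{D}^A$. For (3): the tensor product preserves cohomological complexes and Artin complexes, and its internal right adjoint $\underline{\Hom}$ preserves cohomological complexes; composing with $\omega^0$ gives $\omega^0\underline{\Hom}$ as right adjoint to $\otimes$ on $\mc{D}^A(-,\Q_\ell)$ by the identical argument applied to the tensor–hom adjunction.

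There is essentially no obstacle here: all the content has been front-loaded into \Cref{Dbbcoh} (stability of $\mc{D}^{\coh}$ under the six operations), \Cref{6 foncteurs} (stability of $\mc{D}^A$ under $f^*$, $f_!$ for $f$ quasi-finite, and $\otimes$), and \Cref{w<id vs omega0} together with \Cref{corollaire de w<id vs omega0} (that $\omega^0$ lands in $\mc{D}^A$ on cohomological input). The only mild point to be careful about is bookkeeping: one must check that the source functors $f^*$, $f_!$, $\otimes$ genuinely take Artin complexes to Artin complexes so that the displayed chain of adjunction equivalences makes sense on both sides, and that composing a right adjoint with the right adjoint $\omega^0$ of a fully faithful inclusion again yields a right adjoint — a formal fact about adjunctions. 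I would state the proof simply as: "This follows from the stability properties of \Cref{Dbbcoh} and \Cref{6 foncteurs}, from \Cref{w<id vs omega0}, and from composing the relevant adjunctions with the adjunction $(\iota,\omega^0)$."
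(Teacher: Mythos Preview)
Your proposal is correct and follows exactly the approach the paper intends; indeed the paper gives no explicit proof for this corollary, treating it as an immediate consequence of \Cref{Dbbcoh}, \Cref{6 foncteurs}, and \Cref{w<id vs omega0}/\Cref{corollaire de w<id vs omega0} via the formal composition of adjunctions you spell out. The only remark is that your claim ``$\underline{\Hom}$ preserves cohomological complexes'' is not actually among the stability properties listed in \Cref{Dbbcoh}, but the paper's own phrasing (``the six functors from $\mc{D}^{\coh}(-,\Q_\ell)$'') takes this for granted as well, so this is not a divergence from the paper.
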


\begin{remark}\label{EC_X 0}These functors have quite remarkable features. Let $S$ be a scheme of finite type over $\mb{F}_p$. Let $\ell\neq p$ be a prime number. Let $X$ be a scheme of finite type over $k$.

First, Vaish and Nair used them to define the weightless intersection complex $EC_X=\omega^0 j_* \Q_{\ell,U}$ with $j\colon U\rar X$ the open immersion of a regular subscheme of $X$ (see \cite[3.1.15]{nv}). This is an analog of Ayoub and Zucker's motive $\mb{E}_X$ (see \cite[2.20]{az}). 

Furthermore, if $f\colon \widetilde{X}\rar X$ is a resolution of singularities, then we have $EC_X=\omega^0 f_* \Q_{\ell,\widetilde{X}}$ (see \cite[4.1.2]{nv}). The proof amounts to the following fact: if $i\colon F\rar S$ is a closed immersion of dense complement and $S$ is regular, the functor $\omega^0 i^!$ vanishes on $\mc{D}^{smA}(S,\Q_\ell)$ (see the proof of \cite[4.1.1]{nv}).
\end{remark}

\subsection{The perverse homotopy t-structure for schemes of finite type over a finite field and with coefficients \texorpdfstring{$\Q_\ell$}{Q\textell}}

We will now show that the perverse homotopy t-structure induces a t-structure on $\mc{D}^A(S,\Q_\ell)$ when $S$ is of finite type over a finite field. To show this, we will construct a t-structure on $\mc{D}^A(S,\Q_\ell)$ by gluing the perverse t-structure on smooth Artin objects along stratifications and show that it coincides with the perverse homotopy t-structure.

\begin{proposition}\label{perverse homotopy t-structure induces a t-structure} Let $S$ be a scheme of finite type over $\mb{F}_p$. Let $\ell\neq p$ be a prime number. Then, 
\begin{enumerate} \item The perverse homotopy t-structure induces a t-structure on $\mc{D}^A(S,\Q_\ell)$. 

\item Let $i\colon F\rar S$ be a closed immersion and $j\colon U\rar S$ be the open complement. The sequence $$\mc{D}^A(F,\Q_\ell)\overset{i_*}{\rar}\mc{D}^A(S,\Q_\ell)\overset{j^*}{\rar}\mc{D}^A(U,\Q_\ell)$$ satisfies the axioms of the gluing formalism of \cite[1.4.3]{bbd} and the perverse homotopy t-structure on $\mc{D}^A(S,\Q_\ell)$ is obtained by gluing the t-structures of $\mc{D}^A(U,\Q_\ell)$ and $\mc{D}^A(F,\Q_\ell)$, (see \cite[1.4.9]{bbd}).

\end{enumerate}
\end{proposition}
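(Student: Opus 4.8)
The plan is to prove both statements simultaneously by constructing, for each scheme $S$ of finite type over $\mathbb{F}_p$, a t-structure on $\mc{D}^A(S,\Q_\ell)$ by gluing the perverse t-structures on the smooth Artin pieces along a stratification, and then to identify this glued t-structure with the perverse homotopy t-structure. First I would fix, for a scheme $S$, the class of nil-regular stratifications $\mc{S}$ of $S$; by \Cref{stratification} every Artin $\ell$-adic complex is smooth Artin on each stratum of a sufficiently fine such stratification, and by \Cref{t-structure h-perverse smooth,t-structure perverse smooth} each stratum $i_T\colon T\hookrightarrow S$ carries the perverse t-structure on $\mc{D}^{smA}(T,\Q_\ell)$, which is the ordinary t-structure shifted by $\delta(T)$. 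For a fixed $\mc{S}$ one obtains, exactly as in \cite[1.4.10]{bbd}, a glued t-structure on $\mc{D}^A(S,\Q_\ell)$ whose non-positive (resp. non-negative) objects are those $M$ with $i_T^* M$ smooth Artin and perverse t-non-positive on each $T$ (resp. $\omega^0 i_T^! M$ perverse t-non-negative on each $T$); here the functors $i_T^*$ and $\omega^0 i_T^!$ exist and have the needed adjunction and (co)localization properties by \Cref{six foncteurs constructibles,t-adj,t-adj2}, and \Cref{glueing} provides the gluing package at the level of Ind-Artin complexes, which restricts to $\mc{D}^A$ once we know the truncations preserve $\mc{D}^A$.

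The heart of the argument is to show that this glued t-structure is independent of $\mc{S}$ (so it refines to a well-defined t-structure as $\mc{S}$ ranges over all nil-regular stratifications) and that its non-positive part coincides with ${}^{\mathrm{hp}}\mc{D}^A_{\In}(S,\Q_\ell)^{\leqslant 0}\cap \mc{D}^A(S,\Q_\ell)$. For the non-positive side this is essentially \Cref{locality}(2): an object $M$ of $\mc{D}^A(S,\Q_\ell)$ is perverse homotopy t-non-positive if and only if there is a nil-regular stratification such that $i_T^* M$ is smooth Artin and ${}^{\mathrm{p}}$-non-positive on each stratum, which is precisely the glued description for some $\mc{S}$; and since the glued non-positive part only grows when $\mc{S}$ is refined (pullback is perverse t-exact), the union over all $\mc{S}$ stabilizes. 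The key remaining point is that the candidate truncation functors actually land in $\mc{D}^A(S,\Q_\ell)$ rather than merely in $\mc{D}^A_{\In}(S,\Q_\ell)$: one must check that for $M\in \mc{D}^A(S,\Q_\ell)$ the truncation ${}^{\mathrm{hp}}\tau_{\leqslant 0} M$ is again constructible-Artin. I would prove this by noetherian induction on $S$ using the localization triangle for $(j^*,i_*)$ from \Cref{glueing}: on the open regular stratum the truncation is computed by \Cref{t-structure h-perverse smooth} (ordinary truncation of a smooth Artin complex, which is smooth Artin), and on the closed complement $\omega^0 i^! M$ is cohomological-to-Artin by \Cref{corollaire de w<id vs omega0} hence constructible-Artin by induction; gluing the two truncations via the triangle and using that $\omega^0$ preserves $\mc{D}^A$ on cohomological complexes (\Cref{w<id vs omega0}) gives a constructible-Artin object.

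The main obstacle, and the step I expect to absorb most of the work, is precisely this last constructibility statement for the truncation functors, i.e. that the formal right-adjoint $\omega^0$ and the glued truncations do not take us out of the bounded constructible world — this is where finite type over $\mathbb{F}_p$ and the comparison $\omega^0 \simeq w_{\leqslant \id}$ on $\mc{D}^{\coh}$ (\Cref{w<id vs omega0}, \Cref{corollaire de w<id vs omega0}) are genuinely used, in contrast to the $2$- and $3$-dimensional integral-coefficient cases. Once that is in hand, part (2) of the statement is immediate: the gluing formalism of \cite[1.4.3]{bbd} for the sequence $\mc{D}^A(F,\Q_\ell)\xrightarrow{i_*}\mc{D}^A(S,\Q_\ell)\xrightarrow{j^*}\mc{D}^A(U,\Q_\ell)$ is inherited by restriction from \Cref{glueing}, and the identification of the perverse homotopy t-structure with the glued one is the content of \Cref{locality} together with part (1); then \cite[1.4.9]{bbd} gives that the t-structure on $\mc{D}^A(S,\Q_\ell)$ is the glued t-structure of the t-structures on $\mc{D}^A(U,\Q_\ell)$ and $\mc{D}^A(F,\Q_\ell)$.
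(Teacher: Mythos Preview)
Your plan is sound and correctly isolates the key input: the comparison $\omega^0 \simeq w_{\leqslant \id}$ on $\mc{D}^{\coh}$ (\Cref{w<id vs omega0}) forces the right adjoints $\omega^0 j_*$ and $\omega^0 i^!$ to land in $\mc{D}^A$ rather than merely in $\mc{D}^A_{\In}$, which is precisely what makes the glued truncations stay constructible. The paper packages this differently, following the machinery of \cite[2.2.10]{bbd}: one fixes an \emph{admissible pair} $(\mc{S},\mc{L})$ (a stratification together with finite sets of smooth Artin perverse sheaves on each stratum), shows that any such pair refines to a \emph{superadmissible} one on which all of $i^*$, $i_!$, $\omega^0 i_*$, $\omega^0 i^!$ preserve $(\mc{S},\mc{L})$-constructibility, glues the perverse t-structures on the strata to a t-structure on each $\mc{D}_{\mc{S},\mc{L}}(S,\Q_\ell)$, and identifies it with the perverse homotopy t-structure via \Cref{locality}. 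Your direct noetherian induction avoids this bookkeeping, at the cost that your phrase ``gluing the two truncations via the triangle'' conceals the actual two-step formula for the glued truncation: for instance ${}^{\mathrm{hp}}\tau_{\leqslant 0} M$ is the fiber of $M' \to i_* \tau^F_{>0}(i^* M')$ where $M'$ is the fiber of $M \to \omega^0 j_* \tau^U_{>0}(j^* M)$, so the object one truncates on $F$ is $i^* M'$ rather than $\omega^0 i^! M$ as you wrote. Once this is spelled out, each step stays in $\mc{D}^A$ for exactly the reasons you give (the $U$-truncation by \Cref{t-structure h-perverse smooth}, the passage through $\omega^0 j_*$ by \Cref{w<id vs omega0} since $j_*$ preserves cohomological complexes, and the $F$-truncation by induction), and your argument goes through.
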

\begin{proof} We can assume that $S$ is reduced. The first part of (2) is a direct consequence of \Cref{glueing} and \Cref{six foncteurs constructibles}. To prove the rest of the proposition, we need to adapt the ideas of \cite[2.2.10]{bbd} to our setting.

We first need a few definitions:
\begin{itemize}
    \item If $\mc{S}$ is a stratification of $S$, we say that a locally closed subscheme $X$ of $S$ is an $\mc{S}$-subscheme of $S$ if $X$ is a union of strata of $\mc{S}$.
    \item A pair $(\mc{S},\mc{L})$ is \emph{admissible} if 
        \begin{itemize}
            \item $\mc{S}$  is a stratification of $S$ with smooth strata everywhere of the same dimension. 
            \item for every stratum $T$ of $\mc{S}$, $\mc{L}(T)$ is a finite set of isomorphism classes of smooth Artin perverse sheaves (see \Cref{smooth Artin perverse sheaves}).
        \end{itemize}
    \item If $(\mc{S},\mc{L})$ is an admissible pair and $X$ is an $\mc{S}$-subscheme of $S$, the category $\mc{D}_{\mc{S},\mc{L}}(X,\Q_\ell)$ of \emph{$(\mc{S},\mc{L})$-constructible Artin $\ell$-adic complexes} over $X$ is the full subcategory of $\mc{D}^A(X,\Q_\ell)$ made of those objects $M$ such that for any stratum $T$ of $\mc{S}$ contained in $X$, the restriction of $M$ to $T$ is a smooth Artin $\ell$-adic complex and its perverse cohomology sheaves are successive extensions of objects whose isomorphism classes lie in $\mc{L}(T)$. 
    \item A pair $(\mc{S}',\mc{L}')$ refines a pair $(\mc{S},\mc{L})$ if every stratum $S$ of $\mc{S}$ is a union of strata of $\mc{S}'$ and any perverse sheaf $M$ over a stratum $T$ of $\mc{S}$ whose isomorphism class lies in $\mc{L}(T)$ is $(\mc{S}',\mc{L}')$-constructible. 
\end{itemize}

If $(\mc{S},\mc{L})$ is an admissible pair and $X$ is an $\mc{S}$-subscheme of $S$, the category $\mc{D}_{\mc{S},\mc{L}}(X,\Q_\ell)$ is a stable subcategory of $\mc{D}^A(S,\Q_\ell)$ (\textit{i.e.} it is closed under finite (co)limits). 

If $(\mc{S},\mc{L})$ is an admissible pair and  $i\colon U \rar V$ is an immersion between $\mc{S}$-subschemes of $S$, the functors $i_!$ and $i^*$ preserve $(\mc{S},\mc{L})$-constructible objects. 

Now, we say that an admissible pair $(\mc{S},\mc{L})$ is \emph{superadmissible} if letting $i\colon T \rar S$ be the immersion of a stratum $T$ of $\mc{S}$ into $S$ and letting $M$ be a perverse sheaf over $T$ whose isomorphism class lies in $\mc{L}(T)$, the complex $\omega^0i_*M$ is $(\mc{S},\mc{L})$-constructible. 

The proof of \cite[3.16]{az} yields equivalences $\omega^0i_*\omega^0\simeq \omega^0 i_*$ and $\omega^0i^!\omega^0\simeq \omega^0 i^!$ for any quasi-finite morphism $i$. 

We now claim that an admissible pair $(\mc{S},\mc{L})$ can always be refined into a superadmissible one. 

Assume that this is true when we replace $S$ with the union of the strata of dimension $n$ or more. Let $i\colon T \rar S$ and $i'\colon T'\rar S$ be immersions of a strata of $\mc{S}$. Let $M$ be a perverse sheaf over $T$ whose isomorphism class lies in $\mc{L}(T)$. If $T$ and $T'$ are of dimension at least $n$, then, the perverse cohomology sheaves of $(i')^*\omega^0i_*M$ are obtained as successive extensions of objects whose isomorphism classes lie in $\mc{L}(T')$ by induction.

If $T=T'$, then $$(i')^*\omega^0i_*M=\omega^0 i^*\omega^0i_*M=\omega^0i^*i_*M=M$$ whose isomorphism class lies in $\mc{L}(T)$. 

Otherwise, if $T$ is of dimension $n-1$, we can always replace $T$ with an open subset, so that the closure of $T$ is disjoint from $T'$. Then, 
$$(i')^*\omega^0i_*M=\omega^0(i')^*i_*M=0.$$ 

Assume now that $T$ is of dimension at least $n$ and that $T'$ is of dimension $n-1$. We can replace $T'$ with an open subset so that for any $M$ in $\mc{L}(T)$, the complex $(i')^*\omega^0i_*M$ is smooth Artin by \Cref{six foncteurs constructibles} and  \Cref{stratification}. We can also add all the isomorphism classes of the $\Hlp^n\left((i')^*\omega^0i_*M\right)$ to $\mc{L}(T')$ because there are finitely many of them. This proves our claim when we replace $S$ with the union of the strata of dimension $n-1$ or more.

Hence, any admissible pair can be refined into a superadmissible one.

If $(\mc{S},\mc{L})$ is a superadmissible pair and  $i\colon U \rar V$ is an immersion between $\mc{S}$-subschemes of $S$, we claim that the functors $\omega^0i_*$ and $\omega^0i^!$ preserve $(\mc{S},\mc{L})$-constructibility. The proof is the same as \cite[2.1.13]{bbd} using as above the commutation of $\omega^0$ with the six functors.

Hence, if $(\mc{S},\mc{L})$ is a superadmissible pair, we can define a t-structure on $(\mc{S},\mc{L})$-constructible objects by gluing the perverse t-structure on the strata. By \Cref{locality}, the positive (resp. negative) objects of this t-structure are the positive (resp. negative) objects of the perverse homotopy t-structure that are $(\mc{S},\mc{L})$-constructible. 

Thus, the perverse homotopy t-structure induces a t-structure over $(\mc{S},\mc{L})$-cons\-tructible objects. Therefore, $(\mc{S},\mc{L})$-constructible objects are stable under the truncation functors of the perverse homotopy t-structure. 

Any object $M$ of $\mc{D}^A(S,\Q_\ell)$ is $(\mc{S},\mc{L})$-constructible for some superadmissible pair $(\mc{S},\mc{L})$. Indeed, by \Cref{stratification}, there is an admissible pair $(\mc{S},\mc{L})$ such that $M$ is $(\mc{S},\mc{L})$-constructible. Such a pair is can always be refined into one that is superadmissible. 

Hence, the subcategory $\mc{D}^A(S,\Q_\ell)$ of the category $\mc{D}^A_{\In}(S,\Q_\ell)$ is stable under the truncation functors of the perverse homotopy t-structure. Therefore, the perverse homotopy t-structure induces a t-structure on $\mc{D}^A(S,\Q_\ell)$.

Now, the second part (2) follows from \Cref{glueing}.
\end{proof}

\begin{definition}\label{Artin homotopy perverseerse sheaves} Let $S$ be a scheme of finite type over $\mb{F}_p$. Let $\ell\neq p$ be a prime number. We denote by $\mathrm{Perv}^A(S,\Q_\ell)^\#$ and call \emph{abelian category of Artin homotopy perverse sheaves over} $S$ the heart of the perverse homotopy t-structure on $\mc{D}^A(S,\Q_\ell)$.
\end{definition}

\begin{remark}\label{perv vs pervsharp} When both are defined, the abelian category $\mathrm{Perv}^A(S,\Q_\ell)^\#$ coincides with the abelian category $\mathrm{Perv}^A(S,\Q_\ell)$ (see \Cref{Artin perverse sheaves}). Recall that this is the case when $S$ is of finite type over the field $\mb{F}_p$ and $\dim(S)\leqslant 2$ according to our main theorem.
\end{remark}

With the assumptions of the definition, there is a right exact functor $$\iota\colon \mathrm{Perv}^A(S,\Q_\ell)^\# \rar \mathrm{Perv}(S,\Q_\ell).$$ 

It is exact if and only if the perverse t-structure induces a t-structure on Artin $\ell$-adic complexes. 




\subsection{Weightless perverse t-structure}
In the next section, we will study a functor akin to the intermediate extension functor of \cite{bbd}. To show that this functor behaves nicely, one would like to restrict the perverse t-structure over $\mc{D}^{\coh}(S,\Q_\ell)$ and to show that the functor $\omega^0$ is t-exact. 

However, it is not known to the author of this thesis whether the perverse t-structure preserves cohomological motives. To circumvent this issue, notice however that, using \cite[5.1.7]{bbd}, the perverse t-structure of $\mc{D}^b_c(S,\Q_\ell)$ induces a t-structure on $\mc{D}^b_m(S,\Q_\ell)$ and that the heart $\mathrm{Perv}_m(S,\Q_\ell)$ of this t-structure is a Serre subcategory of $\mathrm{Perv}(S,\Q_\ell)$. Furthermore, notice that using \cite[5.1.6]{bbd}, the fibered category $\mc{D}^b_m(-,\Q_\ell)$ is endowed with the six operations. We will now define a t-structure on a certain subcategory of \emph{weightless complexes} such that the inclusion of Artin complexes into this category is t-exact and the right adjoint functor of the inclusion of weightless complexes into mixed complexes is t-exact.

\begin{definition}\label{def weightless} Let $S$ be a scheme of finite type over $\mb{F}_p$. Let $\ell\neq p$ be a prime number. We denote by $\mc{D}^{w\leqslant 0}(S,\Q_\ell)$ and call \emph{category of weightless $\ell$-adic complexes} the subcategory of the non-positive objects of the t-structure $(w_{\leqslant \id},w_{>\id})$ of Vaish and Nair.    
\end{definition}
\begin{remark} We chose the notation $\mc{D}^{w\leqslant 0}(S,\Q_\ell)$ rather than $\mc{D}^{w\leqslant \id}(S,\Q_\ell)$, because as stated in \Cref{weightless lisse} below, the punctual weight of the standard cohomology sheaves of lisse weightless complexes are non-positive.
\end{remark}

Since the functor $w_{\leqslant \id}$ of Vaish and Nair was proved in \Cref{je suis pareil que Vaish et Nair} to coincide with $\omega^0$ over cohomological motives and is right adjoint to the inclusion of weightless complexes into the category of mixed complexes, we will from this point on denote the functor $w_{\leqslant \id}$ by $\omega^0$.

We have an analog of \Cref{six foncteurs constructibles}:

\begin{proposition}\label{six foncteurs weightless} Let $\ell\neq p$ be prime numbers. Then, on schemes of finite type over $\mb{F}_p$, the six functors from $\mc{D}^b_m(-,\Q_\ell)$ induce the following adjunctions on $\mc{D}^{w\leqslant 0}(-,\Q_\ell)$:
\begin{enumerate}\item $(f^*,\omega^0 f_*)$ for any separated morphism of finite type $f$,
\item $(f_!,\omega^0 f^!)$ for any quasi-finite morphism $f$,
\item $(\otimes,\omega^0\underline{\Hom})$.
\end{enumerate}
\end{proposition}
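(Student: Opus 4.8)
The plan is to deduce all three adjunctions from two ingredients: the six operations on $\mc{D}^b_m(-,\Q_\ell)$ (\cite[5.1.6]{bbd}) together with the preservation statements that $f^*$ preserves weightless complexes for $f$ separated of finite type, that $f_!$ preserves weightless complexes for $f$ quasi-finite, and that $-\otimes-$ preserves weightless complexes. Indeed, by construction $\mc{D}^{w\leqslant 0}(S,\Q_\ell)$ is a reflective subcategory of $\mc{D}^b_m(S,\Q_\ell)$ whose reflector is $\omega^0=w_{\leqslant\id}$ (this is exactly the statement that $w_{\leqslant\id}$ is right adjoint to the inclusion). Granting, say, that $f^*$ carries $\mc{D}^{w\leqslant 0}(S,\Q_\ell)$ into $\mc{D}^{w\leqslant 0}(X,\Q_\ell)$, a three-line adjunction computation — using that the inclusion intertwines $f^*$ on the two subcategories with $f^*$ on the ambient categories, the adjunction $(f^*,f_*)$ on $\mc{D}^b_m$, and then the reflection $\omega^0$ — identifies the right adjoint of the restricted functor with $N\mapsto\omega^0(f_*N)$; likewise $(f_!,\omega^0f^!)$ is obtained from $(f_!,f^!)$, and $(\otimes,\omega^0\underline{\Hom})$ from $(\otimes,\underline{\Hom})$. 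This is the precise analogue of how \Cref{six foncteurs constructibles} follows from the stability of $\mc{D}^{\coh}$ under the operations, with $\mc{D}^{\coh}$ replaced by $\mc{D}^{w\leqslant 0}$.

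The heart of the matter is thus the three preservation statements, which I would prove using the gluing description of Nair and Vaish's weightless t-structure: an object $M$ of $\mc{D}^b_m(S,\Q_\ell)$ is weightless if and only if there is a stratification of $S$ into smooth strata such that, for every stratum $i_T\colon T\rar S$, the perverse cohomology sheaves of $i_T^*M$ are lisse of weight $\leqslant\dim(T)$ (on a smooth equidimensional stratum these are exactly the sheaves $L[\dim T]$ with $L$ lisse of weight $\leqslant 0$, by \Cref{t-structure perverse lisse}). For $f\colon X\rar S$ separated of finite type one pulls back a witnessing stratification of $S$, refines it to a stratification of $X$ with smooth strata, and reduces by dévissage along the perverse filtration to the following point: if $h\colon U\rar T$ is a morphism of smooth $\mb{F}_p$-schemes and $L$ is lisse on $T$ of weight $\leqslant 0$, then $h^*(L[\dim T])$ has its unique perverse cohomology sheaf of weight $\leqslant\dim(U)$, which is immediate because $(h^*L)[\dim U]$ is a lisse perverse sheaf of weight $\leqslant\dim(U)$ regardless of the perverse degree in which the remaining shift places it. For $f$ quasi-finite one factors $f$ through a finite morphism and an open immersion (Zariski's main theorem, exactly as in \Cref{generators}); for a finite morphism $f_!=f_*$ is perverse t-exact and preserves weights, and for an open immersion $j_!$ is perverse right t-exact and preserves the property ``weight $\leqslant w$'' by \cite{WeilII}, so in both cases the bound ``weight $\leqslant\dim$ on each stratum'' is maintained. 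For the tensor product the same stratum-by-stratum analysis works, using that $(L\otimes L')[2\dim T]$ has perverse cohomology sheaf $(L\otimes L')[\dim T]$ of weight $\leqslant\dim(T)$ whenever $L,L'$ are lisse of weight $\leqslant 0$.

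I expect the main obstacle to be the bookkeeping in the quasi-finite and tensor-product cases: one must arrange the auxiliary stratifications compatibly (so that over each stratum of the target $f$ is, up to purely inseparable maps, a disjoint union of finite étale covers by pieces of the expected dimension) and must track perverse degrees against weights carefully through the dévissage. None of this, however, goes beyond the techniques already used in \Cref{generators}, in the proof of \Cref{main theorem}, and in \cite{thesemorel,nv}; the $f^*$ case, by contrast, is essentially immediate from the dimension count above, and the passage from preservation to the stated adjunctions is formal.
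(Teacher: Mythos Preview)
Your proposal is correct and follows essentially the same approach as the paper: reduce the three adjunctions to the corresponding preservation statements (that $f^*$, $f_!$ for $f$ quasi-finite, and $\otimes$ preserve weightless complexes), then conclude formally from the reflection $\omega^0$. The only difference is that the paper simply cites \cite[3.1.10, 3.1.8, 3.2.3]{nv} for the preservation under $f^*$, finite $f_*$, and $\otimes$, and argues the open-immersion case of $j_!$ directly via stratification exactly as you do (restrict $j_!M$ to strata inside $U$ and strata outside $U$), whereas you unpack all of these arguments from first principles; your detour through weight preservation for $j_!$ from \cite{WeilII} works but is less direct than the paper's observation that $j_!M$ simply vanishes on strata disjoint from $U$.
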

\begin{proof}
    The first stability assertion follows from \cite[3.1.10]{nv} while the third follows from \cite[3.2.3]{nv}.

    In the case when $f$ is finite, the second stability assertion follows from \cite[3.1.8]{nv}. Using Zariski's Main Theorem \cite[18.12.13]{ega4}, it suffices to prove that if $j\colon U\rar S$ is an open immersion and $M$ is weightless over $U$, then $j_!M$ is still weightless over $S$. 

    Recall that a mixed complex $N$ defined over a scheme $X$ is weightless if and only if there is an adapted stratification $\mc{X}$ of $X$ in the sense of \cite[3.1.6]{nv} such that for each stratum $Y$ of $\mc{X}$, the perverse cohomology sheaves of the restriction of $N$ to $Y$ are of weight at most $ \dim(Y)$.

    Take an adapted stratification of $S$ such that $U$ is a union of strata. Then, perverse cohomology sheaves of the restriction of $j_!M$ to any stratum $T$ contained in $U$ is of weight at most $\dim(T)$ since $M$ is weightless, while the restriction of $j_!M$ to any stratum disjoint from $U$ vanishes. Therefore, the complex $j_!M$ is weightless, which yields the result.
\end{proof}

\begin{definition}
    Let $S$ be a scheme of finite type over $\mb{F}_p$. Let $\ell\neq p$ be a prime number. 
    \begin{enumerate}
        \item We denote by $\mathrm{Loc}^{w\leqslant 0}_S(\Q_\ell)$ and call \emph{category of weightless lisse $\ell$-adic sheaves} the full subcategory of lisse mixed $\ell$-adic sheaves over $S$ that are of non-positive weight in the sense of \cite[5]{bbd}.
        \item We denote by $\mc{D}^{w\leqslant 0}_{\mathrm{lisse}}(S,\Q_\ell)$ and call \emph{category of weightless lisse $\ell$-adic complexes} the subcategory of $\mc{D}^{w\leqslant 0}(S,\Q_\ell)$ made of those objects which are lisse as $\ell$-adic complexes.
    \end{enumerate}
\end{definition}
Since representations of weight at most $0$ of $\hat{\Z}$ form a Serre subcategory of $\Rep(\hat{\Z},\Q_\ell)$, the category 
$\mathrm{Loc}^{w\leqslant 0}_S(\Q_\ell)$ is a Serre subcategory of $\mathrm{Loc}_S(\Q_\ell)$. 

\begin{proposition}\label{weightless lisse}
    Let $S$ be a scheme of finite type over $\mb{F}_p$. Let $\ell\neq p$ be a prime number. Then, the ordinary t-structure of $\mc{D}_{\mathrm{lisse}}(S,\Q_\ell)$ induces a t-structure on $\mc{D}^{w \leqslant 0}_{\mathrm{lisse}}(S,\Q_\ell)$. The heart of this t-structure is $\mathrm{Loc}^{w\leqslant 0}_S(\Q_\ell)$. 
    
    In particular, $\mc{D}^{w \leqslant 0}_{\mathrm{lisse}}(S,\Q_\ell)$ is the subcategory of $\mc{D}^b_c(S,\Q_\ell)$ made of those complexes $C$ of $\ell$-adic sheaves such that for all integer $n$, the sheaf $\Hl^n(C)$ is weightless and lisse.
\end{proposition}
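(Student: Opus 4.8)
The plan is to deduce this from \Cref{keur}, applied to the bounded t-category $\mc{D}=\mc{D}_{\mathrm{lisse}}(S,\Q_\ell)$ with its ordinary t-structure (whose heart is $\mathrm{Loc}_S(\Q_\ell)$ by \cite[3.28]{hrs}) and to the full subcategory $\mc{D}'=\mc{D}^{w\leqslant 0}_{\mathrm{lisse}}(S,\Q_\ell)=\mc{D}^{w\leqslant 0}(S,\Q_\ell)\cap \mc{D}_{\mathrm{lisse}}(S,\Q_\ell)$. First I would record that $\mc{D}'$ is a thick subcategory: indeed $\mc{D}^{w\leqslant 0}(S,\Q_\ell)=\mc{D}^b_m(S,\Q_\ell)^{\leqslant\id}$ is a thick subcategory of $\mc{D}^b_m(S,\Q_\ell)$ by \cite[3.1.7]{nv}, and $\mc{D}_{\mathrm{lisse}}(S,\Q_\ell)$ is a thick subcategory of $\mc{D}^b_c(S,\Q_\ell)$, so their intersection is thick.

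The core of the argument will be the identification $\mc{D}'\cap\mathrm{Loc}_S(\Q_\ell)=\mathrm{Loc}^{w\leqslant 0}_S(\Q_\ell)$, which I would derive from the more precise claim that a lisse complex $M$ over $S$ lies in $\mc{D}^{w\leqslant 0}(S,\Q_\ell)$ if and only if $\Hl^n(M)$ is an object of $\mathrm{Loc}^{w\leqslant 0}_S(\Q_\ell)$ for every integer $n$. To prove this I would unwind the description of weightlessness recalled in the proof of \Cref{six foncteurs weightless} — existence of a stratification with smooth strata such that on each stratum $Y$ the perverse cohomology sheaves of $M|_Y$ have weight at most $\dim(Y)$ — using \Cref{t-structure perverse lisse}: on a connected smooth stratum the perverse t-structure on lisse complexes is the ordinary one shifted by $\dim(Y)$, so the perverse cohomology sheaves of $M|_Y$ are the lisse sheaves $\Hl^n(M|_Y)[\dim(Y)]$, which are of weight $w(\Hl^n(M|_Y))+\dim(Y)$; hence weightlessness of $M$ amounts to the existence of a smooth stratification on which each $(\Hl^n M)|_Y$ has non-positive punctual weight, for all $n$. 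Since $M\in\mc{D}^{w\leqslant 0}(S,\Q_\ell)$ is in particular mixed, each $\Hl^n(M)$ is a lisse mixed sheaf; because the punctual weight of a mixed sheaf is detected at closed points, which the strata cover, and because $f^*$ preserves mixed sheaves of weight $\leqslant 0$ (\cite[5]{bbd}), this is equivalent to $\Hl^n(M)$ being of non-positive weight on $S$, i.e.\ to $\Hl^n(M)\in\mathrm{Loc}^{w\leqslant 0}_S(\Q_\ell)$. Taking $M$ in the ordinary heart gives the identification; the reverse inclusion $\mathrm{Loc}^{w\leqslant 0}_S(\Q_\ell)\subseteq\mc{D}'$ is immediate (a weightless lisse sheaf lies in $\mc{D}^{w\leqslant 0}(S,\Q_\ell)$ using any smooth stratification), and the "only if" direction together with dévissage shows that $\mathrm{Loc}^{w\leqslant 0}_S(\Q_\ell)$ generates $\mc{D}'$ as a thick subcategory of $\mc{D}$.

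With these in place, \Cref{keur} reduces to checking that the kernel of a morphism $f\colon M\rar N$ in $\mathrm{Loc}^{w\leqslant 0}_S(\Q_\ell)$ again lies in $\mathrm{Loc}^{w\leqslant 0}_S(\Q_\ell)$, which holds because the latter is a Serre subcategory of $\mathrm{Loc}_S(\Q_\ell)$ (as observed before \Cref{def weightless}, via the fact that the representations of weight at most $0$ of $\hat{\Z}$ form a Serre subcategory). So the ordinary t-structure restricts to $\mc{D}^{w\leqslant 0}_{\mathrm{lisse}}(S,\Q_\ell)$, its heart is $\mc{D}'\cap\mathrm{Loc}_S(\Q_\ell)=\mathrm{Loc}^{w\leqslant 0}_S(\Q_\ell)$, and the last assertion follows from \Cref{D^b_A description} together with the characterization of $\mc{D}_{\mathrm{lisse}}$ inside $\mc{D}^b_c$ from \cite[3.28]{hrs}. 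The step I expect to be the main obstacle is the second paragraph: matching the "adapted stratification / perverse weight $\leqslant\dim$" notion of weightlessness inherited from \cite{nv} with the pointwise "weight $\leqslant 0$" condition defining $\mathrm{Loc}^{w\leqslant 0}_S(\Q_\ell)$, and making sure that passing to a smooth stratification loses no information — this should work precisely because the weight condition on sheaves is pointwise and the strata cover $S$, so that neither normality of $S$ nor any constancy-of-weight statement is needed, and because objects of $\mc{D}^{w\leqslant 0}$ are mixed by construction.
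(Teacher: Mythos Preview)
Your proposal is correct and follows essentially the same approach as the paper: both reduce via \Cref{keur} to the identification $\mc{D}^{w\leqslant 0}(S,\Q_\ell)\cap\mathrm{Loc}_S(\Q_\ell)=\mathrm{Loc}^{w\leqslant 0}_S(\Q_\ell)$, which is then verified by computing perverse cohomology on a smooth (adapted) stratification and using that on a smooth connected stratum $T$ the perverse t-structure on lisse complexes is the ordinary one shifted by $\dim(T)$. Your write-up is more explicit than the paper's about verifying the thickness and generator hypotheses of \Cref{keur} and about why the stratum-wise weight condition globalizes (pointwise nature of weights), but the argument is the same; the Serre-subcategory remark you cite actually appears just after, not before, \Cref{def weightless}.
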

\begin{proof}
    Using \Cref{keur}, it suffices to show that $$\mc{D}^{w \leqslant 0}(S,\Q_\ell) \cap \mathrm{Loc}_S(\Q_\ell)=\mathrm{Loc}^{w \leqslant 0}_S(\Q_\ell).$$

    Let $M$ be an object of $\mathrm{Loc}^{w \leqslant 0}_S(\Q_\ell)$. It lies in $\mathrm{Loc}_S(\Q_\ell)$ by definition.    
    Furthermore, if a stratification $\mc{S}$ is adapted to $M$ in the sense of \cite[3.1.6]{nv} and $T$ is a stratum of $\mc{S}$, since $M|_T$ is lisse and $T$ is smooth, for any integer $n$, we have $$\Hlp^{n}(M|_T)=\begin{cases} M|_T[\dim(T)] &\text{if }n=\dim(T)\\
    0 & \text{otherwise}
    \end{cases}.$$ 
    
    Since the complex $M|_T$ is of weight at most $0$, the complex $M|_T[\dim(T)]$ is of weight at most $\dim(T)$. Therefore, for any integer $n$, the complex $\Hlp^{n}(M|_T)$ is of weight at most $\dim(T)$. Hence, the complex $M$ lies in $\mc{D}^{w \leqslant 0}(S,\Q_\ell)$

    Conversely, if $M$ is lies in $\mc{D}^{w \leqslant 0}(S,\Q_\ell) \cap \mathrm{Loc}_S(\Q_\ell)$, letting $\mc{S}$ be adapted to $M$ and $T$ be a stratum of $M$, the complex $\Hlp^{\dim(T)}(M|_T)=M|_T[\dim(T)]$ is of weight at most $\dim(T)$.    
    Thus, the sheaf $M|_T$ is of weight at most $0$ and therefore, the sheaf $M$ lies in $\mathrm{Loc}^{w \leqslant 0}_S(\Q_\ell)$.
\end{proof}

\begin{corollary}\label{weightless lisse perverse}
    Let $S$ be a smooth scheme over $\mb{F}_p$. Let $\ell\neq p$ be a prime number. Then, the perverse t-structure of $\mc{D}_{\mathrm{lisse}}(S,\Q_\ell)$ induces a t-structure on $\mc{D}^{w \leqslant 0}_{\mathrm{lisse}}(S,\Q_\ell)$. If $S$ is connected, the heart of this t-structure is $\mathrm{Loc}^{w\leqslant 0}_S(\Q_\ell)[\dim(S)]$. 
\end{corollary}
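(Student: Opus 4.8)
The plan is to deduce this corollary from \Cref{weightless lisse} and \Cref{t-structure perverse lisse}, in the same way that \Cref{t-structure perverse smooth} was deduced from \Cref{t-structure perverse lisse} and \Cref{t-structure ordinaire smooth}. Since $S$ is smooth over $\mb{F}_p$ it is regular, and the function $\delta=\dim$ is a dimension function on $S$, consistent with the convention $\delta(S)=\dim(S)$ fixed at the beginning of this section; I will use this $\delta$ throughout. I would first treat the case where $S$ is connected and then reduce the general case to it.

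For $S$ connected, by \Cref{t-structure perverse lisse} the perverse t-structure of $\mc{D}^b_c(S,\Q_\ell)$ induces on $\mc{D}_{\mathrm{lisse}}(S,\Q_\ell)$ the ordinary t-structure shifted by $\delta(S)=\dim(S)$. By \Cref{weightless lisse}, that ordinary t-structure restricts to a t-structure on the stable subcategory $\mc{D}^{w\leqslant 0}_{\mathrm{lisse}}(S,\Q_\ell)$, with heart $\mathrm{Loc}^{w\leqslant 0}_S(\Q_\ell)$. As $\mc{D}^{w\leqslant 0}_{\mathrm{lisse}}(S,\Q_\ell)$ is closed under shifts, the shift by $\dim(S)$ of a t-structure that restricts to it again restricts to it; this shifted t-structure is the perverse t-structure, and its heart is $\mathrm{Loc}^{w\leqslant 0}_S(\Q_\ell)[\dim(S)]$, which gives the statement in the connected case.

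For general smooth $S$, I would decompose $S=\bigsqcup_{T\in\pi_0(S)}T$ into its finitely many connected components. Then $\mc{D}_{\mathrm{lisse}}(S,\Q_\ell)\simeq\prod_T\mc{D}_{\mathrm{lisse}}(T,\Q_\ell)$, the perverse truncation functors act factorwise (the perverse t-structure being defined pointwise via the $i_x^\ast$ and $i_x^!$), and an object is lisse and weightless exactly when each of its restrictions to the $T$ is, so $\mc{D}^{w\leqslant 0}_{\mathrm{lisse}}(S,\Q_\ell)\simeq\prod_T\mc{D}^{w\leqslant 0}_{\mathrm{lisse}}(T,\Q_\ell)$. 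Applying the connected case to each $T$ yields the result.

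The argument is essentially formal, so I do not expect a genuine obstacle: the only points needing (routine) care are the compatibility of weightlessness with passage to connected components and the remark that shifting a t-structure that restricts to a shift-stable subcategory still restricts. If one prefers to avoid the connected-component bookkeeping, an alternative is to invoke \Cref{keur} directly and check that kernels and cokernels of maps between objects of $\mathrm{Loc}^{w\leqslant 0}_S(\Q_\ell)[\dim(S)]$, computed in $\mathrm{Perv}(S,\Q_\ell)$, stay weightless and lisse — which, as in the proof of \Cref{weightless lisse}, reduces to the Serre-subcategory property of representations of weight $\leqslant 0$.
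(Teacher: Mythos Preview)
Your proposal is correct and matches the paper's intended argument: the corollary is stated there without proof, as an immediate consequence of \Cref{weightless lisse} combined with \Cref{t-structure perverse lisse}, exactly as you spell out. The reduction to connected components you add is routine and compatible with the paper's treatment of the analogous smooth Artin case in \Cref{t-structure perverse smooth non connexe}.
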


If $S$ is a scheme of finite type over $\mb{F}_p$ and $\ell\neq p$ is prime, we now define a t-structure on $\mc{D}^{w\leqslant 0}(S,\Q_\ell)$ by gluing the perverse t-structures over strata. Proceeding as in \Cref{perverse homotopy t-structure induces a t-structure}, we define the following:

\begin{itemize}
    \item A pair $(\mc{S},\mc{L})$ is \emph{admissible} if 
        \begin{itemize}
            \item $\mc{S}$  is a stratification of $S$ with smooth strata everywhere of the same dimension. 
            \item for every stratum $T$ of $\mc{S}$, $\mc{L}(T)$ is a finite set of isomorphism classes of weightless lisse perverse sheaves (see \Cref{weightless lisse perverse}).
        \end{itemize}
    \item If $(\mc{S},\mc{L})$ is an admissible pair and $X$ is an $\mc{S}$-subscheme of $S$, the category $\mc{D}_{\mc{S},\mc{L}}(X,\Q_\ell)$ of \emph{$(\mc{S},\mc{L})$-constructible weightless $\ell$-adic complexes} over $X$ is the full subcategory of the stable category $\mc{D}^{w \leqslant 0}(X,\Q_\ell)$ made of those objects $M$ such that for any stratum $T$ of $\mc{S}$ contained in $X$, the restriction of $M$ to $T$ is a weightless lisse $\ell$-adic complex and its perverse cohomology sheaves are successive extensions of objects whose isomorphism classes lie in $\mc{L}(T)$.
    \item An admissible pair $(\mc{S},\mc{L})$ is \emph{superadmissible} if letting $i\colon T \rar S$ be the immersion of a stratum $T$ of $\mc{S}$ into $S$ and letting $M$ be a perverse sheaf over $T$ whose isomorphism class lies in $\mc{L}(T)$, the complex $\omega^0i_*M$ is $(\mc{S},\mc{L})$-constructible. 
\end{itemize}

As before, an admissible pair $(\mc{S},\mc{L})$ can always be refined into a superadmissible one, replacing \Cref{stratification} with the fact that weightless complexes admit a stratification over which they are weightless and lisse.

If $(\mc{S},\mc{L})$ is a superadmissible pair and  $i\colon U \rar V$ is an immersion between $\mc{S}$-subschemes of $S$, the functors $i_!$, $i^*$, $\omega^0i_*$ and $\omega^0i^!$ preserve $(\mc{S},\mc{L})$-constructibility.

Hence, if $(\mc{S},\mc{L})$ is a superadmissible pair, we can define a t-structure on $(\mc{S},\mc{L})$-constructible objects by gluing the perverse t-structures on the strata.

Any object $M$ of $\mc{D}^{w \leqslant 0}(S,\Q_\ell)$ is $(\mc{S},\mc{L})$-constructible for some superadmissible pair $(\mc{S},\mc{L})$. If $(\mc{S}',\mc{L}')$ refines $(\mc{S},\mc{L})$, the t-structure defined on $\mc{D}_{\mc{S}',\mc{L}'}(X,\Q_\ell)$ induces the t-structure defined on $\mc{D}_{\mc{S},\mc{L}}(X,\Q_\ell)$. Hence, we can non-ambiguously define a t-structure on $\mc{D}^{w \leqslant 0}(S,\Q_\ell)$.

Notice that if $M$ is Artin, we can chose $(\mc{S},\mc{L})$ above to be such that the $\mc{L}(T)$ are smooth Artin for any stratum $T$ of $\mc{S}$. Therefore, our t-structure induces the perverse homotopy t-structure on $\mc{D}^A(S,\Q_\ell)$.

\begin{definition} Let $S$ be a scheme of finite type over $\mb{F}_p$. Let $\ell\neq p$ be a prime number.
\begin{enumerate}
    \item We call \emph{weightless perverse t-structure} the t-structure defined above on $\mc{D}^{w \leqslant 0}(S,\Q_\ell)$. 
    \item We denote by $\mathrm{Perv}^{w \leqslant 0}(S,\Q_\ell)$ its heart and call it the \emph{abelian category of weightless perverse sheaves}.
\end{enumerate} 
\end{definition}
We use the notation $\mathrm{wp}$ to denote the objects related to the weightless perverse t-structure.

By construction, we have:
\begin{proposition}\label{gluing weightless} Let $S$ be a scheme of finite type over $\mb{F}_p$. Let $\ell\neq p$ be a prime number.
\begin{enumerate}
    \item Let $i\colon F\rar S$ be a closed immersion and $j\colon U\rar S$ be the open complement. The sequence $$\mc{D}^{w \leqslant 0}(F,\Q_\ell)\overset{i_*}{\rar}\mc{D}^{w \leqslant 0}(S,\Q_\ell)\overset{j^*}{\rar}\mc{D}^{w \leqslant 0}(U,\Q_\ell)$$ satisfies the axioms of the gluing formalism (see \cite[1.4.3]{bbd} and \Cref{glueing}) and the weightless perverse t-structure on $\mc{D}^{w \leqslant 0}(S,\Q_\ell)$ is obtained by gluing the weightless perverse t-structures of $\mc{D}^{w \leqslant 0}(U,\Q_\ell)$ and $\mc{D}^{w \leqslant 0}(F,\Q_\ell)$ (see \cite[1.4.9]{bbd}).
\item Let $M$ be an object of $\mc{D}^{w \leqslant 0}(S,\Q_\ell)$. Then,
\begin{enumerate}\item $M\geqslant_{\mathrm{wp}} 0$ if and only if there is a stratification of $S$ such that for any stratum $i\colon T\hookrightarrow S$, we have $\omega^0 i^! M \geqslant_{\mathrm{wp}} 0$.
\item $M \leqslant_{\mathrm{wp}} 0$ if and only if there is a stratification of $S$ such that for any stratum $i\colon T\hookrightarrow S$, we have $i^*M \leqslant_{\mathrm{wp}} 0$.
\end{enumerate}
\item The inclusion functor $$\mc{D}^A(S,\Q_\ell)\rar \mc{D}^{w \leqslant 0}(S,\Q_\ell)$$
is t-exact when the left hand side is endowed with the perverse homotopy t-structure and the right hand side is endowed with the weightless perverse t-structure.
\end{enumerate}
\end{proposition}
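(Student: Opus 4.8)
The plan is to treat the three assertions in order, with (1) carrying all the substance and (2), (3) following formally from it. For (1), I would first assemble the recollement: by \Cref{six foncteurs weightless} the stable category $\mc{D}^{w\leqslant 0}(S,\Q_\ell)$ carries the adjunctions $(i^*,i_*)$, $(i_*,\omega^0 i^!)$, $(j_!,j^*)$ and $(j^*,\omega^0 j_*)$, with $i_*$ and $\omega^0 j_*$ fully faithful, and the identities $j^*i_*=0$ together with conservativity of $(i^*,j^*)$ are inherited from $\mc{D}^b_m(-,\Q_\ell)$; this is the gluing data, verified exactly as in \Cref{glueing}. With respect to the weightless perverse t-structures, $j^*$ and $i_*$ are t-exact --- immediate from the stratum-by-stratum definition of that t-structure, compare \Cref{t-adj} and \Cref{t-adj2} --- so \cite[1.4.10]{bbd} produces a glued t-structure. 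To identify it with the weightless perverse t-structure, I would take a superadmissible pair $(\mc{S},\mc{L})$ on $S$, refine it so that $U$ and $F$ are unions of strata, observe that it then restricts to superadmissible pairs on $U$ and on $F$ and that $\mc{D}_{\mc{S},\mc{L}}(S,\Q_\ell)$ is the gluing of $\mc{D}_{\mc{S},\mc{L}}(U,\Q_\ell)$ and $\mc{D}_{\mc{S},\mc{L}}(F,\Q_\ell)$, and use associativity of the gluing of t-structures to see that the iterated gluing of the perverse t-structures over the strata of $S$ is the gluing along $i,j$ of the iterated gluings over $U$ and over $F$. Passing to the colimit/limit over superadmissible pairs finishes (1); then (2) follows from it by noetherian induction on $S$, exactly as \Cref{locality} follows from \Cref{glueing}, by iterating the recollement along the strata of a stratification with smooth strata.

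For (3), the key point is that a smooth Artin perverse sheaf is a weightless lisse perverse sheaf: via \Cref{description des faisceaux l-adiques lisses} it corresponds to a representation of Artin origin, which is strongly of weight $0$ by \Cref{strongly of weight 0}, hence of weight $0$ by \Cref{weight 0}, so it lies in $\mathrm{Loc}^{w\leqslant 0}_S(\Q_\ell)[\dim(S)]$ by \Cref{weightless lisse perverse}. Hence every admissible pair $(\mc{S},\mc{L})$ of the perverse-homotopy construction (those with $\mc{L}(T)$ smooth Artin) is also admissible for the weightless construction. Given an Artin object $M$, I would choose by \Cref{stratification} such a pair making $M$ constructible and refine it --- still with $\mc{L}(T)$ smooth Artin --- to one superadmissible for both constructions simultaneously; this is possible because $\omega^0 i_*N$ is an Artin $\ell$-adic complex when $N$ is a smooth Artin perverse sheaf on a stratum (\Cref{six foncteurs constructibles}, \Cref{corollaire de w<id vs omega0}) and Artin $\ell$-adic complexes are weightless, so for such pairs the two superadmissibility conditions coincide (using $\omega^0 i_*\omega^0\simeq\omega^0 i_*$). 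On the associated category the Artin objects are exactly $\mc{D}_{\mc{S},\mc{L}}(S,\Q_\ell)\cap \mc{D}^A(S,\Q_\ell)$, both t-structures on it are glued from the perverse t-structures on the strata and therefore coincide, and by \Cref{perverse homotopy t-structure induces a t-structure} and the construction of the weightless perverse t-structure they compute, respectively, the perverse homotopy and the weightless perverse truncations of $M$. Since every Artin object lies in such a category, the inclusion $\mc{D}^A(S,\Q_\ell)\rar\mc{D}^{w\leqslant 0}(S,\Q_\ell)$ is t-exact.

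The step I expect to be the main obstacle is the bookkeeping hidden in (1) and (3): verifying that superadmissible pairs restrict compatibly along the open/closed decomposition, so that the constructed t-structure really is the glued one and not merely compatible with it, and that --- in (3) --- one can achieve superadmissibility for the Artin and the weightless constructions at once. Once this is settled, the rest is a formal consequence of the recollement formalism of \cite[1.4]{bbd} and of \Cref{six foncteurs weightless}.
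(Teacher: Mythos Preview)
Your proposal is correct and follows essentially the same route as the paper. The paper is extremely terse here: the proposition is preceded by the words ``By construction, we have:'', because the weightless perverse t-structure was just defined in the preceding paragraphs by gluing perverse t-structures along the strata of a superadmissible pair, and it was noted there that when $M$ is Artin one may choose the pair $(\mc{S},\mc{L})$ with each $\mc{L}(T)$ consisting of smooth Artin perverse sheaves, so that the construction restricts to the perverse homotopy t-structure; your argument for (3) is exactly this observation, made explicit. Your treatment of (1) via associativity of gluing and restriction of superadmissible pairs to $U$ and $F$ is likewise the content hidden behind ``by construction'', and the bookkeeping you flag as the main obstacle is precisely what the paper absorbs into its construction paragraphs and \Cref{six foncteurs weightless}.
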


Proceeding as in \Cref{perv vs hperv}, this implies:
\begin{corollary}\label{perv vs wperv} Let $S$ be a scheme of finite type over $\mb{F}_p$. Let $\ell\neq p$ be a prime number. Let $M$ be a weightless $\ell$-adic complex. Then, 
\begin{enumerate}
\item  The complex $M$ is weightless perverse t-negative if and only if it is perverse t-negative.
\item The complex $M$ be a weightless $\ell$-adic complex. If the complex $M$ is perverse t-positive, then it is weightless perverse t-positive.
\end{enumerate}
\end{corollary}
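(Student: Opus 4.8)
The plan is to follow the proof of \Cref{perv vs hperv}, with \Cref{gluing weightless}(2) in the role of \Cref{locality} and \Cref{weightless lisse perverse} in the role of the smooth Artin computation \Cref{t-structure h-perverse smooth}.

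For part (1), I would start with a weightless $\ell$-adic complex $M$. By construction of the weightless perverse t-structure, $M$ is $(\mc{S},\mc{L})$-constructible for some superadmissible pair $(\mc{S},\mc{L})$, and refining $(\mc{S},\mc{L})$ (harmless, since the weightless perverse t-structure is defined unambiguously) we may arrange that $i_T^*M$ is weightless and lisse for every stratum $T$ of $\mc{S}$. Then I would argue along two lines. On one side, \Cref{gluing weightless}(2)(b) reduces the relation $M\leqslant_{\mathrm{wp}}0$ to the relations $i_T^*M\leqslant_{\mathrm{wp}}0$ over the strata $T$; since each such $T$ is smooth over $\mb{F}_p$ and $i_T^*M$ is weightless and lisse, \Cref{weightless lisse perverse} identifies the weightless perverse t-structure on $\mc{D}^{w\leqslant 0}_{\mathrm{lisse}}(T,\Q_\ell)$ with the restriction of the perverse t-structure, so $i_T^*M\leqslant_{\mathrm{wp}}0$ if and only if $i_T^*M\leqslant_{\mathrm{p}}0$. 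On the other side, perverse t-negativity is by definition a pointwise condition on the stalks $i_x^*(-)$ and every point of $S$ lies in a unique stratum of $\mc{S}$, hence $M\leqslant_{\mathrm{p}}0$ if and only if $i_T^*M\leqslant_{\mathrm{p}}0$ for every stratum $T$. Chaining these equivalences proves (1).

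For part (2), I would reproduce the orthogonality argument of \Cref{perv vs hperv}(2). Assuming $M$ is a weightless $\ell$-adic complex which is perverse t-positive, $M$ lies in the right orthogonal, computed in $\mc{D}^b_m(S,\Q_\ell)$, of the perverse strictly t-negative complexes; since $\mc{D}^{w\leqslant 0}(S,\Q_\ell)$ is a full subcategory of $\mc{D}^b_m(S,\Q_\ell)$, the same orthogonality holds inside $\mc{D}^{w\leqslant 0}(S,\Q_\ell)$. By part (1), applied after a shift, the objects of ${}^{\mathrm{wp}}\mc{D}^{w\leqslant 0}(S,\Q_\ell)^{\leqslant -1}$ are exactly the weightless complexes that are perverse strictly t-negative, so $M$ is right orthogonal to all of them; that is, $M$ is weightless perverse t-positive.

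The only genuinely delicate point is the comparison step in (1): one must check that the gluing description of the weightless perverse t-structure and the pointwise description of the perverse t-structure collapse to the same stratumwise condition. This rests on \Cref{weightless lisse perverse} — concretely, on the fact that weight-$\leqslant 0$ representations of $\hat{\Z}$ form a Serre subcategory of $\Rep(\hat{\Z},\Q_\ell)$ — together with the standard stability of the perverse t-structure under refinement of stratifications. Everything else is bookkeeping already carried out in the preceding subsections.
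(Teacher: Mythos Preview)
Your proposal is correct and follows exactly the route the paper indicates: part (1) is the direct analog of \Cref{perv vs hperv}(1) with \Cref{gluing weightless}(2)(b) and \Cref{weightless lisse perverse} replacing \Cref{locality}(2) and \Cref{t-structure h-perverse smooth}, and part (2) is the orthogonality argument of \Cref{perv vs hperv}(2), with your use of (1) in place of the generator description being the natural adaptation since the weightless perverse t-structure is defined by gluing rather than by generators.
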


\begin{theorem}\label{omega^0 t-exact} Let $S$ be a scheme of finite type over $\mb{F}_p$. Let $\ell\neq p$ be a prime number.

Let $$\iota\colon\mc{D}^{w\leqslant 0}(S,\Q_\ell)\rar \mc{D}^b_m(S,\Q_\ell)$$ be the inclusion. When the left hand side is endowed with the weightless perverse t-structure and the right hand side is endowed with the perverse t-structure, the adjunction $(\iota,\omega^0)$ is a t-adjunction and the functor $\omega^0$ is t-exact.  
\end{theorem}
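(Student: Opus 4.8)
The plan is to deduce the t-adjunction from \Cref{perv vs wperv} and to isolate the right t-exactness of $\omega^0$ as the only real content. First I would record that $\iota$ is right t-exact: if $N$ is a weightless $\ell$-adic complex that is weightless perverse t-negative, then $\iota N=N$ is perverse t-negative by \Cref{perv vs wperv}(1). Since $\iota$ is left adjoint to $\omega^0$, the right adjoint $\omega^0$ is then automatically left t-exact — for $M$ perverse t-positive and $N$ weightless perverse t-negative one has $\pi_0\Map(N,\omega^0M)=\pi_0\Map(\iota N,M)=0$ — so $(\iota,\omega^0)$ is a t-adjunction, and it only remains to prove that $\omega^0$ is right t-exact.

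So let $M$ be perverse t-negative in $\mc{D}^b_m(S,\Q_\ell)$. Since $\omega^0 M=w_{\leqslant\id}M$ is weightless by construction, \Cref{perv vs wperv}(1) reduces the right t-exactness to showing that $w_{\leqslant\id}M$ is itself perverse t-negative. The key local input is that on any scheme $T$ of finite type over $\mb{F}_p$ Morel's weight truncation functors $w_{\leqslant a}$, $w_{>a}$ are perverse right t-exact: given a perverse t-negative $N$ on $T$, the truncation triangle $w_{\leqslant a}N\rar N\rar w_{>a}N$ and the long exact sequence of perverse cohomology sheaves give, using $\Hlp^nN=0$ for $n>0$, an isomorphism $\Hlp^{n-1}(w_{>a}N)\simeq\Hlp^n(w_{\leqslant a}N)$ for $n\geqslant 2$ and a surjection $\Hlp^0(w_{>a}N)\twoheadrightarrow\Hlp^1(w_{\leqslant a}N)$; by Morel's definition the left-hand sheaves are of weight $\leqslant a$ and the right-hand ones of weight $>a$, and a perverse sheaf which is simultaneously of weight $\leqslant a$ and of weight $>a$ vanishes, so $\Hlp^n(w_{\leqslant a}N)=0$ for all $n\geqslant 1$; the statement for $w_{>a}$ follows since $w_{>a}N$ is the cofiber of $w_{\leqslant a}N\rar N$. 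Granting this, I would conclude globally by choosing, via \cite[3.1.3]{nv}, a stratification $\mc{S}$ of $S$ fine enough that $w_{\leqslant\id}M={}^{\mc{S}}w_{\leqslant\id}M$, so that ${}^{\mc{S}}w_{\leqslant\id}$ is glued from the $w_{\leqslant\dim T}$ along $\mc{S}$; then, using that $i_T^*$ is perverse right t-exact for every stratum $i_T\colon T\hookrightarrow S$ (immersions have relative dimension $0$), the stratumwise perverse right t-exactness of Morel's truncations just proved, and the gluing descriptions of the perverse and weightless perverse t-structures (\Cref{gluing weightless}), one gets that $i_T^*w_{\leqslant\id}M$ is perverse t-negative on every stratum — arguing through the triangle $i_T^*w_{\leqslant\id}M\rar i_T^*M\rar i_T^*w_{>\id}M$ and the cofiber criterion — hence $w_{\leqslant\id}M$ is perverse t-negative on $S$. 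This gives $\omega^0$ right t-exact, and combined with the left t-exactness above, $\omega^0$ is t-exact; in particular it restricts to an exact functor $\mathrm{Perv}_m(S,\Q_\ell)\rar\mathrm{Perv}^{w\leqslant 0}(S,\Q_\ell)$.

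The main obstacle is precisely the last, global, step: transporting the stratumwise right t-exactness of Morel's truncations through Nair and Vaish's gluing construction to the single functor $\omega^0=w_{\leqslant\id}$. The delicate point is that for a glued truncation functor $i_T^*$ need not commute with the truncation, so one cannot simply invoke an identity $i_T^*w_{\leqslant\id}M\simeq w_{\leqslant\dim T}(i_T^*M)$ and must instead argue through the truncation triangles while keeping careful track of the characterizations of the negative parts of the perverse, weightless perverse and weight t-structures on each stratum; here the precise statements of \cite[1.4]{bbd}, of \cite[3.1]{nv}, and of \Cref{gluing weightless} are what make the bookkeeping go through. (If one prefers, the perverse right t-exactness of $w_{\leqslant\id}$ can alternatively be cited directly from Morel's thesis \cite{thesemorel} and the gluing results of \cite{nv}, in which case the theorem becomes almost immediate given \Cref{perv vs wperv}.) Everything else is formal.
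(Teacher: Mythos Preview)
Your proposal is correct and follows the same logical skeleton as the paper: use \Cref{perv vs wperv} to get right t-exactness of $\iota$ (hence left t-exactness of $\omega^0$), then establish that $\omega^0=w_{\leqslant\id}$ sends perverse t-negative objects to perverse t-negative objects, and conclude via \Cref{perv vs wperv}(1) again.

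The only difference is in the treatment of the middle step. The paper simply cites \cite[3.2.6]{nv} for the perverse right t-exactness of $w_{\leqslant\id}$, which is precisely the alternative you mention in your final parenthetical remark. Your attempt to reprove this directly has a correct local component (Morel's $w_{\leqslant a}$ is perverse right t-exact on a single stratum, via the weight argument on perverse cohomology sheaves), but the gluing step you flag as ``the main obstacle'' is indeed the real content, and your sketch does not quite close it: from the triangle $i_T^*w_{\leqslant\id}M\rar i_T^*M\rar i_T^*w_{>\id}M$ alone you cannot conclude, because the glued characterization of ${}^{\mc{S}}\mc{D}^{>\id}$ involves $i_T^!$ rather than $i_T^*$, so $i_T^*w_{>\id}M$ has no immediate weight bound. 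Completing this requires an induction on the number of strata using the recollement triangles, which is essentially what \cite[3.2.6]{nv} does. Since you correctly identify this gap and explicitly offer the citation as the clean alternative, your proposal is in substance the paper's proof.
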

\begin{proof} \Cref{perv vs wperv} implies that the functor $\iota$ is right t-exact. Thus, the functor $\omega^0$ is left t-exact. But by \cite[3.2.6]{nv}, the functor $\omega^0$ is perverse right exact, \textit{i.e.} it sends perverse t-negative objects to perverse t-negative objects. But any such object is t-negative with respect to the weightless perverse t-structure using \Cref{perv vs wperv}. Thus, the functor $\omega^0$ is right t-exact and therefore, it is t-exact.
\end{proof}
We can now define the weightless truncation functor over mixed perverse sheaves. 

\begin{definition} Let $S$ be a scheme of finite type over $\mb{F}_p$. Let $\ell\neq p$ be a prime number. We call \emph{weightless truncation} the exact functor induced by $\omega^0$:  $$\omega^0\colon\mathrm{Perv}_m(S,\Q_\ell) \rar \mathrm{Perv}^{w \leqslant 0}(S,\Q_\ell).$$
\end{definition}

Finally, \Cref{omega^0 t-exact} implies an analog of the affine Lefschetz theorem for the perverse homotopy t-structure.
\begin{corollary}\label{lef aff l-adique} Let $S$ be a scheme of finite type over $\mb{F}_p$. Let $\ell\neq p$ be a prime number. Let $j\colon U\rar S$ be an affine open immersion. Then, the functors $$j_!, \omega^0j_*\colon \mc{D}^{smA}(U,\Q_\ell)\rar \mc{D}^A(S,\Q_\ell)$$
are t-exact with respect to the perverse homotopy t-structure on both sides.    
\end{corollary}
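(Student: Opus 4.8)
The plan is to reduce the statement to the classical affine Lefschetz theorem \cite[4.1.1]{bbd} and to translate between the perverse and the perverse homotopy t-structures by means of \Cref{perv vs hperv}. Recall that \cite[4.1.1]{bbd} is available over our base (one substitutes \cite[XV.1.1.2]{travauxgabber} for \cite[XIV.3.1]{sga4} in its proof, as explained in the preliminaries), and that it transfers to $\mc{D}^b_m$ since the perverse t-structure there is induced from that of $\mc{D}^b_c$. The ``formal halves'' require no affineness: as $j$ is an open immersion, hence quasi-finite, the t-adjunction $(j_!,\omega^0 j^!)$ of \Cref{t-adj}(1) shows that $j_!$ is right t-exact for the perverse homotopy t-structure, and the t-adjunction $(j^*,\omega^0 j_*)$ (the case $d=0$ of \Cref{t-adj}(3)) shows that $\omega^0 j_*$ is left t-exact. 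Moreover, by \Cref{six foncteurs constructibles} both $j_!$ and $\omega^0 j_*$ map $\mc{D}^{smA}(U,\Q_\ell)\subseteq \mc{D}^A(U,\Q_\ell)$ into $\mc{D}^A(S,\Q_\ell)$; and since $\mc{D}^{smA}(U,\Q_\ell)\subseteq \mc{D}^{\coh}(U,\Q_\ell)$ while $j_*$ preserves cohomological complexes, \Cref{w<id vs omega0} identifies $\omega^0 j_*$ on this subcategory with $w_{\leqslant\id}\circ j_*$.

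It remains to prove that $j_!$ is left t-exact and that $\omega^0 j_*$ is right t-exact. We may assume $U$ connected; as $U$ is regular, \Cref{t-structure h-perverse smooth} identifies the perverse homotopy t-structure on $\mc{D}^{smA}(U,\Q_\ell)$ with the one induced by the perverse t-structure. Let $M$ be a smooth Artin $\ell$-adic complex over $U$. If $M$ is perverse homotopy t-positive, then it is perverse t-positive over $U$; since $j$ is affine, $j_! M$ is perverse t-positive over $S$ by \cite[4.1.1]{bbd}, and as $j_! M\in\mc{D}^A(S,\Q_\ell)$ it is perverse homotopy t-positive by \Cref{perv vs hperv}(2). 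Dually, if $M$ is perverse homotopy t-negative, then it is perverse t-negative over $U$; since $j$ is affine, $j_* M$ is perverse t-negative over $S$ by \cite[4.1.1]{bbd}, hence so is $\omega^0 j_* M=w_{\leqslant\id}(j_* M)$ by the perverse right t-exactness of $w_{\leqslant\id}$ (\cite[3.2.6]{nv}, see also \Cref{omega^0 t-exact}), and as $\omega^0 j_* M\in\mc{D}^A(S,\Q_\ell)$ it is perverse homotopy t-negative by \Cref{perv vs hperv}(1). Together with the formal halves, this shows that $j_!$ and $\omega^0 j_*$ are t-exact.

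The only genuinely non-formal ingredient is the classical affine Lefschetz theorem itself; all the rest is bookkeeping between t-structures via \Cref{perv vs hperv} and \Cref{t-adj}. The point that needs a little care is that the source t-structure — the perverse homotopy t-structure on $\mc{D}^{smA}(U,\Q_\ell)$ — must be available and coincide with the perverse one, which is exactly what \Cref{t-structure h-perverse smooth} provides and is why one takes $U$ regular. An alternative, working inside the category of weightless $\ell$-adic complexes, uses that $\mc{D}^A\hookrightarrow\mc{D}^{w\leqslant 0}$ is fully faithful and t-exact (\Cref{gluing weightless}(3)), that weightless perverse negativity coincides with perverse negativity (\Cref{perv vs wperv}), and that $\omega^0\colon\mc{D}^b_m\to\mc{D}^{w\leqslant 0}$ is t-exact (\Cref{omega^0 t-exact}); for the left t-exactness of $j_!$ over an arbitrary $U$ this route would, however, still require an affine Lefschetz statement for the weightless perverse t-structure, which is the actual difficulty.
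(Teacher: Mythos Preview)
Your proof is correct. It takes a somewhat different route from the paper's: the paper factors $\omega^0 j_*$ as the composition
\[
\mc{D}^{smA}(U,\Q_\ell)\xrightarrow{\iota}\mc{D}^b_m(U,\Q_\ell)\xrightarrow{j_*}\mc{D}^b_m(S,\Q_\ell)\xrightarrow{\omega^0}\mc{D}^{w\leqslant 0}(S,\Q_\ell),
\]
observes that each arrow is t-exact (via \Cref{t-structure h-perverse smooth}, classical affine Lefschetz, and the full t-exactness of $\omega^0$ from \Cref{omega^0 t-exact}), and then pulls the conclusion back along the t-exact inclusion $\mc{D}^A\hookrightarrow\mc{D}^{w\leqslant 0}$ of \Cref{gluing weightless}(3); the case of $j_!$ is declared similar. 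You instead avoid the weightless perverse t-structure for the main argument: after recording the formal halves from \Cref{t-adj}, you transport between the perverse and perverse homotopy t-structures on $\mc{D}^A(S,\Q_\ell)$ directly via \Cref{perv vs hperv}, invoking only the perverse right t-exactness of $w_{\leqslant\id}$ from \cite[3.2.6]{nv} rather than the full \Cref{omega^0 t-exact}. Your approach is a bit lighter on machinery and makes transparent which half needs affineness and which is formal; the paper's approach has the virtue of presenting $\omega^0 j_*$ as a literal composite of t-exact functors, which is cleaner to quote downstream. Both arguments silently rely on \Cref{t-structure h-perverse smooth}, hence on $U$ being regular, which is not part of the stated hypotheses; your phrase ``as $U$ is regular'' assumes this without justification, and this is really a small imprecision in the corollary as written rather than a defect specific to your argument.
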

\begin{proof}
    The functor $$\omega^0j_*\colon \mc{D}^{smA}(U,\Q_\ell)\rar \mc{D}^{w  \leqslant 0}(S,\Q_\ell)$$ identifies with the composition of t-exact functors:
    $$\begin{tikzcd} \mc{D}^{smA}(U,\Q_\ell)\ar[r,"\iota"] & \mc{D}^b_m(U,\Q_\ell) \ar[r,"j_*"] & \mc{D}^b_m(S,\Q_\ell) \ar[r,"\omega^0"] & \mc{D}^{w  \leqslant 0}(S,\Q_\ell) \\
    \end{tikzcd}$$
    and is therefore t-exact using \Cref{t-structure h-perverse smooth} and \Cref{omega^0 t-exact}. Since the inclusion functor $$\mc{D}^A(S,\Q_\ell)\rar \mc{D}^{w \leqslant 0}(S,\Q_\ell)$$ is t-exact by \Cref{gluing weightless}, the result follows for $\omega^0j_*$. The case of the functor $j_!$ is similar.
\end{proof}

\subsection{Artin intermediate extension and simple Artin homotopy perverse sheaves}
We now show that the abelian category of Artin homotopy perverse sheaves shares some features with the category of perverse sheaves. 

Let $j\colon U\rar S$ be an open immersion of schemes of finite type over $\mb{F}_p$. Recall (see \cite[1.4.22]{bbd}) that the intermediate extension functor $j_{!*}$ is defined as the image of the map $\Hlp^0j_!\rar \Hlp^0 j_*$. In fact, whenever the gluing formalism of \cite[1.4.3]{bbd} is satisfied, \cite[1.4.22]{bbd} provides an intermediate extension functor. Hence, \Cref{perverse homotopy t-structure induces a t-structure} allows us to define an \emph{Artin intermediate extension functor}:
\begin{definition} Let $S$ be a scheme of finite type over $\mb{F}_p$. Let $\ell\neq p$ be a prime number. Let $j\colon U\rar S$ be an open immersion. The \emph{Artin intermediate extension functor} $j_{!*}^A$ is the functor which associates to an Artin homotopy perverse sheaf $M$ over $U$ the image of the map $$\Hlh^0 j_! M\rar\Hlh^0 \omega^0 j_* M.$$ 
\end{definition}

Since the weightless truncation functor and the inclusion of Artin homotopy perverse sheaves into weightless perverse sheaves are exact, the following proposition holds.

\begin{proposition}\label{j!*^A=omega^0j!*} Let $S$ be a scheme of finite type over $\mb{F}_p$. Let $\ell\neq p$ be a prime number. Let $j\colon U\rar S$ be an open immersion. Then, we have $j_{!*}^A=\omega^0j_{!*}$.
\end{proposition}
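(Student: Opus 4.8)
The plan is to transport the whole statement into the category of weightless perverse sheaves, where every functor in sight is exact. By \Cref{gluing weightless}(1) the weightless perverse t-structure satisfies the gluing formalism of \cite[1.4.3]{bbd}, so by \cite[1.4.22]{bbd} there is an intermediate extension functor $j_{!*}\colon \mathrm{Perv}^{w\leqslant 0}(U,\Q_\ell)\to \mathrm{Perv}^{w\leqslant 0}(S,\Q_\ell)$ sending $N$ to the image of the natural map from the top weightless perverse cohomology sheaf of $j_!N$ to the bottom weightless perverse cohomology sheaf of $\omega^0 j_*N$ (here $j_!$ and $\omega^0 j_*$ are the functors of \Cref{six foncteurs weightless}, which are respectively right and left t-exact for the weightless perverse t-structure). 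The proposition will follow once we show, first, that $j_{!*}^A$ is the restriction of $j_{!*}$ along the inclusion of Artin homotopy perverse sheaves, and second, that $\omega^0$ intertwines the ordinary intermediate extension on mixed perverse sheaves with $j_{!*}$.

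For the first point: by \Cref{gluing weightless}(3) the inclusion $\mathrm{Perv}^A(U,\Q_\ell)^\#\hookrightarrow \mathrm{Perv}^{w\leqslant 0}(U,\Q_\ell)$ is induced by a t-exact, fully faithful functor $\mc{D}^A\hookrightarrow \mc{D}^{w\leqslant 0}$; being t-exact it commutes with the cohomology functors of both t-structures, and being exact and fully faithful on hearts it commutes with the formation of images. Moreover it intertwines the gluing data: $j_!$ is literally the same functor on $\mc{D}^A$ and on $\mc{D}^{w\leqslant 0}$, while for $M\in\mc{D}^A(U,\Q_\ell)$ the complex $j_*M$ is cohomological (every Artin complex is cohomological and $\mc{D}^{\coh}$ is stable under $j_*$ by \Cref{Dbbcoh}), so its Artin truncation and its weightless truncation $\omega^0$ agree by \Cref{w<id vs omega0}. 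Hence for $M\in\mathrm{Perv}^A(U,\Q_\ell)^\#$ the object $j_{!*}^A(M)=\mathrm{Im}(\Hlh^0 j_!M\to \Hlh^0\omega^0 j_*M)$ is carried by the inclusion to $j_{!*}(M)$ computed in the weightless perverse world, which in turn lies in the image of $\mathrm{Perv}^A(S,\Q_\ell)^\#$.

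For the second point, let $P\in\mathrm{Perv}_m(U,\Q_\ell)$. By \Cref{omega^0 t-exact} the functor $\omega^0$ is t-exact for the perverse and the weightless perverse t-structures, hence commutes with the corresponding cohomology functors and is exact, so image-preserving, on hearts. The weightless $j_*$ is by definition $\omega^0 j_*$ (\Cref{six foncteurs weightless}); and since $j_!$ extends by zero it preserves the negative part of Nair and Vaish's weightless t-structure (\Cref{six foncteurs weightless}) and also its positive part (using $i^!j_!=0$ and the gluing description of positivity), whence $\omega^0 j_!\simeq j_!\omega^0$. Applying the exact functor $\omega^0$ to $j_{!*}P=\mathrm{Im}(\Hlp^0 j_!P\to \Hlp^0 j_*P)$ and using these compatibilities yields $\omega^0(j_{!*}P)\simeq j_{!*}(\omega^0 P)$. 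Since any $M\in\mathrm{Perv}^A(U,\Q_\ell)^\#$ is a weightless complex, $\omega^0 M=M$, so combining the two points gives $\omega^0 j_{!*}(M)=j_{!*}(\omega^0 M)=j_{!*}(M)=j_{!*}^A(M)$; when $M$ is moreover a genuine perverse sheaf over $U$ the left-hand side is $\omega^0$ of the ordinary intermediate extension, and in general $\omega^0 j_{!*}$ is to be read through the weightless intermediate extension. The only real work is the bookkeeping in the second point — reconciling the three incarnations of $j_*$ and of the truncation functors and verifying the compatibility $\omega^0 j_!\simeq j_!\omega^0$ together with the fact that $\omega^0$ sends the transition map $j_!\to\omega^0 j_*$ to the corresponding one; granting these, the statement is formal, since by \cite[1.4.22]{bbd} the intermediate extension is just the image of one morphism between hearts, and $\omega^0$ and the inclusion of Artin homotopy perverse sheaves are exact.
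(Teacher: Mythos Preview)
Your proof is correct and takes essentially the same approach as the paper: the paper's one-sentence proof simply says ``since the weightless truncation functor and the inclusion of Artin homotopy perverse sheaves into weightless perverse sheaves are exact, the following proposition holds,'' and your argument is a careful unpacking of exactly this, passing through the weightless intermediate extension as an intermediary. Your added verifications (that $\omega^0 j_!\simeq j_!\omega^0$ via the gluing description of $\mc{D}^{>\id}$ and $i^!j_!=0$, and that the transition maps match up) are precisely the bookkeeping the paper leaves implicit.
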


Using \cite[1.4.23]{bbd}, we have the following description of the intermediate extension functor (compare with \cite[2.1.9]{bbd}):
\begin{proposition}\label{carext} Let $S$ be a scheme of finite type over $\mb{F}_p$. Let $\ell\neq p$ be a prime number. Let $i\colon F\rar S$ be a closed immersion and $j\colon U\rar S$ be the open complementary immersion. Let $M$ be an Artin homotopy perverse sheaf over $U$. Then, the complex $j_{!*}^A (M)$ over $S$ is the only extension $P$ of $M$ to $S$ such that $i^* P<0$ and $\omega^0 i^! P>0$.
\end{proposition}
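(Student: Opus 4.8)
The plan is to deduce this from the abstract characterisation of the intermediate extension functor \cite[1.4.23]{bbd}, applied inside the gluing situation of \Cref{perverse homotopy t-structure induces a t-structure}(2). Recall that \cite[1.4.23]{bbd} states: in any situation satisfying the gluing axioms \cite[1.4.3]{bbd}, if $M$ is an object of the heart over $U$, then $j_{!*}(M)$ is the unique extension $P$ of $M$ whose restriction to $F$ satisfies $i^*P$ in strictly negative degrees and $i^!P$ in strictly positive degrees (and conversely such a $P$, if it exists, is $j_{!*}(M)$). So first I would invoke \Cref{perverse homotopy t-structure induces a t-structure}(2), which says precisely that the sequence $\mc{D}^A(F,\Q_\ell)\overset{i_*}{\rar}\mc{D}^A(S,\Q_\ell)\overset{j^*}{\rar}\mc{D}^A(U,\Q_\ell)$ satisfies the gluing axioms of \cite[1.4.3]{bbd} with the glued perverse homotopy t-structure.

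The key point is to identify, in this Artin setting, the right adjoint functor to $i_*$ (playing the role of ``$i^!$'' in \cite[1.4.3]{bbd}) and the left adjoint (playing the role of ``$i^*$''). By \Cref{glueing} and \Cref{six foncteurs constructibles}, the relevant functors on $\mc{D}^A_{\In}(-,\Q_\ell)$ are $i^*$ (genuine pullback, which preserves Artin complexes) and $\omega^0 i^!$. So the abstract statement of \cite[1.4.23]{bbd} applied to the perverse homotopy t-structure reads: $j^A_{!*}(M)$ is the unique extension $P$ of $M$ to $S$ with $i^*P$ strictly perverse-homotopy-negative and $\omega^0 i^! P$ strictly perverse-homotopy-positive. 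By \Cref{gluing weightless}(3) the inclusion $\mc{D}^A(S,\Q_\ell)\rar \mc{D}^{w\leqslant 0}(S,\Q_\ell)$ is t-exact for the (perverse homotopy, weightless perverse) pair, and the analogous fact holds over $F$; moreover by \Cref{perv vs wperv} (resp.\ the smooth Artin comparison) these weightless perverse t-structures agree with the perverse t-structure on negative and positive objects respectively, so the conditions ``$i^*P<0$'' and ``$\omega^0 i^! P>0$'' can be phrased with respect to the genuine perverse t-structure, which is the statement to be proved.

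Concretely the steps are: (1) quote \cite[1.4.23]{bbd} in the gluing situation of \Cref{perverse homotopy t-structure induces a t-structure}(2) to get that $P=j^A_{!*}(M)$ is characterised by $i^*P$ perverse-homotopy $<0$ and $\omega^0 i^! P$ perverse-homotopy $>0$; (2) use \Cref{gluing weightless}(3) (over $F$) together with \Cref{perv vs wperv}(1) to translate ``$i^*P$ perverse-homotopy $<0$'' into ``$i^*P$ perverse $<0$'' — note $i^*P$ is an Artin complex over $F$ by \Cref{6 foncteurs}, so the comparison of t-structures applies; (3) similarly use \Cref{perv vs hperv}(2) or \Cref{perv vs wperv}(2) to see that $\omega^0 i^! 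P$ perverse-homotopy $>0$ is implied by $\omega^0 i^! P$ perverse $>0$, and conversely since the perverse homotopy t-structure on Artin complexes over $F$ is final among t-structures making the inclusion into $\mc{D}^b_c(F,\Q_\ell)$ right t-exact, the positivity conditions match up; (4) conclude the two characterisations coincide, hence $j^A_{!*}(M)=P$ is the unique extension with $i^*P<0$ and $\omega^0 i^!P>0$ with respect to the perverse t-structure.

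The main obstacle I anticipate is step (3): making precise the comparison between ``perverse-homotopy-positive'' and ``perverse-positive'' for the object $\omega^0 i^! P$, since $i^! P$ itself is only weightless (or Ind-Artin) rather than constructible Artin, and one genuinely needs the weightless intermediary. The clean way around this is to perform the whole argument one level up, inside $\mc{D}^{w\leqslant 0}(F,\Q_\ell)$ with the weightless perverse t-structure — there \Cref{perv vs wperv} gives both directions of the comparison with the perverse t-structure — and then restrict to Artin objects using the t-exactness of \Cref{gluing weightless}(3). I expect no further difficulty: once the right adjoints are correctly identified and the t-exact comparison functors are in place, the statement is a formal consequence of \cite[1.4.23]{bbd}.
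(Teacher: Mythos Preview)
Your step (1) is exactly the paper's proof: apply \cite[1.4.23]{bbd} to the gluing situation of \Cref{perverse homotopy t-structure induces a t-structure}(2), where the role of ``$i^!$'' is played by $\omega^0 i^!$. That is all the paper does.

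The extra steps (2)--(4) arise from a misreading of the proposition: the conditions ``$i^*P<0$'' and ``$\omega^0 i^!P>0$'' are meant with respect to the perverse homotopy t-structure on $\mc{D}^A(F,\Q_\ell)$, not the perverse t-structure on $\mc{D}^b_c(F,\Q_\ell)$. This is the ambient t-structure throughout the section, and it is how the proposition is used immediately afterwards (the lemma following \Cref{simple} checks that $i^*F$ is ``in degree at most $\dim(F)-\dim(V)<0$ with respect to the perverse homotopy t-structure''). So no translation is needed.

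Your attempted translation also has a genuine gap in step (3). You want both implications between ``$\omega^0 i^!P$ perverse-homotopy $>0$'' and ``$\omega^0 i^!P$ perverse $>0$''. \Cref{perv vs hperv}(2) gives perverse-positive $\Rightarrow$ perverse-homotopy-positive, but the converse is not available in general: by \Cref{perv vs hperv}(1) the \emph{negative} parts of the two t-structures agree on $\mc{D}^A(F,\Q_\ell)$, not the positive parts, and the two t-structures coincide only when the perverse t-structure already restricts to Artin complexes (\Cref{perv vs hperv}(3)), which is precisely what fails in general (\Cref{exemple dim 3}). The ``finality'' remark does not rescue this: being final for right t-exactness says nothing about the positive cones matching. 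So had the proposition actually been about the perverse t-structure, your argument would not close.
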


The following result is similar to \cite[4.3.1]{bbd}.

\begin{proposition}\label{simple} Let $S$ be a scheme of finite type over $\mb{F}_p$. Let $\ell\neq p$ be a prime number. 
\begin{enumerate}
\item The abelian category of Artin homotopy perverse sheaves over $S$ with coefficients in $\Q_\ell$ is artinian and noetherian: every object is of finite length.
\item If $j\colon V\hookrightarrow S$ is the inclusion of a smooth subscheme and if $L$ is an irreducible object of $\Sh^{smA}(V,\Q_\ell)$, then the Artin homotopy perverse sheaf $j_{!*}^A(L[\dim(V)])$ is simple. Every simple Artin perverse sheaf over $S$ is obtained in this way.
\end{enumerate}
\end{proposition}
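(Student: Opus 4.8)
The plan is to run the Noetherian induction on $S$ (which we may take reduced) that underlies \cite[4.3.1]{bbd}, transported to the perverse homotopy t-structure via the gluing formalism of \cite[1.4.3]{bbd} --- available here by \Cref{perverse homotopy t-structure induces a t-structure} --- and via the identity $j_{!*}^A=\omega^0 j_{!*}$ (\Cref{j!*^A=omega^0j!*}) together with the characterization of the Artin intermediate extension in \Cref{carext}. The first thing I would record, independently of the induction, is the simplicity claim in (2). Factoring a locally closed immersion $V\hookrightarrow S$ as an open immersion $V\hookrightarrow\overline V$ followed by the closed immersion $\overline V\hookrightarrow S$, and using that $(\overline V\hookrightarrow S)_*$ is t-exact, fully faithful and carries $(V\hookrightarrow\overline V)_{!*}^A$ to $(V\hookrightarrow S)_{!*}^A$, it suffices to treat $V$ open in $S$ (and, after restricting to the connected component on which $L$ lives, connected). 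By \Cref{t-structure h-perverse smooth} the complex $L[\dim V]$ is then a simple object of $\mathrm{Perv}^{smA}(V,\Q_\ell)$, and \Cref{carext} shows that $P:=j_{!*}^A(L[\dim V])$ has neither a nonzero subobject nor a nonzero quotient supported on $S\setminus V$; a short diagram chase (any nonzero $N\subseteq P$ has $j^*N\neq 0$, hence $j^*N=L[\dim V]$ by simplicity of $L$, hence $P/N$ is supported on $S\setminus V$ and so vanishes) then gives that $P$ is simple.

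Next I would prove (1) by Noetherian induction, assuming finite length holds on all proper closed subschemes. Given an Artin homotopy perverse sheaf $M$, \Cref{stratification} and \Cref{reduction affine} provide a dense affine open immersion $j\colon U\hookrightarrow S$ with $U$ regular and $j^*M$ a smooth Artin homotopy perverse sheaf over $U$; write $i\colon F\hookrightarrow S$ for the complement. Since $\Sh^{smA}(U,\Q_\ell)$ consists of finite-type $\Q_\ell$-representations it is a finite-length category, so $j^*M$ admits a finite composition series with simple subquotients $L_k[\dim U]$. Affine Lefschetz for the perverse homotopy t-structure (\Cref{lef aff l-adique}) makes $j_!$ t-exact, so $j_!j^*M$ is homotopy perverse and carries a finite filtration with graded pieces $j_!(L_k[\dim U])$; each of these surjects onto the simple object $j_{!*}^A(L_k[\dim U])$ (by the step just done) with kernel killed by $j^*$, hence of the form $i_*(-)$ and of finite length by induction on $F$. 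Thus $j_!j^*M$ is of finite length, and the four-term exact sequence
$$0\to i_*\Hlh^{-1}i^*M\to \Hlh^0 j_!j^*M\to M\to i_*\Hlh^0 i^*M\to 0$$
furnished by the recollement (as in \cite[1.4.24]{bbd}), whose outer terms are of finite length by induction on $F$, forces $M$ to have finite length.

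Finally I would classify the simple objects, again by Noetherian induction. Let $M$ be a simple Artin homotopy perverse sheaf over $S$ and choose $j\colon U\hookrightarrow S$ as above. If $j^*M=0$ then $M=i_*i^*M$ with $i^*M$ simple over $F$ (as $i_*$ is exact and fully faithful), and I conclude by the induction hypothesis together with the transitivity $i_*\circ(\cdot)_{!*}^A=(\cdot)_{!*}^A$ along the composite of immersions. If $j^*M\neq 0$, a diagram chase using the simplicity of $M$ and the exactness of $j^*$ and of $\Hlh^0 j_!=j_!$ shows that $j^*M$ is already simple in $\mathrm{Perv}^{smA}(U,\Q_\ell)$; restricting to the connected component $V$ of $U$ on which it is supported gives an open immersion $h\colon V\hookrightarrow S$ with $h^*M=L[\dim V]$ for a simple smooth Artin $\ell$-adic sheaf $L$ (\Cref{t-structure h-perverse smooth}). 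Simplicity of $M$ rules out a nonzero sub- or quotient object supported on $S\setminus V$, so the two strict inequalities of \Cref{carext} hold for $M$, and therefore $M=h_{!*}^A(L[\dim V])$.

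I expect the main obstacle to be the middle step of the classification --- showing $j^*M$ is simple when $M$ is --- because $\mathrm{Perv}^{smA}(U,\Q_\ell)=\Rep^A(\pi_1^{\mathrm{pro\acute{e}t}}(U),\Q_\ell)^{*}$ is not semisimple, so one cannot simply split a subobject off and must instead track a subobject of $j^*M$ down through $j_!$ into $M$ and back up through $j^*$, using that $\Hlh^0 j_!j^*M$ surjects onto $M$. A second delicate point is the bookkeeping that avoids circularity: the simplicity of $j_{!*}^A(L_k[\dim U])$ is invoked in the proof of (1) at the \emph{same} dimension as $S$, which is precisely why that simplicity must be established first and independently of the Noetherian induction.
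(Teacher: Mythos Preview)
Your overall architecture --- Noetherian induction, the gluing formalism of \Cref{perverse homotopy t-structure induces a t-structure}, and the characterization in \Cref{carext} --- is exactly the paper's, which simply says ``same as \cite[4.3.1]{bbd}, replacing \cite[4.3.2, 4.3.3]{bbd} by the following lemmas''. The gap in your write-up is precisely where those two lemmas sit.

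In your simplicity argument for $P=j_{!*}^A(L[\dim V])$ you take a nonzero subobject $N\subseteq P$, observe $j^*N\neq 0$, and conclude $j^*N=L[\dim V]$ ``by simplicity of $L$''. But $j^*N$ is a subobject of $L[\dim V]$ in $\mathrm{Perv}^A(V,\Q_\ell)^\#$, and there is no reason for $j^*N$ to be smooth Artin. What you need is that $L[\dim V]$ is simple in the \emph{full} category $\mathrm{Perv}^A(V,\Q_\ell)^\#$; you have only asserted simplicity in $\mathrm{Perv}^{smA}(V,\Q_\ell)$. The same issue recurs in your finite-length argument, where you invoke ``the simple object $j_{!*}^A(L_k[\dim U])$'' before having secured simplicity in the right category.

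The fix is exactly what the paper isolates as two lemmas. First, for $V$ smooth connected and $L$ irreducible smooth Artin, one shows $L[\dim V]=(j')_{!*}^A\bigl((j')^*L[\dim V]\bigr)$ for \emph{every} open immersion $j'\colon U\hookrightarrow V$; the key input here is that $\omega^0 i^!$ vanishes on $\mc{D}^{smA}(V,\Q_\ell)$ when $V$ is smooth (see \Cref{EC_X 0} and the proof of \cite[4.1.1]{nv}), so both conditions of \Cref{carext} hold automatically. Second, one deduces as in \cite[4.3.3]{bbd} that $L[\dim V]$ is simple in $\mathrm{Perv}^A(V,\Q_\ell)^\#$: given $0\neq N\subseteq L[\dim V]$, restrict to an open $U$ on which $N$ becomes smooth Artin (\Cref{stratification}), use irreducibility of $L|_U$ (via surjectivity of $\pi_1^{\et}(U)\to\pi_1^{\et}(V)$) to force $(j')^*N\in\{0,L|_U[\dim V]\}$, and conclude via the first lemma. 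Once this is in place, your diagram chases go through verbatim.

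Incidentally, the step you flag as the ``main obstacle'' --- that $j^*M$ is simple when $M$ is simple with $j^*M\neq 0$ --- is not actually problematic and does not require semisimplicity of $\Rep^A(\pi,\Q_\ell)^*$: a nonzero proper subobject $N'\subsetneq j^*M$ in $\mathrm{Perv}^A(U,\Q_\ell)^\#$ gives, via the image of $j_!N'\to M$, a nonzero proper subobject of $M$ (its restriction to $U$ is $N'$), contradicting simplicity.
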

\begin{proof} The proof is the same as in \cite[4.3.1]{bbd} replacing \cite[4.3.2, 4.3.3]{bbd} with the following lemmas.
\end{proof}
\begin{lemma} Let $V$ be a smooth connected $\mb{F}_p$-scheme. Let $\ell\neq p$ be a prime number. Let $L$ be an irreducible object of $\Sh^{smA}(V,\Q_\ell)$. Then, if $F=L[\dim(V)]$, for any open immersion $j\colon U\hookrightarrow V$, we have $F=j_{!*}^A j^*F$.
\end{lemma}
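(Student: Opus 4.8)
The plan is to reduce the statement to the characterisation of the Artin intermediate extension given in \Cref{carext}. We may assume $U\neq\varnothing$; as $V$ is smooth and connected, hence irreducible, $U$ is then dense. Write $d=\dim(V)$ and let $i\colon Z\rar V$ be the reduced closed immersion complementary to $j$, so $\dim(Z)<d$. Since $j^*L$ is a smooth Artin $\ell$-adic sheaf over the smooth $d$-dimensional scheme $U$, the complex $j^*F=(j^*L)[d]$ is a smooth Artin perverse sheaf, in particular an Artin homotopy perverse sheaf, so $j_{!*}^A(j^*F)$ is defined; and $F$ visibly restricts to $j^*F$ over $U$. Hence by \Cref{carext} it will be enough to check the support condition $i^*F<_{\mathrm{hp}}0$ and the cosupport condition $\omega^0i^!F>_{\mathrm{hp}}0$.

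For the support condition, I would choose a stratification of $Z$ by smooth connected locally closed subschemes and restrict $i^*F$ to each stratum, using the gluing/locality description of the perverse homotopy t-structure (\Cref{glueing}, \Cref{locality}). For a stratum $i_T'\colon T\rar V$ of dimension $e<d$, the sheaf $(i_T')^*L$ is smooth Artin on the regular connected scheme $T$ with $\delta(T)=e$, so it lies in perverse degree $e$ by \Cref{t-structure perverse smooth}; thus $(i_T')^*F=(i_T')^*L[d]$ lies in perverse degree $e-d\leqslant-1$, and by \Cref{t-structure h-perverse smooth} and \Cref{gluing weightless} this is $<_{\mathrm{hp}}0$. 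Running over all strata then gives $i^*F<_{\mathrm{hp}}0$.

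For the cosupport condition I would first observe that $F=L[d]$ is smooth Artin, hence cohomological, so $i^!F$ is cohomological (a closed immersion is finite, hence quasi-finite; see Section~\ref{Dbbcoh}) and $\omega^0i^!F$ is Artin by \Cref{corollaire de w<id vs omega0}. With the notation above and $c=d-e\geqslant1$, absolute purity \cite[XVI.3.1]{travauxgabber} applied to the regular pair $T\hookrightarrow V$ (the strata being smooth over $\mb{F}_p$, hence regular of pure codimension $c$ in $V$) gives $(i_T')^!L=(i_T')^*L(-c)[-2c]$, so that $(i_T')^!F=\bigl((i_T')^*L\bigr)(-c)[d-2c]$ lies in perverse degree $c\geqslant1$ in $\mc{D}^b_m(T,\Q_\ell)$. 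Since $\omega^0\colon\mc{D}^b_m(T,\Q_\ell)\rar\mc{D}^{w\leqslant 0}(T,\Q_\ell)$ is t-exact for the perverse and weightless perverse t-structures (\Cref{omega^0 t-exact}), and since $\omega^0i_T^!\omega^0\simeq\omega^0i_T^!$ for the quasi-finite $i_T$ (as in the proof of \Cref{perverse homotopy t-structure induces a t-structure}) while $\omega^0$ agrees with $w_{\leqslant\id}$ on cohomological complexes (\Cref{w<id vs omega0}), the restriction of $\omega^0i^!F$ to $T$ is $\omega^0\bigl((i_T')^!F\bigr)$, which is Artin and lies in weightless perverse degree $c\geqslant1$, hence in perverse homotopy degree $\geqslant1$ by \Cref{gluing weightless}. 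Running over all strata and using \Cref{locality} gives $\omega^0i^!F>_{\mathrm{hp}}0$, and then \Cref{carext} yields $F=j_{!*}^A(j^*F)$.

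There is no deep obstacle here; the delicate points are bookkeeping ones. I expect the step requiring the most care to be transporting the degree estimates through $\omega^0$ and between the weightless perverse and the perverse homotopy t-structures stratum by stratum (together with the harmless degree shifts needed to pass from "$\leqslant_{\mathrm{hp}}0$" to "$<_{\mathrm{hp}}0$" and from "$\geqslant_{\mathrm{hp}}0$" to "$>_{\mathrm{hp}}0$"), which is precisely what the locality statement, the t-exactness of $\omega^0$ and the identity $\omega^0i^!\omega^0\simeq\omega^0i^!$ are designed to handle; and the verification that absolute purity applies on each stratum, which works because the stratification of $Z$ can be taken with strata smooth over $\mb{F}_p$.
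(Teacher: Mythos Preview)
Your proof is correct and follows the same overall strategy as the paper: both reduce to the characterisation in \Cref{carext} and verify the support and cosupport conditions separately. The support argument is essentially identical (the paper just says $i^*L$ is smooth Artin on the closed complement, hence in degree at most $\dim(Z)<d$, without stratifying explicitly).

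The cosupport argument, however, differs. The paper dispatches it in one line by invoking the stronger fact (from \Cref{EC_X 0}, ultimately \cite[4.1.1]{nv}) that $\omega^0 i^!$ \emph{vanishes} on $\mc{D}^{smA}(V,\Q_\ell)$ when $V$ is regular and $i$ has dense complement. You instead compute $(i_T')^!F$ on each smooth stratum via absolute purity and push the resulting degree estimate through $\omega^0$ using its t-exactness (\Cref{omega^0 t-exact}) and the compatibility $\omega^0 i_T^!\omega^0\simeq\omega^0 i_T^!$. This is more work but more self-contained: you do not rely on the Nair--Vaish vanishing, and the machinery you use (absolute purity, t-exactness of $\omega^0$, \Cref{gluing weightless}, \Cref{locality}) is all developed earlier in the paper. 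The paper's route is terser; yours makes the mechanism more visible.
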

\begin{proof} We use \Cref{carext}. Let $i\colon F\rar S$ be the reduced complementary closed immersion of $j$. Then, the $\ell$-adic sheaf $i^*L$ over $F$ is smooth Artin. It is therefore in degree at most $\dim(F)$ with respect to the perverse homotopy t-structure. Thus, the $\ell$-adic complex $i^*F$ is in degree at most $\dim(F)-\dim(V)<0$.

Finally, the functor $\omega^0 i^!$ vanishes over $\mc{D}^{smA}(V,\Q_\ell)$ since $V$ is smooth (see the proof of \cite[4.1.1]{nv}).
\end{proof}

\begin{lemma} Let $V$ be a smooth connected $\mb{F}_p$-scheme. Let $\ell\neq p$ be a prime number. Let $L$ be a simple object of $\Sh^{smA}(V,\Q_\ell)$. Then, the Artin homotopy perverse sheaf $L[\dim(V)]$ is simple.
\end{lemma}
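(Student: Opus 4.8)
The plan is to show directly that $L[\dim V]$, which lies in $\mathrm{Perv}^A(V,\Q_\ell)^\#$ by \Cref{t-structure h-perverse smooth}, admits no nonzero proper subobject; since $L\neq 0$, this gives simplicity. So suppose $0\to G\to L[\dim V]\to \mathcal{G}\to 0$ is exact in $\mathrm{Perv}^A(V,\Q_\ell)^\#$ with $G\neq 0$; the goal is to force $\mathcal{G}=0$.

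First I would spread out. By \Cref{ouvert dense lisse} there is a dense --- hence connected, as $V$ is irreducible --- open immersion $j\colon U\hookrightarrow V$ over which both $G$ and $\mathcal{G}$ restrict to smooth Artin complexes. Since $j^*$ is t-exact for the perverse homotopy t-structure (\Cref{t-adj2}), applying it gives an exact sequence $0\to j^*G\to (L|_U)[\dim V]\to j^*\mathcal{G}\to 0$ with smooth Artin terms, which via \Cref{t-structure h-perverse smooth} and \Cref{description des faisceaux l-adiques lisses} becomes a short exact sequence $0\to G'\to L|_U\to \mathcal{G}'\to 0$ in $\Rep^A(\pi_1^{\mathrm{pro\acute{e}t}}(U),\Q_\ell)^*$. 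Because $V$ (hence $U$) is regular we have $\pi_1^{\mathrm{pro\acute{e}t}}=\pi_1^{\et}$ by \cite[7.4.10]{bhatt-scholze}, and $\pi_1^{\et}(U)\to\pi_1^{\et}(V)$ is surjective by \cite[V.8.2]{sga1} since $V$ is normal and $U$ is dense; hence $L|_U$ is still a simple representation, so either $G'=0$ or $G'=L|_U$.

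Let $i\colon Z\hookrightarrow V$ be the reduced closed complement of $U$, so $\dim Z<\dim V$. If $G'=0$, then $j^*G=0$ and the gluing formalism of \Cref{glueing} gives $G\simeq i_*(i^*G)$ with $N:=i^*G$ an Artin homotopy perverse sheaf on $Z$. Using successively the adjunction $(i_!,\omega^0 i^!)$ of \Cref{six foncteurs constructibles} and the fact that $\omega^0$ is right adjoint to $\mc{D}^A(Z)\hookrightarrow\mc{D}^{\coh}(Z)$,
$$\Hom(G,L[\dim V])=\Hom_{\mc{D}^A(Z)}\!\big(N,\omega^0 i^!(L[\dim V])\big)=\Hom_{\mc{D}^b_c(Z)}\!\big(N,i^!(L[\dim V])\big).$$
Here $N$ is perverse homotopy t-negative, hence perverse t-negative by \Cref{perv vs hperv}, while an absolute purity computation exactly as in the proof of \Cref{t-structure perverse smooth} (for $x\in Z$ one has $i_x^!L=i_x^*L(-c)[-2c]$ with $c=\codim_V(x)\geqslant 1$) shows $i^!(L[\dim V])$ lies in ${}^{\mathrm{p}}\mc{D}^{\geqslant 1}(Z,\Q_\ell)$. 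So the group above vanishes, contradicting that $G\hookrightarrow L[\dim V]$ is a nonzero morphism. Therefore $G'=L|_U$, i.e. $\mathcal{G}'=0$, i.e. $j^*\mathcal{G}=0$; by gluing again $\mathcal{G}\simeq i_*(i^*\mathcal{G})$ with $N':=i^*\mathcal{G}$ a homotopy perverse sheaf on $Z$. This time the adjunction $(i^*,i_*)$ gives $\Hom(L[\dim V],\mathcal{G})=\Hom_{\mc{D}^A(Z)}\!\big((i^*L)[\dim V],N'\big)$; stratifying $Z$ into smooth connected locally closed pieces $T$ and using \Cref{t-structure h-perverse smooth} and \Cref{locality}, the restriction of $(i^*L)[\dim V]$ to each $T$ is the smooth Artin sheaf $(i^*L)|_T$ shifted by $\dim V$, i.e. by $\dim V-\dim T\geqslant \dim V-\dim Z\geqslant 1$ beyond its natural perverse homotopy position, so $(i^*L)[\dim V]$ is perverse homotopy t-negative of degree $\leqslant -1$. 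As $N'$ sits in degree $0$, this Hom-group also vanishes, whence $\mathcal{G}=0$ and $G=L[\dim V]$.

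I expect the delicate step to be the perversity estimate on $i^!(L[\dim V])$ over the possibly singular and possibly non-equidimensional closed subscheme $Z$: it must be carried out pointwise, using absolute purity on the regular ambient scheme $V$ (reusing the computation from \Cref{t-structure perverse smooth}) together with the observation that every point of $Z$ has codimension at least $1$ in $V$. It is worth noting that the argument deliberately avoids Verdier duality --- which need not preserve Artin complexes --- by obtaining the ``no quotient supported on $Z$'' half symmetrically from the adjunction $(i^*,i_*)$ and shift bookkeeping, rather than by dualizing the ``no subobject'' half.
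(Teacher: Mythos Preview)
Your proof is correct and follows essentially the same approach as the paper, which simply cites \cite[4.3.3]{bbd}: you have carried out the standard BBD argument (spread out to an open where everything is lisse, use simplicity of $L|_U$, then rule out sub/quotients supported on the complement via perversity estimates) and adapted it carefully to the perverse homotopy t-structure, in particular threading the $\omega^0$ adjoint through the $(i_*,\omega^0 i^!)$ step. The computations with absolute purity and with \Cref{locality} are exactly what is needed.
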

\begin{proof}The proof is exactly the same as \cite[4.3.3]{bbd}.
\end{proof}

\begin{proposition}\label{ECX} Let $X$ be a scheme of finite type over $\mb{F}_p$. Let $\ell\neq p$ be a prime number. Recall the construction of the weightless intersection complex $EC_X$ of Vaish and Nair (see \Cref{EC_X 0}). Let $j\colon U\rar X$ be an open immersion with $U$ regular. Then, we have 
\begin{enumerate} \item $EC_X=j_{!*}^A \Q_{\ell,U}[\dim(U)]$.
\item $EC_X$ is a simple Artin homotopy perverse sheaf.
\item If $X$ is a surface, $EC_X$ is an Artin perverse sheaf; in particular, it is a perverse sheaf.
\end{enumerate}

\end{proposition}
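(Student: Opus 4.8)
The plan is to deduce all three assertions from results already in place, the work being essentially bookkeeping. We may assume $X$ reduced, and, for (1) and (2), we take $U$ to be a \emph{dense} regular open subscheme, so that $EC_X$ is the weightless intersection complex recalled in \Cref{EC_X 0}. Since $X$ is of finite type over the perfect field $\mb{F}_p$, such a $U$ is smooth; hence $\Q_{\ell,U}$ is a smooth Artin $\ell$-adic sheaf and, by \Cref{t-structure h-perverse smooth}, the complex $\Q_{\ell,U}[\dim(U)]$ lies in the heart of the perverse homotopy t-structure on $\mc{D}^{smA}(U,\Q_\ell)$. In particular it is an Artin homotopy perverse sheaf over $U$, so that $j_{!*}^A$ may be applied to it.

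For (1), \Cref{j!*^A=omega^0j!*} gives $j_{!*}^A(\Q_{\ell,U}[\dim(U)]) = \omega^0 j_{!*}(\Q_{\ell,U}[\dim(U)])$, where the right-hand $j_{!*}$ denotes the intermediate extension on mixed perverse sheaves. As $U$ is smooth and dense, $j_{!*}(\Q_{\ell,U}[\dim(U)])$ is by definition the intersection complex $IC_X$, so the right-hand side is $\omega^0 IC_X = EC_X$ by the construction of $EC_X$ (see \Cref{EC_X 0} and \cite{nv}).

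For (2), I would first reduce to the case where $X$ is irreducible: a dense regular open of $X$ breaks up along the top-dimensional irreducible components, and, $j_{!*}^A$ being additive, this decomposition is transported to $EC_X$, so it suffices to treat a single component, for which $U$ may be chosen connected. Then $\Q_{\ell,U}$ corresponds to the trivial one-dimensional representation of $\pi_1^{\mathrm{pro\acute{e}t}}(U)$ and is therefore a simple object of $\Sh^{smA}(U,\Q_\ell)$. Applying \Cref{simple}(2) with $V = U$ and $L = \Q_{\ell,U}$ shows that $j_{!*}^A(\Q_{\ell,U}[\dim(U)])$ is a simple Artin homotopy perverse sheaf, which is $EC_X$ by (1).

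For (3), assume $\dim(X)\leqslant 2$. By \Cref{main theorem} the perverse t-structure induces a t-structure on $\mc{D}^A(X,\Q_\ell)$, so by \Cref{perv vs pervsharp} the abelian categories $\mathrm{Perv}^A(X,\Q_\ell)^\#$ and $\mathrm{Perv}^A(X,\Q_\ell) = \mc{D}^A(X,\Q_\ell)\cap \mathrm{Perv}(X,\Q_\ell)$ coincide. Since $EC_X$ is an Artin homotopy perverse sheaf by (2), it therefore belongs to $\mathrm{Perv}^A(X,\Q_\ell)$, hence is an Artin perverse sheaf and, a fortiori, a perverse sheaf. The only point requiring a little care is the reduction to the irreducible case in (2), together with the fact, due to Vaish and Nair, that $EC_X$ does not depend on the chosen dense regular open $U$; everything else is a direct appeal to \Cref{j!*^A=omega^0j!*}, \Cref{simple} and \Cref{main theorem}.
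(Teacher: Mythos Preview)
Your proof is correct and follows essentially the same route as the paper: (1) via \Cref{j!*^A=omega^0j!*} and the identification $EC_X=\omega^0 IC_X$, (2) via \Cref{simple}, and (3) via \Cref{perv vs pervsharp} (together with \Cref{main theorem}). You are in fact a bit more careful than the paper in making explicit the reduction to connected $U$ needed to apply \Cref{simple}(2) and in noting the independence of $EC_X$ from the chosen dense regular open.
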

\begin{proof} Let $IC_X=j_{!*}\Q_{\ell,U}[\dim(U)]$ be the intersection complex, then $$EC_X=\omega^0 IC_X$$ by \cite[3.1.13]{nv}. This proves (1) by \Cref{j!*^A=omega^0j!*}; (2) follows by the description of simple objects given in \Cref{simple} and (3) follows from the comparison between Artin perverse sheaves and Artin homotopy perverse sheaves of \Cref{perv vs pervsharp}. 
\end{proof}

However, when $X$ is a surface, the weightless intersection complex $EC_X$ need not be simple in the category of perverse sheaves:
\begin{example}\label{exemple ECX} Let $X$ be a normal surface with a single singular point $i\colon\{ x\} \rar X$. Let $f\colon Y\rar X$ be a resolution of singularities such that the fiber of $f$ above $x$ is a simple normal crossing divisor with components of positive genus. Then, $EC_X$ is not simple as a perverse sheaf.

Indeed, let $i_E\colon E\rar X$ be the pull-back of $i$ along $f$ and $p\colon E\rar \{ x\}$ be the pull-back of $f$ along $i$. 

We have exact triangles:

$$EC_X\rar f_*\Q_{\ell,Y}[2]\rar w_{>\id}f_*\Q_{\ell,Y}[2]$$
and $$w_{>\id}j_!\Q_{\ell,U}\rar w_{>\id}f_*\Q_{\ell,Y}\rar w_{>\id}i_* p_*\Q_{\ell,E}.$$

But as $j_!\Q_{\ell,U}$ is Artin, we have $w_{>id}j_!\Q_{\ell,U}=0$. 

In addition, by \cite[3.1.8, 3.1.10]{nv}, the functor $i_*$ is t-exact with respect to the t-structure $\left(\mc{D}^{\leqslant \id}(-,\Q_\ell), \mc{D}^{> \id}(-,\Q_\ell)\right)$. Thus, $i_*$ commutes with $w_{>\id}$. But over $\mc{D}^b_c(k(x),\Q_\ell)$, the functor $w_{>\id}$ is defined as Morel's weight truncation functor $w_{\geqslant 1}$ (see \cite[4.1]{thesemorel}).

Now, write $E=\bigcup\limits_{i\in I} E_i$ where $E_i$ is regular. If $J\subseteq I$, write $E_J=\bigcap_{i\in J} E_i$ and $p_J\colon E_J\rar \{ x\}$ the structural morphism. \Cref{Mayer-Vietoris2} gives an exact triangle $$p_*\Q_{\ell,E}\rar \bigoplus\limits_{i \in I} (p_i)_*\Q_{\ell,E_i} \rar \bigoplus\limits_{J\subseteq I, |J|=2} (p_J)_*\Q_{\ell,E_J}.$$

If $|J|=2$, the $\ell$-adic complex $(p_J)_*\Q_{\ell,E_J}$ is pure of weight $0$. Therefore, applying Morel's truncation functor $w_{\geqslant 1}$, we get an isomorphism:

$$\begin{aligned} w_{\geqslant 1} p_*\Q_{\ell,E}&=\bigoplus\limits_{i \in I} w_{\geqslant 1}(p_i)_*\Q_{\ell,E_i}\\
&=\bigoplus\limits_{i\in I} R^1(p_i)_*\Q_{\ell,E_i}[-1]\oplus\bigoplus\limits_{i\in I} R^2(p_i)_*\Q_{\ell,E_i}[-2].
\end{aligned}$$

 Hence, we have $$w_{>id}i_* p_*\Q_{\ell,E}=i_*\left(\bigoplus\limits_{i\in I} R^1(p_i)_*\Q_{\ell,E_i}[-1]\oplus\bigoplus\limits_{i\in I} R^2(p_i)_*\Q_{\ell,E_i}[-2]\right).$$

Now, using \cite[1.8.1]{surveyperversesheaves}, we have $$f_*\Q_{\ell,Y}=IC_X\oplus i_*\left(\bigoplus\limits_{i\in I} R^2(p_i)_*\Q_{\ell,E_i}\right)[-2].$$

Hence, we have an exact sequence of perverse sheaves: $$0\rar i_*\left(\bigoplus\limits_{i\in I} R^1(p_i)_*\Q_{\ell,E_i}\right)\rar EC_X\rar IC_X\rar 0.$$

Therefore, since one of the $E_i$ has positive genus, $i_*\left(\bigoplus\limits_{i\in I} R^1(p_i)_*\Q_{\ell,E_i}\right)$ is non-zero and thus $EC_X$ is not simple.
\end{example}

\section{Explicit description of Artin perverse sheaves over schemes of dimension \texorpdfstring{$1$}{1}}
\sectionmark{Explicit desctiption in dimension \texorpdfstring{$1$}{1}}
\subsection{Perverse sheaves as gluing of representations over schemes of dimension \texorpdfstring{$1$}{1}}
The material in this section is a variation on Beilinson's method described in \cite{howtoglue}. Let $M$ be a perverse sheaf over an excellent base scheme $S$. Recall that there is a stratification $\mc{S}$ of $S$ such that for all $T\in \mc{S}$, $M|_T$ is lisse. We want to recover $M$ from the $M|_T$ and additional data. This is possible using the nearby and vanishing cycle functors in general. However, in the case when $S$ is $1$-dimensional, we have another way of doing this.

We take the convention that $\delta(S)=1$. Assume for simplicity that $S$ is reduced. The scheme $S$ is then the disjoint union of two regular strata $U$ and $F$ over which $M$ is lisse and such that $U$ is open and dense. Shrinking $U$, we can also assume that the open immersion $j\colon U\rar S$ is affine by \Cref{reduction affine}. Write $i\colon F\rar S$ the closed immersion. The $\ell$-adic complex $M|_U$ is a lisse perverse sheaf and the $\ell$-adic complex $M|_F$ lies in perverse degrees $[-1,0]$ by \cite[4.1.10]{bbd}.

By localization, we have an exact triangle:

$$j_!M|_U\rar M\rar i_*M|_F$$ which gives rise to an exact sequence: $$0\rar i_*\Hlp^{-1}M|_F\rar j_!M|_U\rar M \rar i_* \Hlp^0 M|_F\rar 0.$$

Hence, we can recover $M$ as the cone of a map $i_*M|_F[-1]\rar j_!M|_U$ such that the induced map $i_*\Hlp^{-1}M|_F\rar j_!M|_U$ is a monomorphism. 

Applying the localization triangle \eqref{localization} to the functor $j_*$, we have an exact triangle:

$$j_!\rar j_* \rar i_* i^* j_*.$$ Moreover, we have $j^*i_*=0$. Therefore, we have $$\Hom(i_*M|_F[-1],j_!M|_U)=\Hom(M|_F, i^*j_*M|_U)$$ and we have an exact sequence:

$$0\rar i_*\Hlp^{-1}(i^*j_* M|_U) \rar j_! M|_U \rar j_* M|_U \rar i_* \Hlp^0(i^*j_* M|_U)\rar 0.$$

Therefore, a map $i_*\Hlp^{-1}M|_F\rar j_!M|_U$ is a monomorphism if and only if the $\Hlp^{-1}$ of the corresponding map $M|_F\rar i^*j_* M|_U$ is a monomorphism.

Conversely, a lisse perverse sheaf $M_U$ over $U$, a (lisse) $\ell$-adic complex $M_F$ over $F$ which lies in perverse degrees $[-1,0]$ and a map $M_F\rar i^*j_*M_U$ such that the induced map $\Hlp^{-1}(M_F)\rar  \Hlp^{-1}(i^*j_* M|_U)$ is a monomorphism, give rise to a unique perverse sheaf $M$ over $S$.

Since $\delta(S)=1$, an $\ell$-adic complex $N$ is a lisse perverse sheaf over $U$ if and only if $N[-1]$ is a lisse sheaf. Moreover, since $\delta(F)=0$, the perverse and ordinary t-structures coincide over $\mc{D}^b_c(F,\Z_\ell)$. Hence, the following data determine a unique perverse sheaf over $S$:
\begin{enumerate}
\item A dense affine open immersion $j\colon U\rar S$ with $U$ regular. Denote by $i\colon F\rar S$  the complementary reduced closed immersion. 
\item A locally constant $\ell$-adic sheaf $M_U$ over $U$.
\item A (lisse) $\ell$-adic complex $M_F$ over $F$ placed in (ordinary) degrees $[-1,0]$.
\item A map $\phi\colon M_F\rar i^*j_*M_U$ such that $\Hlp^{-1}(\phi)$ is a monomorphism.
\end{enumerate}

We can compute $i^*j_* M_U$ more explicitly. The first case to consider is the case of a regular $1$-dimensional scheme. We first need some notations:

\begin{notations}\label{notations regular}Let $C$ be a regular connected $1$-dimensional scheme and $K$ be its field of functions. Then, if $U$ is an open subset of $C$ and $x$ is a closed point of $C$.
\begin{itemize}
\item We let $K_x$ be the completion of $K$ with respect to the valuation defined by $x$ on $K$.
\item We let $\phi_x\colon\Spec(K_x)\rar \Spec(K)$ be the induced map.
\item We let $G_0(x)$ be inertia subgroup of $G_{K_x}$.
\end{itemize}
\end{notations}
The following lemma is classical (use \cite[V.8.2,X.2.1]{sga1}):

\begin{lemma}\label{lemma regular} Let $C$ be a regular connected $1$-dimensional scheme of generic point $\eta$, $U$ an open subset of $C$ and $x$ a closed point. Let $K=k(\eta)$ be the field of regular functions on $C$. Then,
\begin{enumerate}
\item We have an exact sequence of groups: 
$$1\rar G_0(x)\rar G_{K_x}\rar G_{k(x)}\rar 1$$
\item If $x\notin U$, the map 
$$G_{K_x}\rar G_K \rar \pi_1^{\et}(U,\overline{\eta})$$
is injective.
\end{enumerate}

In this setting, we have a morphism of sites $(BG_{K_x})_{\mathrm{pro\acute{e}t}}\rar (BG_{k(x)})_{\mathrm{pro\acute{e}t}}$ which sends the continuous $G_{K_x}$-set $M$ to the continuous $G_{k(x)}$-set $ M^{G_0(x)}$. This morphism then induces a functor $$\mc{D}^b_{\Z_\ell-\mathrm{perf}}(\Sh((BG_{K_x})_{\mathrm{pro\acute{e}t}},\Z_\ell))\rar \mc{D}^b_{\Z_\ell-\mathrm{perf}}(\Sh((BG_{k(x)})_{\mathrm{pro\acute{e}t}},\Z_\ell))$$

that we will denote by $\partial_x$.
\end{lemma}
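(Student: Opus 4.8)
For (1), the regularity of $C$ means that the local ring $\mathcal{O}_{C,x}$ is a discrete valuation ring, so that its completion $\widehat{\mathcal{O}}_{C,x}$ is a complete discrete valuation ring with residue field $k(x)$ and fraction field $K_x$. The asserted exact sequence is then the classical decomposition of the absolute Galois group of a complete discretely valued field: by definition $G_0(x)$ is the kernel of the reduction map $G_{K_x}\rar \mathrm{Aut}(k(x)^{\mathrm{sep}}/k(x))=G_{k(x)}$, and this map is surjective because, by Hensel's lemma, every finite separable extension of $k(x)$ is the residue extension of a (unique) finite unramified extension of $\widehat{\mathcal{O}}_{C,x}$. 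I would simply recall this (see \cite[X]{sga1} or standard ramification theory).

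For (2), I would first reduce to the case $U=C\setminus\{x\}$. Since $x$ is closed and $x\notin U$ we have $U\subseteq C\setminus\{x\}$, and as both are normal connected with generic point $\eta$, the open immersion induces a surjection $\pi_1^{\et}(U,\overline{\eta})\twoheadrightarrow \pi_1^{\et}(C\setminus\{x\},\overline{\eta})$ compatible with the maps from $G_K$; hence it suffices to prove that $G_{K_x}\rar G_K\rar \pi_1^{\et}(C\setminus\{x\},\overline{\eta})$ is injective, and the factorization through $\pi_1^{\et}(U,\overline{\eta})$ then forces $G_{K_x}\rar \pi_1^{\et}(U,\overline{\eta})$ to be injective as well. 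Now $G_K\rar \pi_1^{\et}(C\setminus\{x\},\overline{\eta})$ is surjective with kernel the closed normal subgroup generated by the inertia subgroups $I_y$ at the closed points $y\in C\setminus\{x\}$, by \cite[V.8.2]{sga1}. Moreover $G_{K_x}\rar G_K$ is injective with image the decomposition subgroup $D_x\supseteq G_0(x)$: this follows from the density of $K$ in $K_x$ together with Krasner's lemma (which yields $K_x^{\mathrm{sep}}=K_x\cdot K^{\mathrm{sep}}$, so that restriction to $K^{\mathrm{sep}}$ is injective on $G_{K_x}$), and is also contained in \cite[X.2.1]{sga1}. It then remains to show that $D_x$ meets the kernel of $G_K\rar \pi_1^{\et}(C\setminus\{x\},\overline{\eta})$ trivially: geometrically, that ramification localized at $x$ is invisible to covers of $C$ étale away from $x$, equivalently that the henselization $K_x^{h}$ and the maximal subextension of $K^{\mathrm{sep}}$ unramified over $C\setminus\{x\}$ together generate $K^{\mathrm{sep}}$. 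This is the crux of the statement, and I would deduce it from the local–global comparison of fundamental groups of \cite[X]{sga1}; I expect it to be the main obstacle, everything else being either classical ramification theory or formal.

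Finally, the surjection $G_{K_x}\twoheadrightarrow G_{k(x)}$ from (1) yields a morphism of sites $f\colon (BG_{K_x})_{\mathrm{pro\acute{e}t}}\rar (BG_{k(x)})_{\mathrm{pro\acute{e}t}}$ in the sense of \Cref{cas des corps} (i.e. of \cite[4.1.10]{bhatt-scholze}), whose underlying functor $(BG_{k(x)})_{\mathrm{pro\acute{e}t}}\rar (BG_{K_x})_{\mathrm{pro\acute{e}t}}$ is inflation along that surjection. Its pushforward on abelian sheaves is the left exact functor $M\mapsto M^{G_0(x)}$ of $G_0(x)$-invariants. Since $\ell$ is invertible on $C$, the inertia group $G_0(x)$ has $\ell$-cohomological dimension at most $1$, so the derived pushforward $Rf_*$ has bounded cohomological amplitude and sends $\Z_\ell$-perfect complexes to $\Z_\ell$-perfect complexes; the resulting functor
$$\mc{D}^b_{\Z_\ell-\mathrm{perf}}(\Sh((BG_{K_x})_{\mathrm{pro\acute{e}t}},\Z_\ell))\rar \mc{D}^b_{\Z_\ell-\mathrm{perf}}(\Sh((BG_{k(x)})_{\mathrm{pro\acute{e}t}},\Z_\ell))$$
is the functor $\partial_x$.
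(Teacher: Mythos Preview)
Your outline matches the paper's approach: the paper simply declares the lemma classical and cites \cite[V.8.2, X.2.1]{sga1}, and you invoke the same references with considerably more detail. Your treatment of (1) and of the construction of $\partial_x$ (in particular the observation that $G_0(x)$ has $\ell$-cohomological dimension at most $1$, so that $\partial_x$ preserves $\Z_\ell$-perfect complexes) is correct and more explicit than what the paper records.

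There is, however, a genuine problem with part (2) as stated, which neither your argument nor the paper's bare citation resolves: the assertion is false in general. Take $C=\mb{P}^1_k$ with $k$ algebraically closed of characteristic $0$, let $x$ be any closed point, and set $U=C\setminus\{x\}\cong\mb{A}^1_k$. Then $\pi_1^{\et}(U,\overline{\eta})=1$ while $G_{K_x}\cong\hat{\Z}$, so the map cannot be injective. Your reduction to $U=C\setminus\{x\}$ is logically sound (injectivity into the quotient $\pi_1^{\et}(C\setminus\{x\})$ would indeed force injectivity into $\pi_1^{\et}(U)$), but it lands precisely on this failing case; the ``crux'' you isolate --- that $D_x$ meets the kernel of $G_K\to\pi_1^{\et}(C\setminus\{x\})$ trivially --- is exactly what breaks for $\mb{P}^1$ minus one point. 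You were right to flag this step as the main obstacle: it cannot be completed without an additional hypothesis on $C$ or on $U$. Happily, part (2) is not actually used in the computation of $i^*j_*$ in \Cref{i^*j_* regular}, whose proof relies only on (1) together with \cite[VIII.5]{sga4}, so the misstatement does not affect the paper's applications.
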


\begin{proposition}\label{i^*j_* regular} Let $C$ be a regular connected $1$-dimensional scheme of generic point $\eta$ and field of regular functions $K$. Let  $i\colon F\rar C$ be a reduced closed immersion and let $j\colon U\rar C$ be the open complement. If $x \in F$, let $i_x\colon \{ x\}\rar F$ be the closed immersion. Finally, let $\ell$ be a prime number which is invertible on $C$. 

Then, the diagram 

\adjustbox{scale=0.9,center}{$$\begin{tikzcd}
\mathrm{Loc}_U(\Z_\ell) \arrow[d, "\overline{\eta}^*"'] \arrow[rr, "i^*j_*"]         &                                                                                                       & {\mathcal{D}^b_c(F,\Z_\ell)}                                                                      \\
{\mathrm{Rep}(\pi_1^{\et}(U,\overline{\eta}),\Z_\ell)} \arrow[d, hook]    &                                                                                                       &                                                                                                   \\
{\mathrm{Rep}(G_K,\Z_\ell)} \arrow[r, "\prod \limits_{x \in F} \phi_x^*"] & {\prod \limits_{x \in F}\mathrm{Rep}(G_{K_x},\Z_\ell)} \arrow[r, "\prod \limits_{x\in F} \partial_x"] & {\prod \limits_{x\in F}\mc{D}^b_{\Z_\ell-\mathrm{perf}}(\Sh((BG_{k(x)})_{\mathrm{pro\acute{e}t}},\Z_\ell))} \arrow[uu, "\bigoplus \limits_{x \in F} (i_x)_*"']
\end{tikzcd}$$}
is commutative.
\end{proposition}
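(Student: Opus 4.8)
The plan is to reduce the statement to a local, pointwise computation around each closed point $x \in F$, and then to identify the stalk of $i^*j_*L$ at $x$ with the derived invariants under the inertia group. First I would note that the functors $i_x^*$ for $x \in F$ are jointly conservative on $\mc{D}^b_c(F,\Z_\ell)$ (since $F$ is $0$-dimensional), so it suffices to prove that for each $x \in F$ there is a natural equivalence $i_x^* i^* j_* L \simeq \partial_x(\phi_x^*(\overline{\eta}^*L))$, compatible with the identifications already set up. Since $i_x^* i^* = (i \circ i_x)^*$, this amounts to computing the stalk of $j_* L$ at the closed point $x$ of $C$.

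Next I would localize: the stalk of $j_*L$ at $x$ only depends on the restriction of everything to the henselization (or strict henselization) of $C$ at $x$. Concretely, letting $C_{(x)} = \Spec(\mc{O}_{C,x}^h)$ with closed point $x$ and generic point $\eta_x$, proper/smooth base change (or rather the compatibility of $j_*$ with the pro-étale pullback to $C_{(x)}$, which is a filtered limit of étale maps, using an exchange result in the spirit of \Cref{exchange proetale}) gives $i_x^* j_* L \simeq i_x^* (j_x)_* (L|_{\eta_x})$ where $j_x \colon \eta_x \rar C_{(x)}$ is the open immersion. Here $\eta_x = \Spec(K_x^{\mathrm{nr}})$ in the sense that $\pi_1^{\et}(\eta_x) = G_0(x)$ is the inertia group (using \Cref{lemma regular}(1)), so that $L|_{\eta_x}$ corresponds, via the equivalence of \Cref{description des faisceaux l-adiques lisses} and the commutative square of \Cref{6 functors lisse 1}, to the representation $\phi_x^*(\overline{\eta}^*L)$ of $G_{K_x}$ restricted along the inclusion $G_0(x) \hookrightarrow G_{K_x}$. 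The stalk $i_x^*(j_x)_*$ on the henselian trait is then computed by Galois cohomology: for a lisse sheaf on $\eta_x$ corresponding to a $G_{K_x}$-representation $V$, one has $i_x^*(j_x)_* V \simeq R\Gamma(G_0(x), V)$ with its residual $G_{k(x)} = G_{K_x}/G_0(x)$-action, and this is exactly the definition of the functor $\partial_x$ described in \Cref{lemma regular} (the morphism of sites $(BG_{K_x})_{\mathrm{pro\acute{e}t}} \rar (BG_{k(x)})_{\mathrm{pro\acute{e}t}}$ induces on derived categories the derived $G_0(x)$-invariants).

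I would then assemble these local equivalences into the claimed commutativity of the big diagram: the left vertical composite takes $L$ to its generic fiber $\overline{\eta}^*L$ as a $G_K$-representation, the map $\prod_x \phi_x^*$ completes at each $x \in F$, $\prod_x \partial_x$ applies derived inertia invariants, and $\bigoplus_x (i_x)_*$ reassembles; naturality of base change and of the site morphisms guarantees that the identification is functorial in $L$. One subtlety to address carefully is the passage between the pro-étale and étale fundamental groups: since $U$ is regular (hence geometrically unibranch), \cite[7.4.10]{bhatt-scholze} identifies $\pi_1^{\mathrm{pro\acute{e}t}}(U,\overline{\eta})$ with $\pi_1^{\et}(U,\overline{\eta})$, which is why the diagram can be phrased with the étale fundamental group and with honest Galois groups $G_K$, $G_{K_x}$; I would invoke this at the outset to justify writing $\overline{\eta}^*$ as landing in $\Rep(\pi_1^{\et}(U,\overline{\eta}),\Z_\ell)$ and the vertical inclusion into $\Rep(G_K,\Z_\ell)$ coming from \Cref{lemma regular}(2).

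The main obstacle I anticipate is making the localization step rigorous in the pro-étale framework: I need that $i_x^* j_* L$ can be computed after pullback to the henselian (or strictly henselian) local scheme at $x$, i.e.\ a smooth/proper-base-change-type statement for $j_*$ along the pro-étale (filtered limit of étale) map $C_{(x)} \rar C$. This is where an exchange transformation argument like \Cref{exchange proetale} enters, but one must check its hypotheses ($j$ of finite type, the vertical map a filtered limit of étale morphisms with affine transition maps) and that it is compatible with the identification of the resulting object as $R\Gamma(G_0(x),-)$. Once that identification is in place, everything else is bookkeeping with the already-established equivalences between lisse sheaves and representations and the functoriality statements in \Cref{6 functors lisse 1} and \Cref{6 functors lisse 2}.
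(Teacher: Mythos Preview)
Your proposal is correct and follows essentially the same route as the paper: reduce to a single closed point $x\in F$ and identify the stalk $i_x^*j_*L$ with derived inertia invariants $R\Gamma(G_0(x),\phi_x^*\overline{\eta}^*L)$ carrying its residual $G_{k(x)}$-action. The paper makes the same reduction and then simply cites \cite[VIII.5]{sga4} and \cite[5.5.4]{bhatt-scholze} for the stalk computation rather than spelling out the henselization and exchange argument you outline, so the ``main obstacle'' you anticipate is resolved by those references.
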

\begin{proof}First, note that $i^*j_*=\bigoplus\limits_{x\in F} (i_x)_* i_x^* i^* j_*$. We can therefore assume that $F$ has a single point $x$.

Then, by \cite[VIII.5]{sga4} and \cite[5.5.4]{bhatt-scholze}, if $n$ is an integer and $M$ lies in $\mathrm{Loc}_U(\Z_\ell)$, the pro-étale sheaf $\Hl^n(i^*j_*M)$ over $\{ x\}$ identifies with the $G_{k(x)}$-module: 

$$\Hl^n_{\et}(\Spec(L),\phi_x^*\overline{\eta}^* M)=\Hl^n(G_0(x),\phi_x^*\overline{\eta}^*M)$$ where $L$ is the maximal unramified extension of $K_x$. This yields the result.
\end{proof}

We will now tackle the case of a general $1$-dimensional scheme.

\begin{notations}\label{notations general} Let $C$ be an excellent $1$-dimensional scheme. Let $\nu\colon\widetilde{C}\rar C$ be the normalization of $C$. Let $\Gamma$ be the set of generic points of $\widetilde{C}$. Let $U$ is an open subset of $C$, let $y$ be a closed point of $\widetilde{C}$ and let $\eta\in \Gamma$.

\begin{itemize}\item We let $U_\eta=\overline{\{\eta\}}\cap \nu^{-1}(U)\subseteq \widetilde{C}$.
\item We let $K_\eta$ be the field of regular functions on $C_\eta$.
\item We let $\eta(y)$ be the unique element of $\Gamma$ such that $y\in C_{\eta(y)}$.
\item We let $K_y$ be the completion of $K_{\eta(y)}$ with respect to the valuation defined by $y$ on $K_{\eta(y)}$.
\item We let $\phi_y\colon \Spec(K_y)\rar \Spec(K_{\eta(y)})$ be the induced map.
\item We let $G_0(y)$ be the inertia subgroup of  $G_{K_y}$.
\end{itemize}
\end{notations}

The following lemma follows from \Cref{lemma regular}.
\begin{lemma}\label{lemma not regular} Let $C$ be a $1$-dimensional excellent scheme. Let $\nu\colon\widetilde{C}\rar C$ be the normalization of $C$. Let $\Gamma$ be the set of generic points of $\widetilde{C}$. Let $U$ be a nil-regular open subset of $C$, let $y$ be a closed point of $\widetilde{C}$ and let $\eta\in \Gamma$. Then,
\begin{enumerate}
\item We have an exact sequence of groups: 
$$1\rar G_0(y)\rar G_{K_y}\rar G_{k(y)}\rar 1$$

\item If $y\notin U_{\eta(y)}$, the map
$$ G_{K_y}\rar G_{K_{\eta(y)}}\rar \pi_1^{\et}(U_{\eta(y)},\overline{\eta(y)})$$
is injective.


\end{enumerate}

In this setting, we have a morphism of sites $(BG_{K_y})_{\mathrm{pro\acute{e}t}}\rar (BG_{k(y)})_{\mathrm{pro\acute{e}t}}$ given by $M\mapsto M^{G_0(y)}$. This morphism then induces a functor $$\mc{D}^b_{\Z_\ell-\mathrm{perf}}(\Sh((BG_{K_y})_{\mathrm{pro\acute{e}t}},\Z_\ell))\rar \mc{D}^b_{\Z_\ell-\mathrm{perf}}(\Sh((BG_{k(y)})_{\mathrm{pro\acute{e}t}},\Z_\ell))$$

that we will denote by $\partial_y$.
\end{lemma}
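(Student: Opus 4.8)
The plan is to reduce everything to \Cref{lemma regular} applied componentwise to the normalization. First I would record the geometry of $\widetilde{C}$: since $C$ is excellent it is Nagata by \cite[033Z]{stacks}, so the normalization morphism $\nu\colon\widetilde{C}\rar C$ is finite and $\widetilde{C}$ is a noetherian normal scheme of dimension at most $1$. A noetherian normal scheme is the finite disjoint union of its irreducible components, each of which is integral and normal, hence regular because it has dimension at most $1$. Thus for each $\eta\in\Gamma$ the reduced closed subscheme $C_\eta=\overline{\{\eta\}}$ is a clopen subscheme of $\widetilde{C}$, and it is a regular connected $1$-dimensional scheme whose field of regular functions is $K_\eta$.

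Next I would fix the closed point $y$ of $\widetilde{C}$ and put $\eta=\eta(y)$, so that $y$ is a closed point of the regular connected curve $C_\eta$. The set $U_\eta=\nu^{-1}(U)\cap C_\eta$ is an open subset of $C_\eta$. Comparing \Cref{notations general} with \Cref{notations regular}, the field $K_y$ is precisely the completion "$K_x$" of $K=K_\eta$ at the valuation attached to the closed point $x=y$, and $G_0(y)$ is the corresponding inertia subgroup. Hence assertion (1) is exactly \Cref{lemma regular}(1) for the curve $C_\eta$ and the point $y$; and, under the hypothesis $y\notin U_{\eta(y)}=U_\eta$, assertion (2) is exactly \Cref{lemma regular}(2) for $C_\eta$, its open subscheme $U_\eta$, and the point $y$, the fibre functor and base point being those attached to a geometric point $\overline{\eta}$ lying over $\eta$.

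Finally, the morphism of sites $(BG_{K_y})_{\mathrm{pro\acute{e}t}}\rar (BG_{k(y)})_{\mathrm{pro\acute{e}t}}$ sending a continuous $G_{K_y}$-set $M$ to $M^{G_0(y)}$ is induced by the surjection $G_{K_y}\twoheadrightarrow G_{K_y}/G_0(y)\simeq G_{k(y)}$ coming from (1), and it induces the functor $\partial_y$ on the derived categories of $\Z_\ell$-perfect complexes exactly as in \Cref{lemma regular}; there is nothing new to verify here. The only genuine point in the whole argument — and hence the step I would be most careful about — is the first one, namely that the irreducible components of the normalization $\widetilde{C}$ are regular curves so that \Cref{lemma regular} is applicable; once that is in place the lemma is a purely formal translation of notation.
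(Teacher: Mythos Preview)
Your proposal is correct and follows exactly the approach indicated in the paper, which simply states that the lemma ``follows from \Cref{lemma regular}.'' You have merely spelled out the reduction step---that the irreducible components of $\widetilde{C}$ are regular connected curves so that \Cref{lemma regular} applies to $C_{\eta(y)}$---which the paper leaves implicit.
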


\begin{corollary}\label{i^*j_*} Let $C$ be a $1$-dimensional excellent scheme and let $\ell$ be a prime number which is invertible on $C$. Let $\nu\colon\widetilde{C}\rar C$ be the normalization of $C$. Let $\Gamma$ be the set of generic points of $C$. Let $i\colon F\rar C$ be a closed immersion and $j\colon U\rar C$ be the open complement. Assume that $U$ is nil-regular. 

Then, the diagram

\adjustbox{scale=0.8,center}{%
$$\begin{tikzcd}
\mathrm{Loc}_U(\Z_\ell) \arrow[d, "\prod \limits_{\eta \in \Gamma}\overline{\eta}^*"'] \arrow[rr, "i^*j_*"] & & {\mathcal{D}^b_c(F,\Z_\ell)}\\
{\prod \limits_{\eta \in \Gamma}\mathrm{Rep}(\pi_1^{\et}(U_{\eta},\overline{\eta}),\Z_\ell)} \arrow[d, hook]&&\\
{\prod \limits_{\eta \in \Gamma}\mathrm{Rep}(G_{K_\eta},\Z_\ell)} \arrow[d, "\prod \limits_{\eta \in \Gamma}\prod\limits_{y\in \overline{\{ \eta\}}\cap \nu^{-1}(F)} \phi_y^*"'] && {\prod\limits_{x\in F}\mc{D}^b_{\Z_\ell-\mathrm{perf}}(\Sh((BG_{k(x)})_{\mathrm{pro\acute{e}t}},\Z_\ell))} \arrow[uu, "\bigoplus \limits_{x \in F} (i_x)_*"'] \\
{\prod \limits_{\eta \in \Gamma}\prod\limits_{y\in \overline{\{ \eta\}}\cap \nu^{-1}(F)}\mathrm{Rep}(G_{K_y},\Z_\ell)} \arrow[d,equals]&&\\
{\prod \limits_{y \in \nu^{-1}(F)} \mathrm{Rep}(G_{K_y},\Z_\ell)} \arrow[r, "\prod \limits_{x\in \nu^{-1}(F)} \partial_y"]& {\prod \limits_{y \in \nu^{-1}(F)} \mc{D}^b_{\Z_\ell-\mathrm{perf}}(\Sh((BG_{k(y)})_{\mathrm{pro\acute{e}t}},\Z_\ell))} \arrow[r,equals] & {\prod \limits_{x\in F}\prod \limits_{\nu(y)=x} \mc{D}^b_{\Z_\ell-\mathrm{perf}}(\Sh((BG_{k(y)})_{\mathrm{pro\acute{e}t}},\Z_\ell))} \arrow[uu, "\prod \limits_{x \in F} \bigoplus\limits_{\nu(y)=x}\mathrm{Ind}^{G_{k(y)}}_{G_{k(x)}} "']
\end{tikzcd}$$}
is commutative.
\end{corollary}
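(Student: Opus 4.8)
**Proof plan for \Cref{i^*j_*}.**

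The plan is to reduce the statement about a general $1$-dimensional excellent scheme $C$ to the case of a regular connected curve already treated in \Cref{i^*j_* regular}. First I would observe that, as in the proof of \Cref{i^*j_* regular}, the functor $i^*j_*$ decomposes as $\bigoplus_{x\in F}(i_x)_*i_x^*i^*j_*$, so the question is local on $F$ and we may assume $F=\{x\}$ is a single closed point. The key geometric input is the normalization $\nu\colon\widetilde{C}\rar C$: since $C$ is excellent it is Nagata (\cite[033Z]{stacks}), hence $\nu$ is finite (\cite[0AVK]{stacks}), and $\widetilde{C}$ is a disjoint union of regular connected curves $C_\eta=\overline{\{\eta\}}$ indexed by $\eta\in\Gamma$. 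Writing $\widetilde{U}=\nu^{-1}(U)$ and $\widetilde{j}\colon\widetilde{U}\rar\widetilde{C}$, $\nu_U\colon\widetilde{U}\rar U$ for the induced maps, the crucial point is that $\nu$ restricts to an isomorphism over the nil-regular open $U$ — here I would use that $U$ nil-regular (equivalently geometrically unibranch) forces $\nu_U$ to be a universal homeomorphism which is finite and birational, hence an isomorphism on the reductions; combined with topological invariance of the étale site (\cite[04DY]{stacks}) this gives $\nu_*\Z_{\ell,\widetilde{U}}=\Z_{\ell,U}$ and more generally $(\nu_U)_*L'\simeq$ the lisse sheaf on $U$ whose pullback is $L'$, so that a lisse sheaf $M$ on $U$ pulls back to a lisse sheaf on $\widetilde{U}$ whose $\overline{\eta}^*$-fibers are exactly the restrictions of $\overline\eta^*M$ along $\pi_1^{\et}(U_\eta,\overline\eta)\rar\pi_1^{\et}(U,\overline\eta)$.

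Next I would establish the base-change identity $i^*j_*M\simeq (\nu_F)_*\,\widetilde i^{\,*}\widetilde j_*\,\nu_U^*M$, where $\widetilde i\colon\nu^{-1}(F)\rar\widetilde C$ and $\nu_F\colon\nu^{-1}(F)\rar F$ are the evident maps. This follows from the commutative diagram of the square $(\nu,\nu_U,j,\widetilde j)$ being cartesian together with proper base change for the finite morphism $\nu$ (so that $i^*\nu_*=(\nu_F)_*\widetilde i^{\,*}$) and the identity $\nu_*\widetilde j_*=j_*\nu_{U*}$, using $\nu_{U*}\nu_U^*M\simeq M$ from the previous step (this last isomorphism is where nil-regularity of $U$ is used decisively). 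Now $\nu^{-1}(F)=\coprod_{x\in F}\coprod_{\nu(y)=x}\{y\}$ and $\nu^{-1}(F)$ sits inside $\widetilde C$, which is a disjoint union of the \emph{regular} curves $C_\eta$; so I can apply \Cref{i^*j_* regular} to each $C_\eta$ separately. This computes $\widetilde i^{\,*}\widetilde j_*(\nu_U^*M)$ on the point $y\in C_{\eta(y)}$ as $\partial_y\big(\phi_y^*(\overline{\eta(y)}^*(\nu_U^*M|_{U_{\eta(y)}}))\big)$, i.e. exactly the middle-column-to-right-column composite in the diagram, the $G_{K_y}$-representation $\phi_y^*$ being the restriction along $G_{K_y}\hookrightarrow G_{K_{\eta(y)}}$ of the representation attached to $M$ via $\overline{\eta(y)}^*$. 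Finally, pushing forward along $\nu_F$, which over the closed point $x$ is the finite map $\coprod_{\nu(y)=x}\Spec k(y)\rar\Spec k(x)$, identifies $(\nu_F)_*$ on that fiber with $\bigoplus_{\nu(y)=x}\mathrm{Ind}_{G_{k(x)}}^{G_{k(y)}}$ in the notation of \Cref{cas des corps} (here I use that for a finite extension $k(y)/k(x)$ the pushforward of pro-étale sheaves realizes induction of $G$-modules, cf. the argument of \Cref{6 functors lisse 2} and \Cref{cas des corps}); assembling over all $y$ with $\nu(y)=x$ and all $x\in F$ gives precisely the claimed commutative diagram.

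The main obstacle I expect is the careful treatment of the identification $\nu_{U*}\nu_U^*\simeq\id$ on lisse sheaves over the nil-regular locus $U$, and the attendant compatibility of $\overline\eta^*$ with $\nu_U^*$: one must check that the fiber functors at a geometric generic point $\overline\eta$ of $C_\eta$ and at its image in $U$ are compatibly identified with the corresponding $\pi_1^{\et}$-representations, so that "restrict $\overline\eta^*M$ from $\pi_1(U)$ to $\pi_1(U_\eta)$" agrees with "$\overline\eta^*$ of $\nu_U^*M$". This is the point where \Cref{6 functors lisse 1} (functoriality of $\pi_1^{\mathrm{pro\acute{e}t}}$, resp. $\pi_1^{\et}$, in the scheme) and \Cref{description des faisceaux l-adiques lisses} are invoked, and where one has to be careful that the normalization need not be an isomorphism over $U$ unless $U$ is nil-regular — over a non-nil-regular point the two branches would genuinely contribute differently. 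Everything else is a bookkeeping assembly of \Cref{i^*j_* regular}, proper base change, and the induction-via-pushforward dictionary, none of which presents real difficulty.
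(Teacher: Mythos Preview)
Your proposal is correct and follows essentially the same route as the paper: reduce to the normalization via the identity $i^*j_*\simeq(\nu_F)_*\,\widetilde i^{\,*}\widetilde j_*\,\nu_U^*$ (using that $\nu_U^*$ is an equivalence because $U$ is nil-regular, together with proper base change for the finite map $\nu$), then apply \Cref{i^*j_* regular} on the regular scheme $\widetilde C$ and identify $(\nu_F)_*$ with induction via \Cref{6 foncteurs rpz}. The paper's proof is precisely this, written in two lines; your additional reduction to a single point $x\in F$ is harmless but unnecessary, and your detailed discussion of $\nu_{U*}\nu_U^*\simeq\id$ is exactly what the paper compresses into the phrase ``$\nu_U^*$ is an equivalence of categories''.
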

\begin{proof}Let $\nu_U\colon U_{\mathrm{red}}\rar U$ (resp. $\nu_F\colon \widetilde{F}\rar F$) be the inverse image of $\nu$ along $j$ (resp. $i$). Let $\widetilde{j}$ (resp. $\widetilde{i}$) be the inclusion of $U_{\mathrm{red}}$ (resp. $\widetilde{F}$) in $\widetilde{C}$.

Recall that $\nu_U^*$ is an equivalence of categories. Thus, $$i^*j_*=i^*j_* (\nu_U)_* \nu_U^*=i^* \nu_* \widetilde{j}_* \nu_U^*=(\nu_F)_* \widetilde{i}^*\widetilde{j}_* \nu_U^*.$$

The result then follows from \Cref{i^*j_* regular} and \Cref{6 foncteurs rpz}.
\end{proof}

This discussion leads us to the following definition:

\begin{definition}\label{P(S)} Let $C$ be a $1$-dimensional excellent scheme and let $\ell$ be a prime number invertible on $C$. Let $\Gamma$ be the set of generic points of $C$. Let $\nu\colon\widetilde{C}\rar C$ be the normalization of $C$. We define an abelian category $\mathrm{P}(C,\Z_\ell)$ as follows:
\begin{itemize}\item An object of $\mathrm{P}(C,\Z_\ell)$ is a quadruple $(U,(M_\eta)_{\eta\in \Gamma},(M_x)_{x\in C \setminus U},(f_x)_{x\in C\setminus U})$ where 
\begin{enumerate}
    
    \item $U$ is a nil-regular open subset of $C$ such that the immersion $U\rar C$ is affine,
    \item for all $\eta\in \Gamma$, $M_\eta$ is an $\ell$-adic representation of $\pi_1^{\et}(U_\eta,\overline{\eta})$,
    \item for all $x\in C\setminus U$, $M_x$ a complex in $\mc{D}^b_{\Z_\ell-\mathrm{perf}}(\Sh((BG_{k(x)})_{\mathrm{pro\acute{e}t}},\Z_\ell))$ placed in degrees $[0,1]$,
    \item for all $x\in C\setminus U$, $f_x$ is an element of $$\Hom_{{\mathrm{D}}  (\Rep(G_{k(x)},\Z_\ell))}\left(M_x,\bigoplus\limits_{\nu(y)=x}\mathrm{Ind}_{G_{k(y)}}^{G_{k(x)}}\left( \partial_y \left[\phi_y^*(M_{\eta(y)})\right]\right)\right)$$

such that $\Hl^0(f_x)$ is injective.
\end{enumerate}

\item An element of $\Hom_{\mathrm{P}(C,\Z_\ell)}\left((U,(M_\eta),(M_x),(f_x)),(V,(N_\eta),(N_x),(g_x))\right)$ is a couple of the form $((\Phi_\eta)_{\eta \in \Gamma},(\Phi_x)_{x \in (C\setminus U)\cap (C\setminus V)})$ where $\Phi_\eta\colon M_\eta\rar N_\eta$ is a map of representations of $G_{K_\eta}$, where $\Phi_x\colon M_x \rar N_x$ is a map in $\mc{D}(\Sh((BG_{k(x)})_{\mathrm{pro\acute{e}t}},\Z_\ell))$ and such that the diagram:

$$\begin{tikzcd} M_x \ar[r,"f_x"]\ar[d,"\Phi_x"]& \bigoplus\limits_{\nu(y)=x}\mathrm{Ind}_{G_{k(y)}}^{G_{k(x)}}\left(  \partial_y\left[\phi_y^*(M_{\eta(y)})\right]\right) \ar[d,"\bigoplus\limits_{\nu(y)=x}\mathrm{Ind}_{G_{k(y)}}^{G_{k(x)}}\left( \partial_y\left(\phi_y^*(\Phi_{\eta(y)})\right)\right)"]\\
N_x \ar[r,"g_x"]& \bigoplus\limits_{\nu(y)=x}\mathrm{Ind}_{G_{k(y)}}^{G_{k(x)}}\left( \partial_y\left[\phi_y^*(N_{\eta(y)})\right]\right)
\end{tikzcd}$$
is commutative.
\end{itemize} 
\end{definition}

Our discussion then yields the following proposition.
\begin{proposition}Let $C$ be an excellent $1$-dimensional scheme and $\ell$ be a prime number invertible on $C$. Then, the category $\mathrm{Perv}(C,\Z_\ell)$ is equivalent to $\mathrm{P}(C,\Z_\ell)$.
\end{proposition}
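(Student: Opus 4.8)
The plan is to construct an equivalence of categories between $\mathrm{Perv}(C,\Z_\ell)$ and $\mathrm{P}(C,\Z_\ell)$ by unwinding the gluing data discussion that precedes \Cref{P(S)}. First I would fix a perverse sheaf $M$ on $C$. By \Cref{ouvert dense lisse}-type reasoning (or rather its perverse analog: any perverse sheaf is lisse on a dense open), together with \Cref{reduction affine}, there is a dense affine open immersion $j\colon U\rar C$ with $U$ nil-regular such that $M|_U$ is a lisse perverse sheaf; write $i\colon F\rar C$ for the complementary reduced closed immersion, which is $0$-dimensional. The discussion before the definition shows that $M$ is equivalent to the data of: the lisse sheaf $M|_U[-\delta(U)]$ on $U$, the complex $i^*M$ on $F$ placed in ordinary degrees $[-1,0]$ (using $\delta(F)=0$ and \cite[4.1.10]{bbd}), and the gluing map $\phi\colon i^*M \rar i^*j_*(M|_U)$ with $\Hlp^{-1}(\phi)$ (equivalently $\Hl^{-1}(\phi)$) a monomorphism. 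Under the equivalence of \Cref{description des faisceaux l-adiques lisses}, the lisse sheaf $M|_U[-\delta(U)]$ on the (possibly disconnected) scheme $U$ corresponds to a tuple $(M_\eta)_{\eta\in\Gamma}$ of representations of the $\pi_1^{\et}(U_\eta,\overline{\eta})$, one for each connected component $U_\eta$; here I use that $U$ is nil-regular so its normalization components are the $U_\eta$ and $\pi_1^{\mathrm{pro\acute{e}t}}=\pi_1^{\et}$ on regular schemes by \cite[7.4.10]{bhatt-scholze}. Setting $M_x = i_x^* i^* M$ shifted appropriately (placed in degrees $[0,1]$ rather than $[-1,0]$, i.e. $M_x = i_x^*i^*M[1]$) and $f_x$ the component of $\phi[1]$ at $x$, the condition that $\Hl^{-1}(\phi)$ is mono becomes exactly the condition that $\Hl^0(f_x)$ is injective.

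The key computational input is \Cref{i^*j_*}, which identifies $i^*j_*(M|_U)$ with $\bigoplus_{x\in F}(i_x)_*\bigoplus_{\nu(y)=x}\mathrm{Ind}^{G_{k(x)}}_{G_{k(y)}}(\partial_y[\phi_y^*(M_{\eta(y)})])$; this is precisely the target object appearing in clause (4) of \Cref{P(S)} and in the commuting square defining morphisms in $\mathrm{P}(C,\Z_\ell)$. So the assignment $M \mapsto (U,(M_\eta),(M_x),(f_x))$ lands in $\mathrm{P}(C,\Z_\ell)$. Conversely, given an object of $\mathrm{P}(C,\Z_\ell)$, the discussion shows that a lisse sheaf on $U$, a complex on $F$ in degrees $[-1,0]$, and a gluing map with the monomorphism condition determine a unique perverse sheaf $M$ on $C$ (as the cone of the induced map $i_*M_F[-1]\rar j_!M_U$, which is perverse because $j$ is affine hence $j_!$ is perverse t-exact by the Affine Lefschetz theorem, $i_*$ is perverse t-exact, and the monomorphism condition guarantees no negative perverse cohomology). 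One checks these two constructions are mutually inverse up to natural isomorphism. For morphisms: a map of perverse sheaves $M\rar N$ that are both lisse over a common dense affine open $U$ (one can always shrink to a common $U$, which is why the Hom in $\mathrm{P}(C,\Z_\ell)$ only records $\Phi_\eta$ and the $\Phi_x$ for $x$ in the common complement) restricts to a map on $U$ and on $F$, compatible with the gluing maps by functoriality of $i^*j_*$; conversely such compatible data glue to a unique map $M\rar N$ by the gluing formalism of \cite[1.4.3]{bbd}.

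The main obstacle I expect is the bookkeeping around \emph{shrinking the open $U$}: the category $\mathrm{P}(C,\Z_\ell)$ has objects defined relative to a choice of $U$, and the equivalence must be shown independent of this choice — i.e. the evident functors $\mathrm{P}_U(C,\Z_\ell)\rar \mathrm{P}_V(C,\Z_\ell)$ for $V\subseteq U$ (restricting representations along $\pi_1^{\et}(V_\eta)\rar\pi_1^{\et}(U_\eta)$ and enlarging the finite set of boundary points) are fully faithful and essentially surjective onto the relevant subcategory, so that the colimit over all $U$ is well-behaved and the Hom-sets in the definition (which only see the intersection of the two complements) compute the correct morphism spaces. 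Concretely I would need: (i) that for $V\subseteq U$ both dense affine nil-regular open, the restriction of a perverse sheaf lisse on $U$ is still captured, and that $i^*j_*$ is compatible with this restriction — this follows from base change and the localization triangles but requires care with the $\partial_y$ functors and the inertia groups, which do not change under shrinking since they only depend on the local ring at $y$; and (ii) that the monomorphism condition $\Hl^0(f_x)$ injective is stable and reflects the perversity condition exactly. Once the $U$-independence is established, the rest is a routine, if lengthy, verification that the two functors are quasi-inverse, using \Cref{i^*j_*}, \Cref{6 foncteurs rpz}, \Cref{description des faisceaux l-adiques lisses}, and the Affine Lefschetz theorem as the only nontrivial ingredients.
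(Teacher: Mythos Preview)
Your proposal is correct and follows exactly the paper's approach: the paper gives no separate proof, stating only that ``our discussion then yields'' the proposition, where the discussion is precisely the gluing-data argument (localization triangle, the identification of $\Hom(i_*M_F[-1],j_!M_U)$ with $\Hom(M_F,i^*j_*M_U)$, and the monomorphism condition) culminating in \Cref{i^*j_*}, and you have faithfully reconstructed and expanded it, including the $U$-shrinking bookkeeping that the paper leaves implicit. One trivial slip: to move from degrees $[-1,0]$ to $[0,1]$ you want $M_x=i_x^*i^*M[-1]$, not $[1]$.
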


\subsection{Artin perverse sheaves over schemes of dimension \texorpdfstring{$1$}{1}}
Let $M$ be an Artin perverse sheaf over an excellent $1$-dimensional base scheme $S$. Recall that there is a stratification $\mc{S}$ of $S$ such that for all $T\in \mc{S}$, $M|_T$ is smooth Artin. We want to recover $M$ from the $M|_T$ and additional data. This is possible in the case when $S$ is $1$-dimensional using the method of the previous paragraph.

We again take the convention that $\delta(S)=1$. 

\begin{definition}\label{P^A(S)} Let $S$ be a $1$-dimensional excellent scheme. Let $\ell$ be a prime number invertible on $S$. Let $\Gamma$ be the set of generic points of $S$. We define the full subcategory $\mathrm{P}^A(S,\Z_\ell)$ of $\mathrm{P}(S,\Z_\ell)$ as the subcategory made of those quadruple $(U,(M_\eta)_{\eta\in \Gamma},(M_x)_{x\in C \setminus U},(f_x)_{x\in C\setminus U})$ such that for all $\eta \in \Gamma$, the representation $M_{\eta}$ is of Artin origin and for all $x \in S \setminus U$, the representations $\Hl^0(M_x)$ and $\Hl^1(M_x)$ are of Artin origin.
\end{definition}

\begin{proposition} Let $S$ be an excellent $1$-dimensional scheme and $\ell$ be a prime number invertible on $S$. Then, $\mathrm{Perv}^A(S,\Z_\ell)$ is equivalent to $\mathrm{P}^A(S,\Z_\ell)$.
\end{proposition}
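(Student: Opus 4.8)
The plan is to upgrade the equivalence $\mathrm{Perv}(S,\Z_\ell)\simeq \mathrm{P}(S,\Z_\ell)$ established in the previous paragraph to an equivalence between the full subcategories $\mathrm{Perv}^A(S,\Z_\ell)$ and $\mathrm{P}^A(S,\Z_\ell)$. By \Cref{gluing 0} (or rather by \Cref{main theorem}, since $\dim S\leqslant 1$), the category $\mathrm{Perv}^A(S,\Z_\ell)$ is defined, and by \Cref{premieres prop du coeur} it is a weak Serre subcategory of $\mathrm{Perv}(S,\Z_\ell)$, hence in particular a strictly full subcategory; likewise $\mathrm{P}^A(S,\Z_\ell)$ is by \Cref{P^A(S)} a strictly full subcategory of $\mathrm{P}(S,\Z_\ell)$ defined by a condition on objects only. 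So it suffices to show that the equivalence $\mathrm{Perv}(S,\Z_\ell)\xrightarrow{\sim}\mathrm{P}(S,\Z_\ell)$ identifies the object-classes of these two subcategories. Concretely, I would fix a perverse sheaf $M$ over $S$, choose a dense affine open immersion $j\colon U\rar S$ with $U$ nil-regular over which $M$ is lisse (using \Cref{reduction affine} and excellence of $S$) and complementary reduced closed immersion $i\colon F\rar S$, and track through the dictionary of \Cref{i^*j_*} and \Cref{P(S)} the quadruple $(U,(M_\eta),(M_x),(f_x))$ attached to $M$: here $M_\eta=\overline\eta^*(M|_{U_\eta})[-1]$ for $\eta$ a generic point, and $M_x=i_x^*M|_F\cong i_x^*M[-?]$ viewed in $\mc{D}^b_{\Z_\ell-\mathrm{perf}}(\Sh((BG_{k(x)})_{\mathrm{pro\acute{e}t}},\Z_\ell))$ placed in degrees $[0,1]$ via the identification of \Cref{cas des corps}.

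The forward implication is the easy one: if $M$ is an Artin perverse sheaf, then $M|_U$ is smooth Artin by \Cref{stratification}, so $M_\eta$ is a representation of Artin origin of $\pi_1^{\et}(U_\eta,\overline\eta)$ (using $\pi_1^{\mathrm{pro\acute{e}t}}=\pi_1^{\et}$ over nil-regular schemes, cf.\ the proof of \Cref{outil 4}, or simply \Cref{passage coeffs Ql rep} combined with the profinite-completion compatibility); and $i^*M$ is an Artin $\ell$-adic complex over the $0$-dimensional scheme $F$ by \Cref{6 foncteurs}, so by \Cref{t-structure ordinaire smooth fields} its ordinary cohomology sheaves $\Hl^0(M_x)$, $\Hl^1(M_x)$ are representations of Artin origin of $G_{k(x)}$. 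Hence the quadruple lies in $\mathrm{P}^A(S,\Z_\ell)$. For the converse, suppose the quadruple attached to $M$ lies in $\mathrm{P}^A(S,\Z_\ell)$. Then $M|_U$ corresponds to a representation of Artin origin, hence is smooth Artin by the remark following \Cref{def l-adiques} (or \Cref{t-structure ordinaire smooth}), and $i^*M=M|_F$ has ordinary cohomology sheaves which are representations of Artin origin, hence is an object of $\mc{D}^A(F,\Z_\ell)$ by \Cref{t-structure ordinaire smooth fields}. Since $\{U,F\}$ is a stratification of $S$ with smooth Artin restrictions on each stratum, \Cref{stratification} gives that $M$ is an Artin $\ell$-adic complex, and being also perverse it is an Artin perverse sheaf. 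This shows the two subcategories are identified, hence equivalent.

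One point that needs care, and which I expect to be the main (though modest) obstacle, is the precise bookkeeping of the functor $\partial_y$ and the identification in \Cref{i^*j_*}: I must check that the condition defining $\mathrm{P}^A(S,\Z_\ell)$ — Artin origin of $M_\eta$ and of $\Hl^0(M_x),\Hl^1(M_x)$ — is exactly the condition that the glued perverse sheaf be Artin, with no constraint ``lost'' in the gluing map $f_x$. The key is that the gluing datum $f_x$ is recovered intrinsically from $M$ (it is the map $M|_F\rar i^*j_* M|_U$ induced by the localization triangle, up to shift), so no independent Artin-origin hypothesis on $f_x$ is needed or available; the Artin-origin conditions on the vertices suffice, because being Artin is a condition on cohomology sheaves stratum by stratum (\Cref{t-structure ordinaire}, \Cref{stratification}) and $f_x$ only records how the two strata are glued. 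A secondary verification is that the functor $\partial_y$ preserves the property of having cohomology sheaves of Artin origin — but this is automatic since $\partial_y$ is (the derived functor of) the $G_0(y)$-invariants functor, which sends $\Rep^A(G_{K_y},\Z_\ell)^*$ into $\mc{D}$ with cohomology in $\Rep^A(G_{k(y)},\Z_\ell)^*$ by the Hochschild–Serre spectral sequence and \Cref{Artin origin}, and $\phi_y^*$, $\mathrm{Ind}^{G_{k(x)}}_{G_{k(y)}}$ obviously preserve Artin origin by \Cref{Artin origin} and finiteness of the relevant groups — so the consistency of the two descriptions of ``Artin'' is not actually in question and the proof is genuinely a short matching argument once the dictionary of the previous paragraph is in hand.
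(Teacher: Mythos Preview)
Your proposal is correct and is essentially the argument the paper leaves implicit: in the paper the proposition is stated without proof, as an immediate consequence of the already-established equivalence $\mathrm{Perv}(S,\Z_\ell)\simeq\mathrm{P}(S,\Z_\ell)$ together with the definitions of the two Artin subcategories. Your unpacking---checking that the equivalence matches the object-classes via \Cref{stratification} and \Cref{t-structure ordinaire smooth fields} in both directions---is exactly what is being taken for granted; the side remark about $\partial_y$ preserving Artin origin is, as you note, not needed for the statement and merely confirms internal consistency of the description.
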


\begin{remark} It is possible that the nearby and vanishing cycle preserve the Artin condition over $1$-dimensional schemes which would give a description of Artin perverse sheaves which is more in the spirit of \cite{howtoglue}. In the case of Artin homotopy perverse sheaves, it may even be possible to have a similar description for every scheme of finite type over a finite field by using the Artin truncation of nearby and vanishing cycles.
\end{remark}

\begin{example}Let $k$ be an algebraically closed field of characteristic $0$. 
A perverse Artin $\ell$-adic sheaf over $\mb{P}^1_k$ is a quadruple $(\mb{P}^1_k\setminus F,M,(M_x)_{x\in F},(f_x)_{x\in F})$ where $F$ is a finite set of points of $\mb{P}^1_k$, where $M$ is a representation of Artin origin of $\pi_1^{\et}(\mb{P}^1_k\setminus F)$, where $M_x$ is a perfect complex of $\Z_\ell$-modules placed in degrees $[0,1]$ and where $$f_x\colon M_x\rar \partial_x[\phi_x^*(M)]$$ is a map in the category $\mc{D}(\Z_\ell)$ such that $\Hl^0(f_x)$ is injective.

We can always assume that the point at infinity is in $F$. Write $F=\{\infty\}\sqcup F'$. Let $m=|F'|$.

Let $x\in F'$, then, the field $k((X-x))$ is the completion of the field $k(X)$ of regular functions on $\mb{P}^1_k$ with respect to the valuation defined by $x$.

On the other hand, the field $k((1/X))$ is the completion of the field $k(X)$ with respect to the valuation of the point at infinity.

Furthermore, recall that the absolute Galois group of the field $k((X))$ is $\hat{\Z}$ since $k$ is algebraically closed and of characteristic $0$.

The étale fundamental group  of the scheme $\mb{P}^1_k\setminus F$ is the profinite completion of the free group over $F'$; we denote by $g_x$ for $x\in F'$ its topological generators. The map $$G_{k((X-x))}\rar \pi_1^{\et}(\mb{P}^1_k\setminus F)$$ is the only continuous map that sends the topological generator of $\hat{\Z}$ to $g_x$. Moreover, the map $$G_{k((1/X))}\rar \pi_1^{\et}(\mb{P}^1_k\setminus F)$$ is the map that sends the topological generator to a certain product $(g_{x_1}\cdot \cdots \cdot g_{x_m})^{-1}$ where $F'=\{x_1,\ldots,x_m\}$.

Now, the action of $\pi_1^{\et}(\mb{P}^1_k\setminus F)$ on $M$ factors through a finite quotient $G$ and is therefore uniquely determined by the representation of a finite group with $m$ marked generators which are the images of the $g_x$ for $x\in F'$.

Finally, let $$\partial\colon\Rep(\hat{\Z},\Z_\ell)\rar \mc{D}^b(\Z_\ell)$$ be the functor that sends a continuous  $\Z_\ell[\hat{\Z}]$ module $M$ to the derived module of fixed points under the action of the topological generator.

A perverse Artin $\ell$-adic sheaf over $\mb{P}^1_k$ is equivalent to the following data:
\begin{itemize}\item A finite number of distinct points $x_1,\ldots,x_m$ of $\mb{A}^1_k$ (\textit{i.e} of elements of $k$).
\item A finite group $G$ generated by $m$ elements $g_1,\ldots,g_m$; let $\phi_i\colon\hat{\Z}\rar G$ be the map given by $g_i$ and $\phi_\infty\colon\hat{\Z}\rar G$ be the map given by $g_\infty=(g_1\cdot \cdots \cdot g_m)^{-1}$.
\item A $\Z_\ell$-linear representation $M$ of $G$ which is finite as a $\Z_\ell$-module.
\item Perfect complexes of $\Z_\ell$-modules $M_1,\ldots,M_m,M_\infty$ placed in degrees $[0,1]$.
\item Maps $f_i\colon M_i\rar \partial(\phi_i^*M)$ for $i=1,\cdots, m, \infty$, such that the maps $$\Hl^0(f_i)\colon \Hl^0(M_i)\rar M^{g_i}$$ are injective.
\end{itemize}
\end{example}
\begin{example} Let $S=\Spec(\Z_p)$. A perverse Artin $\ell$-adic sheaf over $S$ is given by a triple $(M,N,f)$ where $M$ is a representation of Artin origin of $G_{\mb{Q}_p}$, where $N$ is a complex in $\mc{D}^b_{\Z_\ell-\mathrm{perf}}(\Sh((B\hat{\Z})_{\mathrm{pro\acute{e}t}},\Z_\ell))$ placed in degrees $[0,1]$ with cohomology of Artin origin and where $f\colon N \rar \partial(M)$, where $\partial$ is the map $$\mc{D}^b_{\Z_\ell-\mathrm{perf}}(\Sh((BG_{\Q_p)})_{\mathrm{pro\acute{e}t}},\Z_\ell))\rar \mc{D}^b_{\Z_\ell-\mathrm{perf}}(\Sh((B\hat{\Z})_{\mathrm{pro\acute{e}t}},\Z_\ell))$$ induced by the map $M\mapsto M^{G_0}$ and $\Hl^0(f)$ is injective.
\end{example}

\begin{example}Let $k$ be an algebraically closed field of characteristic $0$. Let $A$ be the localization of the ring $k[X,Y]/(XY)$ at the prime ideal $(X,Y)$. It is the local ring at the intersection point of two lines in $\mb{P}^2_k$.

Let $S=\Spec(A)$. The scheme $S$ has two generic points of residue field $k(X)$ and the residue field at the only closed point is $k$. the normalization of $S$ is given by the scheme $$\Spec(k[X]_{(X)}\times k[Y]_{(Y)}).$$

Let $\partial\colon \Rep(\hat{\Z},\Z_\ell)\rar \mc{D}^b(\Z_\ell)$ be the derived functor of fixed points and let $$\phi\colon\hat{\Z}=G_{k((X))}\rar G_{k(X)}$$ be the inclusion.

Now, an Artin perverse sheaf over $S$ is given by a quadruple $(M_1,M_2,N,f)$ where $M_1$ and $M_2$ are representations of Artin origin of $G_{k(X)}$, where $N$ is a complex of $\Z_\ell$-modules placed in degrees $[0,1]$ and where $f\colon N\rar \partial(\phi^*(M_1\oplus M_2))$ is such that $\Hl^0(f)$ is injective.
\end{example}

\begin{example} Let $S=\Spec(\Z[\sqrt{5}]_{(2)})$. This scheme has two points: its generic point has residue field $\Q(\sqrt{5})$ and its closed point has residue field $\mb{F}_2$. The normalization of $S$ is $\Spec\left(\Z\left[\frac{1+\sqrt{5}}{2}\right]_{(2)}\right)$ which is a discrete valuation ring. The residue field of its closed point is the field $\mb{F}_4$ with $4$ elements. The completion of $\Q(\sqrt{5})$ with respect to the valuation defined by $2$ is the field $\Q_2(\sqrt{5})$.

Let $\partial$ be the map $$\mc{D}^b_{\Z_\ell-\mathrm{perf}}(\Sh((BG_{\Q(\sqrt{5})})_{\mathrm{pro\acute{e}t}},\Z_\ell))\rar \mc{D}^b_{\Z_\ell-\mathrm{perf}}(\Sh((B\hat{\Z})_{\mathrm{pro\acute{e}t}},\Z_\ell))$$ induced by the map $M\mapsto M^{G_0}$.

Let $\phi\colon\hat{\Z}\rar \hat{\Z}$ be the multiplication by $2$ map. The map $\phi^*$ has right adjoint functor
$$\In_\phi \colon\Sh((B\hat{\Z})_{\mathrm{pro\acute{e}t}},\Z_\ell)\rar \Sh((B\hat{\Z})_{\mathrm{pro\acute{e}t}},\Z_\ell).$$

Thus an Artin perverse sheaf over $S$ is given by a triple $(M,N,f)$ where $M$ is a representation of Artin origin of $G_{\Q(\sqrt{5})}$, where $N$ is a complex which belongs to $\mc{D}^b_{\Z_\ell-\mathrm{perf}}(\Sh((B\hat{\Z})_{\mathrm{pro\acute{e}t}},\Z_\ell))$ and is placed in degrees $[0,1]$ with cohomology of Artin origin and where $$f\colon N\rar \In(\partial(M))$$ is such that $\Hl^0(f)$ is injective.
\end{example}

\newpage
\bibliographystyle{alpha}
\bibliography{biblio.bib}
\end{document}